\newcommand{\clr}{rgb:black,1;blue,4;red,1}
\newcommand{\wdot}{ node[circle, draw, fill=white, thick, inner sep=0pt, minimum width=4pt]{}}
\newcommand{\bdot}{ node[circle, draw, fill=\clr, thick, inner sep=0pt, minimum width=4pt]{}}
\newcommand{\up}{\uparrow}
\newcommand{\down}{\downarrow}
\newcommand{\wrd}{\langle\up,\down\rangle}
\newcommand{\p}[1]{|{#1}|}
\newcommand{\lcap}{
\begin{tikzpicture}[baseline = 3pt, scale=0.5, color=\clr]
        \draw[-,thick] (1,0) to[out=up, in=right] (0.53,0.5) to[out=left, in=right] (0.47,0.5);
        \draw[->,thick] (0.49,0.5) to[out=left,in=up] (0,0);
\end{tikzpicture}
}
\newcommand{\lcup}{
\begin{tikzpicture}[baseline = 6pt, scale=0.5, color=\clr]
        \draw[-,thick] (1,1) to[out=down, in=right] (0.53,0.5) to[out=left, in=right] (0.47,0.5);
        \draw[->,thick] (0.49,0.5) to[out=left,in=down] (0,1);
\end{tikzpicture}
}
\newcommand{\swap}{
\begin{tikzpicture}[baseline = 3pt, scale=0.5, color=\clr]
        \draw[->,thick] (0,0) to[out=up, in=down] (1,1);
        \draw[->,thick] (1,0) to[out=up, in=down] (0,1);
\end{tikzpicture}
}
\newcommand{\swapr}{
\begin{tikzpicture}[baseline = 3pt, scale=0.5, color=\clr]
        \draw[<-,thick] (0,0) to[out=up, in=down] (1,1);
        \draw[<-,thick] (1,0) to[out=up, in=down] (0,1);
\end{tikzpicture}
}
\newcommand{\rswap}{
\begin{tikzpicture}[baseline = 3pt, scale=0.5, color=\clr]
        \draw[->,thick] (0,0) to[out=up, in=down] (1,1);
        \draw[<-,thick] (1,0) to[out=up, in=down] (0,1);
\end{tikzpicture}
}
\newcommand{\caup}{
\begin{tikzpicture}[baseline = 5pt, scale=0.75, color=\clr]
                 \draw[<-,thick] (0.75,1) to[out=down,in=left] (1,0.6) to[out=right,in=down] (1.25,1);
          \draw[<-,thick] (1.25,0) to[out=up,in=right] (1,0.4) to[out=left,in=up] (0.75,0);\end{tikzpicture}}
\newcommand{\xdot}{
\begin{tikzpicture}[baseline = 3pt, scale=0.5, color=\clr]
        \draw[->,thick] (0,0) to[out=up, in=down] (0,1);
        \draw (0,0.4)\bdot;
\end{tikzpicture}
}
\newcommand{\xdotr}{
\begin{tikzpicture}[baseline = 3pt, scale=0.5, color=\clr]
        \draw[<-,thick] (0,0) to[out=up, in=down] (0,1);
        \draw (0,0.4)\bdot;
\end{tikzpicture}
}
\newcommand{\cldotr}{
\begin{tikzpicture}[baseline = 3pt, scale=0.5, color=\clr]
        \draw[<-,thick] (0,0) to[out=up, in=down] (0,1);
        \draw (0,0.4)\wdot;
\end{tikzpicture}
}
\newcommand{\cldot}{
\begin{tikzpicture}[baseline = 3pt, scale=0.5, color=\clr]
        \draw[->,thick] (0,0) to[out=up, in=down] (0,1);
        \draw (0,0.4)\wdot;
\end{tikzpicture}
}
\def\SUM#1#2{{\mbox{$\sum\limits_{#1}^{#2}$}}}
 \providecommand{\og}{``}
\providecommand{\fg}{''} \providecommand{\smfandname}{and}
\def\sc{\scriptstyle}
\def\ssc{\scriptscriptstyle}
\def\crulefill{\leavevmode\leaders\hrule height 1pt\hfill\kern 0pt}
\long\def\QUERY#1{%
\leavevmode\newline%
\noindent$\star\star\star$\thinspace\textsf{Comment/Query}\crulefill\newline%
   \space #1\newline\hbox to 120mm{\crulefill}$\star\star\star$\newline}
\newtheorem{Theorem}{Theorem}[section]
\newtheorem{Lemma}[Theorem]{Lemma}
\newtheorem{Cor}[Theorem]{Corollary}
\newtheorem{Prop}[Theorem]{Proposition}
\theoremstyle{definition}
\newtheorem{Defn}[Theorem]{Definition}
\newtheorem{Assumption}[Theorem]{Assumption}
\numberwithin{equation}{section}
\theoremstyle{definition}
\def\enumerate{\begingroup\ifnum\@enumdepth>3\@toodeep\else
      \advance\@enumdepth\@ne
      \edef\@enumctr{enum\romannumeral\the\@enumdepth}%
      \topsep\z@\parskip\z@
      \list{\csname label\@enumctr\endcsname}
        {\@nmbrlisttrue\let\@listctr\@enumctr
         \parsep\z@\itemsep\z@\topsep\z@
         \setcounter{\@enumctr}{0}
         \def\makelabel##1{\hss\llap{\rm ##1}}
       }\fi}
\let\bar=\overline
\let\epsilon=\varepsilon
\def\({\big(}
\def\){\big)}
\def\C{\mathbb C}
\def\Z{\mathbb Z}
\def\0{\underline{0}}
\DeclareMathOperator{\End}{End}
  \gdef\set#1{\mathinner{\lbrace\,{\mathcode`\|"8000%
                                   \let|\midvert #1}\,\rbrace}}
  \gdef\seT#1{\mathinner{\Big\lbrace\,{\mathcode`\|"8000%
                                   \let|\midverT #1}\,\Big\rbrace}}
\def\midvert{\egroup\mid\bgroup}
\def\midverT{\egroup\,\Big|\,\bgroup}
\def\Set[#1]#2|#3|{\Big\{\ #2\ \Big| \
           \vcenter{\hsize #1mm\centering #3}\Big\}}
\def\up{{\boldsymbol\upsilon}}
\def\th{\theta}
\def\b{\beta}
\def\d{\delta}
\def\g{\gamma}
\def\l{\lambda}
\def\si{\sigma}
\def\sc{\scriptstyle}
\def\ssc{\scriptscriptstyle}
\def\dis{\displaystyle}
\def\bs{\backslash}
\def\Z{\mathbb{Z}}
\def\C{\mathbb{C}}
\def\es{\varepsilon}
\def\OT#1{{\raisebox{-8.5pt}{$\stackrel{{\mbox{\Large$\dis\otimes$}}}{\sc#1}$}}}
\def\mfg{{\mathfrak g}}
\def\fh{{\mathfrak h}}
\def\Set{{\rm Set}}
\def\OTIMES{{{\sc\!}\otimes{\sc\!}}}
\def\b{\beta}
\def\equa#1#2{
\begin{equation}\label{#1}#2\end{equation}}
\def\up{{\upsilon}}
\def\textsf#1{{\textit{#1}}}%
\begin{document}
\baselineskip15pt
\title{Affine walled Brauer-Clifford superalgebras}
\author{Mengmeng Gao, Hebing Rui, Linliang Song, Yucai Su}
\address{M.G.  Department  of Mathematics, East China Normal University,  Shanghai, 2000240, China}\email{g19920119@163.com}
\address{H.R.  School of Mathematical Science, Tongji University,  Shanghai, 200092, China}\email{hbrui@tongji.edu.cn}
\address{L.S.  School of Mathematical Science, Tongji University,  Shanghai, 200092, China}\email{llsong@tongji.edu.cn}
\address{Y.S.  School of Mathematical Science, Tongji University,  Shanghai, 200092, China}\email{ycsu@tongji.edu.cn}
\thanks{H. Rui is supported  partially by NSFC (grant No.  11571108).  L. Song is supported  partially by NSFC (grant No.  11501368).  Y. Su is supported partially by NSFC (grant No.  11371278 and  11431010)}

\date{\today}
\sloppy \maketitle

\begin{abstract}
In this paper, a notion of  affine walled Brauer-Clifford superalgebras $BC_{r, t}^{\rm aff} $ is introduced over an arbitrary integral domain $R$ containing $2^{-1}$.
These superalgebras can be considered as  affinization of  walled Brauer superalgebras in \cite{JK}.  By constructing infinite many homomorphisms from $BC_{r, t}^{\rm aff}$ to a class of level two walled Brauer-Clifford superagebras over  $\mathbb C$, we prove  that  $BC_{r, t}^{\rm aff} $ is free over $R$ with infinite rank. We explain  that any finite dimensional irreducible  $BC_{r, t}^{\rm aff} $-module over an algebraically closed field $F$ of characteristic not $2$   factors through a cyclotomic quotient of  $BC_{r, t}^{\rm aff} $, called  a
 cyclotomic  (or level $k$)  walled Brauer-Clifford superalgebra   $ BC_{k, r, t}$.
 Using a previous method on cyclotomic walled Brauer algebras in \cite{RSu1}, we prove   that
   $BC_{k, r, t}$   is  free over $R$ with  super rank
$(k^{r+t}2^{r+t-1} (r+t)!, k^{r+t}2^{r+t-1} (r+t)!)$ if and only if  it is admissible in the sense of   Definition~\ref{condi-kcycl}.
Finally, we prove that the degenerate affine (resp., cyclotomic)  walled
Brauer-Clifford superalgebras defined by Comes-Kujawa in \cite{CK} are  isomorphic to our affine (resp., cyclotomic)  walled Brauer-Clifford superalgebras.
\end{abstract}

\tableofcontents

\section{Introduction}

In his pioneer's work, Schur considered $V^{\otimes r}$,   the $r$-th tensor product of the natural module $V$  of the general linear group $GL_n(\mathbb C)$. This is a left $GL_n(\mathbb C)$-module such that $GL_n(\mathbb C)$ acts on $V^{\otimes r}$ diagonally.  There is a right action of the symmetric group $\Sigma_r$ on $V^{\otimes r}$, and two such actions commute with each other.  This enabled Schur to
establish a duality  between the polynomial representations of  $GL_n(\mathbb C)$ and   the representations of  symmetric groups over $\mathbb C$.
Later on, such a result was generalized by  Brauer~\cite{B},  Sergeev ~\cite{Ser} and Lehrer-Zhang\cite{GZ} and so on. In these cases, the $r$-th tensor product $V^{\otimes r}$  are considered where $V$ is the natural module of  a symplectic group  or an  orthogonal group or a queer Lie superalgebra $\mathfrak{q}(n)$ or  an orthosymplectic supergroup and so on.  The Brauer algebras and the Hecke-Clifford superalgebras etc  naturally appear as  endomorphism algebras of  $V^{\otimes r}$.

Koike~\cite{Koi}  considered the  mixed tensor  modules $V^{\otimes r}\OTIMES (V^*)^{\otimes t}$ of
the $r$-th power of the natural module $V$ and the $t$-th power of the dual natural module $V^*$ of the general linear group $GL_n(\mathbb C)$ for various $r,t\in\Z^{\ge0}$. This led him to introduce the notion of  walled Brauer algebras in \cite{Koi} (see also \cite{Tur}).
  Shader and Moon \cite{SM}  set up super  Schur-Weyl dualities between walled Brauer algebras and general linear Lie superalgebras, by studying mixed tensor modules of general linear Lie superalgebras ${\frak{gl}}_{m|n}$.  Brundan  and Stroppel  \cite{BS4}
established super Schur-Weyl dualities between level two
Hecke algebras $H_r^{p,q}$ and  ${\frak{gl}}_{m|n}$, by studying tensor modules $K_\l\OTIMES V^{\otimes r}$ of Kac modules $K_\l$ with the
$r$-th power $V^{\otimes r}$ of the natural module $V$ of ${\frak{gl}}_{m|n}$. This led
them to obtain various results including the celebrated 
one on Morita equivalences between blocks of categories of finite dimensional ${\frak{gl}}_{m|n}$-modules
and categories of
finite dimensional left modules over some generalized Khovanov's diagram algebras~\cite{BS}.
By studying tensor modules $M^{r,t}:=V^{\otimes r}\OTIMES K_\l\OTIMES  (V^*)^{\otimes t}$ of
Kac modules $K_\l$ with the $r$-th power of the natural module $V$ and the $t$-th power of the dual natural module $V^*$ of $\frak{gl}_{m|n}$, two of authors~\cite{RSu, RSu1} introduced  a new class of associative algebras, referred to  affine walled Brauer algebras, over a   commutative ring containing~$1$.\footnote{See \cite{Sa} (resp.,  \cite{BCNR}) where the affine walled Brauer algebra is defined over $\mathbb C$ (resp., over $R$  in terms of affine oriented  Brauer category.}
They established super Schur-Weyl dualities between level two walled Brauer algebras $B_{2,r,t}$ and general linear Lie superalgebras,
which enables them to
classify  highest weight vectors of ${\frak{gl}}_{m|n}$-modules $M^{r,t}$, and to determine
decomposition numbers of $B_{2,r,t}$ arising from super Schur-Weyl duality. In order to further study representation theory of queer Lie superalgebras and to establish
higher level mixed Schur-Weyl duality between queer Lie superalgebras and some associative algebras, a
 natural question is, what kind of algebras may come into the play if one replaces
general linear Lie superalgebras ${\frak{gl}}_{m|n}$ by queer Lie superalgebras $\mathfrak{q}(n)$.
This is one of the motivations of the present paper to introduce the notion of affine walled Brauer-Clifford superalgebras.
Another motivation comes from two of authors' work on the Jucys-Murphy elements of walled Brauer algebras in \cite{RSu}.

In 2014, Jung and Kang~\cite{JK}  introduced the notion of  walled Brauer superalgebras  or walled Brauer-Clifford superalgebras $BC_{r, t}$  so as to establish the mixed Schur-Weyl-Sergeev duality for queer Lie superalgebras $\mathfrak{q}(n)$. The superalgebra $BC_{r, t}$ can be considered as a generalization of a Hecke-Clifford superalgebra and a walled Brauer algebra. In the present paper, we construct Jucys-Murphy elements for $BC_{r, t}$ and study its properties in detail. Through these elements, we can  introduce the notion of affine Brauer-Clifford superalgebras $BC_{r, t}^{\rm aff}$ in a ring theoretical way.  Using arguments similar to those in \cite{RSu},
we construct  infinite many homomorphisms between the affine Brauer-Clifford superalgebra $BC_{r, t}^{\rm aff}$ and walled Brauer-Clifford superalgebras, and thus we are able to prove that the affine walled Brauer-Clifford superalgebra is free over $R$ with infinite rank if the defining parameter $\omega_1$ is zero. However, many affine walled Brauer-Clifford superalgebras which appear in the higher level mixed Schur-Weyl-Sergeev duality have non-zero defining parameter $\omega_1$. In order to overpass this, we consider level two mixed Schur-Weyl-Sergeev duality for $\mathfrak{q}(n)$ and prove that a class of level two walled Brauer-Clifford superalgebras over $\mathbb C$  with non-zero $\omega_1$ have required  super-dimensions. Using these level two walled Brauer-Clifford superalgebras instead of  walled Brauer-Clifford superalgebras used before, we can establish infinite many superalgebra homomorphisms and hence prove the freeness of the  affine  walled Brauer-Clifford superalgebra over $R$ no matter whether $\omega_1$ is zero or not.   This  is one of the points which is different from the work in \cite{RSu}.

It is a natural problem to give a  classification of finite dimensional irreducible $BC_{r, t}^{\rm aff} $-modules over an algebraically closed field of characteristic not $2$. By introducing cyclotomic quotients of $BC_{r, t}^{\rm aff}$, called cyclotomic walled Brauer-Clifford superalgebras, we are able to prove that
 any finite dimensional irreducible $BC_{r, t}^{\rm aff} $-module factors through  a cyclotomic walled Brauer-Clifford superalgebra.
  We define this superalgebra over $R$ and prove that it is free over $R$ with required rank if and only if it is admissible in the sense of  Definition~\ref{condi-kcycl}.
In a sequel, we will classify finite dimensional irreducible modules for affine and cyclotomic walled Brauer-Clifford superalgebras over an arbitrary (algebraically closed) field with characteristic not $2$.

We notice that degenerate affine walled Brauer-Clifford algebras and their cyclotomic quotients are also introduced by  Comes and Kujawa   via affine oriented  Brauer-Clifford supercategory  \cite{CK} in the early of July, 2017 (at that time we had obtained  our affine and cyclotomic walled Brauer-Clifford algebras), and they prove that their degenerate affine walled Brauer-Clifford superalgebras are free over $R$ with infinite rank.
We introduce affine walled Brauer-Clifford algebras or their cyclotomic quotients in terms of generators and defining relations in order to study  relationship between finite dimensional modules of affine walled Brauer-Clifford algebras or their cyclotomic quotients and those  of queer Lie superalgebras.
Motivated by \cite{BCNR}, we prove that a  degenerate   affine walled Brauer-Clifford superalgebra in \cite{CK}  is isomorphic to one of our affine walled Brauer-Clifford superalgebras. We also prove that their cyclotomic walled Brauer-Clifford superalgebra is isomorphic to one of  ours if both of these two superalgebras are admissible in the sense of Definition~\ref{condi-kcycl}. This also gives a proof of the freeness of Comes-Kujawa's cyclotomic walled Brauer-Clifford superalgebra. Such a result is not available  in \cite{CK}.

We organize our paper as follows. In section~2, we recall the notion of walled Brauer-Clifford superalgebras $BC_{r, t}$  in \cite{JK}. Several properties on the Jucys-Murphy elements of $BC_{r, t}$
are given.  This leads us to introduce the notion of affine walled Brauer-Clifford superalgebras in section~3.  We give infinite many homomorphisms between  $BC_{r, t}^{\rm aff} $ and  walled Brauer-Clifford superalgebras.   We also define cyclotomic walled Brauer-Clifford superalgebras. In section~4, we use higher level mixed Schur-Weyl-Sergeev dualities to prove that a class of level two walled Brauer-Clifford algebras with non-zero parameter $\omega_1$ have required super-dimensions over $\mathbb C$. In section~5,  we  construct infinite many homomorphisms between  $BC_{r, t}^{\rm aff} $  and   level two walled Brauer-Clifford superalgebras which appear in the higher level mixed Schur-Weyl-Sergeev dualities in section~4.  This in turn enables us  to  mimic  arguments in \cite{RSu}  to prove the freeness of   $BC_{r, t}^{\rm aff} $ over $R$. In particular,   $BC_{r, t}^{\rm aff} $ is of infinite super-rank.  In section~6, we prove that a cyclotomic walled Brauer-Clifford superalgebra is free over $R$ with required super rank if and only if it is admissible. Finally,  in section~7, we prove that the degenerate affine (resp.,  cyclotomic) walled Brauer-Clifford superalgebra in \cite{CK} is isomorphic to ours. In the later case, we need to assume that both superalgebras are admissible.

\section{Walled Brauer-Clifford superalgebras}

Throughout, we assume that  $R$ is  an integral  domain containing  $2^{-1}$.
Let $\Sigma_r$ be the {\it symmetric group} in $r$ letters.
 Then $\Sigma_r$ is generated by $s_1, \ldots, s_{r-1}$,  subject to the  relations (for all admissible $i$ and $j$):
\begin{equation}\label{symm} s_i^2=1, \ \ s_is_{i+1}s_i=s_{i+1}s_is_{i+1}, \ \ \text{ $s_is_j=s_js_i$, if  $|i-j|>1$.}\end{equation}
Each $s_i$ can be identified with  the simple reflection  $(i, i+1)$, where $(i, j)\in  \Sigma_r$, which switches $i , j$ and fixes others. In this paper, we always assume that
$\Sigma_r$ acts on the right of the set $\{1, 2, \ldots, r\}$.

The {\it  Hecke-Clifford algebra} $HC_{r}$ was introduced by Sergeev~\cite{Ser}  in order to study the $r$-th  tensor product of the natural module for the queer Lie superalgebras $\mathfrak{q}(n)$.  It is  the  associative $R$-superalgebra generated by even elements $s_1, \ldots, s_{r-1}$ and odd elements $c_1, \ldots, c_r$   subject to \eqref{symm} together with  the   following defining relations (for all admissible  $i, j$):
\begin{equation}\label{sera} c_i^2=-1, \ \ c_ic_j=-c_jc_i, \ \ \text{$w^{-1} c_i w=c_{(i)w}, \forall w\in \Sigma_r$}. \end{equation}

 In this paper, we denote $\Z_i=\{0, 1, \ldots, i-1\}$.   We always use $\alpha_j$ to denote the $j$-th coordinate of
$\alpha\in\Z_i^r$  for $1\le j\le r$. Let  $|\alpha|=\sum_{j=1}^r \alpha_j$.
 The Hecke-Clifford algebra $HC_r$ is free over $R$ with basis $\{c^{\alpha} w\mid w\in \Sigma_r, \alpha\in \Z_2^r\}$, where $c^\alpha=c_1^{\alpha_1}\cdots c_r^{\alpha_r}$  (see \cite{K}).  Since  $s_1, \ldots, s_{r-1}$ (resp., $c_1, \ldots, c_r$) are   even (resp.,~odd),
the even (resp., odd) subspace of  $HC_r$ is spanned by $\{c^{\alpha} w\mid w\in \Sigma_r, \alpha\in \mathbb Z_2^r, |\alpha|\in 2\mathbb Z\}$ (resp., $\{c^{\alpha} w\mid w\in \Sigma_r,  \alpha\in \mathbb Z_2^r, |\alpha|\not\in 2\mathbb Z\}$).  In particular, the super rank of $HC_r$ is $(2^{r-1} r!, 2^{r-1} r!)$.

We need $\overline{HC}_r$  as follows. As the $R$-superalgebra, it is generated by the even elements $\bar s_1, \ldots, \bar s_{r-1}$ and odd  elements $\bar c_1, \ldots, \bar c_r$ subject to the relations (for all admissible $i$ and $j$):  \begin{equation}\label{aseradual}\begin{aligned} &  \bar s_i^2=1, \ \ \bar s_i\bar s_{i+1}\bar s_i=\bar s_{i+1}\bar s_i\bar s_{i+1}, \ \ \text{and  $\bar s_i\bar s_j=\bar s_j\bar s_i$, if  $|i-j|>1$, }\\
 & \bar c_i^2=1, \ \ \bar c_i\bar c_j=-\bar c_j\bar c_i, \ \ \text{and $w^{-1} \bar c_i w=\bar c_{(i)w}, \forall w\in \Sigma_r$. }\\ \end{aligned}  \end{equation}
In this case, we identify $\bar s_i$  with $s_i$. If $\sqrt{-1}\in R$£¬ then $HC_r$ is the $\overline{HC}_r$ by setting  $\bar c_i=\sqrt{-1}c_i$ and $\bar s_i=s_i$.
Let   \begin{equation}\label{mmpy} L_1=0,  \text{ and  $ L_i=\mathfrak  L_i+c_i \mathfrak L_i c_i $, $2\le i\le r$,}\end{equation} where  $\mathfrak  L_i=\sum_{j=1}^{i-1} (j, i)$.
These elements, which  are known as Jucys-Murphy elements of $HC_r$, satisfy  the following relations for all admissible $i, j, k$:
\begin{equation}\label{jm-wba1}\begin{aligned} & L_i L_j=L_jL_i, \ \   s_i L_k=L_k s_i\ \  \text{ if $k\neq i, i+1$},\\
& s_iL_is_i=L_{i+1}-(1-c_ic_{i+1})s_i,\ \  c_iL_k=(-1)^{\delta_{i, k}} L_k c_i,\\
\end{aligned}\end{equation}
where $\delta_{i, k}=1$ if $i=k$, and $0$ otherwise. If we denote by $\bar L_1, \ldots, \bar L_r$ the Jucys-Murphy elements of $\overline {HC}_{r}$, then $\bar L_1=0$ and
$\bar  L_i= \bar {\mathfrak L}_i -\bar c_i \bar  {\mathfrak L}_i \bar  c_i$,
 where  $\bar {\mathfrak L}_j=\sum_{k=1}^{j-1} (\bar k, \bar j)$. In this case, we identify $\bar i$ with $i$ for all $1\le i\le r$.
 So, $\Sigma_r$ can be identified with the symmetric group on the set $\{\bar 1, \ldots, \bar r\}$, and  \eqref{jm-wba1} turns out to be
 \begin{equation}\label{jm-wba2}\begin{aligned} &\bar  L_i \bar L_j=\bar L_j\bar L_i, \ \   \bar s_i \bar L_k=\bar L_k \bar s_i\ \  \text{ if $k\neq i, i+1$},\\
& \bar s_i\bar L_i\bar s_i=\bar L_{i+1}-(1+
\bar c_i\bar c_{i+1})\bar s_i,\ \   {\bar c_i\bar L_k=(-1)^{\delta_{i, k}}} \bar L_k \bar c_i.\\
\end{aligned}\end{equation}
Considering  $-L_i$ (resp., $-\bar L_i$)  as  abstract generators $x_i$ (resp., $\bar x_i$) yields  the notion of the affine Hecke-Clifford algebra $HC_{r}^{\rm aff}$ (resp.,
${\overline{HC}}{\ssc\,}_{r}^{\rm aff}$) defined as follows.

The {\it affine Hecke-Clifford} algebra
$HC_{r}^{\rm aff}$ is the associative $R$-superalgebra generated by even elements  $s_1, \ldots, s_{r-1}, x_1$ and odd elements $c_1, \ldots, c_r$   subject to
 \eqref{symm}--\eqref{sera}, together with the following defining relations (for all admissible $i$ and $j$):
 \begin{equation} \label{asera} x_1x_2=x_2x_1, \ \ x_1c_i=(-1)^{\delta_{i, 1}} c_ix_1, \ \ s_jx_1=x_1s_j, \text{if $j\neq 1$, }\end{equation}
 where $x_2=s_1x_1s_1-(1-c_1c_2)s_1$. Later on, we need  $\overline{HC}_r^{\rm aff}$ as follows. As  the $R$-superalgebra, it is generated  by even elements  $\bar s_1, \ldots, \bar s_{r-1}, \bar x_1$ and odd elements $\bar c_1, \ldots, \bar c_r$   subject to \eqref{aseradual} together with the following defining relations (for all admissible $i$ and $j$):
 \begin{equation}\label{aseradual2}
 \bar x_1\bar x_2=\bar x_2\bar x_1, \ \  \bar x_1\bar c_i=(-1)^{\delta_{i, 1}} \bar c_i\bar x_1, \ \ \bar s_j\bar x_1=\bar x_1\bar s_j, \text{if $j\neq 1$, }
 \end{equation}
 where $\bar x_2=\bar s_1\bar x_1\bar s_1-(1+\bar c_1\bar c_2)\bar s_1$. Certainly,  $\overline {HC}_r^{\rm aff}$  is $HC_r^{\rm aff}$ if $\sqrt{-1}\in R$. For $1\le i\le r$, define  \begin{equation} \label{relsmurp} x_i=x_i'-L_i, \text{ and $\bar x_i=\bar x_i'-\bar L_i$,} \end{equation}
  where $x_{i}'=s_{i-1}\cdots s_1x_1 s_1\cdots s_{i-1}$, $x_1'=x_1$ and $\bar x_i'=\bar s_{i-1}\cdots \bar s_1 \bar x_1 \bar s_1\cdots \bar s_{i-1}$, $\bar x_1'=\bar x_1$.
  Then we have the following relations for all admissible $i$ and $j$:
 \begin{equation} \label{comm-hc} \begin{aligned} & x_{i+1}=s_{i}x_{i}s_{i}-(1-c_ic_{i+1}) s_i, \text{ and $x_ix_j=x_jx_i$,}\\
 & \bar x_{i+1}=\bar s_{i}\bar x_{i}\bar s_{i}-(1+\bar c_i\bar c_{i+1}) \bar s_i, \text{ and $\bar x_i\bar x_j=\bar x_j\bar x_i$.}
 \\ \end{aligned}
 \end{equation}
   For all  $\alpha\in \mathbb N^r$, define $x^\alpha= x_1^{\alpha_1}\cdots x_r^{\alpha_r}$ and $\bar x^\alpha=\bar x_1^{\alpha_1}\cdots \bar x_r^{\alpha_r}$.  It is proved in \cite{K} that $HC_r^{\rm aff}$ has basis $\{x^{\alpha}c^{\beta} w \mid w\in \mathfrak S_r, \alpha\in \mathbb N^r,
 \beta\in \Z_2^r\}$. The even (resp., odd ) subspace of $HC_r^{\rm aff}$ is spanned by all $ x^\alpha c^{\beta} w$ such that
$|\beta|\in 2\mathbb Z$ (resp., $|\beta|\not\in 2\mathbb Z $). Similar results hold for $\overline{HC}_r^{\rm aff}$.

We are going to recall the definition of  the  {\it walled Brauer-Clifford superalgebra} $BC_{r, t}$. This superalgebra was introduced by Jung and Kang in \cite{JK} so as to study the mixed tensor product of the natural module and its linear dual for the queer Lie superalgebra $\mathfrak{q}(n)$. The original $BC_{r, t}$  is defined via
{\it $(r, t)$-superdiagrams} in \cite{JK}. In this paper, we use its equivalent definition.

\begin{Defn}\cite[Theorem~5.1]{JK} \label{wsera}  The walled  Brauer-Clifford superalgebra $ BC_{r,t}$ is the associative $R$-superalgebra  generated by even generators $e_1$,  $s_1, \ldots, s_{r-1}$, $\bar s_{1}, \ldots, \bar s_{t-1}$, and odd generators  $c_1, \ldots, c_r, \bar c_1, \ldots,  \bar c_{t}$ subject to  \eqref{symm}--\eqref{aseradual} together with the following defining relations for all admissible $i, j$:
 \begin{multicols}{2}
\begin{enumerate}
\item [(1)] $e_1 c_1=e_1\bar c_1 $, $c_1 e_1=\bar c_1 e_1 $,
\item  [(2)]  $\bar s_j c_i=c_i \bar s_j$, $ s_i \bar c_j=\bar c_j  s_i$,
\item [(3)] $c_i \bar c_j=-\bar c_j c_i$, $s_i\bar s_j=\bar s_j s_i$,
\item [(4)]  $e_1^2=0$,
\item [(5)] $e_1 s_1 e_1=e_1=e_1\bar s_1 e_1$,
\item [(6)] $s_i e_1=e_1 s_i$,   $\bar s_i e_1=e_1 \bar s_i$, if $i\neq 1$,
\item [(7)]$e_1s_1\bar s_1 e_1 s_1=e_1s_1\bar s_1 e_1 \bar s_1$,
\item [(8)] $s_1 e_1s_1\bar s_1 e_1 =\bar s_1 e_1s_1\bar s_1 e_1$,
\item [(9)] $c_i e_1=e_1 c_i$ and  $\bar c_i e_1=e_1\bar c_i$, if $i\neq 1$,
\item [(10)] $e_1c_1e_1=0=e_1\bar c_1 e_1$.
\end{enumerate}
\end{multicols}
\end{Defn}

\begin{Lemma}\label{tau}  There is a unique $R$-linear anti-involution $\tau: BC_{r, t}\rightarrow BC_{r, t}$, which fixes    all of its generators.\end{Lemma}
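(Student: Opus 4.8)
The plan is to read $\tau$ off from the presentation in Definition~\ref{wsera}. Let $\mathcal F$ denote the free $R$-superalgebra on the even generators $e_1,s_1,\dots,s_{r-1},\bar s_1,\dots,\bar s_{t-1}$ and the odd generators $c_1,\dots,c_r,\bar c_1,\dots,\bar c_t$, and let $I\subseteq\mathcal F$ be the two-sided ideal generated by the relators $a-b$ coming from the defining relations, so that $BC_{r,t}=\mathcal F/I$. On $\mathcal F$ there is an evident $R$-linear anti-automorphism $\hat\tau$ fixing every generator, acting on a monomial by reversing the order of its factors; it is automatically even, since the parity of a monomial is the sum of the parities of its factors and is insensitive to their order. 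To descend $\hat\tau$ to $BC_{r,t}$ I only need $\hat\tau(I)\subseteq I$. As $\hat\tau$ is an anti-homomorphism, it carries a typical element $\sum_i p_i(a_i-b_i)q_i$ of $I$ to $\sum_i\hat\tau(q_i)\,\hat\tau(a_i-b_i)\,\hat\tau(p_i)$, so it suffices to verify that the reversed form of each single defining relation is again a consequence of the defining relations.

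Most relations are visibly stable under reversal. Every commutation or anticommutation relation $xy=\pm yx$ appearing in \eqref{symm}, \eqref{sera}, \eqref{aseradual} and in relations (2), (3), (6), (9) of Definition~\ref{wsera} reverses to itself (with the same sign), as do the quadratic relations $s_i^2=1$, $c_i^2=-1$, $\bar s_i^2=1$, $\bar c_i^2=1$, relation (4) $e_1^2=0$, and relation (10) $e_1c_1e_1=0=e_1\bar c_1e_1$. The braid relations and relation (5) $e_1s_1e_1=e_1=e_1\bar s_1e_1$ are palindromic words, hence fixed by $\hat\tau$; likewise the conjugation relations $w^{-1}c_iw=c_{(i)w}$ and their barred analogues have palindromic left-hand sides $w^{-1}c_iw$ in the generators, so they too are fixed. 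Finally the two relations in (1) are interchanged: reversing $e_1c_1=e_1\bar c_1$ yields $c_1e_1=\bar c_1e_1$, the second relation of (1), and conversely.

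The only genuinely non-symmetric relations are (7) and (8), and these two are exchanged by $\hat\tau$; this is the one place where a short rewriting is needed and is the main point to get right. Reversing the left-hand word $e_1s_1\bar s_1e_1s_1$ of (7) gives $s_1e_1\bar s_1s_1e_1$, which becomes $s_1e_1s_1\bar s_1e_1$ after using $s_1\bar s_1=\bar s_1s_1$ from (3); reversing the right-hand word $e_1s_1\bar s_1e_1\bar s_1$ gives $\bar s_1e_1s_1\bar s_1e_1$ in the same way. Thus $\hat\tau$ turns relation (7) into $s_1e_1s_1\bar s_1e_1=\bar s_1e_1s_1\bar s_1e_1$, which is exactly relation (8), and an identical computation shows $\hat\tau$ carries (8) back to (7). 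Hence every relator is sent into $I$, so $\hat\tau(I)\subseteq I$.

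It follows that $\hat\tau$ descends to an $R$-linear, parity-preserving anti-homomorphism $\tau$ on $BC_{r,t}$ fixing all generators. Uniqueness is immediate, since an anti-homomorphism is determined by its values on a generating set. For the involution property, $\tau^2$ is an $R$-linear algebra homomorphism fixing every generator, so by the same uniqueness principle $\tau^2=\id$; therefore $\tau$ is the desired anti-involution.
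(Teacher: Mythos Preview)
Your proof is correct and is precisely the routine verification the paper leaves implicit when it writes ``It follows from Definition~\ref{wsera}, immediately.'' You have simply unpacked that sentence: the presentation is symmetric under word reversal, with the only nontrivial swap being relations (7) and (8), exactly as you observed.
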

\begin{proof} It follows from Definition~\ref{wsera}, immediately.\end{proof}

It is known that  the subalgebra of $BC_{r, t}$  generated by even generators  $s_1, \ldots, s_{r-1}$, $\bar s_1, \ldots, \bar s_{t-1}$ and $e_1$ is isomorphic to the walled Brauer algebra $ B_{r, t}(0)$ in \cite{Koi, Tur}. This enables us  to  use freely
the results on $B_{r, t}(0)$ in  \cite{RSu} so as to simplify our presentation.
Write   $s_{i, j}=s_i s_{i+1, j}$ if $i<j$ and $s_{i, i}=1$ and $s_{i, j}=s_{i, j+1} s_{ j}$ if $i>j$. Similarly, we have $\bar s_{i, j}$'s, etc.
Following \cite{RSong}, define $D_{r, t}^f=\{1\}$ if $f=0$ and
 \begin{eqnarray} &\!\!\!\!\!\!\!\!\!\!\!\!\!\!\!\!\!\!\!\!\!\!\!\!\!\!\!\label{drt} {D}_{r,t}^f=\Big\{ s_{f,i_f} \bar s_{f, j_f} \cdots s_{1,i_1}\bar s_{1,{j_1}}\,\Big|&\!\!\! k \le {j_k}\le t, 1\le k\le f,\nonumber\\
 &\!\!\!\!\!\!\!\!\!\!\!\!\!\!\!\!\!\!\!\!\!\!\!\!\!\!\!&\!\!\!{1 \le i_1< i_{2} < \ldots <i_f\le r} \Big\} \text{ if $0<f\le \min\{r, t\}$.}\end{eqnarray}

\begin{Defn}Define  $e^0=1$ and $e^f=e_1e_2\cdots e_f$ if $0<f\le \min\{r, t\}$, where  $e_i=e_{i, i}$  and    $e_{i, j}=(\bar 1, \bar j) (1, i)  e_1  (1, i)(\bar 1, \bar j)$
for all admissible  $i, j$.\end{Defn}

 \begin{Theorem}\cite[Theorem~5.1]{JK}\label{wbhsa321} The walled Brauer-Clifford superalgebra $BC_{r, t}$ has  $R$-basis
 \begin{equation}\label{bcabasis} \left\{ c^{\alpha} d_1^{-1}e^f w d_2 \bar c^{\beta}\mid 0\le f\le \min\{r, t\}, w\in \Sigma_{r-f}\times { {\Sigma}}_{\overline{t-f}}, d_1, d_2\in  D_{r, t}^f, (\alpha, \beta)\in \Z_2^r\times \Z_2^t \right \}.\end{equation} In particular, the super rank of $BC_{r, t}$ is $(2^{r+t-1} (r+t)!, 2^{r+t-1} (r+t)!)$.
 \end{Theorem}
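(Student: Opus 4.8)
The statement is a basis theorem, so the plan is to prove separately that the set \eqref{bcabasis} \emph{spans} $BC_{r,t}$ over $R$ and that it is \emph{$R$-linearly independent}; since \eqref{bcabasis} has exactly $2^{r+t}(r+t)!$ elements ($2^r$ choices for $\alpha$, $2^t$ for $\beta$, and $(r+t)!$ for the walled-Brauer factor $d_1^{-1}e^fwd_2$, which ranges over the basis of $B_{r,t}(0)$), the two halves together give freeness of rank $2^{r+t}(r+t)!$.

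\textbf{Spanning.} Let $M$ be the $R$-span of \eqref{bcabasis}. Since $1\in M$ and the generators generate $BC_{r,t}$, it suffices to show $gM\subseteq M$ for every generator $g$, for then $M$ is a left ideal containing $1$, whence $M=BC_{r,t}$. The normal form keeps $c^\alpha$ and $\bar c^\beta$ pinned at the two ends and $d_1^{-1}e^fwd_2$ in the middle. Multiplying on the left by a left-Clifford $c_k$ reduces, via $c_k^2=-1$ and $c_kc_l=-c_lc_k$ of \eqref{sera}, to $\pm c^{\alpha'}(\cdots)$ with $\alpha'\in\Z_2^r$. Multiplying by $s_i$, $\bar s_j$, or $e_1$ first moves the generator past $c^\alpha$ (using \eqref{sera} and relations (1), (2), (9) of Definition~\ref{wsera}, which permute the indices of $c^\alpha$ and may spawn a $\bar c_1$), after which it acts inside $B_{r,t}(0)$, whose basis $\{d_1^{-1}e^fwd_2\}$ from \cite{RSu} is closed under $B_{r,t}(0)$-multiplication. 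The genuinely new case is a right-Clifford $\bar c_k$ on the left: by relation (3) it anticommutes past $c^\alpha$ and must then be transported rightward across $d_1^{-1}e^fwd_2$. Here relations (2) and (9) slide it past the $s$'s, $\bar s$'s and the $e_i$ with $i\neq1$, relation (1) interconverts $c_1\leftrightarrow\bar c_1$ each time it crosses a wall $e_1$, and relation (10), $e_1c_1e_1=e_1\bar c_1e_1=0$, annihilates the terms in which a Clifford factor becomes trapped between two contractions; after finitely many moves the surviving terms again lie in $M$. Hence $M=BC_{r,t}$ and \eqref{bcabasis} spans.

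\textbf{Independence and freeness.} For the matching lower bound I would use the diagram realization of \cite{JK}. The superalgebra $\BDiag$ of $(r,t)$-superdiagrams has, by construction, the superdiagrams as an $R$-basis, a superdiagram being a walled Brauer $(r,t)$-diagram (there are $(r+t)!$ of these, each with exactly $r+t$ strands) equipped with a $\Z_2$-marking on each strand (the factor $2^{r+t}$). Sending the generators of $BC_{r,t}$ to the evident elementary diagrams and verifying that \eqref{symm}--\eqref{aseradual} and relations (1)--(10) all hold diagrammatically yields a superalgebra homomorphism $\phi\colon BC_{r,t}\to\BDiag$. Under $\phi$ the element $c^\alpha d_1^{-1}e^fwd_2\bar c^\beta$ goes to the superdiagram whose underlying diagram is recorded by $d_1^{-1}e^fwd_2$ and whose strand markings are recorded by $(\alpha,\beta)$; this is a bijection from \eqref{bcabasis} onto the diagram basis. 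As that basis is $R$-linearly independent, its preimages \eqref{bcabasis} are $R$-linearly independent in $BC_{r,t}$. Combined with the spanning step, \eqref{bcabasis} is an $R$-basis and $BC_{r,t}$ is free of rank $2^{r+t}(r+t)!$. Splitting the basis by the parity of $|\alpha|+|\beta|$ separates the even and odd components, each of size $2^{r+t-1}(r+t)!$, giving the asserted super rank.

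\textbf{Main obstacle.} The real work lies in the spanning/straightening step: one must check that transporting the Clifford generators to the two ends terminates and yields a well-defined element independent of the sequence of moves, keeping exact track of the anticommutation signs and of the $c_1\leftrightarrow\bar c_1$ conversions at the walls, and one must confirm that the configurations killed by relation (10) are precisely the non-reduced ones. The other delicate point is securing the lower bound honestly, i.e.\ verifying that the $(r,t)$-superdiagrams really are $R$-linearly independent (equivalently, that the dot-sliding-and-stacking calculus is consistent); alternatively one may replace $\BDiag$ by the mixed tensor module $V^{\otimes r}\otimes(V^*)^{\otimes t}$ of $\mathfrak{q}(n)$ for $n\gg r+t$ and prove the action separates the elements of \eqref{bcabasis}. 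Once either lower bound is in place, the cardinality count $2^{r+t}(r+t)!$ closes the argument.
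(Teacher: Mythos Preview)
Your proposal is correct and is essentially the argument behind \cite[Theorem~5.1]{JK}, to which the paper simply defers: the paper's entire proof is the remark that \eqref{bcabasis} is a refinement of the basis obtained in \cite{JK} and that each $d_1^{-1}e^fwd_2$ corresponds to a unique walled Brauer diagram. Your spanning-plus-independence outline (straightening Clifford factors to the ends and then mapping to the $(r,t)$-superdiagram algebra) is exactly the mechanism that underlies the cited result, and your observation that every strand of a walled Brauer diagram has precisely one endpoint among the top-left and bottom-right vertices is what makes the map from \eqref{bcabasis} to superdiagrams a bijection (up to sign).
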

\begin{proof} The basis of $BC_{r, t}$ given in \eqref{bcabasis} is a refinement of  $X$  given in the proof of \cite[Theorem~5.1]{JK}. We remark that  each $d_1^{-1}e^f w d_2$ corresponds to a unique walled Brauer diagram in \cite{RSu}.
  \end{proof}

\begin{Cor}\label{wsba-1}  For any positive integer $k$,    the subalgebra of $BC_{k+r, k+t}$ generated by even elements $e_{k+1}, s_{k+1}, \ldots$, $s_{k+r-1}$, $\bar s_{k+1}, \ldots, \bar s_{k+t-1}$ and odd elements  $c_{k+1}, \bar c_{k+1}$ is isomorphic to $BC_{r, t}$.    \end{Cor}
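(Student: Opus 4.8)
The plan is to exhibit the isomorphism as an index-shift homomorphism and to deduce injectivity from the basis in Theorem~\ref{wbhsa321}. Write $A$ for the subalgebra of $BC_{k+r,k+t}$ in the statement. First I would note that $BC_{r,t}$ is already generated by the reduced set $\{e_1,s_1,\dots,s_{r-1},\bar s_1,\dots,\bar s_{t-1},c_1,\bar c_1\}$: the conjugation relations in \eqref{sera} and \eqref{aseradual} give $c_i=(1,i)c_1(1,i)$ and $\bar c_j=(\bar 1,\bar j)\bar c_1(\bar 1,\bar j)$, so the remaining Clifford generators are redundant. I would then define $\phi\colon BC_{r,t}\to BC_{k+r,k+t}$ on this reduced set by shifting every index by $k$, namely $e_1\mapsto e_{k+1}$, $s_i\mapsto s_{k+i}$, $\bar s_j\mapsto \bar s_{k+j}$, $c_1\mapsto c_{k+1}$ and $\bar c_1\mapsto \bar c_{k+1}$; this is parity preserving, and it forces $c_i\mapsto c_{k+i}$, $\bar c_j\mapsto \bar c_{k+j}$.

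Next I would verify that $\phi$ respects the defining relations of Definition~\ref{wsera} together with \eqref{symm}--\eqref{aseradual}. The braid and Clifford relations among the $s_{k+i},\bar s_{k+j},c_{k+1},\bar c_{k+1}$ are translation invariant and hold automatically inside $BC_{k+r,k+t}$. For the relations (1)--(10) involving $e_1$, I would pass to the walled Brauer diagram picture recalled after Theorem~\ref{wbhsa321}: the shifted elements $e_{k+1},s_{k+1},\bar s_{k+1},c_{k+1},\bar c_{k+1}$ realize exactly the same local configuration as $e_1,s_1,\bar s_1,c_1,\bar c_1$, merely placed at index $k+1$, so each of (1)--(10) is the shift of a valid diagram identity of $BC_{k+r,k+t}$ and therefore holds. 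Since the image of $\phi$ contains each generator of $A$, we get that $\phi$ is a surjection onto $A$.

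It remains to prove injectivity, and this is where Theorem~\ref{wbhsa321} is used. By the remark following that theorem the basis \eqref{bcabasis} is indexed by Clifford-decorated walled Brauer diagrams. Because every generator in the image of $\phi$ leaves the first $k$ unbarred and the first $k$ barred strands untouched, $\phi$ sends a basis element $c^\alpha d_1^{-1}e^f w d_2\bar c^\beta$ to $\pm$ the basis element of $BC_{k+r,k+t}$ whose decorated diagram is obtained by adjoining $k$ trivial through-strands on each side of the wall. This assignment of decorated diagrams is injective: deleting the $2k$ trivial strands recovers the original diagram. Hence $\phi$ carries the basis \eqref{bcabasis} to an $R$-linearly independent subset of the basis of $BC_{k+r,k+t}$, so $\phi$ is injective, and therefore $\phi\colon BC_{r,t}\to A$ is an isomorphism.

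The step I expect to be the main obstacle is making the injectivity bookkeeping precise: one must confirm that, for every basis element, commuting the shifted Clifford factors $\phi(c^\alpha)$ and $\phi(\bar c^\beta)$ past the shifted diagram and renormalising the arc positions to the canonical $e^f=e_1\cdots e_f$ produces a single signed element of \eqref{bcabasis}, with the sign tracked consistently through the super-structure. The relation check in the second step is, by contrast, routine once phrased through the diagram calculus of $B_{k+r,k+t}(0)$, for which one may invoke \cite{RSu}; the only subtlety there is the mixed relations (1) and (10), which follow from the sliding of a Clifford marking along the arc of $e_{k+1}$.
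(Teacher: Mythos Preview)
Your proposal is correct and follows exactly the approach the paper intends: the paper's own proof is merely ``Easy exercise using Theorem~\ref{wbhsa321} and Definition~\ref{wsera}'', and your two steps---checking the shifted generators satisfy the relations of Definition~\ref{wsera} to get the homomorphism, then invoking the basis of Theorem~\ref{wbhsa321} (via the underlying decorated walled Brauer diagrams) for injectivity---are precisely that exercise spelled out. Your flagged bookkeeping about rewriting $\phi(c^{\alpha}d_1^{-1}e^f w d_2\bar c^{\beta})$ as a single signed basis element of $BC_{k+r,k+t}$ is indeed the only nontrivial point, and it goes through because the map on decorated diagrams ``add $k$ trivial through-strands on each side of the wall'' is injective.
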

\begin{proof} Easy exercise using Theorem~\ref{wbhsa321} and Definition~\ref{wsera}. \end{proof}

\begin{Lemma}\label{spannwbc} Let $BC_{k-1, k-1}$ be the  subalgebra of $BC_{k, k}$ generated by $e_1, s_1, \ldots, s_{k-2}$, $\bar s_1,\ldots,  \bar s_{k-2}$ and  $c_1,  \bar c_1$.  Then $e_k BC_{k, k}$ is a left  $BC_{k-1, k-1}$-module spanned by all $ e_k c_k^\sigma s_{k, j}\bar s_{k, l}$ such that
 $\sigma\in \Z_2$ and $1\le  j, l\le k$.
  \end{Lemma}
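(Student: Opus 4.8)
The plan is to prove the two inclusions $M\subseteq e_kBC_{k,k}$ and $e_kBC_{k,k}\subseteq M$ separately, where I write $M:=BC_{k-1,k-1}\cdot\mathrm{span}_R\{e_kc_k^\sigma s_{k,j}\bar s_{k,l}\mid \sigma\in\Z_2,\ 1\le j,l\le k\}$. First note that $BC_{k-1,k-1}$ in fact contains $c_2,\dots,c_{k-1}$, $\bar c_2,\dots,\bar c_{k-1}$ and $e_2,\dots,e_{k-1}$, since these arise from $c_1,\bar c_1,e_1$ by conjugating with the transpositions $(1,i)$, $(\bar1,\bar j)$ with $i,j\le k-1$, all of which lie in $BC_{k-1,k-1}$. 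Using the locality relations (relations (2),(3),(6),(9) of Definition~\ref{wsera}, together with $w^{-1}c_iw=c_{(i)w}$ from \eqref{sera}) one checks that every generator of $BC_{k-1,k-1}$ involves only strands of index $\le k-1$ and hence commutes with $e_k=e_{k,k}$. Thus $BC_{k-1,k-1}\,e_k=e_k\,BC_{k-1,k-1}$; since each $e_kc_k^\sigma s_{k,j}\bar s_{k,l}$ visibly lies in $e_kBC_{k,k}$, we obtain $M=e_k\cdot BC_{k-1,k-1}\cdot\mathrm{span}\{c_k^\sigma s_{k,j}\bar s_{k,l}\}\subseteq e_kBC_{k,k}$. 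This gives the easy inclusion and shows simultaneously that $e_kBC_{k,k}$ and $M$ are left $BC_{k-1,k-1}$-modules.

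For the reverse inclusion I would show that $M$ is stable under right multiplication by all of $BC_{k,k}$. The key observation is that $BC_{k,k}$ is generated by $BC_{k-1,k-1}$ together with the four elements $s_{k-1},\bar s_{k-1},c_k,\bar c_k$ (here $c_k,\bar c_k$ are genuinely new, while $s_{k-1},\bar s_{k-1}$ together with $BC_{k-1,k-1}$ produce $(1,k)$, $(\bar1,\bar k)$ and hence $e_k$). Since $s_{k,k}=\bar s_{k,k}=1$ we have $e_k=e_kc_k^0s_{k,k}\bar s_{k,k}\in M$, so once right-stability of $M$ is established it follows that $e_kBC_{k,k}\subseteq M$, finishing the proof. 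Because $M=BC_{k-1,k-1}\cdot\mathrm{span}\{e_kc_k^\sigma s_{k,j}\bar s_{k,l}\}$ and $BC_{k-1,k-1}$ is a subalgebra, right-stability reduces to checking that $e_kc_k^\sigma s_{k,j}\bar s_{k,l}\cdot g\in M$ for $g$ ranging over $s_{k-1},\bar s_{k-1},c_k,\bar c_k$ and over the generators of $BC_{k-1,k-1}$.

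These closure computations rest on a few elementary identities among the coset representatives $s_{k,j}=s_{k-1}s_{k-2}\cdots s_j$, namely $s_{k,j}s_i=s_{i-1}s_{k,j}$ for $j<i\le k-2$, $s_{k,j}s_j=s_{k,j+1}$, $s_{k,j}s_{j-1}=s_{k,j-1}$ and $s_{k,j}s_{k-1}=s_{k-2}s_{k,j}$ (with the evident barred analogues), each of which peels off a simple reflection of index $\le k-2$ to the left while leaving a single $s_{k,j'}$ on the right; the peeled factor lies in $BC_{k-1,k-1}$ and commutes past $e_kc_k^\sigma$. The Clifford generators are handled by the relabelling $w^{-1}c_iw=c_{(i)w}$ (and $(k)s_{k,j}=j$): right multiplication by $c_k$ or $\bar c_k$ moves the odd generator leftwards through $s_{k,j}$ and $\bar s_{k,l}$, where it either becomes some $c_m$ (resp. $\bar c_m$) with $m\le k-1$, which commutes past $e_k$ into $BC_{k-1,k-1}$, or reaches the $c_k^\sigma$-slot, at which point $c_k^2=-1$ and the $e_k$-analogue of relation (1) of Definition~\ref{wsera} ($e_kc_k=e_k\bar c_k$) return it to the required form.

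The genuine obstacle is right multiplication by the cup generator $e_1$, because $e_1$ does not commute with $s_{k,1}$ or $\bar s_{k,1}$ and the cup is free to slide, so bare relabelling no longer suffices. Here I would first use $(k)s_{k,j}=j$ to rewrite $e_kc_k^\sigma s_{k,j}=e_ks_{k,j}c_j^\sigma$, push $c_j^\sigma$ to the right past $\bar s_{k,l}$, and then—via relation (9) of Definition~\ref{wsera} when $j\ge2$, and via $c_1e_1=\bar c_1e_1$ from relation (1) when $j=1$—separate off the purely even factor $e_ks_{k,j}\bar s_{k,l}e_1\in e_kB_{k,k}(0)$. Invoking the corresponding spanning result for the walled Brauer algebra $B_{k,k}(0)$ from \cite{RSu}, this even factor rewrites as $\sum_i g_i\,e_ks_{k,j_i}\bar s_{k,l_i}$ with $g_i\in B_{k-1,k-1}(0)\subseteq BC_{k-1,k-1}$; re-commuting the residual Clifford generator through each summand by the same relabelling as in the previous paragraph then places the whole expression in $M$. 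Thus the one point that truly requires the earlier $B_{r,t}(0)$ machinery, rather than elementary generator manipulation, is precisely the interaction of $e_1$ with the coset representatives $s_{k,1},\bar s_{k,1}$.
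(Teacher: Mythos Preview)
Your strategy—show that $M$ is stable under right multiplication by the generators of $BC_{k,k}$ and then use $e_k\in M$—is exactly what the paper does, and your treatment of the $s_i$, $\bar s_i$, $c_i$, $\bar c_i$ is essentially the same (the paper packages the coset identities into short case-by-case formulas, but the content is identical).

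Where you diverge is in the handling of $e_1$. The paper does not separate off the Clifford part or appeal to \cite{RSu}; instead it checks right multiplication by $e_k$ rather than by $e_1$ (legitimate, since $e_1$ is conjugate to $e_k$ by a product of $s_i$'s and $\bar s_j$'s, and stability under those has already been established). That yields the short explicit formula
\[
e_k\, c_k^\sigma\, s_{k,j}\,\bar s_{k,l}\; e_k=
\begin{cases}
0,& j=k=l,\\
c_{k-1}^\sigma\, s_{k-1,j}\,e_k,& l=k>j,\\
\bar c_{k-1}^\sigma\, \bar s_{k-1,l}\,e_k,& j=k>l,\\
e_{k-1}\,c_{k-1}^\sigma\, s_{k-1,j}\,\bar s_{k-1,l}\,e_k,& j,l<k,
\end{cases}
\]
each line visibly in $M$. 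This is self-contained and also feeds directly into the next result ($e_k BC_{k,k}e_k=e_kBC_{k-1,k-1}$).

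Your route via \cite{RSu} is workable in principle but has a small gap. When $j=l=1$ and $\sigma=1$ the Clifford generator cannot be peeled off to either side of the ``even factor'': converting $c_1$ to $\bar c_1$ via $c_1e_1=\bar c_1e_1$ and then pushing $\bar c_1$ left sends it through $\bar s_{k,1}$ back to $\bar c_k$, then through $e_k$ back to $c_k$, and you are where you started. The resolution is simply that $e_k\, c_k^\sigma\, s_{k,1}\,\bar s_{k,1}\, e_1=0$ for both values of $\sigma$ (reduce inductively via $e_k s_{k-1}\bar s_{k-1}=s_{k-1}\bar s_{k-1}e_{k-1}$ down to $e_1 c_1^\sigma e_1$, then invoke $e_1^2=0$ and $e_1c_1e_1=0$), but this case does need to be handled separately; it is not covered by the ``separate off the even factor and cite \cite{RSu}'' maneuver as written.
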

\begin{proof} It is enough to prove that the left $BC_{k-1, k-1}$-module   $V_k$  spanned all $ e_k c_k^\sigma s_{k, j}\bar s_{k, l}$
is a right $BC_{k, k}$-module. If so, then $V_k=e_k BC_{k, k}$ by the fact that   $e_k\in V_k$. We have  $V_k s_i\subset V_k$ and $V_k c_1\subset V_k$
 since $$e_k c_k^\sigma s_{k, j}\bar s_{k, l}s_i=\begin{cases}  {s_{i} e_k c_k^\sigma s_{k, j}\bar s_{k, l}} &\text{if $i<j$,}\\
 {e_k c_k^\sigma s_{k, j+1}\bar s_{k, l}} &\text{if $i=j$,}\\
{s_{i-1}e_k c_k^\sigma s_{k, j}\bar s_{k, l}} &\text{if $i>j$, }\\ \end{cases} \ \text{ and } \
e_k c_k^\sigma s_{k, j}\bar s_{k, l} c_1=\begin{cases} \epsilon c_1  e_k c_k^\sigma s_{k, j}\bar s_{k, l} &\text{if $j>1$,}\\
  e_k c_k^{\sigma+1} s_{k, j}\bar s_{k, l} &\text{if $j=1$, }\\
  \end{cases}$$
where $\epsilon=1$ (resp., $-1$)  if $\sigma=0$ (resp., $1$).  Similarly,  $V_k \bar s_i\subset V_k$ and $V_k \bar c_1\subset V_k$. Finally,  $V_k e_1\subset V_k$ since
\begin{equation}\label{step1} e_k c_k^\sigma s_{k, j}\bar s_{k, l}e_k =\begin{cases} 0 &\text{if $j=k=l$,}\\
c_{k-1}^\sigma s_{k-1, j} e_k &\text{if $l=k>j$,}\\
 \bar c_{k-1}^\sigma \bar s_{k-1, l} e_k &\text{if $j=k>l$,}\\
e_{k-1}  c_{k-1}^\sigma s_{k-1, j}\bar s_{k-1, l}e_k &\text{if $j, l<k$.}\\
\end{cases}
\end{equation}
\end{proof}

\begin{Prop}\label{ese1} We have  $e_k BC_{k, k} e_k=e_{k} BC_{k-1, k-1}$ for all $k\ge 2$ and $e_1 BC_{1, 1} e_1=0$. \end{Prop}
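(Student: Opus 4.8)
The plan is to prove the two assertions separately: the equality $e_kBC_{k,k}e_k=e_kBC_{k-1,k-1}$ for $k\ge 2$ rests on Lemma~\ref{spannwbc} together with the computation \eqref{step1}, while the case $k=1$ is a short direct computation. As a preliminary for $k\ge 2$ I would record that $e_k$ commutes with every generator of $BC_{k-1,k-1}$, namely $e_1,s_1,\dots,s_{k-2},\bar s_1,\dots,\bar s_{k-2},c_1,\bar c_1$. For $e_1,s_i,\bar s_i$ with $i\le k-2$ this is the standard commutation with $e_{k,k}$ in the walled Brauer algebra $B_{k,k}(0)$ (whose relations we use freely), since the strands involved are disjoint from those of $e_k=e_{k,k}$; for $c_1,\bar c_1$ it follows by conjugating relation~(9) by $(1,k)(\bar 1,\bar k)$, using $1\neq k$. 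Consequently $BC_{k-1,k-1}\,e_k=e_k\,BC_{k-1,k-1}$, and right multiplication by $e_k$ commutes with the left $BC_{k-1,k-1}$-action.

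For the inclusion $e_kBC_{k,k}e_k\subseteq e_kBC_{k-1,k-1}$: by Lemma~\ref{spannwbc}, $e_kBC_{k,k}$ is, as a left $BC_{k-1,k-1}$-module, spanned by the elements $e_kc_k^\sigma s_{k,j}\bar s_{k,l}$ with $\sigma\in\Z_2$ and $1\le j,l\le k$, so $e_kBC_{k,k}e_k$ is spanned by the $e_kc_k^\sigma s_{k,j}\bar s_{k,l}e_k$. Each of these is evaluated in \eqref{step1}, and in all four cases the result lies in $BC_{k-1,k-1}\,e_k$: indeed $c_{k-1}=(1,k-1)c_1(1,k-1)$, $\bar c_{k-1}$, $e_{k-1}=e_{k-1,k-1}$, and $s_{k-1,j},\bar s_{k-1,l}$ with $j,l\le k-1$ all belong to $BC_{k-1,k-1}$. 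Using the commutation above this gives $e_kBC_{k,k}e_k\subseteq BC_{k-1,k-1}\,e_k=e_kBC_{k-1,k-1}$. For the reverse inclusion I would use the absorption relation $e_kwe_k=e_k$ with $w=(2,k)\in\Sigma_k$, obtained by conjugating relation~(5) ($e_1s_1e_1=e_1$) by $(1,k)(\bar 1,\bar k)$. Then for $g\in BC_{k-1,k-1}$ the commutation yields $e_kg=e_kwe_kg=e_kwge_k=e_k(wg)e_k\in e_kBC_{k,k}e_k$, since $wg\in BC_{k,k}$. Hence $e_kBC_{k,k}e_k=e_kBC_{k-1,k-1}$.

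For $k=1$, by Theorem~\ref{wbhsa321} the superalgebra $BC_{1,1}$ has $R$-basis $\{\,c_1^\alpha\bar c_1^\beta,\ c_1^\alpha e_1\bar c_1^\beta \mid \alpha,\beta\in\Z_2\,\}$. Sandwiching each basis element $m$ between two copies of $e_1$ and using $e_1^2=0$, $e_1c_1e_1=e_1\bar c_1e_1=0$ from relations~(4) and~(10), together with $e_1c_1=e_1\bar c_1$ from relation~(1) and $\bar c_1^2=1$ from \eqref{aseradual}, shows $e_1me_1=0$ in every case; for instance $e_1c_1\bar c_1e_1=e_1\bar c_1^2e_1=e_1^2=0$ and $e_1(c_1^\alpha e_1\bar c_1^\beta)e_1=(e_1c_1^\alpha e_1)\bar c_1^\beta e_1=0$. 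Therefore $e_1BC_{1,1}e_1=0$.

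I expect the reverse inclusion in the case $k\ge 2$ to be the main obstacle: one must produce the absorption relation $e_kwe_k=e_k$ and combine it with the commutation of $e_k$ with $BC_{k-1,k-1}$. By contrast the forward inclusion and the case $k=1$ are essentially bookkeeping built directly on Lemma~\ref{spannwbc}, \eqref{step1}, and the defining relations.
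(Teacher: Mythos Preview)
Your proof is correct and follows the same strategy as the paper: Lemma~\ref{spannwbc} and \eqref{step1} for the inclusion $e_kBC_{k,k}e_k\subseteq e_kBC_{k-1,k-1}$, then an absorption relation $e_kwe_k=e_k$ combined with $e_kBC_{k-1,k-1}=BC_{k-1,k-1}e_k$ for the reverse inclusion. The paper's write-up is terser (it does not spell out the $k=1$ case separately, reading it off from \eqref{step1} with $j=l=k=1$), but the content is the same.

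One small slip worth fixing: your formula $w=(2,k)$ for the conjugate of $s_1$ by $(1,k)(\bar 1,\bar k)$ is only valid for $k\ge 3$; when $k=2$ the conjugate of $s_1=(1,2)$ by $(1,2)$ is $s_1$ itself, not the identity $(2,2)$ (and indeed $e_2\cdot 1\cdot e_2=e_2^2=0$). The underlying conjugation argument is fine once you record the correct $w$ for $k=2$. The paper sidesteps this case split by taking $w=s_{k-1}$, which works uniformly for all $k\ge 2$; note that $e_ks_{k-1}e_k=e_k$ is already the instance $\sigma=0$, $j=k-1$, $l=k$ of \eqref{step1}.
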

\begin{proof} We have $e_k BC_{k, k} e_k\subseteq e_{k} BC_{k-1, k-1}$  by   Lemma~\ref{spannwbc} and \eqref{step1}. When $k\ge 2$,  the inverse inclusion follows from the  equation   $e_k=e_ks_{k-1}e_k$ and $e_k x=x e_k$ for any $x\in BC_{k-1,k-1}$.
\end{proof}

\begin{Defn}\label{RSu-GRSS} For all admissible  $i, j$, let $  y_i=\sum_{j=1}^{i-1} (e_{i, j}+\bar e_{i, j})-L_i$, and $\bar y_i=\sum_{j=1}^{i-1} (e_{j, i}-\bar e_{j, i})-\bar L_i$, where $\bar e_{i, j}=c_i e_{i, j} c_i$. Then
$y_i={\mathfrak y_i}+c_i {\mathfrak y_i} c_i$ and $\bar y_j= {\bar {\mathfrak y}_j}-\bar c_j {\bar {\mathfrak y}_j} \bar c_j$,
where $\mathfrak y_i$ (resp.,  ${\bar {\mathfrak y}}_j$)  is   $y_i$ (resp.,  $\bar y_j$)  in \cite[(3.5)]{RSu} in the case $\delta_1=0$.
So, \begin{equation}\label{fraky} \mathfrak y_i=\mbox{$\sum\limits_{j=1}^{i-1}$} (e_{i, j}-(j, i)), \text{ and } \bar {\mathfrak y}_i=
\mbox{$\sum\limits_{j=1}^{i-1}$} (e_{j, i}-(\bar j, \bar i)).\end{equation}
\end{Defn}

\begin{Lemma}\label{usef} With the notations above, the following results hold in   $ BC_{r, t}$ $($for all admissible  $i, j)$,
\begin{multicols}{2}
\begin{enumerate}
\item [(1)] $s_j y_i=y_i s_j$, $\bar s_j \bar y_i=\bar y_i\bar s_j$ if $j\neq i-1, i$,
\item  [(2)] $s_j\bar y_i=\bar y_i s_j$, $\bar s_j  y_i=y_i\bar s_j$ if $j\neq i-1$,
\item [(3)] $y_i c_i=-c_i y_i$, $\bar y_i \bar c_i=-\bar c_i \bar y_i$,
\item [(4)] $y_i c_j=c_j y_i$, $\bar y_i \bar c_j=\bar c_j\bar y_i$ if $i\neq j$,
\item [(5)] $y_i\bar c_j=\bar c_j y_i$, $\bar y_i c_j=c_j\bar y_i$ if $j\geq i$,
\item [(6)] $y_{i} y_{i+1}=y_{i+1}y_i$, $\bar y_i \bar y_{i+1}=\bar y_{i+1}\bar y_i$,
\item [(7)] $y_i(e_i+\bar y_i-\bar e_i)=(e_i+\bar y_i-\bar e_i)y_i $,
\item [(8)] $e_{i} \bar y_i=e_i( L_i-\bar L_i)$, $e_{i} y_i=e_i(\bar L_i-L_i)$,
\item [(9)] $e_i s_i y_is_i=s_i y_is_i e_i$,  $e_j \bar s_j \bar y_j\bar s_j=\bar s_j \bar y_j\bar s_j e_j$,
\item [(10)] $y_i\tilde y_i =\tilde y_i y_i$, $\bar y_i {\tilde {\bar y}}_i  ={\tilde {\bar y}}_i  \bar y_i$,
\item [(11)] $ e_i y_i^k c_i e_i=0$,  $\forall k\in \mathbb N$,
\item [(12)] $e_i y_i^{2n} e_i=0 $, $e_i \bar y_i^{2n} e_i=0$,  $\forall n\in \mathbb N$,
\item [(13)] $e_i y_i e_i=e_i \bar y_i e_i=0$,
 \end{enumerate}\end{multicols}\noindent
 where $\tilde y_i=s_iy_is_i-(1-c_ic_{i+1})s_i$ and ${\tilde {\bar y}}_i=\bar s_i\bar y_i\bar s_i-(1+\bar c_i\bar c_{i+1})\bar s_i$.\end{Lemma}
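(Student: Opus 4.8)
The plan is to reduce all thirteen identities to known relations for the Jucys--Murphy elements $\mathfrak y_i,\bar{\mathfrak y}_i$ of the walled Brauer algebra $B_{r,t}(0)$ (the elements of \cite[(3.5)]{RSu} with $\delta_1=0$), which lie in the even subalgebra, together with the Clifford relations \eqref{sera}, \eqref{aseradual} and the $e_1$-relations of Definition~\ref{wsera}. The structural input I would exploit throughout is the decomposition $y_i=\mathfrak y_i+c_i\mathfrak y_ic_i$ and $\bar y_i=\bar{\mathfrak y}_i-\bar c_i\bar{\mathfrak y}_i\bar c_i$ of Definition~\ref{RSu-GRSS}. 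A preparatory step records, from the action $w^{-1}c_iw=c_{(i)w}$ of \eqref{sera} and from relations (2)--(3) of Definition~\ref{wsera}, exactly how each $c_j,\bar c_j$ moves past $\mathfrak y_i$ and $\bar{\mathfrak y}_i$; in particular $\bar c_j$ commutes with $\mathfrak y_i$ whenever $j\ge i$, since by \eqref{fraky} it then commutes with each transposition $(k,i)$ and with each $e_{i,k}$ appearing in $\mathfrak y_i$.

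With this in hand, the Clifford-commutation relations (3), (4), (5) follow by direct substitution: for example $y_ic_i=\mathfrak y_ic_i+c_i\mathfrak y_ic_i^{2}=\mathfrak y_ic_i-c_i\mathfrak y_i=-c_iy_i$ by $c_i^{2}=-1$, and (4), (5) are the same computation, keeping track of which Clifford generators commute with $\mathfrak y_i$ (the two sign changes cancelling on the symmetrized summand). The remaining relations (1), (2), (6), (7), (9), (10) are Clifford-symmetrizations of the corresponding identities for $\mathfrak y_i,\bar{\mathfrak y}_i$ proved in \cite{RSu}: after substituting the decomposition, the conjugated summand $c_i\mathfrak y_ic_i$ is treated by moving the relevant even generator $s_j,\bar s_j$ or $e_i$ past $c_i$ and then invoking the $B_{r,t}(0)$ relation on the $\mathfrak y$-level.

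Relation (8) is the technical heart. Here I would compute $e_iy_i$ from $y_i=\sum_{j<i}(e_{i,j}+\bar e_{i,j})-L_i$ directly, using the walled-Brauer untwisting identities of \cite{RSu} to collapse $e_i\sum_{j<i}e_{i,j}$ to $e_i\bar{\mathfrak L}_i$, and then relation (1) of Definition~\ref{wsera} in the forms $e_ic_i=e_i\bar c_i$ and $c_ie_i=\bar c_ie_i$ to rewrite the $c_i$-symmetrized pieces; this yields $e_iy_i=e_i(\bar L_i-L_i)$, and the companion $e_i\bar y_i=e_i(L_i-\bar L_i)$ follows symmetrically. Applying the anti-involution $\tau$ of Lemma~\ref{tau}, which fixes $e_i,L_i,\bar L_i$ and satisfies $\tau(y_i)=y_i$, gives the right-handed form $y_ie_i=(\bar L_i-L_i)e_i$, used repeatedly below. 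Relation (13) then follows, since $e_iy_ie_i=e_i(\bar L_i-L_i)e_i$ reduces, after $e_ic_i=e_i\bar c_i$, $\bar c_i^{2}=1$ and $c_i^{2}=-1$, to $2e_i(\bar{\mathfrak L}_i-\mathfrak L_i)e_i$, which vanishes by the walled-Brauer identity $e_i\mathfrak y_ie_i=0$ of \cite{RSu}.

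For the vanishing relations (11) and (12) I would record a clean uniform computation: since $\bar c_ie_i=c_ie_i$, since $y_i$ commutes with $\bar c_i$ by (5) and anticommutes with $c_i$ by (3), and since $c_i^{2}=-1$, $\bar c_i^{2}=1$,
\[
e_iy_i^{k}e_i=e_iy_i^{k}\bar c_i^{2}e_i=e_i\bar c_iy_i^{k}c_ie_i=e_ic_iy_i^{k}c_ie_i=(-1)^{k+1}e_iy_i^{k}e_i,
\]
so $2e_iy_i^{2n}e_i=0$ gives (12) because $2^{-1}\in R$; the identical manipulation yields $e_iy_i^{k}c_ie_i=(-1)^{k}e_iy_i^{k}c_ie_i$, proving (11) for odd $k$, while $k=0$ in (11) is relation (10). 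The remaining case, even $k\ge2$ in (11), is the main obstacle: there the sign argument degenerates to a tautology, and I expect to proceed by induction on $k$, using $y_i^{2n}c_i=-y_i^{2n-1}c_iy_i$ together with (8) to express $e_iy_i^{2n}c_ie_i$ through lower powers capped by $e_i$, and then collapsing $e_i(\cdots)e_i$ into $e_iBC_{i-1,i-1}$ by Proposition~\ref{ese1} and the expansion of Lemma~\ref{spannwbc}, so that each surviving term carries a factor $e_ic_ie_i=0$ or $e_i^{2}=0$. Controlling this bookkeeping is the delicate point.
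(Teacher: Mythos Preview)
Your overall strategy matches the paper's: reduce each identity to the known $\mathfrak y_i,\bar{\mathfrak y}_i$ relations from \cite{RSu} via the decomposition of Definition~\ref{RSu-GRSS}, together with Clifford relations and the anti-involution~$\tau$. Your treatments of (1)--(10), (12), (13) and of (11) for odd $k$ are essentially the paper's arguments (the paper also uses the $BC_{r,t}\cong BC_{t,r}$ swap to halve the work, which you did not invoke but do not need). One small slip: when you write ``$k=0$ in (11) is relation (10)'' you mean Definition~\ref{wsera}(10), not item~(10) of the present lemma.

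The one genuine gap is the even case of (11), where you concede the bookkeeping is ``the delicate point'' and propose to push $y_i e_i=(\bar L_i-L_i)e_i$ inward and then collapse via Proposition~\ref{ese1}. That route is workable in principle but unnecessary; the paper's argument is short and avoids all bookkeeping. The key observation you are missing is that (7) lets $\bar y_i+e_i-\bar e_i$ pass through any power of $y_i$. For $k$ even write
\[
e_i y_i^{k-1}\bar y_i
= e_i y_i^{k-1}(\bar y_i+e_i-\bar e_i)-e_i y_i^{k-1}e_i+e_i y_i^{k-1}\bar e_i
\overset{(7)}{=} e_i(\bar y_i+e_i-\bar e_i)y_i^{k-1}-e_i y_i^{k-1}e_i+e_i y_i^{k-1}\bar e_i.
\]
Multiplying on the right by $c_ie_i$ kills the last two terms (use $e_ic_ie_i=0$, $e_i^2=0$, $e_i\bar e_i=e_ic_ie_ic_i=0$), leaving $e_i y_i^{k-1}\bar y_i c_ie_i=e_i\bar y_i y_i^{k-1}c_ie_i$. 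Now (8) in the form $e_i\bar y_i=-e_iy_i$ and $\bar y_ie_i=-y_ie_i$, together with (3) and (5), turns the left side into $e_ic_iy_i^{k}e_i$ and the right side into $-e_ic_iy_i^{k}e_i$, whence $e_iy_i^{k}c_ie_i=0$. This replaces your proposed induction and expansion entirely.
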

\begin{proof} We assume $\sqrt{-1}\in R$. Then  $BC_{r, t}\cong BC_{t, r}$. It is reasonable since we can embed $R$ into a larger integral domain containing $\sqrt{-1}$.
 The required isomorphism sends (a): $\sqrt{-1}  c_i $ (resp.,  $\sqrt{-1}\bar c_j$) in  $BC_{r, t}$ to $\bar c_i$ (resp., $c_j$) in $BC_{t, r}$; (b): $e_1$ to $e_1$; (c): $s_i$ (resp., $\bar s_j$) in $BC_{r, t}$ to $\bar s_i$ (resp., $s_j$) in $BC_{t, r}$.
 So, it suffices  to verify one of equations  in (1)--(6), (8)--(13) except (11).

(1) If $j\neq i, i-1$, then $s_j c_i=c_is_j$  and $s_j\mathfrak y_i=\mathfrak y_i s_j$  by \eqref{sera} and  \cite[Lemma~3.3(6)]{RSu}.    So, $$s_j y_i=s_j(\mathfrak y_i+c_i\mathfrak y_i c_i)= (\mathfrak y_i+c_i\mathfrak y_i c_i)s_j=y_i s_j.$$

 (2) If  $j\neq i-1$, then  $s_j\bar {\mathfrak y}_i=\bar {\mathfrak y}_i s_j$ by \cite[Lemma~3.3(7)]{RSu}.  By Definition~\ref{wsera}(2), $$ s_j\bar y_i=s_j(\bar{\mathfrak y}_i-\bar c_i\bar{\mathfrak y}_i\bar c_i)=s_j\bar y_i.$$

 (3) Since $c_i^2=-1$, we have   $c_iy_i=c_i(\mathfrak y_i+c_i\mathfrak y_i c_i)=-\mathfrak y_i c_i+c_i \mathfrak y_i=-y_i c_i$.

 (4) By \eqref{sera}, \eqref{jm-wba1} and Definition~\ref{wsera}(9),  $c_ic_j=-c_jc_i$,  $c_j L_i=L_ic_j$  and $e_{i, k}c_j=c_je_{i, k}$ if $i\neq j$.
By  Definition~\ref{RSu-GRSS}, we have  $y_ic_j=c_jy_i$.

  (5)  If $j>k$, then $e_{i, k} \bar c_j=\bar c_j e_{i,k}$. By Definition~\ref{wsera}(2)--(3) and Definition~\ref{RSu-GRSS}, we have $y_i \bar c_j=\bar c_j y_i$.

  (6) Since  $\mathfrak y_i=\sum_{j=1}^{i-1} (e_{i, j}-(j, i))$ (see \eqref{fraky}), we have  $\mathfrak y_i c_{i+1}=c_{i+1}\mathfrak  y_i$. By \cite[Lemma~3.3(9)]{RSu},  $\mathfrak y_i\mathfrak y_{i+1}=\mathfrak y_{i+1}\mathfrak y_{i}$. Thus,
    $$\begin{aligned}  y_i y_{i+1} &=(\mathfrak y_i+c_i\mathfrak y_ic_i) y_{i+1}\overset{(4)}=\mathfrak y_i y_{i+1}+c_i\mathfrak y_i y_{i+1}c_i\\
&=  \mathfrak y_i\mathfrak y_{i+1}+c_{i+1}\mathfrak y_i\mathfrak y_{i+1}c_{i+1}+c_i \mathfrak y_i \mathfrak y_{i+1} c_i+c_ic_{i+1} \mathfrak y_i\mathfrak y_{i+1} c_{i+1} c_i.\\ \end{aligned} $$
 Applying the anti-involution $\tau$ on the above equation yields $ y_i y_{i+1}=y_{i+1} y_i$.

(7) We have
$$ \begin{aligned} y_i (e_i-\bar e_i+\bar y_i)&=(\mathfrak y_i+c_i\mathfrak y_i c_i) (e_i+{\bar {\mathfrak y}}_i-\bar c_i {\bar {\mathfrak y}}_i  \bar c_i-c_i e_ic_i)\\
& =\mathfrak y_i(e_i+{\bar {\mathfrak y}}_i)-\bar c_i \mathfrak y_i (e_i+{\bar {\mathfrak y}}_i)\bar c_i +c_i \mathfrak y_i (e_i+{\bar {\mathfrak y}}_i) c_i-
\bar c_i c_i \mathfrak y_i (e_i+{\bar {\mathfrak y}}_i) c_i\bar c_i.  \\
\end{aligned}$$
Applying the anti-involution $\tau$ on the above equation and using  $(e_i+{\bar {\mathfrak y}}_i )\mathfrak y_i = \mathfrak y_i (e_i+{\bar {\mathfrak y}}_i )$ (see   \cite[Lemma~3.3(4)]{RSu}) yields (7).

(8) By  \cite[Lemma~3.3(1)]{RSu},  $e_i\mathfrak y_i=e_{i} (-\mathfrak L_i+{\bar {\mathfrak L}}_i)$. So,
$$\begin{aligned} e_iy_i &=e_i\mathfrak y_i+e_i c_i\mathfrak y_i c_i=e_i\mathfrak y_i(1+\bar c_i c_i)=-e_i(\mathfrak L_i-\bar{\mathfrak L}_i)(1+\bar c_i c_i)=-e_i\mathfrak L_i+e_i{\bar {\mathfrak L}}_i
-e_i(\mathfrak L_i-{\bar {\mathfrak L}}_i)\bar c_i c_i\\ & =-e_i (\mathfrak L_i+ c_i \mathfrak L_i c_i)+e_i(\bar {\mathfrak L}_i-
\bar c_i \bar {\mathfrak L}_i \bar c_i )=-e_i
(L_i-\bar L_i).\\ \end{aligned}
$$

   (9) By \cite[Lamma~3.3(5)]{RSu}, $e_i s_i\mathfrak y_i s_i=  s_i\mathfrak y_i s_i e_i$. So,
 $$ e_i s_i y_i s_i=e_i s_i c_i\mathfrak y_i c_i s_i+  e_i s_i \mathfrak y_i  s_i
=c_{i+1} e_i s_i\mathfrak y_i s_i c_{i+1}+ s_i \mathfrak y_i s_i  e_i=s_i y_i s_i e_i.$$

(10) We define $\textbf{m}_i=s_i\mathfrak y_i s_i-s_i$. By \cite[Lemma~3.3(5)]{RSu}), $\textbf{m}_i \mathfrak y_i=\mathfrak y_i \textbf{m}_i $.
So,
\begin{equation}\label{yy123} \begin{aligned}&  y_i\tilde y_i =(\mathfrak y_i+c_i\mathfrak y_ic_i) (s_i(\mathfrak y_i+c_i\mathfrak y_ic_i)s_i-(1-c_ic_{i+1})s_i)\\
=&\mathfrak y_i \textbf{m}_i + \mathfrak y_i (s_ic_i \mathfrak y_i c_i s_i+c_ic_{i+1}s_i)+c_i\mathfrak y_ic_i (s_i (\mathfrak y_i+c_i\mathfrak y_i c_i) s_i-(1-c_ic_{i+1})s_i)\\
=&\mathfrak y_i  \textbf{m}_i  +c_{i+1} \mathfrak y_i  \textbf{m}_i c_{i+1}+c_i\mathfrak y_i c_i \textbf{m}_i +
c_i\mathfrak y_i c_i(s_ic_i\mathfrak y_ic_is_i +c_ic_{i+1}s_i )\\
=& \mathfrak y_i  \textbf{m}_i +c_{i+1}\mathfrak y_i \textbf{m}_i c_{i+1}+c_i\mathfrak y_ic_is_i\mathfrak y_is_i-
c_i\mathfrak y_i c_i s_i -c_i\mathfrak y_ic_{i+1}s_i+c_i\mathfrak y_i c_i s_i c_i \mathfrak y_i c_i s_i\\
=& \mathfrak y_i \textbf{m}_i  +c_{i+1}\mathfrak y_i  \textbf{m}_i c_{i+1}+ c_i\mathfrak y_i \textbf{m}_i c_i -c_i\mathfrak y_ic_{i}s_i-c_ic_{i+1}
\mathfrak y_is_i\mathfrak y_i s_i  c_ic_{i+1}\\
=&\mathfrak y_i \textbf{m}_i +c_{i+1}\mathfrak y_i  \textbf{m}_i c_{i+1}+ c_i\mathfrak y_i \textbf{m}_i c_i+c_{i+1}c_i\mathfrak y_i \textbf{m}_i c_ic_{i+1}.
\end{aligned}\end{equation}
Applying the anti-involution $\tau$ on \eqref{yy123}  yields $\tilde y_i y_i= y_i\tilde y_i$.

   (11) If  $k$ is odd,  then
$$e_i y_i^k c_i e_i=e_i y_i^k \bar c_i e_i\overset{(5)}=e_i\bar c_i y_i^k  e_i\overset{(3)} =-e_i y_i^k c_i e_i, $$
 forcing   $ e_i y_i^k c_i e_i=0$.  If $k$ is even, then
 \begin{equation}\label{eybary} e_{i} y_i^{k-1} \bar y_i=e_{i} y_{i}^{k-1}(\bar y_i+e_i-\bar e_i)-e_iy_i^{k-1} e_i+e_iy_i^{k-1}\bar  e_i
 \overset{(7)}=e_{i}(\bar y_i+e_i-\bar e_i)y_{i}^{k-1}-e_iy_i^{k-1} e_i+e_iy_i^{k-1}\bar  e_i.\end{equation}
So, $e_{i} y_i^{k-1} \bar y_i c_ie_i=e_i \bar y_i y_i^{k-1} c_i e_i$.
On the other hand,  $e_{i} y_i^{k-1} \bar y_i c_ie_i=-e_i c_i
y_i^{k-1}\bar y_i e_i=e_{i} c_i y_i^k e_i $ and $e_i \bar y_i y_i^{k-1} c_i e_i=-e_i y_i^k c_ie_i=-e_{i} c_i y_i^k e_i$. So, $ e_i y_i^k c_i e_i=0$ for even $k$.

 (12) We  have $e_k y_k^{2n} e_k=0$ since
$$e_k y_k^{2n} e_k =e_k y_k^{2n} \bar c_k^2 e_k\overset{(5)}=e_k \bar c_k  y_k^{2n} c_k e_k
=  e_k c_k y_k^{2n}c_k  e_k\overset{(3)}= e_k y_k^{2n} c_k^2 e_k=-e_k y_k^{2n} e_k. $$

(13) If $j<i$, then  $e_i c_i e_{i, j} c_i e_i=e_i \bar c_i e_{i, j} \bar c_i e_i=e_{i} e_{i, j} e_i=e_i$ and $e_ic_i(j, i)c_ie_i=e_i \bar c_i (j, i) \bar c_i e_i=e_i$.
So, $$e_i y_i e_i=2 e_i\mathfrak y_i e_i=2\mbox{$\sum\limits_{j=1}^{i-1}$} e_i(e_{i, j}-(i, j))e_i=0.$$
\end{proof}

\begin{Cor}\label{equal123} There is a unique element $\omega_{a, k}\in BC_{k-1, k-1}$ such that $e_k y_k^a e_k=\omega_{a, k} e_k$. Similarly, there is a unique element $\bar \omega_{a, k}\in BC_{k-1, k-1}$ such that $e_k {\bar y_k}^a e_k=\bar \omega_{a, k} e_k$. Moreover, $\omega_{2n,k}=\bar \omega_{2n,k}=0$.
\end{Cor}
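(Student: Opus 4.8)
The statement has three parts---existence of $\omega_{a,k}$ and $\bar\omega_{a,k}$, their uniqueness, and the vanishing for even exponents---and I would establish each by reducing to a result already in hand. For existence, note first that $y_k,\bar y_k\in BC_{k,k}$: by Definition~\ref{RSu-GRSS} they are $R$-linear combinations of the elements $e_{k,j},\bar e_{k,j},(j,k),(\bar j,\bar k)$ and $c_k$, all of which lie in $BC_{k,k}$. Hence $y_k^a,\bar y_k^a\in BC_{k,k}$, so $e_ky_k^ae_k$ and $e_k\bar y_k^ae_k$ belong to $e_kBC_{k,k}e_k$, which by Proposition~\ref{ese1} equals $e_kBC_{k-1,k-1}$ when $k\ge 2$. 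Since $e_k$ commutes with every element of $BC_{k-1,k-1}$ (as used already in the proof of Proposition~\ref{ese1}), we have $e_kBC_{k-1,k-1}=BC_{k-1,k-1}e_k$, and therefore $e_ky_k^ae_k=\omega_{a,k}e_k$ and $e_k\bar y_k^ae_k=\bar\omega_{a,k}e_k$ for some $\omega_{a,k},\bar\omega_{a,k}\in BC_{k-1,k-1}$.

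\emph{Uniqueness.} This amounts to injectivity of the $R$-linear map $\rho_k:BC_{k-1,k-1}\to BC_{k,k},\ b\mapsto be_k$. I would prove that $\rho_k$ carries the basis of $BC_{k-1,k-1}$ from Theorem~\ref{wbhsa321} to an $R$-linearly independent subset of the basis of $BC_{k,k}$. Concretely, for a basis element $b=c^\alpha d_1^{-1}e^fwd_2\bar c^\beta$ of $BC_{k-1,k-1}$ (all indices in $\{1,\dots,k-1\}$), the Clifford factor $\bar c^\beta$ commutes past $e_k$ by Definition~\ref{wsera}(9), so $be_k=c^\alpha(d_1^{-1}e^fwd_2\,e_k)\bar c^\beta$. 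The diagram calculus behind Theorem~\ref{wbhsa321} then identifies $d_1^{-1}e^fwd_2\,e_k$ with the walled Brauer diagram obtained from that of $d_1^{-1}e^fwd_2$ by adjoining the fresh horizontal $(k,\bar k)$-arcs of $e_k$: because the strand $k$ does not occur in $d_1^{-1}e^fwd_2$, no closed loop is created (so the vanishing parameter of $B_{k,k}(0)$ never enters), and the result is again a single walled Brauer diagram, now carrying $f+1$ arcs. Thus $\rho_k$ sends distinct decorated basis diagrams of $BC_{k-1,k-1}$ to distinct decorated basis diagrams of $BC_{k,k}$, and is therefore injective. Verifying this normal form for $d_1^{-1}e^fwd_2\,e_k$ is the one genuinely diagrammatic step and the main obstacle; everything else is formal. (Alternatively, one can upgrade Lemma~\ref{spannwbc} to freeness of $e_kBC_{k,k}$ over $BC_{k-1,k-1}$ on the listed $2k^2$ generators by a rank count, and then read off the coefficient of $e_k$, the generator with $\sigma=0,\ j=l=k$.)

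\emph{Vanishing.} Granting uniqueness, the last claim is immediate: Lemma~\ref{usef}(12) gives $e_ky_k^{2n}e_k=0=0\cdot e_k$ and $e_k\bar y_k^{2n}e_k=0=0\cdot e_k$, whence $\omega_{2n,k}=\bar\omega_{2n,k}=0$ by injectivity of $\rho_k$. In the degenerate case $k=1$ one has $y_1=\bar y_1=0$ and $e_1^2=0$, so every $\omega_{a,1}$ and $\bar\omega_{a,1}$ vanishes, and uniqueness is clear because $e_1$ is a nonzero vector in the free $R$-module $BC_{1,1}$.
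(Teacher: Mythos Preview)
Your proposal is correct and follows essentially the same approach as the paper: existence via Proposition~\ref{ese1}, uniqueness via the basis of Theorem~\ref{wbhsa321}, and vanishing via Lemma~\ref{usef}(12). The only difference is one of detail: the paper simply cites Theorem~\ref{wbhsa321} for uniqueness, whereas you spell out explicitly why the map $b\mapsto be_k$ is injective (basis diagrams of $BC_{k-1,k-1}$ go to basis diagrams of $BC_{k,k}$ by adjoining the fresh $(k,\bar k)$-arcs), and you handle the degenerate case $k=1$ separately.
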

\begin{proof} The existence of an  $\omega_{a, k}$ follows from  Proposition~\ref{ese1} and the uniqueness of such an element follows from Theorem~\ref{wbhsa321}.
 Finally, we have $\omega_{2n,k}=\bar \omega_{2n,k}=0$ by  Lemma~\ref{usef}(12).\end{proof}

\begin{Lemma}\label{y1} For   $n\in \mathbb N$,   $e_i \bar y_i^{2n+1}\!=\!\sum_{j=0}^n a^{(i)}_{2n+1, j} e_i y_i^{2j+1}$ for some $a_{2n+1,j}^{(i)}\in$ $
R[\omega_{3, i},  \ldots,\omega_{2n-1, i} ]$ such that
\begin{enumerate}\item [(1)] $a_{2n+1, n}^{(i)}=-1$,\vskip2pt
 \item [(2)] $a_{2n+1, j}^{(i)}=a_{2n-1, j-1}^{(i)}$, $1\le j\le n$,
 \item [(3)]  $a^{(i)}_{2n+1, 0}=\sum_{j=1}^{n-1} a^{(i)}_{2n-1, j} \omega_{2j+1, i}$.\end{enumerate}
\end{Lemma}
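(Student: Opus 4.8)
The plan is to induct on $n$. The base case $n=0$ reads $e_i\bar y_i=a^{(i)}_{1,0}e_iy_i$, and it holds with $a^{(i)}_{1,0}=-1$ directly from Lemma~\ref{usef}(8), which gives $e_i\bar y_i=e_i(L_i-\bar L_i)=-e_iy_i$; this already secures (1) for $n=0$, while (2),(3) are vacuous. For the inductive step I would work throughout inside $e_iBC_{i,i}$ and set $p_i:=e_i-c_ie_ic_i$. The three structural facts I need are $e_i\bar y_i=-e_iy_i$ (Lemma~\ref{usef}(8)); $e_ip_i=0$, since $e_i^2=0$ and $e_ic_ie_i=0$ (Definition~\ref{wsera}(4),(10) transported by Corollary~\ref{wsba-1}); and the commutator identity $[y_i^a,\bar y_i]=p_iy_i^a-y_i^ap_i$, which follows because Lemma~\ref{usef}(7) says $y_i$, hence $y_i^a$, commutes with $\bar y_i+p_i$.

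The heart of the argument is a reduction that trades one $\bar y_i$ for a $y_i$. First I would record
\begin{equation*}
e_iy_i^ap_i=e_iy_i^ae_i-e_iy_i^a\,c_ie_ic_i=\omega_{a,i}e_i,
\end{equation*}
where the first term equals $\omega_{a,i}e_i$ by Corollary~\ref{equal123} and the second vanishes by Lemma~\ref{usef}(11). Combining this with $e_i\bar y_i=-e_iy_i$, $e_ip_i=0$ and the commutator identity yields the one-step rule
\begin{equation*}
e_iy_i^a\bar y_i=-e_iy_i^{a+1}-\omega_{a,i}e_i,
\end{equation*}
and applying it twice gives
\begin{equation*}
e_iy_i^a\bar y_i^2=e_iy_i^{a+2}+\omega_{a+1,i}e_i+\omega_{a,i}e_iy_i.
\end{equation*}
The decisive point is the parity vanishing $\omega_{2m,i}=0$ from Corollary~\ref{equal123}: for odd $a=2j+1$ the middle term dies, leaving the clean identity $e_iy_i^{2j+1}\bar y_i^2=e_iy_i^{2j+3}+\omega_{2j+1,i}e_iy_i$. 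Crucially, every $\omega_{*,i}$ produced remains a left coefficient sitting in front of $e_i$ or $e_iy_i$, so at no stage do I need to commute an element of $BC_{i-1,i-1}$ past $\bar y_i$.

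Finally I would substitute the induction hypothesis $e_i\bar y_i^{2n-1}=\sum_{j=0}^{n-1}a^{(i)}_{2n-1,j}e_iy_i^{2j+1}$ into $e_i\bar y_i^{2n+1}=e_i\bar y_i^{2n-1}\bar y_i^2$ and apply the last identity termwise, the coefficients $a^{(i)}_{2n-1,j}$ riding along unchanged on the left:
\begin{equation*}
e_i\bar y_i^{2n+1}=\sum_{j=1}^{n}a^{(i)}_{2n-1,j-1}e_iy_i^{2j+1}+\Big(\sum_{j=1}^{n-1}a^{(i)}_{2n-1,j}\omega_{2j+1,i}\Big)e_iy_i,
\end{equation*}
after reindexing the first sum and discarding the $j=0$ term of the second, whose coefficient $\omega_{1,i}=0$ by Lemma~\ref{usef}(13). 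Reading off coefficients gives $a^{(i)}_{2n+1,j}=a^{(i)}_{2n-1,j-1}$ for $1\le j\le n$, which is (2) and, telescoped down to the base case, forces (1); and $a^{(i)}_{2n+1,0}=\sum_{j=1}^{n-1}a^{(i)}_{2n-1,j}\omega_{2j+1,i}$, which is (3). The ring membership $a^{(i)}_{2n+1,j}\in R[\omega_{3,i},\dots,\omega_{2n-1,i}]$ is then immediate from the inductive membership together with the shape of (3). The only genuine obstacle is parity bookkeeping: one must organize the two reductions so that the intermediate even-indexed $\omega$'s cancel and the surviving odd-indexed ones land solely in the constant term, and the two displayed reduction identities are built precisely to enforce this.
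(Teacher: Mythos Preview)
Your proof is correct and follows essentially the same route as the paper's. Both arguments rest on the same three ingredients---Lemma~\ref{usef}(7) (your commutator identity for $y_i$ with $\bar y_i+p_i$), Lemma~\ref{usef}(8) (giving $e_i\bar y_i=-e_iy_i$), and Lemma~\ref{usef}(11) together with Corollary~\ref{equal123} (giving $e_iy_i^ap_i=\omega_{a,i}e_i$)---to reduce $e_iy_i^{2j+1}\bar y_i^2$ to $e_iy_i^{2j+3}+\omega_{2j+1,i}e_iy_i$ and then induct. The only difference is packaging: you first isolate the one-step rule $e_iy_i^a\bar y_i=-e_iy_i^{a+1}-\omega_{a,i}e_i$ and iterate it, whereas the paper computes the two-step version directly; but the underlying manipulation is the same insertion of $\bar y_i=(\bar y_i+e_i-\bar e_i)-(e_i-\bar e_i)$ that the paper performs.
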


\begin{proof} When $n=0$, we have $e_i(y_i+\bar y_i)=0$ by   Lemma~\ref{usef}(8).
So, $a_{1, 0}^{(i)}=-1$.
In general,
 we have
$$\begin{aligned}  e_i y_i^{2j-1} \bar y_i^2& = e_i y_i^{2j-1} ( \bar y_i+e_i-\bar e_i) \bar y_i-e_i y_i^{2j-1} e_i \bar y_i\ \  (\text{by Lemma~\ref{usef}(11)})\\
&  =e_i (\bar y_i+e_i-\bar e_i) y_i^{2j-1} \bar y_i +\omega_{2j-1,i} e_i  y_i \ \  (\text{by Lemma~\ref{usef}(7)})
\\ & =-e_i y_i^{2j} \bar y_i+\omega_{2j-1,i} e_i  y_i\ \ (\text{ by Lemma~\ref{usef}(8)}).  \end{aligned} $$ Similarly, using $\omega_{2j, i}=0$ yields $e_i y_i^{2j} \bar y_i = -e_i y_i^{2j+1}$.  So, $e_i  y_i^{2j-1} \bar y_i^2=e_i y_i^{2j+1}+w_{2j-1,i} e_i  y_i$. By inductive assumption on $n$ and $\omega_{1, i}=0$, we have the result, immediately.
\end{proof}

\begin{Lemma}\label{y2} For positive integers  $n$,  $e_i \bar y_i^{2n}\!=\!\sum_{j=0}^n a^{(i)}_{2n, j} e_i y_i^{2j}$ for some  $a_{2n,j}^{(i)}\!\in\!
R[\omega_{3, i},  \ldots,\omega_{2n-1, i} ]$ such that
\begin{enumerate}\item [(1)] $a_{2n, n}^{(i)}=1$,
 \item [(2)] $a_{2n, j}^{(i)}=a_{2n-2, j-1}^{(i)}$, $1\le j\le n$,
 \item  [(3)] $a^{(i)}_{2n, 0}=\sum_{j=1}^{n-1} a^{(i)}_{2n-2, j} \omega_{2j+1, i}$.\end{enumerate}
\end{Lemma}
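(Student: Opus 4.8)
The plan is to imitate the proof of Lemma~\ref{y1} verbatim, only with even powers of $\bar y_i$ in place of odd ones, and to induct on $n$. The base case $n=1$ can be read off directly: since $e_i\bar y_i=-e_iy_i$ by Lemma~\ref{usef}(8), and $e_iy_i\bar y_i=-e_iy_i^2$ (the $m=1$ instance of the commutation identity below, using $\omega_{1,i}=0$), one gets $e_i\bar y_i^2=e_iy_i^2$, i.e. $a^{(i)}_{2,1}=1$ and $a^{(i)}_{2,0}=0$. Equivalently I can take $n=0$ as the base case, reading $e_i\bar y_i^0=e_i=e_iy_i^0$ with $a^{(i)}_{0,0}=1$, which then feeds the recursion cleanly.

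The engine is a single commutation identity that I would establish first: for every $m\in\mathbb N$,
\[ e_iy_i^m\bar y_i=-e_iy_i^{m+1}-\omega_{m,i}e_i. \]
To prove it I write $\bar y_i=(e_i+\bar y_i-\bar e_i)-e_i+\bar e_i$, discard the term $e_iy_i^m\bar e_i=e_iy_i^mc_ie_ic_i=0$ by Lemma~\ref{usef}(11), commute $y_i^m$ past the element $e_i+\bar y_i-\bar e_i$ using Lemma~\ref{usef}(7), collapse $e_i(e_i+\bar y_i-\bar e_i)=e_i\bar y_i=-e_iy_i$ (using $e_i^2=0$, $e_ic_ie_i=0$ and Lemma~\ref{usef}(8)), and finally replace $e_iy_i^me_i$ by $\omega_{m,i}e_i$ via Corollary~\ref{equal123}. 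Specializing to even $m=2j$, where $\omega_{2j,i}=0$, and then to odd $m=2j+1$ produces the recursion I actually need,
\[ e_iy_i^{2j}\bar y_i^2=-e_iy_i^{2j+1}\bar y_i=e_iy_i^{2j+2}+\omega_{2j+1,i}e_i. \]

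With this in hand the inductive step is purely formal bookkeeping. Multiplying the inductive hypothesis $e_i\bar y_i^{2n-2}=\sum_{j=0}^{n-1}a^{(i)}_{2n-2,j}e_iy_i^{2j}$ on the right by $\bar y_i^2$ and substituting the recursion gives
\[ e_i\bar y_i^{2n}=\sum_{j=0}^{n-1}a^{(i)}_{2n-2,j}\bigl(e_iy_i^{2j+2}+\omega_{2j+1,i}e_i\bigr). \]
Shifting the index in the first sum by $j\mapsto j-1$ yields $a^{(i)}_{2n,j}=a^{(i)}_{2n-2,j-1}$ for $1\le j\le n$, which is (2); in particular $a^{(i)}_{2n,n}=a^{(i)}_{2n-2,n-1}=1$ by the inductive case of (1), giving (1). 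The remaining terms collect the coefficient of $e_i=e_iy_i^0$, and since $\omega_{1,i}=0$ the $j=0$ summand drops out, leaving $a^{(i)}_{2n,0}=\sum_{j=1}^{n-1}a^{(i)}_{2n-2,j}\omega_{2j+1,i}$, which is (3). The membership $a^{(i)}_{2n,j}\in R[\omega_{3,i},\dots,\omega_{2n-1,i}]$ is then immediate from (2), (3) and the inductive hypothesis, since the highest $\omega$ appearing in (3) is $\omega_{2(n-1)+1,i}=\omega_{2n-1,i}$.

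I do not anticipate a genuine obstacle: every ingredient—Lemma~\ref{usef}(7),(8),(11), the relations $e_i^2=0$ and $e_ic_ie_i=0$, Corollary~\ref{equal123}, and the vanishing $\omega_{1,i}=\omega_{2j,i}=0$—is already in place, and the argument is a mirror image of Lemma~\ref{y1}. The only point requiring care is the index shift together with the dropping of the $\omega_{1,i}$ term, which is precisely what forces the sum in (3) to begin at $j=1$ rather than $j=0$.
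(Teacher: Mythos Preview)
Your proposal is correct and follows essentially the same route as the paper's proof: the paper records the identity $e_i y_i^{2j}\bar y_i^{2}=-e_i y_i^{2j+1}\bar y_i=e_i y_i^{2j+2}+\omega_{2j+1,i}e_i$ and then invokes induction on $n$ together with $\omega_{1,i}=0$, which is exactly your engine and bookkeeping. You have simply supplied the details (the derivation of $e_i y_i^m\bar y_i=-e_i y_i^{m+1}-\omega_{m,i}e_i$ and the index shift) that the paper leaves implicit.
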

\begin{proof} We have $e_i y_i^{2j} \bar y_i^2=-e_iy^{2j+1} \bar y_i=e_iy_i^{2j+2}+\omega_{2j+1, i} e_1$. By inductive assumption  on $n$ and $\omega _{1,i}=0$, we immediately have the result. \end{proof}

 We can assume $k \ge 2$ (resp., $n\ge 2$)  in the Lemma~\ref{free1}  since  $y_1=\bar y_1=0$ (resp., $\omega_{1, k}=\bar \omega_{1, k}=0$ by Lemma~\ref{usef}(13)).
\begin{Lemma}\label{free1} We have $\bar \omega_{2n-1, k}\in $ $ R[\omega_{3, k}, \ldots, \omega_{2n-1, k}]$ if  $k, n\in\Z^{\ge2}$. Furthermore, both  $\omega_{2n-1, k}$
and $\bar\omega_{2n-1, k}$ are central in $BC_{k-1, k-1}$.\end{Lemma}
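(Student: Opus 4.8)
The plan is to read off the polynomiality statement from Lemma~\ref{y1} and then to bootstrap centrality from it; the single serious difficulty will be the generator $e_1$.

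\emph{Polynomiality.} First I would apply Lemma~\ref{y1} with its index equal to $n-1$, which gives
\[
e_k\bar y_k^{\,2n-1}=\sum_{j=0}^{n-1}a^{(k)}_{2n-1,j}\,e_ky_k^{2j+1},\qquad a^{(k)}_{2n-1,j}\in R[\omega_{3,k},\ldots,\omega_{2n-3,k}].
\]
Multiplying on the right by $e_k$ and using $e_ky_k^{2j+1}e_k=\omega_{2j+1,k}e_k$ and $e_k\bar y_k^{\,2n-1}e_k=\bar\omega_{2n-1,k}e_k$ from Corollary~\ref{equal123} (the coefficients $a^{(k)}_{2n-1,j}\in BC_{k-1,k-1}$ staying to the left), I obtain $\bar\omega_{2n-1,k}e_k=\big(\sum_{j=0}^{n-1}a^{(k)}_{2n-1,j}\omega_{2j+1,k}\big)e_k$. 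The uniqueness in Corollary~\ref{equal123} then forces $\bar\omega_{2n-1,k}=\sum_{j=0}^{n-1}a^{(k)}_{2n-1,j}\omega_{2j+1,k}$, and since $\omega_{1,k}=0$ the $j=0$ term drops out, leaving an element of $R[\omega_{3,k},\ldots,\omega_{2n-1,k}]$.

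\emph{Reduction of centrality and the easy generators.} Because $\bar\omega_{2n-1,k}$ is now a polynomial over $R$ in $\omega_{3,k},\ldots,\omega_{2n-1,k}$, it is central as soon as each $\omega_{2m-1,k}$ ($2\le m\le n$) is, so it suffices to prove $\omega_{2n-1,k}$ central in $BC_{k-1,k-1}$. For this I would use that $e_kx=xe_k$ for $x\in BC_{k-1,k-1}$ (Proposition~\ref{ese1}) together with the injectivity of $x\mapsto xe_k$, which is precisely the uniqueness of Corollary~\ref{equal123} (ultimately Theorem~\ref{wbhsa321}); these reduce $g\,\omega_{2n-1,k}=\omega_{2n-1,k}\,g$, for a generator $g$ of $BC_{k-1,k-1}$, to the single identity $e_k[g,y_k^{2n-1}]e_k=0$. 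For $g\in\{s_1,\ldots,s_{k-2},\bar s_1,\ldots,\bar s_{k-2},c_1\}$ this is immediate, since each such $g$ commutes with $y_k$ by Lemma~\ref{usef}(1),(2),(4). The generator $\bar c_1$ needs only a one-line check: $\bar c_1$ commutes with $L_k$ and with each $e_{k,j},\bar e_{k,j}$ for $j\ge2$, while the relations of Definition~\ref{wsera}(1),(3),(9) give $[\bar c_1,e_{k,1}]=c_ke_{k,1}-e_{k,1}c_k$ and $[\bar c_1,\bar e_{k,1}]=e_{k,1}c_k-c_ke_{k,1}$, whose sum is $0$; hence $[\bar c_1,y_k]=0$ as well.

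\emph{The main obstacle: $e_1$.} The one remaining generator, $e_1$, is where the real work lies: $y_k$ does not commute with $e_1$, because the $j=1$ constituents $e_{k,1},\bar e_{k,1}$ and the $j=1$ part of $L_k$ all involve the strand shared with $e_1$. Here I would establish $e_k[e_1,y_k^{2n-1}]e_k=0$ directly. Writing $y_k=\mathfrak y_k+c_k\mathfrak y_k c_k$ as in Definition~\ref{RSu-GRSS} and expanding $[e_1,y_k^{2n-1}]=\sum_i y_k^{\,i}[e_1,y_k]y_k^{\,2n-2-i}$, the strategy is to transport every factor $c_k$ out to the enclosing $e_k$'s and collapse it through the Clifford identities $e_kc_k=e_k\bar c_k$, $c_ke_k=\bar c_ke_k$, $c_k^2=-1$, $e_kc_ke_k=0$ and the relations of Lemma~\ref{usef}, thereby reducing each sandwiched summand to a purely walled-Brauer expression $e_k\big(\text{word in }\mathfrak y_k,e_1\big)e_k$. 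On such expressions the required vanishing is exactly the analogue of centrality proved for $B_{k-1,k-1}(0)$ in \cite{RSu}, imported through the isomorphism identifying the subalgebra generated by $e_1,s_i,\bar s_i$ with $B_{k-1,k-1}(0)$. I expect the genuine difficulty to be the bookkeeping of the $c_k$-twisted cross terms: one must check that after sandwiching by $e_k$ every contribution not already matched by the \cite{RSu} computation cancels in pairs, exactly as it did for $\bar c_1$ above, with the even-power vanishing $e_ky_k^{2m}e_k=0$ and an induction on $n$ (feeding in the already-established centrality of $\omega_{3,k},\ldots,\omega_{2n-3,k}$) absorbing the lower-order remainders.
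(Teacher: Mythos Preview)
Your polynomiality argument is correct and matches the paper. Your treatment of $s_i,\bar s_i,c_1,\bar c_1$ by directly checking $[g,y_k]=0$ is also fine (the paper instead handles $\bar s_i,\bar c_1$ by passing to $\bar y_k$ via Lemma~\ref{y1}, but your direct check through Lemma~\ref{usef}(2),(4) and the small $\bar c_1$ computation is perfectly valid).

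The genuine error is your claim that $e_1$ does not commute with $y_k$. It does. The $j=1$ constituents of $y_k$ are $e_{k,1}+\bar e_{k,1}-(1,k)-c_k(1,k)c_k$, and the standard walled Brauer identity $e_1e_{k,1}=e_1(1,k)$, $e_{k,1}e_1=(1,k)e_1$ (a one-line consequence of $e_1(1,k)e_1=e_1$, itself following from $e_1s_1e_1=e_1$ and $s_je_1=e_1s_j$ for $j>1$) gives $[e_1,\,e_{k,1}-(1,k)]=0$; conjugating by $c_k$, which commutes with $e_1$, yields $[e_1,\,\bar e_{k,1}-c_k(1,k)c_k]=0$. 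All $j\ge2$ terms commute with $e_1$ trivially. Hence $[e_1,y_k]=0$, and the paper accordingly lists $e_1$ among the \emph{easy} generators $\{e_1,s_1,\ldots,s_{k-2},c_1\}$. Your proposed workaround---expanding $[e_1,y_k^{2n-1}]$, stripping $c_k$'s, and appealing to the analogous \cite{RSu} computation---is not only unnecessary, it is also left at the level of a sketch; once you replace it with the two-line observation above, your proof is complete.
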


\begin{proof}
By Lemma~\ref{y1} and inductive assumption on $k$, we have the first statement.
To
prove the second, note 
that any $h\in \{e_1, s_1, \ldots, s_{k-2}, c_1\}$ commutes with $e_k, y_k$. So, $e_k (h \omega_{2n-1, k})=e_k(\omega_{2n-1, k} h)$. By Theorem~\ref{wbhsa321}, $h \omega_{2n-1, k}=\omega_{2n-1, k} h$.
Finally, we need to check that $e_k (h \omega_{2n-1, k})=e_k(\omega_{2n-1, k} h)$ for any $h\in \{\bar s_1, \ldots, \bar s_{k-2}, \bar c_1\}$. In this case, we use Lemma~\ref{y1}.
More explicitly, we can use $\bar y_k$ instead of $y_k$
in $e_k y_k^{2n-1} e_k$. Thus  $h \omega_{2n-1, k}=\omega_{2n-1, k} h$, as required.
\end{proof}

In the following, we define
\begin{equation} \label{hhk} h_k =y_k+e_k+\bar e_k, \text{ and } \bar h_k=\bar y_k+e_k-\bar e_k, \text{ for all admissible $k$.}\end{equation}
\begin{Lemma}\label{syk} For $k,a\in\Z^{\ge 1}$, we have $$s_k y_{k+1}^a=h_k^a s_k-\mbox{$\sum\limits_{b=0}^{a-1}$} h_k ^{a-1-b}y_{k+1}^b+\mbox{$\sum\limits_{b=0}^{a-1}$} (-1)^{a-b} c_kc_{k+1} h_k^{a-b-1} y_{k+1}^b,$$
where $h_k$ is given in \eqref{hhk}.
\end{Lemma}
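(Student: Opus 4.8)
The plan is to induct on $a$. The base case $a=1$ is the single relation
$$s_k y_{k+1} = h_k s_k - 1 - c_k c_{k+1},$$
and the inductive step is powered by the anticommutation identity $h_k c_k c_{k+1} = -c_k c_{k+1} h_k$; once these two ingredients are in place, the rest is bookkeeping with signs and summation indices.

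First I would establish the base case. Using Definition~\ref{RSu-GRSS} together with the walled Brauer relations imported from \cite{RSu}, one checks the recursion $y_{k+1} = \tilde y_k + e_{k+1,k} + \bar e_{k+1,k}$, where $\tilde y_k = s_k y_k s_k - (1-c_kc_{k+1})s_k$. A direct computation using $s_k^2=1$ and the Clifford relations $s_kc_k = c_{k+1}s_k$, $s_kc_{k+1}=c_ks_k$ coming from \eqref{sera} gives $s_k\tilde y_k = y_k s_k - (1 + c_k c_{k+1})$, while the walled Brauer identity $s_k e_{k+1,k} = e_k s_k$ (which reduces, when $k=1$, to $e_{2,1}=s_1e_1s_1$), together with its conjugate $s_k \bar e_{k+1,k} = \bar e_k s_k$ obtained by conjugating with $c_k,c_{k+1}$, yields $s_k(e_{k+1,k}+\bar e_{k+1,k}) = (e_k + \bar e_k)s_k$. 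Adding these and recalling $h_k = y_k + e_k + \bar e_k$ produces the base case.

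Next I would prove $h_k c_kc_{k+1} = -c_kc_{k+1}h_k$. For the summand $y_k$ this is immediate from Lemma~\ref{usef}(3),(4): since $y_k c_k = -c_k y_k$ and $y_k c_{k+1} = c_{k+1}y_k$, we get $y_k c_kc_{k+1} = -c_kc_{k+1}y_k$. For the remaining summand $e_k + \bar e_k = e_k + c_k e_k c_k$, I would use $c_k^2 = -1$ and $c_kc_{k+1} = -c_{k+1}c_k$ from \eqref{sera} together with $e_kc_{k+1} = c_{k+1}e_k$ from Definition~\ref{wsera}(9) to compute that $(e_k+\bar e_k)c_kc_{k+1} = e_kc_kc_{k+1}-c_kc_{k+1}e_k$ while $c_kc_{k+1}(e_k+\bar e_k) = c_kc_{k+1}e_k-e_kc_kc_{k+1}$, so the two are negatives of each other. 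Consequently $h_k^{m}c_kc_{k+1} = (-1)^m c_kc_{k+1}h_k^m$ for every $m \geq 0$.

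For the inductive step, assuming the formula for $a-1$, I would right-multiply by $y_{k+1}$ and substitute the base case into the leading term, giving $h_k^{a-1}(s_ky_{k+1}) = h_k^a s_k - h_k^{a-1} - h_k^{a-1}c_kc_{k+1}$; here I push $c_kc_{k+1}$ to the far left through $h_k^{a-1}$, turning $-h_k^{a-1}c_kc_{k+1}$ into $(-1)^a c_kc_{k+1}h_k^{a-1}$. After shifting the index $b\mapsto b+1$ in the two inherited sums, the newly produced terms $-h_k^{a-1}$ and $(-1)^a c_kc_{k+1}h_k^{a-1}$ supply exactly the missing $b=0$ contributions, and all powers and signs line up with the claimed formula. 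The main obstacle I anticipate is the base case, specifically extracting the recursion for $y_{k+1}$ and the diagrammatic relation $s_k e_{k+1,k} = e_k s_k$ carefully from \cite{RSu} and the definition $e_{i,j} = (\bar1,\bar j)(1,i)e_1(1,i)(\bar1,\bar j)$; the anticommutation relation and the inductive bookkeeping are routine by comparison.
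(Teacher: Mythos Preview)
Your proposal is correct and follows the same approach as the paper, which simply says ``It is easy to verify the result by induction on $a$.'' Your detailed verification of the base case $s_k y_{k+1}=h_k s_k-1-c_kc_{k+1}$ via the recursion $y_{k+1}=\tilde y_k+e_{k+1,k}+\bar e_{k+1,k}$, the anticommutation $h_kc_kc_{k+1}=-c_kc_{k+1}h_k$, and the inductive bookkeeping are all sound and fill in exactly what the paper omits.
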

\begin{proof} It is easy to verify the result by induction on $a$.
\end{proof}

\begin{Lemma}\label{dzjk} Suppose $1\le j\le k-1$. Define
$ z_{j, k}=s_{j, k-1} h_{k-1}  s_{k-1, j}$,
 and $ \bar z_{j, k}=\bar s_{j, k-1} \bar h_{k-1} \bar s_{k-1, j}$, where $h_{k-1}$ and $\bar h_{k-1}$ are given in \eqref{hhk}.  Then
 \begin{enumerate} \item [(1)]  $ z_{j, k}=\sum_{\ell=1}^{k-1} e_{j, \ell}-\sum_{1\le s\le k-1, s\neq j}  (s, j)+c_j \left( \sum_{\ell=1}^{k-1} e_{j, \ell}-\sum_{1\le s\le k-1, s\neq j}  (s, j)\right) c_j$,
 \item [(2)] $\bar z_{j, k}=\sum_{\ell=1}^{k-1} e_{ \ell, j}-\sum_{1\le  s\le k-1,  s\neq j}  (\bar s, \bar j)-
 \bar c_j \left(\sum_{\ell=1}^{k-1} e_{ \ell, j}-\sum_{1\le  s\le k-1,  s\neq j}  (\bar s, \bar j)\right) \bar c_j$.
\end{enumerate} \end{Lemma}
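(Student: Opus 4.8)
The key observation is that $s_{k-1,j}=s_{j,k-1}^{-1}$, since the two words are reverses of each other; writing $w=s_{j,k-1}$, the element $z_{j,k}=w\,h_{k-1}\,w^{-1}$ is thus a conjugate of $h_{k-1}$ by the permutation $w$, and likewise $\bar z_{j,k}=\bar w\,\bar h_{k-1}\,\bar w^{-1}$ with $\bar w=\bar s_{j,k-1}$. First I would peel off the Clifford generators. Using $y_{k-1}=\mathfrak y_{k-1}+c_{k-1}\mathfrak y_{k-1}c_{k-1}$ and $\bar e_{k-1}=c_{k-1}e_{k-1}c_{k-1}$, one gets $h_{k-1}=\mathfrak h_{k-1}+c_{k-1}\mathfrak h_{k-1}c_{k-1}$, where by \eqref{fraky} (and absorbing $e_{k-1}=e_{k-1,k-1}$ into the first sum) $\mathfrak h_{k-1}:=\mathfrak y_{k-1}+e_{k-1}=\sum_{\ell=1}^{k-1}e_{k-1,\ell}-\sum_{\ell=1}^{k-2}(\ell,k-1)$ lies in the walled Brauer algebra $B_{r,t}(0)$. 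The problem then reduces to conjugating this even element.

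Next I would read off $w$ as a permutation. Unwinding $s_{j,k-1}=s_js_{j+1}\cdots s_{k-2}$ under the right action shows that $w$ sends $j\mapsto k-1$ and $m\mapsto m-1$ for $j<m\le k-1$, so that $w^{-1}=s_{k-1,j}$ fixes everything outside $[j,k-1]$, sends $k-1\mapsto j$, and sends $\ell\mapsto \ell+1$ for $j\le\ell\le k-2$. Now I conjugate $\mathfrak h_{k-1}$ term by term. For the transpositions, $w(\ell,k-1)w^{-1}=((\ell)w^{-1},(k-1)w^{-1})=((\ell)w^{-1},j)$; as $\ell$ runs over $1,\ldots,k-2$, the coordinate $(\ell)w^{-1}$ runs over $\{1,\ldots,k-1\}\setminus\{j\}$, so $w\big(\sum_{\ell=1}^{k-2}(\ell,k-1)\big)w^{-1}=\sum_{1\le s\le k-1,\,s\neq j}(s,j)$. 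For the $e$'s, conjugation by the unbarred permutation $w$ only relabels the left-hand (unbarred) index and fixes the right-hand (barred) index, so $w\,e_{k-1,\ell}\,w^{-1}=e_{(k-1)w^{-1},\ell}=e_{j,\ell}$ and hence $w\big(\sum_{\ell=1}^{k-1}e_{k-1,\ell}\big)w^{-1}=\sum_{\ell=1}^{k-1}e_{j,\ell}$. Therefore $w\,\mathfrak h_{k-1}\,w^{-1}=\sum_{\ell=1}^{k-1}e_{j,\ell}-\sum_{1\le s\le k-1,\,s\neq j}(s,j)=:\mathfrak z_{j,k}$.

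Finally I would reattach the Clifford part. By \eqref{sera}, $w^{-1}c_iw=c_{(i)w}$, whence $w\,c_{k-1}\,w^{-1}=c_{(k-1)w^{-1}}=c_j$; applying $w(\,\cdot\,)w^{-1}$ to $h_{k-1}=\mathfrak h_{k-1}+c_{k-1}\mathfrak h_{k-1}c_{k-1}$ gives $z_{j,k}=\mathfrak z_{j,k}+c_j\,\mathfrak z_{j,k}\,c_j$, which is exactly (1). Part (2) runs in parallel with every object barred: one checks $\bar h_{k-1}=\bar{\mathfrak h}_{k-1}-\bar c_{k-1}\bar{\mathfrak h}_{k-1}\bar c_{k-1}$, where $\bar{\mathfrak h}_{k-1}=\bar{\mathfrak y}_{k-1}+e_{k-1}=\sum_{\ell=1}^{k-1}e_{\ell,k-1}-\sum_{\ell=1}^{k-2}(\bar\ell,\overline{k-1})$ and where $\bar e_{k-1}=c_{k-1}e_{k-1}c_{k-1}=\bar c_{k-1}e_{k-1}\bar c_{k-1}$ follows from Definition~\ref{wsera}(1) together with Corollary~\ref{wsba-1}; conjugating by $\bar w$ — now only the barred index of each $e$ moves and $\bar w\bar c_{k-1}\bar w^{-1}=\bar c_j$ — yields $\bar z_{j,k}=\bar{\mathfrak z}_{j,k}-\bar c_j\bar{\mathfrak z}_{j,k}\bar c_j$ with $\bar{\mathfrak z}_{j,k}=\sum_{\ell=1}^{k-1}e_{\ell,j}-\sum_{1\le s\le k-1,\,s\neq j}(\bar s,\bar j)$. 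The one step needing care is the conjugation rule $w\,e_{a,b}\,w^{-1}=e_{(a)w^{-1},b}$ for the $e$'s under an unbarred permutation (and its barred analogue); this is just the relabelling of walled Brauer diagrams and can be verified directly from $e_{a,b}=(\bar 1,\bar b)(1,a)e_1(1,a)(\bar 1,\bar b)$ using the commutation relations Definition~\ref{wsera}(2),(3),(6), or quoted from the corresponding computation for $B_{r,t}(0)$ in \cite{RSu}. I expect this to be the only genuine obstacle; the remaining index bookkeeping is routine.
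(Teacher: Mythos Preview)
Your proposal is correct and is exactly the natural way to carry out what the paper dismisses as ``Easy exercise.'' The decomposition $h_{k-1}=\mathfrak h_{k-1}+c_{k-1}\mathfrak h_{k-1}c_{k-1}$, the identification of $s_{j,k-1}$ as the cycle sending $k-1\mapsto j$, and the term-by-term conjugation of $\mathfrak h_{k-1}$ are all sound; the conjugation rule $w\,e_{a,b}\,w^{-1}=e_{(a)w^{-1},b}$ for $w\in\Sigma_r$ is indeed the only point requiring a moment's thought, and your suggested verification via $e_{a,b}=(\bar 1,\bar b)(1,a)e_1(1,a)(\bar 1,\bar b)$ together with Definition~\ref{wsera}(3),(6) works (for $j\ge2$ the word $s_{j,k-1}$ avoids $s_1$ and commutes past $e_1$; the case $j=1$ is handled by the same diagrammatic relabelling or a short separate check).
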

\begin{proof} Easy exercise. \end{proof}
Note that $\omega_{0,k}=0$ and $\omega_{1, k}=\bar \omega_{1, k}=0$ (see Lemma~\ref{usef}(13)), and $e_k h=0$ for $h\in BC_{k-1, k-1}$ if and only if $h=0$. We will use these facts  frequently in the proof of
the following lemma, where we use the terminology that
a monomial in  $z_{j, k+1}$'s and $\bar z_{j, k+1}$'s
is a {\it leading term} in an expression if it has
the highest degree by defining ${\rm deg\,}z_{i,j}={\rm deg\,}\bar z_{i,j}=1$.

\begin{Lemma}\label{omed} For any positive integer  $n$,   $\omega_{2n+1, k+1}$ can be written as an $R$-linear combination of
monomials in  $ z_{j, k+1}$'s and $\bar z_{j, k+1}$'s for $1\le j\le k$ such that the leading terms
of  $\omega_{2n+1, k+1}$ are $2\sum_{j=1}^{k}(-z_{j, k+1}^{2n}+\bar z_{ j, k+1}^{2n})$.
\end{Lemma}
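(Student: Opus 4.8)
The plan is to read off $\omega_{2n+1,k+1}$ from its defining equation $\omega_{2n+1,k+1}e_{k+1}=e_{k+1}y_{k+1}^{2n+1}e_{k+1}$ (Corollary~\ref{equal123}) and to strip the power of $y_{k+1}$ down using Lemma~\ref{syk}. The relations I will lean on are the shifted forms of Definition~\ref{wsera}, namely $e_{k+1}s_ke_{k+1}=e_{k+1}$, $e_{k+1}^2=0$ and $s_ke_{k+1}=e_ks_k$, the fact that every element of $BC_{k,k}$ (in particular $h_k$) commutes with $e_{k+1}$ (as in the proof of Proposition~\ref{ese1}), and the two identities $z_{k,k+1}=h_k$ and $z_{j,k+1}=s_{j,k}h_ks_{k,j}$ that follow from Lemma~\ref{dzjk} and \eqref{hhk}.

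First I would set $a=2n+1$ in Lemma~\ref{syk} and sandwich the resulting identity between two copies of $e_{k+1}$. Each middle block $h_k^{2n-b}y_{k+1}^{b}$ then becomes $h_k^{2n-b}\omega_{b,k+1}e_{k+1}$, since $h_k$ commutes with $e_{k+1}$ and $e_{k+1}y_{k+1}^be_{k+1}=\omega_{b,k+1}e_{k+1}$; by Corollary~\ref{equal123} together with $\omega_{1,k+1}=0$, only the odd indices $3\le b\le 2n-1$ survive here. By induction on $n$ each such $\omega_{b,k+1}$ is already an $R$-combination of monomials in the $z_{j,k+1}$'s and $\bar z_{j,k+1}$'s of degree at most $b-1$, so, since $h_k=z_{k,k+1}$, the products $h_k^{2n-b}\omega_{b,k+1}$ are again monomial combinations of degree at most $2n-1$ and land entirely in the non-leading part. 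The only remaining contribution is the head term coming from $h_k^{2n+1}s_k$; the role of the outer caps is to make this collapse in degree by one, via $e_{k+1}s_ke_{k+1}=e_{k+1}$, $s_ke_{k+1}=e_ks_k$ and $e_{k+1}^2=0$, and it is this collapse that produces the degree-$2n$ leading term.

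To identify that leading term I would use $e_{k+1}y_{k+1}=e_{k+1}(\bar L_{k+1}-L_{k+1})$ from Lemma~\ref{usef}(8) (and its $\tau$-image $y_{k+1}e_{k+1}=(\bar L_{k+1}-L_{k+1})e_{k+1}$): near the cap each surviving factor of $y_{k+1}$ is replaced by the difference of a barred and an unbarred Jucys--Murphy block. The sum $\sum_{j=1}^k$ arises because $L_{k+1}$ and $\bar L_{k+1}$ carry the sums $\mathfrak L_{k+1}=\sum_{j=1}^k(j,k+1)$ and $\bar{\mathfrak L}_{k+1}=\sum_{j=1}^k(\bar j,\overline{k+1})$, whose $j$-th term the cap transfers to position $j$, stacking the remaining factors into the pure power $z_{j,k+1}^{2n}$ (resp. $\bar z_{j,k+1}^{2n}$) by Lemma~\ref{dzjk}. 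The two signs $-z_{j,k+1}^{2n}$ and $+\bar z_{j,k+1}^{2n}$ are exactly the signs in $\bar L_{k+1}-L_{k+1}$, while the overall factor $2$ comes from the doubling $L_{k+1}=\mathfrak L_{k+1}+c_{k+1}\mathfrak L_{k+1}c_{k+1}$ (equivalently $y_k=\mathfrak y_k+c_k\mathfrak y_kc_k$). If one prefers, the barred powers can instead be generated from the unbarred ones through Lemmas~\ref{y1} and \ref{y2}, or through the isomorphism $BC_{r,t}\cong BC_{t,r}$ used in the proof of Lemma~\ref{usef}.

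The hard part will be the bookkeeping, in two places. First, the two $c_kc_{k+1}$-sums in Lemma~\ref{syk} must be shown to feed only into the lower-degree part; this is where I would push the odd factors against the cap using $c_ke_{k+1}=e_{k+1}c_k$ and $c_{k+1}e_{k+1}=\bar c_{k+1}e_{k+1}$ and then kill them with the vanishing identities $e_iy_i^kc_ie_i=0$ and $e_iy_i^{2n}e_i=0$ of Lemma~\ref{usef}(11),(12). Second, I must check that the degree collapse of the head term is genuinely symmetric in the positions $1,\dots,k$, so that every $z_{j,k+1}^{2n}$ and $\bar z_{j,k+1}^{2n}$ occurs with the same coefficient $2$ and no position is singled out; here conjugation by the $s_{j,k}$ commutes with the cap modulo the lower-order relations already accounted for. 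The base case $n=1$, where the middle sum is empty and $\omega_{3,k+1}$ equals its head term plus constants, can be verified by hand and anchors the induction.
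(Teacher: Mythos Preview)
Your ingredients are exactly the ones the paper uses, but the order of operations in your plan is off by one crucial step, and as written the computation does not touch $\omega_{2n+1,k+1}$ at all.

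You propose to sandwich Lemma~\ref{syk} with $a=2n+1$ between two copies of $e_{k+1}$. That computes $e_{k+1}\,s_k\,y_{k+1}^{2n+1}\,e_{k+1}$, not $e_{k+1}\,y_{k+1}^{2n+1}\,e_{k+1}$; there is no $s_k$ present in the defining expression for $\omega_{2n+1,k+1}e_{k+1}$, so Lemma~\ref{syk} cannot be invoked until one is introduced. The paper's (and the only natural) way to do this is to apply Lemma~\ref{usef}(8) \emph{first}, writing
\[
e_{k+1}\,y_{k+1}^{2n+1}\,e_{k+1}
=e_{k+1}(\bar L_{k+1}-L_{k+1})\,y_{k+1}^{2n}\,e_{k+1},
\]
and then expanding $L_{k+1}=\sum_{j=1}^k\bigl((j,k{+}1)+c_{k+1}(j,k{+}1)c_{k+1}\bigr)$ with $(j,k{+}1)=s_{j,k}s_ks_{k,j}$. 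Each $j$-summand now carries an $s_k$, the outer $s_{j,k},s_{k,j}$ commute with $e_{k+1}$ and $y_{k+1}$, and one applies Lemma~\ref{syk} with $a=2n$, not $2n+1$. This is also where the degree drop actually happens: Lemma~\ref{usef}(8) consumes one power of $y_{k+1}$, whereas the cap relation $e_{k+1}s_ke_{k+1}=e_{k+1}$ that you cite only strips the trailing $s_k$ from the head term $h_k^{2n}s_k$ and does not lower degree. After conjugating by $s_{j,k}$ that head term becomes $z_{j,k+1}^{2n}$, the $c_{k+1}(\,\cdot\,)c_{k+1}$ copy doubles it, and the sign is $-2$ from $-L_{k+1}$. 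For the $\bar L_{k+1}$ half the paper does not reuse Lemma~\ref{usef}(8) ``on each surviving factor'' as you suggest; instead it applies $\tau$ to Lemma~\ref{y2} to replace $y_{k+1}^{2n}e_{k+1}$ by $\bar y_{k+1}^{2n}e_{k+1}$ up to lower terms, and then the same manipulation on the barred side yields the leading term $+2\bar z_{j,k+1}^{2n}$. One further small point: the $c_kc_{k+1}$-sums from Lemma~\ref{syk} do not merely ``feed into the lower-degree part''; after sandwiching they vanish outright, since $h_k$ and $c_k$ commute with $e_{k+1}$ and then $e_{k+1}\,y_{k+1}^{b}c_{k+1}\,e_{k+1}=0$ by Lemma~\ref{usef}(11).
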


\begin{proof}  By  Corollary \ref{equal123} and Lemma~\ref{usef}(8),  we have
\begin{equation}\label{ommmm}
\omega_{2n+1,k+1}e_{k+1}=e_{k+1} y_{k+1}^{2n+1} e_{k+1}=e_{k+1} (\bar L_{k+1}- L_{k+1}) y_{k+1}^{2n}e_{k+1}.\end{equation}
Note that $(j,k+1)=s_{j,k}s_ks_{k,j}$ and  $s_{j,k}, s_{k,j}$ commute with $y_{k+1},e_{k+1}, c_{k+1}$  (see Lemma~\ref{usef}(1) and \eqref{sera}).
Considering the right-hand side of \eqref{ommmm} and expressing $L_{k+1}$ by \eqref{mmpy},  we see that a term  of $-e_{k+1}  L_{k+1} y_{k+1}^{2n}e_{k+1}$
 becomes
$$\begin{aligned}{\sc\!}&-s_{j, k} e_{k+1} (s_{k} y_{k+1}^{2n}\!+
c_{k+1} s_{k} c_{k+1}y_{k+1}^{2n}) e_{k+1} s_{ k, j}=-2s_{j, k} e_{k+1} (s_k y_{k+1}^{2n}) e_{k+1}s_{k, j}\\
& =-2s_{j, k} e_{k+1}\left \{h_k^{2n} s_k -\mbox{$\sum\limits_{b=0}^{2n-1}$} h_k ^{2n-b-1} y_{k+1}^b
+ \mbox{$\sum\limits_{b=0}^{2n-1}$}(-1)^{2n-b} c_k c_{k+1} h_k^{2n-b-1}  y_{k+1}^b\right\} e_{k+1} s_{k, j} \\
&=-2\!s_{j, k} h_k^{2n}e_{k+1}s_ke_{k+1}s_{k, j}+2s_{j, k}\mbox{$\sum\limits_{b=0}^{2n-1}$} h_k^{2n-b-1} e_{k+1} y_{k+1}^b e_{k+1} s_{k, j} \ \ (\text{by Lemma~\ref{usef}(11)})
\\ &=-2\!s_{j, k}e_{k+1}\Big( h_k^{2n}-\mbox{$\sum\limits_{b=0}^{2n-1}$} h_k^{2n-b-1}  \omega_{b, k+1}\Big)s_{k, j}.\\  \end{aligned}$$
 By inductive assumption, the right-hand side of  the above equation can be written as an $R$-linear combination of monomials with the required form such that the leading term is $-2z_{j, k+1}^{2n}$. Finally,  we consider terms in \eqref{ommmm} concerning $\bar L_{k+1}$, namely we need to deal with
 $$e_{k+1} (\bar j, \overline{ k\!+\!1}) y_{k+1}^{2n}  e_{k+1}- e_{k+1} \bar c_{k+1} (\bar j, \overline{ k\!+\!1})\bar c_{k+1} y_{k+1}^{2n}  e_{k+1}=2 e_{k+1}(\bar j, \overline{ k\!+\!1}) y_{k+1}^{2n}  e_{k+1} .$$
 Applying $\tau$ on $ e_{k+1}\bar y_{k+1}^{2n}$ and using Lemma~\ref{y2} and inductive assumption on $n$, we can use
   $ \bar y_{k+1}^{2n} e_{k+1}$ to replace
$y_{k+1}^{2n} e_{k+1}$ in $e_{k+1}  (\bar j, \overline{k\!+\!1})  y_{k+1}^{2n} e_{k+1}$ (by forgetting lower terms). This enables us to consider  $ e_{k+1} (\overline j,
 \overline {k\!+\!1})\bar y_{k+1}^{2n}  e_{k+1}$ instead. As above, this term
can be written as the required form with leading term $ 2\bar z_{j, k+1}^{2n}$. The proof is completed.
\end{proof}

\begin{Lemma}\label{commomega}For~$a\!\in\!\Z^{\ge0},{\ssc\,}k\!\in\!\Z^{\ge 1}$, both $\omega_{a, k+1}$ and
$\bar \omega_{a, k+1}$ commute with $y_{k+1}$, $\bar y_{k+1}, c_{l}, \bar c_{l}$, $l\ge k+1$.\end{Lemma}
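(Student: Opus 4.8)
The plan is to peel off the easy cases and then reduce everything to a short list of commutations checkable with Lemma~\ref{usef}. Since $\omega_{2n,k+1}=\bar\omega_{2n,k+1}=0$ by Corollary~\ref{equal123}, only $a=2n+1$ needs treatment, and here $\omega_{2n+1,k+1}$ and $\bar\omega_{2n+1,k+1}$ lie in $BC_{k,k}$ and are \emph{even}: indeed $y_{k+1},\bar y_{k+1},e_{k+1}$ are even, so $\omega_{2n+1,k+1}e_{k+1}=e_{k+1}y_{k+1}^{2n+1}e_{k+1}$ is even, and the odd part of $\omega_{2n+1,k+1}$ must vanish because $e_{k+1}h=0$ forces $h=0$ for $h\in BC_{k,k}$ (the shift of the faithfulness used before Lemma~\ref{omed}).

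For the Clifford generators $c_l,\bar c_l$ with $l\ge k+1$ I would argue by parity. Such $c_l$ (resp. $\bar c_l$) anticommutes with every odd generator $c_m,\bar c_m$ ($m\le k$) of $BC_{k,k}$, by \eqref{sera} and Definition~\ref{wsera}(3), and commutes with every even generator $e_1,s_i,\bar s_i$, by Definition~\ref{wsera}(6),(9) together with $w^{-1}c_iw=c_{(i)w}$ (the relevant transpositions fix $l$). Hence moving $c_l$ (resp. $\bar c_l$) past any monomial contributes the sign $(-1)^{(\text{number of odd factors})}$, which is $+1$ on the even elements $\omega_{2n+1,k+1}$ and $\bar\omega_{2n+1,k+1}$; so these commute with all $c_l,\bar c_l$, $l\ge k+1$.

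For $y_{k+1}$ and $\bar y_{k+1}$ I would invoke Lemma~\ref{omed} (and its bar-analogue, via $BC_{r,t}\cong BC_{t,r}$) to write $\omega_{2n+1,k+1}$ and $\bar\omega_{2n+1,k+1}$ as $R$-linear combinations of monomials in the $z_{j,k+1}=s_{j,k}h_ks_{k,j}$ and $\bar z_{j,k+1}=\bar s_{j,k}\bar h_k\bar s_{k,j}$; since a centraliser is a subalgebra, it suffices that each $z_{j,k+1}$ and $\bar z_{j,k+1}$ commute with $y_{k+1},\bar y_{k+1}$. The reflection factors $s_{j,k},s_{k,j}$ (resp. $\bar s_{j,k},\bar s_{k,j}$) involve only $s_m$ (resp. $\bar s_m$) with $m\le k-1$, each of which commutes with $y_{k+1}$ and $\bar y_{k+1}$ by Lemma~\ref{usef}(1)--(2). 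Conjugating out these factors reduces the problem to the four identities $[h_k,y_{k+1}]=[h_k,\bar y_{k+1}]=[\bar h_k,y_{k+1}]=[\bar h_k,\bar y_{k+1}]=0$, and the symmetry interchanging barred and unbarred data leaves only $[h_k,y_{k+1}]=0$ and $[h_k,\bar y_{k+1}]=0$ to prove.

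To attack $[h_k,y_{k+1}]=0$ I would first record the identity $y_{k+1}=s_kh_ks_k-(1-c_kc_{k+1})s_k$, obtained by expanding $y_{k+1}=\sum_{m=1}^k(e_{k+1,m}+\bar e_{k+1,m})-L_{k+1}$ and using $s_ke_{k,m}s_k=e_{k+1,m}$, $s_k\bar e_{k,m}s_k=\bar e_{k+1,m}$ and $s_kL_ks_k=L_{k+1}-(1-c_kc_{k+1})s_k$ from \eqref{jm-wba1}. Writing $h_k=y_k+e_k+\bar e_k$ and using $[y_k,y_{k+1}]=0$ (Lemma~\ref{usef}(6)), the task becomes $[e_k+\bar e_k,y_{k+1}]=0$; substituting $s_ky_ks_k=y_{k+1}-(e_{k+1,k}+\bar e_{k+1,k})+(1-c_kc_{k+1})s_k$ into Lemma~\ref{usef}(9), $e_ks_ky_ks_k=s_ky_ks_ke_k$, rearranges into the desired identity once the diagrammatic products $e_ke_{k+1,k}$, $e_ks_k$ and their $\tau$-images are compared. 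The commutator $[h_k,\bar y_{k+1}]$ is handled identically using the mixed relations Lemma~\ref{usef}(2),(5) and the expansion of $\bar y_{k+1}$. I expect this last verification to be the only real obstacle: the reductions above are formal, but $[e_k+\bar e_k,y_{k+1}]=0$ is genuinely an identity among Brauer--Clifford diagrams (the individual products $e_ke_{k+1,k}$ and $e_{k+1,k}e_k$ differ), and its proof requires assembling all terms of $y_{k+1}$ and checking that the discrepancies cancel via Lemma~\ref{usef}(9).
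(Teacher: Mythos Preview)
Your reduction to the commutations $[z_{j,k+1},y_{k+1}]=0$ (and the barred variants) is exactly the paper's starting point, and your further reduction to $j=k$ by conjugating out the $s_{j,k}$'s is valid and a pleasant simplification. Your parity argument for $c_l,\bar c_l$ with $l\ge k+1$ is also correct and in fact cleaner than what the paper does: the paper instead checks directly from the explicit form of $z_{j,k+1}$ and $\bar z_{j,k+1}$ in Lemma~\ref{dzjk} that these commute with $c_l,\bar c_l$, and then invokes Lemma~\ref{omed}.

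Where you diverge, and where there is a genuine gap, is the final step for $y_{k+1}$. You reduce to $[e_k+\bar e_k,\,y_{k+1}]=0$ and propose to verify it by expanding $y_{k+1}$ and invoking Lemma~\ref{usef}(9), but you do not carry this out, and the computation is not short: the individual commutators $[e_k,e_{k+1,k}]$, $[e_k,\bar e_{k+1,k}]$, $[\bar e_k,e_{k+1,k}]$, $[\bar e_k,\bar e_{k+1,k}]$ and the $(1-c_kc_{k+1})s_k$ terms must all be assembled and cancelled. The mixed case $[h_k,\bar y_{k+1}]=0$ is even less developed in your sketch, since there is no direct analogue of Lemma~\ref{usef}(6) for $[y_k,\bar y_{k+1}]$ to peel off the $y_k$ part of $h_k$. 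The paper avoids this entirely: it writes $z_{j,k+1}=\mathfrak z_{j,k+1}+c_j\mathfrak z_{j,k+1}c_j$ and $y_{k+1}=\mathfrak y_{k+1}+c_{k+1}\mathfrak y_{k+1}c_{k+1}$, expands the product $z_{j,k+1}y_{k+1}$ into four terms each of the form $c^?\,\mathfrak z_{j,k+1}\mathfrak y_{k+1}\,c^?$, invokes the already-known walled-Brauer commutation $\mathfrak z_{j,k+1}\mathfrak y_{k+1}=\mathfrak y_{k+1}\mathfrak z_{j,k+1}$ from \cite[Lemma~3.11]{RSu}, and then observes that the anti-involution $\tau$ of Lemma~\ref{tau} fixes both $z_{j,k+1}$ and $y_{k+1}$, so applying $\tau$ to the expansion gives $y_{k+1}z_{j,k+1}$ equal to the same expression. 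The bar cases are handled by the analogous decomposition $\bar z_{j,k+1}=\bar{\mathfrak z}_{j,k+1}-\bar c_j\bar{\mathfrak z}_{j,k+1}\bar c_j$. This $\tau$-trick is the key idea you are missing; it replaces the diagrammatic cancellation you were bracing for with a one-line structural argument.
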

 \begin{proof} Since $\omega_{a, 1}=\bar \omega_{a, 1}=\omega_{1, k}=\bar \omega_{1, k}=\omega_{2a, k}=\bar\omega_{2a, k}=0$ for all admissible $a, k$, we can assume $a, k\in \mathbb Z^{\ge 2}$ and $2\nmid a$. In order to verify that $\omega_{a, k+1}$ and
$\bar \omega_{a, k+1}$ commute with $y_{k+1}$,  by Lemmas~\ref{free1},~\ref{dzjk}--\ref{omed},  it suffices to prove that   $y_{k+1}$ commutes with both  $z_{j, k+1}$ and $\bar z_{j, k+1}$ for $1\le j\le k$. By Lemma~\ref{dzjk},    $z_{j, k+1}=\mathfrak z_{j, k+1}+c_j \mathfrak z_{j, k+1} c_j$, where
 $$\mathfrak z_{j, k+1}=\mbox{$\sum\limits_{\ell=1}^k$} e_{j, \ell}-\mbox{$\sum\limits_{1\le s\le k, s\neq j}$}  (s, j),$$
 which  is $z_{j, k+1} $  in \cite[Lemma~3.9]{RSu}.
  Obviously,  $c_{k+1} \mathfrak z_{j, k+1}=\mathfrak z_{j, k+1} c_{k+1}$.
    By Lemma~\ref{usef}(4), we have  $y_{k+1}c_j=c_j y_{k+1}$.
   So,
  \begin{equation}\label{zy123}  \begin{aligned}  z_{j,k+1}y_{k+1}&
  =\mathfrak z_{j, k+1} (c_{k+1} \mathfrak y_{k+1}c_{k+1}+\mathfrak y_{k+1})+c_j \mathfrak z_{j, k+1}(c_{k+1} \mathfrak y_{k+1}c_{k+1}+\mathfrak y_{k+1})c_j\\
  &=c_{k+1} \mathfrak  z_{j, k+1}\mathfrak y_{k+1} c_{k+1}+\mathfrak z_{j, k+1}\mathfrak y_{k+1}+c_j  \mathfrak z_{j, k+1} \mathfrak y_{k+1} c_j+c_jc_{k+1} \mathfrak z_{j, k+1} \mathfrak y_{k+1}c_{k+1}c_j. \\
  \end{aligned}\end{equation}
  Recall that $\tau$ is the $R$-linear anti-involution in Lemma~\ref{tau}.
   By Definition~\ref{RSu-GRSS} and Lemma~\ref{dzjk}, $\tau$ fixes both   $z_{j, k+1}$ and $y_{k+1}$.
  So, $y_{k+1} z_{j,k+1}=\tau(z_{j,k+1}y_{k+1})$. Since $\mathfrak y_{k+1} \mathfrak z_{j, k+1} =\mathfrak z_{j, k+1} \mathfrak y_{k+1} $ (see \cite[Lemma~3.11]{RSu}), we have    $z_{j,k+1}y_{k+1}=y_{k+1} z_{j,k+1}$ by \eqref{zy123}. One can check $\bar z_{j, k+1} y_{k+1}=y_{k+1} \bar z_{j, k+1}$ similarly via Definition~\ref{RSu-GRSS} and the equation
   $\bar z_{j,k+1}=\bar {\mathfrak z}_{j, k+1}-\bar c_j \bar {\mathfrak z}_{j, k+1} \bar c_j$.
   This proves that $y_{k+1}$ commutes with $\omega_{a, k+1}$ and
$\bar \omega_{a, k+1}$. We remark that one can check both $\omega_{a, k+1}$ and
$\bar \omega_{a, k+1}$ commute with  $\bar y_{k+1}$, similarly.
    By Lemma~\ref{dzjk}, one can easily check  that $z_{j, k+1}$ and $\bar z_{j, k+1}$ commute with $c_l$ and $\bar c_l$ for all $l\ge k+1$. \end{proof}

\section{Affine walled Brauer-Clifford superalgebras}
In this section, we assume that  $R$ is  an integral  domain  containing $\omega_1$ and $2^{-1}$. Motivated by  Definition~\ref{wsera} and Lemma~\ref{usef}, we introduce the notion of affine walled Brauer-Clifford superalgebra over $R$ as follows.
\begin{Defn}\label{awbsa}
 The {\it affine  walled Brauer-Clifford superalgebra}
$ BC_{r,t}^{\text{\rm aff}}$ is the associative $R$-superalgebra generated by odd elements $c_1, \ldots, c_r$, $\bar c_1, \ldots, \bar c_t$ and  even elements  $e_1, x_1, \bar x_1$, $s_1, \ldots, s_{r-1}$, $\bar s_1, \ldots, \bar s_{t-1}$,  and two families of even central elements  $\omega_{2k+1}, \bar \omega_{k}$, $ k\in\Z^{\ge1}$ subject to \eqref{symm}--\eqref{aseradual}, \eqref{asera}, \eqref{aseradual2} and Definition~\ref{wsera}(1)--(10) together with the following defining relations for all admissible $i$:
\begin{multicols}{2}
\begin{enumerate}
\item [(1)]$e_1(x_1+\bar x_1)=(x_1+\bar x_1)e_1=0$,
 \item [(2)] $e_1s_1x_1s_1=s_1x_1s_1e_1$,
\item [(3)] $x_1(e_1+\bar x_1-\bar e_1 )=(e_1-\bar e_1+\bar x_1)x_1$,
\item [(4)] $e_1\bar s_1\bar x_1\bar s_1=\bar s_1\bar x_1\bar s_1 e_1$,
\item [(5)] $e_1x_1^{2k+1} e_1=\omega_{2k+1} e_1$,  $\forall k\in \mathbb N$,
\item [(6)] $e_1x_1^{2k}e_1=0$,  $\forall k\in \mathbb N$,
 \item  [(7)] $e_1\bar x_1^{k}e_1=\bar \omega_{k}e_1$, $\forall k\in \mathbb Z^{>0}$,
 \item [(8)] $x_1 \bar c_{i}=\bar c_{i} x_1$,
 \item [(9)] $\bar x_1 c_i= c_i \bar x_1$,
 \item [(10)] $ x_1 \bar s_i=\bar s_i x_1$,
 \item [(11)] $\bar x_1 s_i=s_i \bar x_1$.
 \end{enumerate}
\end{multicols}
 \end{Defn}

For the simplification of presentation, we set $\omega_{2k}=0$, $\forall k\in \mathbb N$. The  following  result follows from  Definition~\ref{awbsa}, immediately.
\begin{Lemma}\label{antiaff} There is an $R$-linear anti-involution $\sigma:  BC_{r,t}^{\rm aff}\rightarrow  BC_{r,t}^{\rm aff}$, which fixes all generators of $BC_{r, t}^{\rm aff}$ in Definition~$\ref{awbsa}$.  \end{Lemma}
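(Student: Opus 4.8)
The plan is to realize $\sigma$ via the universal property of the free superalgebra. Let $F$ be the free $R$-superalgebra on the generators listed in Definition~\ref{awbsa}, and let $I\subseteq F$ be the two-sided ideal generated by the defining relations, so $BC_{r,t}^{\rm aff}=F/I$. There is a unique $R$-linear anti-automorphism $\tilde\sigma\colon F\to F$ fixing every generator and reversing products, $\tilde\sigma(g_1g_2\cdots g_n)=g_n\cdots g_2 g_1$, in the ordinary (non-super) convention; see below for why the super sign cannot be used. To obtain $\sigma$ it then suffices to prove $\tilde\sigma(I)\subseteq I$, after which $\sigma$ is automatically an involution, since $\sigma^2$ is an $R$-algebra automorphism of $BC_{r,t}^{\rm aff}$ fixing all generators and hence the identity. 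Concretely, for each defining relation written as $u=v$ I must check that $\tilde\sigma(u)-\tilde\sigma(v)$ lies in $I$, i.e.\ reduces modulo $I$ to an $R$-combination of defining relations.

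Most relations are dispatched by two observations. First, many are palindromic and fixed word-by-word by $\tilde\sigma$: $e_1^2=0$, $e_1 s_1 e_1=e_1=e_1\bar s_1 e_1$, $e_1 c_1 e_1=0=e_1\bar c_1 e_1$, and the braid relations in \eqref{symm} and \eqref{aseradual}. Second, the rest occur in $\sigma$-paired sets interchanged by $\tilde\sigma$: for instance $\tilde\sigma$ swaps $e_1c_1=e_1\bar c_1$ with $c_1 e_1=\bar c_1 e_1$ in Definition~\ref{wsera}(1), swaps the two halves of Definition~\ref{awbsa}(1), and sends each commutation relation (such as $s_i\bar c_j=\bar c_j s_i$, $x_1\bar c_i=\bar c_i x_1$, $\bar x_1 s_i=s_i\bar x_1$, or the centrality of $\omega_{2k+1},\bar\omega_k$) to itself up to an overall sign. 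Since these relations, together with the signs forced by $c_ic_j=-c_jc_i$ and $c_i^2=-1$, are exactly those already present for $BC_{r,t}$, the non-affine part is preserved precisely as in Lemma~\ref{tau}.

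The substance lies in the affine relations, and the key point is that $\tilde\sigma$ fixes, modulo $I$, the derived elements $x_2=s_1x_1s_1-(1-c_1c_2)s_1$ and $\bar x_2=\bar s_1\bar x_1\bar s_1-(1+\bar c_1\bar c_2)\bar s_1$. This is where the ordinary convention is forced: one computes $\tilde\sigma(c_1c_2s_1)=s_1c_2c_1$ and then, using the Clifford conjugation relations $s_1c_1=c_2s_1$ and $s_1c_2=c_1s_1$ from \eqref{sera}, obtains $s_1c_2c_1=c_1c_2s_1$, so that $\tilde\sigma(x_2)\equiv x_2\pmod I$. With the super sign the same computation yields $-c_1c_2s_1$ and $\tilde\sigma(x_2)\not\equiv x_2$, which would break \eqref{asera}. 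Granting $\tilde\sigma(x_2)\equiv x_2$ and $\tilde\sigma(\bar x_2)\equiv\bar x_2$, the relations $x_1x_2=x_2x_1$, $\bar x_1\bar x_2=\bar x_2\bar x_1$, and Definition~\ref{awbsa}(2)--(4) are palindromic or self-paired and follow directly; for (3) one notes that $\tilde\sigma$ fixes $\bar e_1=c_1e_1c_1$, hence fixes $e_1-\bar e_1+\bar x_1$. Relations Definition~\ref{awbsa}(5)--(7) become palindromic once one uses that $\tilde\sigma$ fixes the central generators $\omega_{2k+1},\bar\omega_k$, so e.g.\ $\tilde\sigma(e_1x_1^{2k+1}e_1)=e_1x_1^{2k+1}e_1$ and $\tilde\sigma(\omega_{2k+1}e_1)=e_1\omega_{2k+1}=\omega_{2k+1}e_1$.

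The one genuinely non-obvious pairing is between Definition~\ref{wsera}(7) and (8): $\tilde\sigma$ sends $e_1s_1\bar s_1 e_1 s_1=e_1s_1\bar s_1 e_1\bar s_1$ to $s_1e_1\bar s_1 s_1 e_1=\bar s_1 e_1\bar s_1 s_1 e_1$, which becomes relation (8) after replacing $\bar s_1 s_1$ by $s_1\bar s_1$ via Definition~\ref{wsera}(3); thus $\tilde\sigma(u_7-v_7)\equiv u_8-v_8\in I$, and symmetrically for (8). This reduction modulo other relations is exactly what is permitted in verifying $\tilde\sigma(I)\subseteq I$, and it is the only place where a relation is not sent to a single relation on the nose. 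I expect this pairing, together with the sign bookkeeping around $x_2$ and $\bar x_2$, to be the main (though ultimately routine) obstacle.
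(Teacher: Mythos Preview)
Your proposal is correct and is precisely the verification that the paper declares ``follows from Definition~\ref{awbsa}, immediately'' without further detail; you have simply unpacked that claim by checking each defining relation is sent into the ideal, and your handling of the subtle points (the ordinary, non-super sign convention forced by $\tilde\sigma(x_2)\equiv x_2$, and the pairing of Definition~\ref{wsera}(7) with (8) via $s_1\bar s_1=\bar s_1 s_1$) is accurate.
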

Lemmas~\ref{assum} and \ref{assum1} can be proven by arguments similar to those for
  Lemmas~\ref{y1} and \ref{y2}.
\begin{Lemma}\label{assum} For any $n\in \mathbb N$, $e_1\bar x_1^{2n+1}=\sum_{j=0}^n a_{2n+1, j} e_1 x_1^{2j+1}$ for  some $a_{2n+1, j}\in BC_{r, t}^{\rm aff}$  such that
\begin{enumerate}\item [(1)]  $a_{2n+1, n}=-1$,
\item [(2)] $a_{2n+1, j}=a_{2n-1, j-1}$ for all $1\le j\le n-1$,
\item [(3)] $a_{2n+1, 0}=\sum_{j=0}^{n-1} a_{2n-1, j}\omega_{2j+1}$.  \end{enumerate}\noindent In particular, $a_{2n+1,j}\in R[ \omega_3, \ldots, \omega_{2n-1}]$, for all $0\le j\le n$. 
\end{Lemma}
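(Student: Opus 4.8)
The plan is to transcribe the proof of Lemma~\ref{y1}, since the defining relations of $BC_{r,t}^{\rm aff}$ in Definition~\ref{awbsa} are precisely the abstract counterparts of the identities of Lemma~\ref{usef} and Corollary~\ref{equal123} invoked there. Concretely, Definition~\ref{awbsa}(1) replaces Lemma~\ref{usef}(8) (giving $e_1\bar x_1=-e_1x_1$ and its right-hand mirror $\bar x_1e_1=-x_1e_1$), Definition~\ref{awbsa}(3) replaces Lemma~\ref{usef}(7) (so that $e_1+\bar x_1-\bar e_1$ commutes with $x_1$), and Definition~\ref{awbsa}(5)--(6) with the convention $\omega_{2k}=0$ replace Corollary~\ref{equal123} (so $e_1x_1^{2k+1}e_1=\omega_{2k+1}e_1$ and $e_1x_1^{2k}e_1=0$). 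Here $\bar e_1:=c_1e_1c_1$, and $e_1^2=0$, $e_1\bar e_1=0$ follow from $e_1^2=0$ and $e_1c_1e_1=0$ in Definition~\ref{wsera}(4),(10). Since the $\omega_{2k+1}$ are central by hypothesis, no analog of Lemma~\ref{free1} is required.

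The one ingredient that must be re-established from scratch is the affine analog of Lemma~\ref{usef}(11): $e_1x_1^kc_1e_1=0$ for all $k\in\mathbb N$. For odd $k$ this is the short chain
\[
e_1x_1^kc_1e_1=e_1x_1^k\bar c_1e_1=e_1\bar c_1x_1^ke_1=e_1c_1x_1^ke_1=(-1)^ke_1x_1^kc_1e_1,
\]
using $c_1e_1=\bar c_1e_1$ and $e_1\bar c_1=e_1c_1$ from Definition~\ref{wsera}(1), $x_1\bar c_1=\bar c_1x_1$ from Definition~\ref{awbsa}(8), and $x_1c_1=-c_1x_1$ from \eqref{asera}. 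For even $k$ I would mimic the even case of Lemma~\ref{usef}(11) by evaluating $e_1x_1^{k-1}\bar x_1c_1e_1$ twice. On one side, Definition~\ref{awbsa}(3) (with $e_1^2=e_1\bar e_1=0$, the odd case, and $e_1x_1^{k-1}e_1=\omega_{k-1}e_1$) rewrites it as $e_1\bar x_1x_1^{k-1}c_1e_1$, which collapses to $-e_1c_1x_1^ke_1$ via $e_1\bar x_1=-e_1x_1$ and $x_1^kc_1=c_1x_1^k$ (as $k$ is even). On the other side, commuting $c_1$ leftward through $x_1^{k-1}\bar x_1$ by \eqref{asera} and Definition~\ref{awbsa}(9) and using $\bar x_1e_1=-x_1e_1$ turns it into $+e_1c_1x_1^ke_1$. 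Comparing the two forces $e_1c_1x_1^ke_1=0$, hence $e_1x_1^kc_1e_1=0$. This even case is the main obstacle, being the only step that is more than a direct substitution of defining relations.

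With $e_1x_1^kc_1e_1=0$ available, the core identity, for odd $m$, is
\[
e_1x_1^m\bar x_1^2=e_1x_1^{m+2}+\omega_m\,e_1x_1,
\]
derived as in Lemma~\ref{y1}: from $e_1x_1^m\bar x_1^2=e_1x_1^m(\bar x_1+e_1-\bar e_1)\bar x_1-e_1x_1^me_1\bar x_1+e_1x_1^m\bar e_1\bar x_1$, the $\bar e_1$-term vanishes by the odd case, Definition~\ref{awbsa}(3) together with $e_1(\bar x_1+e_1-\bar e_1)=-e_1x_1$ turns the first term into $-e_1x_1^{m+1}\bar x_1$, and the middle term becomes $\omega_m\,e_1x_1$ by Definition~\ref{awbsa}(5),(1); the intermediate identity $e_1x_1^{m+1}\bar x_1=-e_1x_1^{m+2}$ (with $m+1$ even, relying on the even case and $e_1x_1^{m+1}e_1=0$) then finishes it. Finally I would induct on $n$. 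The base case $n=0$ is $e_1\bar x_1=-e_1x_1$, so $a_{1,0}=-1$; for the step, apply the hypothesis $e_1\bar x_1^{2n-1}=\sum_{j=0}^{n-1}a_{2n-1,j}e_1x_1^{2j+1}$, multiply on the right by $\bar x_1^2$, and insert the core identity $e_1x_1^{2j+1}\bar x_1^2=e_1x_1^{2j+3}+\omega_{2j+1}e_1x_1$ (the central coefficients pass freely through everything). Reindexing yields $a_{2n+1,j}=a_{2n-1,j-1}$ for $1\le j\le n$, giving (1) and (2), and $a_{2n+1,0}=\sum_{j=0}^{n-1}a_{2n-1,j}\omega_{2j+1}$, giving (3); unlike Lemma~\ref{y1} the sum in (3) keeps its $j=0$ term because $\omega_1$ need not vanish. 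The membership $a_{2n+1,j}\in R[\omega_3,\dots,\omega_{2n-1}]$ follows by induction, using $\omega_1\in R$.
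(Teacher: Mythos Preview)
Your proposal is correct and follows exactly the paper's approach: the paper's entire proof is the sentence that Lemmas~\ref{assum} and \ref{assum1} can be proven by arguments similar to those for Lemmas~\ref{y1} and \ref{y2}, and you have carried out precisely that transcription. Your reconstruction of the affine analog of Lemma~\ref{usef}(11) is what the paper later records (for general $i$) as Lemma~\ref{hecrel2}(5), and your remark that the $j=0$ term survives in (3) because $\omega_1$ need not vanish is the only genuine adjustment required.
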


\begin{Lemma}\label{assum1} For any positive integer $n$, $e_1\bar x_1^{2n}=\sum_{j=0}^n a_{2n, j} e_1 x_1^{2j}$ for some $a_{2n,j}\in BC_{r,t}^{\rm aff }$ such that
\begin{enumerate}\item[(1)] $a_{2n, n}=1$,
\item [(2)] $a_{2n, j}=a_{2n-1, j-1}$ for all $1\le j\le n-1$,
\item [(3)] $a_{2n, 0}=\sum_{j=0}^{n-1} a_{2n-2, j}\omega_{2j+1}$. \end{enumerate}\noindent In particular, $a_{2n, j}\in R[ \omega_3, \ldots, \omega_{2n-1}]$, for all $0\le j\le n$. 
\end{Lemma}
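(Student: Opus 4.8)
The plan is to mirror the proof of Lemma~\ref{y2}: concentrate all the work in a single one-step recursion inside $e_1\,BC_{r,t}^{\rm aff}\,e_1$ and then induct on $n$. Concretely, I would first establish
\begin{equation}\label{star}
e_1 x_1^{2j}\bar x_1^2=e_1 x_1^{2j+2}+\omega_{2j+1}e_1\qquad(j\ge 0),
\end{equation}
where $\omega_1\in R$ for $j=0$. Granting \eqref{star}, one writes $e_1\bar x_1^{2n}=(e_1\bar x_1^{2n-2})\bar x_1^2$, inserts the inductive expansion $e_1\bar x_1^{2n-2}=\sum_{j=0}^{n-1}a_{2n-2,j}\,e_1x_1^{2j}$, and applies \eqref{star} to each summand, obtaining $e_1\bar x_1^{2n}=\sum_{j=0}^{n-1}a_{2n-2,j}\bigl(e_1x_1^{2j+2}+\omega_{2j+1}e_1\bigr)$. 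Reindexing the first sum yields $a_{2n,j}=a_{2n-2,j-1}$ for $1\le j\le n$ (the case $j=n$ gives $a_{2n,n}=a_{0,0}=1$, i.e.\ (1), since $e_1\bar x_1^0=e_1$ forces $a_{0,0}=1$, and the cases $j<n$ give the shift relation (2), cf.\ Lemma~\ref{y2}(2)), while collecting the constant terms gives $a_{2n,0}=\sum_{j=0}^{n-1}a_{2n-2,j}\omega_{2j+1}$, which is (3). Since $\omega_1\in R$, the $\omega_{2j+1}$ $(j\ge1)$ are central generators, and $a_{2n-2,j}\in R[\omega_3,\dots,\omega_{2n-3}]$ by induction, the membership $a_{2n,j}\in R[\omega_3,\dots,\omega_{2n-1}]$ is immediate.

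The one real ingredient behind \eqref{star} is the affine analogue of Lemma~\ref{usef}(11),
\begin{equation}\label{starr}
e_1 x_1^{k}c_1 e_1=0\qquad\text{for all }k\in\mathbb{N}.
\end{equation}
For odd $k$ this is quick: using $c_1e_1=\bar c_1e_1$ and $e_1\bar c_1=e_1c_1$ (Definition~\ref{wsera}(1)), $x_1\bar c_1=\bar c_1x_1$ (Definition~\ref{awbsa}(8)) and $x_1c_1=-c_1x_1$ (from \eqref{asera}), one gets $e_1x_1^kc_1e_1=e_1\bar c_1x_1^ke_1=e_1c_1x_1^ke_1=-e_1x_1^kc_1e_1$, whence \eqref{starr} since $2^{-1}\in R$. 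For even $k$ I would reproduce the longer cancellation of Lemma~\ref{usef}(11): from the identity $e_1x_1^{k-1}\bar x_1=e_1(\bar x_1+e_1-\bar e_1)x_1^{k-1}-e_1x_1^{k-1}e_1+e_1x_1^{k-1}\bar e_1$, obtained by commuting $x_1^{k-1}$ through $e_1+\bar x_1-\bar e_1$ (Definition~\ref{awbsa}(3)), multiply on the right by $c_1e_1$. The term $-e_1x_1^{k-1}e_1c_1e_1$ vanishes because $e_1x_1^{k-1}e_1=\omega_{k-1}e_1$ (Definition~\ref{awbsa}(5)) and $e_1c_1e_1=0$ (Definition~\ref{wsera}(10)), and the term $e_1x_1^{k-1}\bar e_1c_1e_1$ vanishes since $\bar e_1c_1e_1=c_1e_1c_1^2e_1=-c_1e_1^2=0$. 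This leaves $e_1x_1^{k-1}\bar x_1c_1e_1=e_1\bar x_1x_1^{k-1}c_1e_1$; evaluating the two sides with $\bar x_1c_1=c_1\bar x_1$ (Definition~\ref{awbsa}(9)), $x_1c_1=-c_1x_1$, and $e_1\bar x_1=-e_1x_1$, $\bar x_1e_1=-x_1e_1$ (Definition~\ref{awbsa}(1)) yields $e_1c_1x_1^ke_1=-e_1c_1x_1^ke_1$, forcing \eqref{starr}.

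Finally, \eqref{star} itself is the two-step computation of Lemma~\ref{y2}. Writing $\bar x_1^2=(\bar x_1+e_1-\bar e_1)\bar x_1-(e_1-\bar e_1)\bar x_1$ and pulling $x_1^{2j}$ through $e_1+\bar x_1-\bar e_1$ by Definition~\ref{awbsa}(3), then using $e_1^2=0$, $e_1\bar e_1=0$ and $e_1\bar x_1=-e_1x_1$ (Definition~\ref{awbsa}(1)), one gets $e_1x_1^{2j}\bar x_1^2=-e_1x_1^{2j+1}\bar x_1$; here the $\bar e_1$-term dies by \eqref{starr} and the $e_1x_1^{2j}e_1$-term by Definition~\ref{awbsa}(6). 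The same reduction applied to $e_1x_1^{2j+1}\bar x_1$, now with $e_1x_1^{2j+1}e_1=\omega_{2j+1}e_1$ by Definition~\ref{awbsa}(5), gives $e_1x_1^{2j+1}\bar x_1=-e_1x_1^{2j+2}-\omega_{2j+1}e_1$, and combining the two identities yields \eqref{star}. I expect the even-$k$ case of \eqref{starr} to be the only genuine obstacle: it is the one point where naive sign-chasing does not close and the longer cancellation is forced. Every relation that argument invokes, however, has an exact counterpart among the defining relations of $BC_{r,t}^{\rm aff}$ (Definition~\ref{awbsa}(1),(3),(5),(6), Definition~\ref{wsera}(1),(4),(10) and \eqref{asera}), so the whole computation transfers intact and Lemma~\ref{assum} is not needed.
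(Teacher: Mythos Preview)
Your argument is correct and is precisely the approach the paper intends: the paper's own proof reads in full ``Lemmas~\ref{assum} and \ref{assum1} can be proven by arguments similar to those for Lemmas~\ref{y1} and \ref{y2}'', and you have faithfully carried out the Lemma~\ref{y2} computation in the affine setting, including the affine analogue of Lemma~\ref{usef}(11) that the paper suppresses. One small remark: your recursion yields $a_{2n,j}=a_{2n-2,j-1}$, exactly as in Lemma~\ref{y2}(2), whereas the printed statement of part~(2) reads $a_{2n,j}=a_{2n-1,j-1}$; this appears to be a typo in the paper (the two differ by a sign, since $a_{2n-1,\ell}=-a_{2n-2,\ell}$), and your version is the one compatible with the indicated proof.
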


\begin{Cor}\label{assump1} If  $e_1$ is $R[\omega_3, \omega_5, \ldots, \bar \omega_1, \bar \omega_2,\ldots]$-torsion-free, then    $\bar \omega_{2n+1} =\sum_{i=0}^{n} a_{2n+1, i} \omega_{2i+1}$ and $\bar \omega_{2n}=0$  for all $n\in \mathbb N $. In particular, $\bar \omega_1=-\omega_1$. \end{Cor}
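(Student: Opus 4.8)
The plan is to evaluate $\bar\omega_{2n+1}e_1$ and $\bar\omega_{2n}e_1$ in two different ways and then invoke the torsion-freeness hypothesis. First I would use Definition~\ref{awbsa}(7) to write $\bar\omega_{2n+1}e_1 = e_1\bar x_1^{2n+1}e_1$. Substituting the expansion from Lemma~\ref{assum}, namely $e_1\bar x_1^{2n+1}=\sum_{j=0}^n a_{2n+1,j}\,e_1 x_1^{2j+1}$, and using that each coefficient $a_{2n+1,j}\in R[\omega_3,\dots,\omega_{2n-1}]$ is built from central generators (the $\omega$'s are central by Definition~\ref{awbsa}) so it commutes past $e_1$, I obtain
\[
\bar\omega_{2n+1}e_1 = \sum_{j=0}^n a_{2n+1,j}\, e_1 x_1^{2j+1} e_1.
\]
Applying Definition~\ref{awbsa}(5), each middle factor satisfies $e_1 x_1^{2j+1}e_1 = \omega_{2j+1}e_1$, so that $\bar\omega_{2n+1}e_1 = \bigl(\sum_{j=0}^n a_{2n+1,j}\omega_{2j+1}\bigr)e_1$.

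Next I would check that the element $\bar\omega_{2n+1} - \sum_{j=0}^n a_{2n+1,j}\omega_{2j+1}$ genuinely lies in the ring $R[\omega_3,\omega_5,\dots,\bar\omega_1,\bar\omega_2,\dots]$: indeed $\bar\omega_{2n+1}$ is itself a generator of that ring, each $a_{2n+1,j}$ lies in $R[\omega_3,\dots,\omega_{2n-1}]$, each $\omega_{2j+1}$ with $j\ge 1$ is a generator, and the $j=0$ term $\omega_1$ lies in $R$ by the standing hypothesis of Section~3. Since this element annihilates $e_1$ and $e_1$ is assumed torsion-free over that ring, it must vanish, giving $\bar\omega_{2n+1} = \sum_{j=0}^n a_{2n+1,j}\omega_{2j+1}$, which is the first assertion.

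For the even case, the identical argument with Lemma~\ref{assum1} in place of Lemma~\ref{assum} yields $\bar\omega_{2n}e_1 = \sum_{j=0}^n a_{2n,j}\, e_1 x_1^{2j}e_1$; but now Definition~\ref{awbsa}(6) forces every factor $e_1 x_1^{2j}e_1 = 0$ (the $j=0$ term being $e_1^2=0$), so $\bar\omega_{2n}e_1 = 0$ and torsion-freeness yields $\bar\omega_{2n}=0$. Finally, specializing the odd formula at $n=0$ gives $\bar\omega_1 = a_{1,0}\omega_1$, and Lemma~\ref{assum}(1) (taken at $n=0$) gives $a_{1,0}=-1$, whence $\bar\omega_1 = -\omega_1$.

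As for obstacles, there is essentially no hard step: the computation is a direct unwinding of the defining relations together with the already-established expansions of Lemmas~\ref{assum} and~\ref{assum1}. The only point demanding care is the bookkeeping that licenses the use of the torsion-freeness hypothesis, i.e.\ confirming that the coefficient combination really belongs to $R[\omega_3,\omega_5,\dots,\bar\omega_1,\bar\omega_2,\dots]$ (using centrality of the $\omega$'s together with $\omega_1\in R$), so that the relation $\bigl(\bar\omega_{2n+1}-\sum_j a_{2n+1,j}\omega_{2j+1}\bigr)e_1=0$ legitimately forces the coefficient itself to vanish rather than merely to be a torsion element.
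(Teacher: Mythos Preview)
Your argument is correct and follows essentially the same route as the paper: multiply the expansions of Lemmas~\ref{assum} and~\ref{assum1} on the right by $e_1$, use Definition~\ref{awbsa}(5)--(7) to identify both sides as scalars times $e_1$, and invoke torsion-freeness. The only cosmetic difference is that the paper obtains the special case $\bar\omega_1=-\omega_1$ directly from Definition~\ref{awbsa}(1) (which gives $(\omega_1+\bar\omega_1)e_1=0$) rather than by specializing the general odd formula at $n=0$, but these are equivalent.
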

\begin{proof} By  Definition~\ref{awbsa}(1), $(\omega_1+\bar \omega_1)e_1=0$. If   $e_1$ is $R[\omega_3, \omega_5, \ldots, \bar \omega_1, \bar \omega_2,\ldots]$-torsion-free, $\bar \omega_1=- \omega_1$. In general,  by Lemma~\ref{assum}, $e_1\bar x_1^{2n+1}e_1=\sum_{j=0}^n a_{2n+1, j} e_1 x_1^{2j+1}e_1$. So, $\bar \omega_{2n+1}=\sum_{j=0}^n a_{2n+1, j} \omega_{2j+1}$.
Similarly, by Lemma~\ref{assum1} and Definition~\ref{awbsa}(6), $\bar \omega_{2n}=\sum_{j=0}^n a_{2n, j} \omega_{2j}=0$.
\end{proof}

\begin{Assumption}\label{admis-para}From here onwards, we always assume that $\bar \omega_{2n}=0$ and  $\bar \omega_{2n+1}$'s are given in Corollary~\ref{assump1}. Otherwise, we would have $e_1=0$ provided that $R$ is a field, in which case,   $BC_{r, t}^{\rm aff}$ turns out to be  $HC_{r}^{\rm aff}\boxtimes {\bar {HC}_t}^{\rm aff}$, the outer tensor product
of two affine Hecke-Clifford superalgebras! We remark that the tensor product is that for superalgebra in the sense that $$(x\boxtimes y ) (x_1\boxtimes y_1)=(-1)^{[y][x_1]} xx_1\boxtimes  y y_1,$$
for any homogenous elements $x, x_1\in HC_{r}^{\rm aff} $ and $y, y_1\in \overline{HC}_{t}^{\rm aff} $, where $[x]$, called the parity of $x$, is $1$ (resp., $0$)  if $x$ is odd (resp., even).
\end{Assumption}

\begin{Theorem}\label{level-1}For any $k\in \mathbb Z^{>0}$, there is a superalgebra  homomorphism $\Phi_k: BC_{r, t}^{\rm aff} \rightarrow BC_{r+k, t+k}$ sending
$s_i, \bar s_j, e_1, x_1, \bar x_1, c_l, \bar c_m, \omega_a, \bar \omega_a$
to
$s_{k+i}, \bar s_{k+j}, e_{k+1}, y_{k+1}, \bar y_{k+1}, c_{k+l}, \bar c_{k+m}, \omega_{a, k+1}, \bar \omega_{a, k+1}$ respectively  for all admissible  $a, i, j, l, m$'s.
\end{Theorem}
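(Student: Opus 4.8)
The plan is to show that the assignment on generators given in the statement respects every defining relation of $BC_{r,t}^{\rm aff}$, so that it extends (uniquely) to a homomorphism of $R$-algebras; since the images $c_{k+l},\bar c_{k+m}$ are odd and all other images are even, parity is preserved and the extension is automatically a superalgebra map. Thus it suffices to verify that each relation of Definition~\ref{awbsa}, together with \eqref{symm}--\eqref{aseradual}, \eqref{asera} and \eqref{aseradual2}, holds after applying $\Phi_k$. I would organize the relations into three groups.

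First, the relations not involving $x_1,\bar x_1$ or the central elements --- namely \eqref{symm}--\eqref{aseradual} and Definition~\ref{wsera}(1)--(10) --- are disposed of in one stroke by Corollary~\ref{wsba-1}: the images $e_{k+1},s_{k+i},\bar s_{k+j},c_{k+l},\bar c_{k+m}$ generate a subalgebra isomorphic to $BC_{r,t}$ under which they correspond to the standard generators, so these relations are inherited. Second, the affine relations for $x_1\mapsto y_{k+1}$ and $\bar x_1\mapsto\bar y_{k+1}$ are direct translations of Lemma~\ref{usef} with $i=k+1$: relation Definition~\ref{awbsa}(1) follows from Lemma~\ref{usef}(8) (the right-hand equality by applying the anti-involution $\tau$ of Lemma~\ref{tau}, which fixes $e_{k+1},y_{k+1},\bar y_{k+1}$), (2) and (4) from Lemma~\ref{usef}(9), (3) from Lemma~\ref{usef}(7), (8)--(9) from Lemma~\ref{usef}(5), and (10)--(11) from Lemma~\ref{usef}(2). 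Likewise the affine Hecke--Clifford relations reduce to Lemma~\ref{usef}: the commutation $x_1x_2=x_2x_1$ becomes $y_{k+1}\tilde y_{k+1}=\tilde y_{k+1}y_{k+1}$, which is Lemma~\ref{usef}(10); the relations $x_1c_i=(-1)^{\delta_{i,1}}c_ix_1$ become Lemma~\ref{usef}(3)--(4); and $s_jx_1=x_1s_j$ for $j\neq1$ becomes Lemma~\ref{usef}(1).

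Third, for the central generators I would take Corollary~\ref{equal123} as the definition of the images: relations (5) and (7) hold because $e_{k+1}y_{k+1}^{2a+1}e_{k+1}=\omega_{2a+1,k+1}e_{k+1}$ and $e_{k+1}\bar y_{k+1}^{a}e_{k+1}=\bar\omega_{a,k+1}e_{k+1}$, while (6) is Lemma~\ref{usef}(12). I must then check that $\omega_{a,k+1}$ and $\bar\omega_{a,k+1}$ really are central in the image. They commute with $y_{k+1},\bar y_{k+1}$ and with $c_{k+l},\bar c_{k+m}$ by Lemma~\ref{commomega}; since they lie in $BC_{k,k}$ (Corollary~\ref{equal123}) while the remaining images $e_{k+1},s_{k+i},\bar s_{k+j}$ involve only indices $>k$, they commute with these directly from the defining relations (disjoint support, with no sign since the $\omega$'s are even); and they commute among themselves because $\omega_{2n-1,k+1},\bar\omega_{2n-1,k+1}$ are central in $BC_{k,k}$ by Lemma~\ref{free1}, the even-indexed ones vanishing. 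Consistency with Assumption~\ref{admis-para} is then automatic: $\bar\omega_{2n,k+1}=0$ by Corollary~\ref{equal123}, and the expansion of $\bar\omega_{2n+1,k+1}$ in the $\omega_{2i+1,k+1}$ furnished by Lemmas~\ref{y1} and \ref{y2} obeys exactly the same recursion as the one defining $\bar\omega_{2n+1}$ in Lemmas~\ref{assum} and \ref{assum1}, so $\Phi_k$ carries each dependency to its counterpart.

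The hard part is not any single relation but the centrality of the images of the $\omega$'s --- in particular their commuting with $y_{k+1}$ and $\bar y_{k+1}$, whose supports overlap those of the $\omega$'s and so cannot be dismissed by a disjointness argument. This is precisely what Lemma~\ref{commomega} supplies, and that lemma in turn rests on the explicit description of $\omega_{2n+1,k+1}$ through the elements $z_{j,k+1},\bar z_{j,k+1}$ in Lemmas~\ref{dzjk}--\ref{omed} and their centrality from Lemma~\ref{free1}. In effect all the genuine work has been done in the preparatory lemmas, and the theorem is the bookkeeping that pairs each defining relation of $BC_{r,t}^{\rm aff}$ with its image. I would also record, consistent with the hypothesis $\omega_1=0$ under which these homomorphisms are constructed, that the $a=0$ instance of (5) reads $e_{k+1}y_{k+1}e_{k+1}=0$, which is exactly Lemma~\ref{usef}(13).
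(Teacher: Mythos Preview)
Your proposal is correct and follows essentially the same approach as the paper: verify each defining relation of $BC_{r,t}^{\rm aff}$ under $\Phi_k$, invoking Corollary~\ref{wsba-1} for the walled Brauer--Clifford relations, Lemma~\ref{usef} (parts (1)--(5), (7)--(13)) for the affine relations, Corollary~\ref{equal123} for relations (5)--(7), and Lemmas~\ref{free1} and~\ref{commomega} for the centrality of $\omega_{a,k+1},\bar\omega_{a,k+1}$. Your added remarks on the compatibility with Assumption~\ref{admis-para} and on the implicit hypothesis $\omega_1=0$ (forced by Lemma~\ref{usef}(13)) are accurate and make explicit points the paper leaves tacit.
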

\begin{proof} It is enough to verify the images of generators of $BC_{r, t}^{\rm aff}$ satisfy the defining relations for $BC_{r, t}^{\rm aff}$ in Definition~\ref{awbsa}.
We say $\Phi_k$ satisfies the relation if the images of generators satisfy this relation.

By Lemmas~\ref{dzjk}--\ref{commomega}, the images of $\omega_a$ and $\bar \omega_a$  commute with the images of other generators.
 By Corollary~\ref{wsba-1}, $\Phi_k$
satisfies  \eqref{symm}--\eqref{aseradual}  and Definition~\ref{wsera}(1)--(10).
 $\Phi_k$ satisfies
 \eqref{asera} and \eqref{aseradual2} by  Lemma~\ref{usef}(1), (3), (4), (10). Further, $\Phi_k$ satisfies Definition~\ref{awbsa}(1)--(4) by Lemma~\ref{usef}(7)--(9). In this case, we need $(y_{k+1}+\bar y_{k+1})e_{k+1}=0$, which can be obtained by applying the anti-involution $\tau$ on Lemma~\ref{usef}(8).
$\Phi_k$ satisfies  Definition~\ref{awbsa}(5)--(7) by Corollary~\ref{equal123} and Lemma~\ref{usef}(11)--(13). Finally, $\Phi_k$ satisfies
 Definition~\ref{awbsa}(8)--(11) by Lemma~\ref{usef}(2), (5).
\end{proof}

In \cite{RSu},  two of the authors proved the freeness of the  affine walled Brauer algebra via  bases of infinite many walled Brauer algebras. The key point is the existence of infinite many homomorphisms between the affine walled Brauer algebra and  walled Brauer algebras \cite[Theorem~3.12]{RSu}.  In the current case,
  Theorem~\ref{level-1} is the counterpart of \cite[Theorem~3.12]{RSu}.
  Since $\omega_{1,  k}=0$ for all $k$, what we can do is to use  Theorem~\ref{level-1} to prove the freeness of affine walled Brauer-Clifford superalgebras with parameters $\omega_1=0$. However, many affine walled Brauer-Clifford algebras  which appear in the higher  mixed Schur-Weyl-Sergeev dualities  have non-zero parameter $\omega_1$. For details, see section~4. For this reason, we  use  level two walled Brauer-Clifford superalgebras  (with special parameters) instead of walled Brauer-Clifford superalgebras later on.
 This  is one of the points which is different from the work in \cite{RSu}.

In $BC_{r, t}^{\rm aff}$, we define $x_i, x_i', \bar x_j$ and $\bar x_j'$ as in \eqref{relsmurp} for all admissible $i$ and $j$.
\begin{Lemma}\label{hecrel} We have the  following  results for admissible  $i$ and $j$:
\begin{multicols}{2}
 \begin{enumerate}
\item [(1)] $x_i c_i=-c_i x_i$ and $\bar x_i \bar c_i=-\bar c_i \bar x_i$,  \item [(2)]  $x_i c_j=c_j x_i$, $\bar x_i \bar c_j=\bar c_j \bar x_i$ if $i\neq j$,
\item [(3)] $x_i\bar c_j=\bar c_j x_i$ and $\bar x_i c_j=c_j \bar x_i$.
  \end{enumerate}\end{multicols} \end{Lemma}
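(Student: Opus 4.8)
First I would record that the Murphy-type recursion $x_{i+1}=s_ix_is_i-(1-c_ic_{i+1})s_i$ from \eqref{comm-hc}, together with its barred analogue, is available inside $BC_{r,t}^{\rm aff}$: it is a formal consequence of \eqref{symm}, \eqref{sera} and \eqref{asera}, all of which are among the defining relations of Definition~\ref{awbsa}. Granting this, parts (1) and (2) for the unbarred generators combine into the single claim $x_ic_j=(-1)^{\delta_{i,j}}c_jx_i$, which I would prove by induction on $i$. The base case $i=1$ is precisely \eqref{asera}. For the inductive step I would insert the recursion into $x_{i+1}c_j$ and commute $c_j$ leftward through $s_ix_is_i$ and through $(1-c_ic_{i+1})s_i$, distinguishing the cases $j\notin\{i,i+1\}$, $j=i$ and $j=i+1$. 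The ingredients are the conjugation relations $s_ic_i=c_{i+1}s_i$, $s_ic_{i+1}=c_is_i$ and $s_ic_j=c_js_i$ for $j\neq i,i+1$ (all from \eqref{sera}), the Clifford relations $c_i^2=-1$ and $c_ic_{i+1}=-c_{i+1}c_i$, and the two auxiliary identities $(1-c_ic_{i+1})c_{i+1}=c_i(1-c_ic_{i+1})$ and $(1-c_ic_{i+1})c_i=-c_{i+1}(1-c_ic_{i+1})$; these let the correct scalar $(-1)^{\delta_{i+1,j}}$ emerge as a common factor and collapse each case to $(-1)^{\delta_{i+1,j}}c_jx_{i+1}$.

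The barred assertions in (1)--(2) follow from the identical induction run with \eqref{aseradual}, \eqref{aseradual2} and the barred recursion; here $\bar c_i^2=1$ together with the $+\bar c_i\bar c_{i+1}$ in the recursion replaces the two auxiliary identities by $(1+\bar c_i\bar c_{i+1})\bar c_{i+1}=\bar c_i(1+\bar c_i\bar c_{i+1})$ and $(1+\bar c_i\bar c_{i+1})\bar c_i=-\bar c_{i+1}(1+\bar c_i\bar c_{i+1})$, which are verified the same way and yield the same outcome. Alternatively, one could deduce the barred cases from the unbarred ones through the isomorphism $BC_{r,t}^{\rm aff}\cong BC_{t,r}^{\rm aff}$ that underlies the proof of Lemma~\ref{usef}.

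For (3) I would argue directly from the shape of $x_i$. By \eqref{relsmurp} and \eqref{mmpy} one has $x_i=x_i'-\mathfrak L_i-c_i\mathfrak L_ic_i$ with $x_i'=s_{i-1}\cdots s_1x_1s_1\cdots s_{i-1}$ and $\mathfrak L_i=\sum_{j<i}(j,i)$, so $x_i$ is a polynomial in the unbarred generators $s_k,x_1,c_k$ in which every $c$ occurs inside the pair $c_i\mathfrak L_ic_i$. Since $\bar c_j$ commutes with each $s_k$ by Definition~\ref{wsera}(2) (hence with $\mathfrak L_i$, a sum of products of $s_k$'s), with $x_1$ by Definition~\ref{awbsa}(8), and sweeps past the pair $c_i\mathfrak L_ic_i$ at the cost of two cancelling sign changes from Definition~\ref{wsera}(3), it commutes with $x_i'$, with $\mathfrak L_i$ and with $c_i\mathfrak L_ic_i$; hence $x_i\bar c_j=\bar c_jx_i$. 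The companion relation $\bar x_ic_j=c_j\bar x_i$ is the mirror image: $\bar x_i=\bar x_i'-\bar{\mathfrak L}_i+\bar c_i\bar{\mathfrak L}_i\bar c_i$ is a polynomial in $\bar s_k,\bar x_1,\bar c_k$ whose $\bar c$'s again appear only in pairs, and $c_j$ commutes with $\bar s_k$ and $\bar x_1$ (Definition~\ref{wsera}(2) and Definition~\ref{awbsa}(9)) and anticommutes an even number of times with the $\bar c$'s (Definition~\ref{wsera}(3)).

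The only genuine bookkeeping is the inductive step of (1)--(2): in the cases $j=i$ and $j=i+1$ the signs produced by $c_i^2=-1$ and by passing $s_i$ across $c_i,c_{i+1}$ must combine to reproduce $(-1)^{\delta_{i+1,j}}$, which is exactly what the two auxiliary identities guarantee. Part (3), by contrast, is immediate once one observes that Clifford generators enter the Jucys--Murphy elements only in pairs, so all anticommutation signs cancel.
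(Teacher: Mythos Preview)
Your argument is correct and is essentially an explicit unpacking of the paper's terse proof. The paper simply cites \eqref{sera}, \eqref{aseradual}, \eqref{asera}, \eqref{aseradual2} for (1)--(2) and Definition~\ref{awbsa}(8)--(11) for (3); your induction on $i$ via the recursion \eqref{comm-hc} and your decomposition $x_i=x_i'-\mathfrak L_i-c_i\mathfrak L_ic_i$ are exactly how one fills in those citations. One minor remark: the isomorphism you invoke as an alternative for the barred cases is established in Lemma~\ref{usef} only for $BC_{r,t}$, not for $BC_{r,t}^{\rm aff}$, so that aside would need a separate justification; but since you give the direct argument anyway, this does not affect the proof.
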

\begin{proof} (1) and (2) follow  from \eqref{sera}, \eqref{aseradual},
\eqref{asera} and \eqref{aseradual2}.  (3) follows from  Definition~\ref{awbsa}(8)--(11).  \end{proof}

\begin{Lemma}\label{hecrel2}   We have the following results  for all admissible  $i, j$:
\begin{multicols}{2}
\begin{enumerate}\item [(1)] $x_i'(\bar x_j'+e_{i, j}-\bar e_{i, j})=(\bar x_j'+e_{i, j}-\bar e_{i, j}) x_i'$,
\item [(2)] $\bar x_j'( x_i'+e_{i, j}+\bar e_{i, j})=( x_i'+e_{i, j}+\bar e_{i, j})\bar x_j'$,
\item [(3)] $e_{i, j}(x_i'+\bar x_j')=0$,
\item [(4)] $(x_i'+\bar x_j')e_{i, j}=0$,
\item [(5)]$e_i{x_i'}^kc_ie_i=0$,$\forall k\in \mathbb N$,
\item [(6)]$e_i{\bar x_i'}^kc_ie_i=0$,$\forall k\in \mathbb N$.
 \end{enumerate}\end{multicols}
\end{Lemma}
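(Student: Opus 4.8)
The plan is to reduce all six identities to the base case $i=j=1$, where they are either defining relations or short consequences of them, and then propagate to arbitrary $i,j$ by conjugation. Two symmetries trim the list. First, the anti-involution $\sigma$ of Lemma~\ref{antiaff} fixes each of $x_i',\bar x_j',e_{i,j},\bar e_{i,j},c_i$ (the defining words $s_{i-1}\cdots s_1x_1s_1\cdots s_{i-1}$, $(1,i)$ and $(\bar1,\bar j)$ are palindromic and $e_1$ is a generator), so applying $\sigma$ to (3) yields (4). Second, after embedding $R$ in a domain containing $\sqrt{-1}$, the affine analogue of the swap isomorphism used in the proof of Lemma~\ref{usef}, namely $BC_{r,t}^{\rm aff}\cong BC_{t,r}^{\rm aff}$ sending $\sqrt{-1}c_i\mapsto\bar c_i$, $\sqrt{-1}\bar c_j\mapsto c_j$, $s_i\mapsto\bar s_i$, $\bar s_j\mapsto s_j$, $e_1\mapsto e_1$, $x_1\mapsto\bar x_1$, $\bar x_1\mapsto x_1$, carries (1) for $BC_{t,r}^{\rm aff}$ with indices $(j,i)$ onto (2) and, after a one-line manipulation, (5) onto (6); here one uses that it sends $e_{i,j}\mapsto e_{j,i}$ and $\bar e_{i,j}\mapsto-\bar e_{j,i}$. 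It therefore suffices to prove (1), (3) and (5).

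For (1) and (3) the content is entirely in the base case. When $i=j=1$ we have $x_1'=x_1$, $\bar x_1'=\bar x_1$, $e_{1,1}=e_1$, $\bar e_{1,1}=\bar e_1$, and then (1) is exactly Definition~\ref{awbsa}(3) while (3) is exactly Definition~\ref{awbsa}(1). To pass to general $(i,j)$ I would conjugate by $g=\bar s_1\cdots\bar s_{j-1}\,s_1\cdots s_{i-1}$. Writing $\pi_i=s_1\cdots s_{i-1}$ and $\bar\pi_j=\bar s_1\cdots\bar s_{j-1}$, Definition~\ref{awbsa}(10),(11) give that $\bar\pi_j$ commutes with $x_1$ and $\pi_i$ with $\bar x_1$, so $g^{-1}x_1g=\pi_i^{-1}x_1\pi_i=x_i'$ and $g^{-1}\bar x_1g=\bar\pi_j^{-1}\bar x_1\bar\pi_j=\bar x_j'$; by \eqref{sera} and Definition~\ref{wsera}(2), $g^{-1}c_1g=c_i$, whence $g^{-1}\bar e_1g=c_ie_{i,j}c_i=\bar e_{i,j}$ once we know $g^{-1}e_1g=e_{i,j}$. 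This last identity lives in the walled Brauer subalgebra $B_{r,t}(0)$ and is the standard fact that conjugating $e_1$ by $\pi_i\bar\pi_j$ moves its two endpoints to strands $i$ and $\bar j$; it is available from \cite{RSu}. Conjugating Definition~\ref{awbsa}(3) and Definition~\ref{awbsa}(1) by $g$ then produces (1) and (3) in general.

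The only real computation is (5), and by the same conjugation (with $j=i$) it reduces to $e_1x_1^kc_1e_1=0$. For odd $k$ this is the short sign argument mirroring Lemma~\ref{usef}(11): using $c_1e_1=\bar c_1e_1$ and $e_1\bar c_1=e_1c_1$ (Definition~\ref{wsera}(1)), $x_1\bar c_1=\bar c_1x_1$ (Lemma~\ref{hecrel}(3)) and $x_1c_1=-c_1x_1$ (Lemma~\ref{hecrel}(1)) one finds $e_1x_1^kc_1e_1=-e_1x_1^kc_1e_1$, so it vanishes as $2^{-1}\in R$. For even $k$ I would follow the even case of Lemma~\ref{usef}(11): put $A=\bar x_1+e_1-\bar e_1$, which commutes with $x_1$ by Definition~\ref{awbsa}(3). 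Expanding
\[
e_1x_1^{k-1}\bar x_1=e_1Ax_1^{k-1}-e_1x_1^{k-1}e_1+e_1x_1^{k-1}\bar e_1
\]
and multiplying on the right by $c_1e_1$ kills the last two terms (since $e_1c_1e_1=0$ and $\bar e_1c_1e_1=c_1e_1c_1^2e_1=0$) and replaces $e_1A$ by $e_1\bar x_1$ (since $e_1^2=0$ and $e_1\bar e_1=0$), giving $e_1x_1^{k-1}\bar x_1c_1e_1=e_1\bar x_1x_1^{k-1}c_1e_1$. Evaluating the two sides with $\bar x_1c_1=c_1\bar x_1$, $x_1c_1=-c_1x_1$, $e_1\bar x_1=-e_1x_1$ and $\bar x_1e_1=-x_1e_1$ (Definition~\ref{awbsa}(1)) turns this into $e_1c_1x_1^ke_1=-e_1c_1x_1^ke_1$, whence $e_1x_1^kc_1e_1=e_1c_1x_1^ke_1=0$. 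Statement (6) then follows from (5) via the swap, or by repeating this argument with bars, using (2) in place of Definition~\ref{awbsa}(3).

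The main obstacle is the even case of (5): unlike the odd case it is not a one-line sign flip, and it hinges on choosing the right intermediate quantity $e_1x_1^{k-1}\bar x_1$ and on the cancellations $e_1^2=e_1\bar e_1=e_1c_1e_1=\bar e_1c_1e_1=0$ that make Definition~\ref{awbsa}(3) and Definition~\ref{awbsa}(1) usable. A secondary point to pin down is that the swap $BC_{r,t}^{\rm aff}\cong BC_{t,r}^{\rm aff}$ is genuinely a superalgebra isomorphism at the affine level, i.e. that it respects \eqref{asera}, \eqref{aseradual2} and Definition~\ref{awbsa}(1)--(11); this is routine but should be recorded, since (2) and (6) rest on it.
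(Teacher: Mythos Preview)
Your proof is correct and follows essentially the same route as the paper: reduce (1)--(4) to the base case $i=j=1$ (where they are defining relations) and conjugate to general $(i,j)$, then handle (5) by the odd/even case split mirroring Lemma~\ref{usef}(11). Your detailed even-case computation for (5) is exactly what the paper means when it says the argument is ``similar to Lemma~\ref{usef}(11)''.

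Two small differences are worth flagging. First, the paper conjugates by the transpositions $(1,i)(\bar 1,\bar j)$ rather than by $\pi_i\bar\pi_j$; both work, but the transposition version makes $g^{-1}e_1g=e_{i,j}$ immediate from the very definition of $e_{i,j}$ rather than requiring a walled-Brauer identity. Second, for (2) the paper avoids the affine swap entirely: it observes directly from Definition~\ref{awbsa}(1),(3) that $\bar x_1(x_1+e_1+\bar e_1)=(x_1+e_1+\bar e_1)\bar x_1$ (a two-line commutator computation, using that $e_1(x_1+\bar x_1)=(x_1+\bar x_1)e_1=0$ forces $[\bar e_1,x_1-\bar x_1]=0$), and then conjugates. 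This is cleaner than your swap argument, precisely because the swap $BC_{r,t}^{\rm aff}\cong BC_{t,r}^{\rm aff}$ also has to respect the $\omega_{2k+1}\leftrightarrow\bar\omega_{2k+1}$ exchange under Assumption~\ref{admis-para}, which---while true---is an extra verification you correctly flag as needing to be recorded. The paper's direct route sidesteps that issue, and since you already offer the ``repeat with bars'' alternative for (6), you can drop the swap altogether with no loss.
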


 \begin{proof}  Multiplying $(1, i) (\bar 1, \bar j)$ on both sides of
Definition~\ref{awbsa}(3)  yields (1).  By Definition~\ref{awbsa}(1),(3), we know  that $\bar x_1(x_1+e_1+\bar e_1)=(x_1+e_1+\bar e_1)\bar x_1$. So (2) can be  proved similarly. Multiplying
 $(1, i)(\bar 1, \bar j)$ on both side of  Definition~\ref{awbsa}(1) yields (3) and (4).
We have $c_i x_i'=-x_i'c_i$ (resp., $\bar c_i x_i'=x_i' \bar c_i$) by \eqref{jm-wba1} and Lemma~\ref{hecrel}(1) (resp., Definition~\ref{awbsa}(8)--(11)). Also, (1) is the counterpart of Lemma~\ref{usef}(7).
So, (5) and (6)  can be verified by arguments similar to those  in the proof of Lemma~\ref{usef}(11). We leave the details to the reader. \end{proof}

 \begin{Lemma}\label{relxpri} We have the following results  for all admissible  $i,j, k, l$:
  \begin{multicols}{2}
 \begin{enumerate} \item [(1)]  $e_{i, k}x_j'=x_j'e_{i, k}$, if $i\neq j$,
 \item [(2)] $e_{i, k}\bar x_l'=\bar x_l' e_{i, k}$, if  $k\neq l$,
 \item [(3)] $e_{i, j}(x_i')^a e_{i, j}=\omega_a e_{i, j}$, $\forall a\in \mathbb  N$,
 \item [(4)] $e_{i, j}(\bar x_j')^a e_{i, j}=\bar \omega_a e_{i, j}, \forall a\in \mathbb  N$.
 \end{enumerate} \end{multicols}
 \end{Lemma}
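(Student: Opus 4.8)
The plan is to reduce every statement to the ``position-one'' defining relations of Definition~\ref{awbsa} by conjugating with symmetric-group elements. The crucial preliminary observation I would establish first is that the twisted generators are honest conjugates of $x_1$ and $\bar x_1$ by single transpositions: $x_i'=(1,i)\,x_1\,(1,i)$ and $\bar x_l'=(\bar 1,\bar l)\,\bar x_1\,(\bar 1,\bar l)$. Indeed, writing $g=s_{i-1}\cdots s_1$, the definition in \eqref{relsmurp} gives $x_i'=g\,x_1\,g^{-1}$, and since $(1,i)\,g$ fixes the point $1$ it lies in $\langle s_2,\dots,s_{r-1}\rangle$, each generator of which commutes with $x_1$ by \eqref{asera}; hence $(1,i)\,g$ commutes with $x_1$ and the two expressions agree. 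The same stabilizer argument yields the conjugation principle $w\,x_j'\,w^{-1}=x_{(j)w^{-1}}'$ for every $w\in\Sigma_r$ (and its barred analogue), together with the factorization $e_{i,j}=(1,i)(\bar 1,\bar j)\,e_1\,(\bar 1,\bar j)(1,i)$, in which the unbarred and barred transpositions commute by Definition~\ref{wsera}(3).

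With these in hand, parts (3) and (4) are immediate. Setting $\pi=(1,i)$ and $\bar\pi=(\bar 1,\bar j)$, I compute $e_{i,j}(x_i')^a e_{i,j}=\pi\bar\pi\,e_1\,\bar\pi\,x_1^a\,\bar\pi\,e_1\,\bar\pi\pi$ after collapsing the interior $\pi$'s via $\pi^2=1$; since $\bar\pi$ commutes with $x_1$ (Definition~\ref{awbsa}(10)) this becomes $\pi\bar\pi\,e_1 x_1^a e_1\,\bar\pi\pi$, and feeding in $e_1 x_1^a e_1=\omega_a e_1$ (Definition~\ref{awbsa}(5)--(6), with $\omega_{2k}=0$) together with the centrality of $\omega_a$ gives $\omega_a e_{i,j}$. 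Part (4) is the mirror computation: the interior $\bar\pi$'s collapse, $\pi$ commutes with $\bar x_1$ by Definition~\ref{awbsa}(11), and $e_1\bar x_1^a e_1=\bar\omega_a e_1$ comes from Definition~\ref{awbsa}(7).

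For parts (1) and (2) the one extra ingredient I need is the sublemma that $e_1$ commutes with $x_m'$ for $2\le m\le r$ and with $\bar x_m'$ for $2\le m\le t$. This I prove by induction on $m$: the base case $m=2$ is exactly Definition~\ref{awbsa}(2) (resp.\ (4)), and the inductive step uses $x_m'=s_{m-1}x_{m-1}'s_{m-1}$ together with the commutation $s_{m-1}e_1=e_1 s_{m-1}$ valid for $m-1\ge 2$ (Definition~\ref{wsera}(6)). To finish (1), with $\pi=(1,i)$ and $\bar\rho=(\bar 1,\bar k)$ I push $x_j'$ leftward through $e_{i,k}=\pi\bar\rho\,e_1\,\bar\rho\pi$: the barred $\bar\rho$ commutes with $x_j'$, the principle turns $\pi x_j'$ into $x_{(j)\pi}'\pi$, and the hypothesis $i\ne j$ guarantees $(j)\pi=(j)(1,i)\ne 1$, so the sublemma lets $e_1$ pass through $x_{(j)\pi}'$; reassembling and using $\pi x_{(j)\pi}'\pi=x_j'$ returns $x_j' e_{i,k}$. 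Part (2) is the verbatim barred analogue, now with $k\ne l$ ensuring $(l)\bar\rho\ne \bar 1$.

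The only genuinely delicate point---hence the step I would treat most carefully---is the bookkeeping of the symmetric-group elements: verifying that the relevant correcting permutations (such as $(1,i)g$, or $(1,m)w(1,j)$ in the general principle) really fix the point $1$ and therefore commute with $x_1$, and tracking which transposition survives after each conjugation. None of the algebra relations are hard individually; the risk is purely in mismatching an index or a left/right action convention, so I would fix once and for all that $\Sigma_r$ acts on the right and compute $(j)(1,i)$ accordingly throughout.
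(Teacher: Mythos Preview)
Your proposal is correct and follows essentially the same route as the paper: both arguments reduce everything to the position-one relations in Definition~\ref{awbsa} by conjugating with $(1,i)(\bar 1,\bar j)$, after first recording that $e_1$ commutes with $x_m'$ for $m\ge 2$ (the paper gets this in one stroke by conjugating $e_1x_2'=x_2'e_1$ by $(2,j)$, you do it by a short induction on $m$). If anything, your bookkeeping is more careful than the paper's terse ``multiplying $(1,i)(\bar 1,\bar k)$ on both sides yields (1)'': you explicitly note that $i\neq j$ forces $(j)(1,i)\neq 1$, which is exactly the check needed for the sublemma to apply.
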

 \begin{proof} We have $e_1x_2'=x_2'e_1$ by Definition~\ref{awbsa}(2). Multiplying $(2, j)$ on both sides of the equation yields $e_1 x_j'=x_j' e_1$. Since $i\neq j$,
  multiplying  $(1,i)(\bar 1, \bar k)$ on both sides of  $e_1 x_j'=x_j' e_1$ yields (1). (2) can be verified similarly.  (3) and (4) follow from Definition~\ref{awbsa}(5)--(7).
  \end{proof}

We consider $\ BC_{r,t}^{\text{\rm aff}}$ as a filtrated superalgebra  by setting
$$ \text{deg}{ s_i}=\text{deg}{ \bar s_j}=\text{deg}{ e_1}=\text{deg}{ c_n}=\text{deg}{ \bar c_m}=\text{deg}{  \omega_a}=\text{deg} {\bar \omega_a}=0 \text{ and } \
\text{deg} {x_k}=\text{deg}{ \bar x_\ell}=1,$$
for all admissible  $a, i, j, k, \ell, m, n$. Let $(BC_{r,t}^{\rm aff})^{(k)}$ be the super  $R$-submodule
spanned by monomials with degrees  less than or equal to $k$ for  $k\in{\mathbb{Z}}^{\ge0}$. Then we have the following filtration
   \begin{equation}\label{filtr}
BC_{r,t}^{\text{\rm aff}}\supset\ldots\supset (BC_{r,t}^{\text{\rm aff}})^{(1)}\supset(BC_{r,t}^{\text{\rm aff}})^{(0)}\supset (BC_{r,t}^{\text{\rm aff}})^{(-1)}=0.\end{equation}
Let ${\rm gr} ( BC_{r,t}^{\text{\rm aff}})\!=\!\oplus_{i\ge0}( BC_{r,t}^{\text{\rm aff}})^{[i]}$, where
$( BC_{r,t}^{\text{\rm aff}})^{[i]}\!=\!( BC_{r,t}^{\text{\rm aff}})^{(i)}/( BC_{r,t}^{\text{\rm aff}})^{(i-1)}$. Then
  ${\rm gr} ( BC_{r,t}^{\text{\rm aff}})$ is a $\mathbb Z$-graded superalgebra associated to $BC_{r,t}^{\text{\rm aff}}$.~We use the same symbols to denote elements in
${\rm gr} ( BC_{r,t}^{\text{\rm aff}})$. In particular, $x_i=x_i'$ and $\bar x_j'=\bar x_j$ in ${\rm gr} ( BC_{r,t}^{\text{\rm aff}})$.

\begin{Defn}\label{regm}  We say that $\textbf{m}$ is a regular monomial of  $BC_{r,t}^{\rm aff}$ if
 $\textbf{m}= x^{\alpha} d   {\bar x}^{\beta} \prod_{n\in \mathbb Z^{>0}} \omega_{2n+1}^{a_{2n+1}}$,  for some $d\in S$, $a_{2n+1}\in \mathbb N$ and $ (\alpha, \beta)\in \mathbb N^r\times \mathbb N^t$, where $S$ is given  in \eqref{bcabasis}, $x^\alpha=\prod_{i=1}^r x_i^{\alpha_i}$ and $\bar x^\beta=\prod_{i=1}^t \bar x_i^{\beta_i}$. \end{Defn}

\begin{Prop}\label{spanned}   As an $R$-module, $BC_{r,t}^{\rm aff}$ is spanned by all regular monomials in Definition~$\ref{regm}$.  \end{Prop}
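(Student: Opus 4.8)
The plan is to straighten an arbitrary word in the generators into an $R$-linear combination of regular monomials by an induction on the filtration degree $k$ of \eqref{filtr}, carrying out the bulk of the reduction inside the associated graded superalgebra ${\rm gr}(BC_{r,t}^{\rm aff})$, where, as noted just before Definition~\ref{regm}, one has $x_i=x_i'$ and $\bar x_j=\bar x_j'$ and all the ``correction terms'' of degree $0$ drop out. Concretely, assuming $(BC_{r,t}^{\rm aff})^{(k-1)}$ is already spanned by regular monomials of degree $\le k-1$, I take an element of $(BC_{r,t}^{\rm aff})^{(k)}$, reduce its leading symbol in ${\rm gr}^{[k]}$ to a combination of leading symbols of degree-$k$ regular monomials, subtract the corresponding honest regular monomials, and send the remainder (which now lies in $(BC_{r,t}^{\rm aff})^{(k-1)}$) back into the induction hypothesis.

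The base case $k=0$ is immediate: a degree-$0$ monomial contains no $x$- or $\bar x$-letter, hence is a word in $c_i,\bar c_j,s_i,\bar s_j,e_1$ and the central $\omega_{2k+1}$. The former satisfy exactly the defining relations of $BC_{r,t}$ (they are among \eqref{symm}--\eqref{aseradual} and Definition~\ref{wsera}(1)--(10)), so the subalgebra they generate is a homomorphic image of $BC_{r,t}$ and is therefore spanned by the diagram basis $S$ of \eqref{bcabasis} by Theorem~\ref{wbhsa321}; since the $\omega_{2k+1}$ are central they slide to the right. This gives precisely the regular monomials with $\alpha=\beta=0$.

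For the inductive step I push every $x$-letter to the left end and every $\bar x$-letter to the right end of the word, working in ${\rm gr}$ where the relevant relations are exact. Using $x_i=x_i'-L_i$ and $\bar x_j=\bar x_j'-\bar L_j$ from \eqref{relsmurp} I may replace each $x_i,\bar x_j$ by its conjugate $x_i',\bar x_j'$, the discrepancy being of degree $0$ and hence absorbed by induction. I then transport an $x'$ past each intervening generator: past $c_l,\bar c_l$ by Lemma~\ref{hecrel}, past $\bar s_i$ and $\bar x_l'$ by Definition~\ref{awbsa}(8)--(11), and across $s_i$ by the braid relation $x_{i+1}=s_ix_is_i-(1-c_ic_{i+1})s_i$ of \eqref{comm-hc}, whose correction has degree $0$. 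The essential moves are with the cap--cups $e_{i,j}$: an $x_j'$ with $j\ne i$ slides past $e_{i,k}$ freely by Lemma~\ref{relxpri}(1) (and dually $\bar x_l'$ by Lemma~\ref{relxpri}(2)), a surviving $x_i'$ meeting $e_{i,j}$ is converted to a $\bar x_j'$ on the far side via $e_{i,j}x_i'=-e_{i,j}\bar x_j'$ from Lemma~\ref{hecrel2}(3) (dually $(x_i'+\bar x_j')e_{i,j}=0$ from Lemma~\ref{hecrel2}(4)), and a chain trapped between two copies of $e_{i,j}$ collapses by $e_{i,j}(x_i')^ae_{i,j}=\omega_ae_{i,j}$, $e_{i,j}(\bar x_j')^ae_{i,j}=\bar\omega_ae_{i,j}$ from Lemma~\ref{relxpri}(3),(4). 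In ${\rm gr}^{[k]}$ the last collapse drops degree and contributes nothing, so every $x$ reaches the left and every $\bar x$ the right, leaving a degree-$0$ block in $BC_{r,t}$, whence an $R$-combination of $x^\alpha d\bar x^\beta$ with $d\in S$ by the base-case analysis. In the filtered lift the collapsed terms reappear as genuine central factors $\omega_a$ (and $\bar\omega_a$) of strictly smaller degree; I rewrite each $\bar\omega$ as an $R$-combination of the $\omega_{2n+1}$ via Corollary~\ref{assump1} (discarding $\bar\omega_{2n}=0$) and slide these central factors to the far right, so that these lower-degree contributions are again handled by the induction hypothesis.

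The main obstacle will be the bookkeeping of the $e_{i,j}$-interactions in the inductive step, namely verifying that each crossing of an affine generator past a cap--cup, and each collapse of a trapped chain, yields only terms that are either already in the form $x^\alpha d\bar x^\beta\prod_n\omega_{2n+1}^{a_{2n+1}}$ or lie in strictly lower filtration degree, so that the induction on $k$ genuinely closes. The relations of Lemmas~\ref{hecrel2} and \ref{relxpri} are engineered for exactly this transport, and it is the centrality of the $\omega_{2n+1}$ (built into Definition~\ref{awbsa}, cf.\ Corollary~\ref{equal123}) together with the reduction $\bar\omega\mapsto\omega$ of Corollary~\ref{assump1} that lets the central part be normalized to $\prod_n\omega_{2n+1}^{a_{2n+1}}$, completing the verification that every element is an $R$-combination of regular monomials.
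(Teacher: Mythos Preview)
Your argument is correct and uses precisely the same toolkit of relations (Lemmas~\ref{hecrel}, \ref{hecrel2}, \ref{relxpri}, together with \eqref{comm-hc} and Corollary~\ref{assump1}) that the paper invokes; there is no gap in the straightening, since every time an $x_i'$ is blocked by an $e_{i,j}$ it is traded for a $\bar x_j'$ which then moves strictly to the right (and dually), and a genuine trap $e_{i,j}(x_i')^a e_{i,j}$ collapses to degree~$0$, so the process terminates in ${\rm gr}^{[k]}$. One minor citation slip: passing $x_i'$ past $\bar x_l'$ in ${\rm gr}$ is justified by Lemma~\ref{hecrel2}(1)--(2), not by Definition~\ref{awbsa}(8)--(11).

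The organization, however, differs from the paper's. You run an induction on the total filtration degree $k$ and straighten arbitrary words directly in ${\rm gr}^{[k]}$; the paper instead lets $M$ be the span of regular monomials and proves $hM\subseteq M$ for each single generator $h$, by induction only on $|\alpha|$ (the $x$-degree of the regular monomial $\mathbf m$). The paper's route is more economical: one never has to manipulate a general word, only $h\cdot\mathbf m$ with $\mathbf m$ already regular, so the middle block is already in the standard form $c^\gamma d_1^{-1}e^fwd_2\bar c^\delta$ and the $e$-interactions reduce to a short explicit case analysis on whether $j\le f$ or $j>f$. Your approach, by contrast, is the classical normal-form argument and makes the role of the filtration (and of the associated graded) more transparent; it also treats $x$ and $\bar x$ symmetrically from the outset, whereas the paper's induction on $|\alpha|$ gives $x$ a privileged role. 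Both buy the same conclusion with the same relations; the paper's is shorter to write down, yours is conceptually more uniform.
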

\begin{proof} Let $M$ be the $R$-submodule of $ BC_{r,t}^{\rm aff}$ spanned by all regular monomials $\textit{\textbf{m}} $ in Definition~\ref{regm}. We want to prove
   \begin{equation} \label{veri}h\textit{\textbf{m}} \in M\mbox{ \ \ for any generator $h$ of $ BC_{r,t}^{\rm aff}$}.\end{equation}
   If so,  we have $M=BC_{r,t}^{\rm aff}$ since
 $1\in M$.

We  prove (\ref{veri})  by induction on  $|\alpha|$. If  $|\alpha|=0$, i.e.,  $\alpha_i=0$ for all possible $i$'s,
then  (\ref{veri}) follows from  Theorem~\ref{wbhsa321}  unless  $h= \bar x_1$. In the later case,
by \eqref{asera},\eqref{aseradual2} and  Lemma~\ref{hecrel},  we need to compute $\bar x_k e^f $ when $1\le k\le t$ and $f>0$.
If $k\in \{1,2, \ldots, f\}$, by Lemma~\ref{hecrel2}(4),  we use $-x_k$ instead of $\bar x_k$ since we work on the graded superalgebra ${\rm gr} ( BC_{r,t}^{\text{\rm aff}})$.
So, $h{\textit{\textbf{m}}}\in M$.   Otherwise, $k>f$.  By Lemma~\ref{relxpri}(2), we can use  $e^f \bar x_k $
 instead of $\bar x_k e^f $.  So, (\ref{veri}) follows from Lemma~\ref{hecrel} and Theorem~\ref{wbhsa321}.

Suppose $|\alpha|>0$. By \eqref{asera},\eqref{aseradual2},  Lemma~\ref{hecrel} and Theorem~\ref{wbhsa321}, we see that (\ref{veri}) holds
 unless $h\in \{\bar x_1, e_1\}$.
  Suppose  $h=\bar x_1$. By Lemma~\ref{hecrel2}(1) and (2), $\bar x_1 x_i=x_i\bar x_1$ in ${\rm gr} ( BC_{r,t}^{\text{\rm aff}})$. So, we need to deal with  $\bar x_k e^f $ when   $1\le k\le t$. They are  the cases that we have dealt with. So, $\bar x_1 \textbf{m}\in M$.

   Finally, we assume  $h=e_1$. If   $\alpha_i\neq 0$ for some  $i$ with $ 2\le i\le r$, then $e_1 x_i=x_i e_1$ in ${\rm gr} ( BC_{r,t}^{\text{\rm aff}})$ (see Lemma ~\ref{relxpri}(1)). By inductive assumption on $|\alpha|$, we have (\ref{veri}). In order to finish the proof, it remains to consider the case that
     $x^{\alpha}=x_1^{\alpha_1}$ such that   $\alpha_1>0$. In this case,
     \begin{equation}\label{veri1}\textbf{m} = x_1^{\alpha_1} {c}^{\gamma } d_1^{-1} e^f w d_2  {\bar c}^{\delta} \bar x^{\beta}\in M,\end{equation}
     where $d_1, d_2\in \mathscr D_{r, t}^f$ and $\beta\in \mathbb N^t$ and $(\gamma, \delta)\in \Z_2^r \times \Z_2^t $.
     Write  $d_1 e_1 d_1^{-1}=e_{i, j}$ for some $i,j$. By \eqref{asera} and inductive assumption on $|\alpha|$, we can use  $d_1^{-1} x_i^{\alpha_1}$
 to replace  $x_1^{\alpha_1} d_1^{-1}$ in \eqref{veri1}. So, we need to verify
  \begin{equation} \label{v1} e_{i, j} x_i^{\alpha_1} c^{\gamma} e^f w d_2 {\bar c}^{\delta }  \bar x^{\beta}\in M.\end{equation}
  By Lemma~\ref{hecrel2}(3) and inductive assumption, it is enough to  verify
   \begin{equation}\label{v2} e_{i, j} \bar x_j^{\alpha_1}c^{\gamma} e^f w d_2 {\bar c}^{\delta }  \bar x^{\beta}\in M.\end{equation}
  If $j\ge f+1$, then  \eqref{v2} follows from Lemma~\ref{relxpri}(2) and Theorem~\ref{wbhsa321}. Otherwise,  $j\le  f$.
If $i=j$, by inductive assumption, we use $( \bar x_i+\bar L_i)^{\alpha_1}$ instead of  $\bar x_i^{\alpha_1} $ in  $e_{i} \bar x_i^{\alpha_1} c^\gamma  e_i$.  If $\gamma_i=0$, then  $e_{i} \bar x_i^{\alpha_1}   e_i=\bar \omega_{\alpha_1} e_1$ in ${\rm gr} ( BC_{r,t}^{\text{\rm aff}})$ (see Lemma~\ref{relxpri}(4)). If $\gamma_i\neq 0$, then $e_{i} \bar x_i^{\alpha_1}c_i   e_i=0$ in ${\rm gr} ( BC_{r,t}^{\text{\rm aff}})$ (see Lemma~\ref{hecrel2}(5)). In any case,  \eqref{v2} follows from inductive assumption  on $|\alpha|$.
Finally, we assume  $i\neq j$.  If $i\neq k$, then  $e_{i,j}c_k=c_ke_{i,j}$. By inductive assumption, we  need to consider
  $e_{i,j} x_i^{\alpha_1} c_i^{\gamma_i} e_j=e_{i,j} x_i^{\alpha_1}  e_j c_i^{\gamma_i}$. Since    $$e_{i,j} x_i^{\alpha_1} e_j= e_{i,j} e_j x_i^{\alpha_1} = (i, j)x_i^{\alpha_1}e_j=x_j^{\alpha_1} (i, j)e_j$$  in
   ${\rm gr}(BC_{r,t}^{\text{\rm aff}})$,  by inductive assumption and our previous results on  $h\in$ $\{s_1, \ldots, s_{r-1}, c_1,\ldots, c_r, x_1\}$, we have \eqref{v2}. So  \eqref{v1} is true.  This completes the proof.
   \end{proof}

\begin{Defn}\label{affinewbcsa} Let $I$ be the two-sided ideal of $BC_{r, t}^{\rm aff}$ generated by $\omega_{2k+1}-{\tilde \omega}_{2k+1}$, where
$\tilde {\omega}_{2k+1}\in R$ for all $k\in \mathbb Z^{>0}$.  Let $\widetilde {BC}_{r, t}=BC_{r, t}^{\rm aff}/I$. \end{Defn}

\begin{Defn}\label{cwbcsa1} Let $I$ be the two-sided ideal of $\widetilde{BC}_{r, t}$ generated by $f(x_1)$ and $g(\bar x_1)$, where
 \begin{equation}\label{fg} f(x_1)= x_1^k\mbox{$\prod\limits  _{i=1}^m$} (x_1^2-u_i^2), \text{ and  }g(\bar x_1)= \bar x_1^{k_1}\mbox{$\prod\limits _{j=1}^{m_1}$}(\bar x_1^2-\bar u_j^2),\end{equation}  for some non-zero  $u_1, \ldots, u_m, \bar u_1, \ldots,\bar u_{m_1}\in R$ such that $\ell=k+2m=k_1+2m_1$ and \begin{equation} \label{efg3211} e_1 f(x_1)=(-1)^k e_1 g(\bar x_1).\end{equation}  The {\it level $\ell$  or cyclotomic walled Brauer-Clifford superalgebra $BC_{\ell, r, t}$ } is the quotient algebra
 $\widetilde{BC}_{r, t}/I$.\end{Defn}
In section~6, we will explain the reason why $f(x_1)$ and $g(\bar x_1)$ have to satisfy   \eqref{fg} and \eqref{efg3211}.

\begin{Defn}\label{cregm}  We say that $\textbf {m}$ is a regular monomial of $\widetilde{BC}_{r, t}$ (resp.,
$ BC_{\ell, r, t}$)  if it is of form  $ x^{\alpha} d   {\bar x}^{\beta} $,  for some $d\in S$, and $ (\alpha, \beta)\in  \mathbb N^r\times \mathbb N^t$ (resp.
$\mathbb Z_\ell^r\times \mathbb Z_\ell^t$), where $S$ is given   in \eqref{bcabasis}.\end{Defn}

\begin{Cor}\label{level-l-span}
As  $R$-modules, both $\widetilde{BC}_{r, t}$ and  $BC_{\ell, r,t}$ are  spanned by their  regular monomials.  \end{Cor}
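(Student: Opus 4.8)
The plan is to deduce both statements from Proposition~\ref{spanned} by passing to successive quotients, treating $\widetilde{BC}_{r,t}$ directly and then reducing the $x$- and $\bar x$-exponents modulo $\ell$ in the cyclotomic quotient. For $\widetilde{BC}_{r,t}$ the claim is essentially immediate: Proposition~\ref{spanned} says $BC_{r,t}^{\rm aff}$ is spanned by the regular monomials $x^{\alpha} d\,\bar x^{\beta}\prod_{n}\omega_{2n+1}^{a_{2n+1}}$, and in $\widetilde{BC}_{r,t}=BC_{r,t}^{\rm aff}/I$ each central $\omega_{2n+1}$ becomes the scalar $\tilde\omega_{2n+1}\in R$. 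Hence the factor $\prod_n\omega_{2n+1}^{a_{2n+1}}$ collapses to a scalar, and the image of each generating monomial is an $R$-multiple of a regular monomial $x^{\alpha}d\,\bar x^{\beta}$ of $\widetilde{BC}_{r,t}$; so these span. Since $BC_{\ell,r,t}$ is in turn a quotient of $\widetilde{BC}_{r,t}$, its regular monomials (the images of the $x^{\alpha}d\,\bar x^{\beta}$ with $\alpha\in\mathbb N^r,\ \beta\in\mathbb N^t$) already span it, and all that remains is to push every exponent into $\{0,\dots,\ell-1\}$.

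For this reduction I would work in the associated graded superalgebra coming from the filtration \eqref{filtr}, in which $x_i=x_i'$ and $\bar x_j=\bar x_j'$. Because $f$ is monic of degree $\ell$ and $f(x_1)=0$ in $BC_{\ell,r,t}$, comparing top-degree components yields $x_1^{\ell}=0$ in the graded algebra, and likewise $g(\bar x_1)=0$ gives $\bar x_1^{\ell}=0$. Since $I$ is two-sided, conjugating $f(x_1)$ by the degree-zero element $w=s_{i-1}\cdots s_1$ shows $f(x_i')=w\,f(x_1)\,w^{-1}=0$; as $x_i=x_i'=w x_1 w^{-1}$ in the graded algebra, its degree-$\ell$ symbol satisfies $x_i^{\ell}=w\,x_1^{\ell}\,w^{-1}=0$ for every $i$. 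The same computation with $\bar w=\bar s_{j-1}\cdots\bar s_1$ gives $\bar x_j^{\ell}=0$ for every $j$.

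To finish, I would use that in the graded algebra the $x_i$ commute among themselves, the $\bar x_j$ commute among themselves, and the $x_i$ commute with the $\bar x_j$ (by \eqref{comm-hc} together with the commutation identities already exploited in the proof of Proposition~\ref{spanned}). Thus whenever some $\alpha_i\ge\ell$ one may split off a factor $x_i^{\ell}=0$, so the symbol of $x^{\alpha}d\,\bar x^{\beta}$ vanishes, and symmetrically if some $\beta_j\ge\ell$. Hence the associated graded of $BC_{\ell,r,t}$ is spanned by the symbols of the regular monomials with $\alpha\in\mathbb Z_\ell^r,\ \beta\in\mathbb Z_\ell^t$. The induced filtration is exhaustive and bounded below, so the standard degree-descending argument lifts this graded spanning set to a spanning set of $BC_{\ell,r,t}$ itself, which is the assertion.

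The routine parts here are the scalar specialization of the $\omega$'s and the final lift from the graded algebra; the step needing genuine care is the cyclotomic relation for the \emph{higher} $x_i$ and $\bar x_j$. This rests on correctly assembling three facts: that $I$ is two-sided, so $f(x_1)=0$ forces $f(x_i')=0$ after conjugation by the $s$'s; that $x_i$ and $x_i'$ agree only in the associated graded, differing in the algebra itself by the degree-zero Jucys--Murphy correction $L_i$; and that passing to the graded algebra is precisely what converts the monic relation $f(x_1)=0$ into the clean vanishing $x_1^{\ell}=0$. Once these are in place, commutativity of the $x_i,\bar x_j$ in the graded algebra makes the exponent reduction automatic.
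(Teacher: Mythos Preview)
Your proof is correct and follows essentially the same approach as the paper: specialize the central $\omega$'s to scalars to handle $\widetilde{BC}_{r,t}$, then use a filtration/degree-descent argument to reduce exponents in the cyclotomic quotient. The paper's write-up is terser, simply asserting that $x_i^{\alpha_i}$ (for $\alpha_i\ge\ell$) can be expressed via lower-degree elements and iterating; you supply the justification the paper leaves implicit, namely that conjugating $f(x_1)=0$ by $s_{i-1}\cdots s_1$ gives $f(x_i')=0$, hence $x_i^\ell=0$ in $\mathrm{gr}(BC_{\ell,r,t})$. One minor remark: your invocation of the commutativity of the $x_i$ and $\bar x_j$ in the graded algebra is unnecessary, since $x^\alpha=x_1^{\alpha_1}\cdots x_r^{\alpha_r}$ already has $x_i^{\alpha_i}$ as a contiguous factor, so the symbol of $x^\alpha d\,\bar x^\beta$ vanishes simply because one factor does.
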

\begin{proof} By Proposition~\ref{spanned}, $\widetilde{BC}_{r, t}$ is spanned by all its regular monomials.
Let $\phi_\ell: \widetilde{BC}_{r, t}\twoheadrightarrow BC_{\ell, r,t}$ be the canonical epimorphism.
It is enough to verify the image of a regular monomial $\textbf{m}$ of $\widetilde{BC}_{r, t}$   can be expressed as a  linear combination of
regular monomials of $BC_{\ell, r,t}$. If both $\alpha\in  \Z_\ell^r$ and $\beta\in \Z_\ell^t$,   then the images of $\textbf{m}$ is a regular monomial of $ BC_{\ell, r,t}$. Otherwise, either $\alpha_i\ge \ell$ or  $\beta_j\ge \ell$ for some possible $i$ or $j$. Since $\widetilde {BC}_{r, t}$ inherits the graded structure of   $BC_{r, t}^{\rm aff}$,  it results in a graded structure on $BC_{\ell, r, t}$.
 So,  either $x_i^{\alpha_i}$ or $\bar x_j^{\beta_j}$ can be expressed as a linear combination of elements in $BC_{\ell, r, t}$ with lower degrees. Using these elements to replace either $x_i^{\alpha_i}$ or $\bar x_j^{\beta_j}$ in the image of $\textbf{m}$ and considering the inverse images of such elements
in $\widetilde{BC}_{r, t}$, we see that the image of  $\textbf{m}$ can be expressed as a linear combination of regular monomials of $BC_{\ell, r, t}$, as required.
\end{proof}

\section{A basis of $BC_{2, r, t}$ with special parameters}
Let $\mfg=\mathfrak{q}(n)$ be the queer Lie superalgebra of rank $n$ over $\C$, which has a basis $e_{ij}=E_{i,j}+E_{-i,-j}$ (even element), $f_{i,j}=E_{i,-j}+E_{-i,j}$ (odd element) for
$i,j\in I^+=\{1,2,...,n\},$ where $E_{i,j}$ is the $2n\times2n$ matrix with entry $1$ at $(i,j)$ position and zero otherwise for $i,j\in I=I^+\cup I^-$, and  $I^-=-I^+$.
Let  $V=\C^{n|n}=V_{\bar0}\oplus V_{\bar1}$ be the
natural $\mfg$-module (and the natural ${\mathfrak{gl}}_{n|n}$-module) with basis $\{v_i\,|\,i\in I\}$. 
Then $v_i$ has the parity $[v_i]=[i]\in\Z_2$, where $[i]=0$ and $[-i]= 1$ for $i\in I^+$.
Let $V^*$ be the linear dual space of $V$ with  dual basis $\{\bar v_i\,|\,i\in I\}$. Thus $V^*$ is a left $\mfg$-module with action
\begin{eqnarray}\label{action-dual}
E_{a,b}\bar v_i=-(-1)^{[a]([a]+[b])}\d_{i,a}\bar v_b\mbox{ for }a,b,i\in I.\end{eqnarray}
Let $\fh=\fh_{\bar 0}\oplus\fh_{\bar 1}$ be a Cartan subalgebra of $\mfg$ with even part $\fh_{\bar0}={\rm span}\{e_{i,i}\,\,|\,i\in I^+\}$ and
odd part $\fh_{\bar 1}={\rm span}\{f_{i,i}\,\,|\,i\in I^+\}$. Let $\fh^*_{\bar0}$ be the dual space of $\fh_{\bar0}$ with
 $\{\es_i\,|\,i\in I^+\}$ being the dual basis of $\{e_{i,i}\,\,|\,i\in I^+\}$. Then an element $\l\in\fh^*$ (called a {\it weight}) can be written as \begin{equation}\label{weight-}\l=\SUM{i\in I^+}{}\l_i\es_i=(\l_1,...,\l_n)\mbox{ \ with \ }\l_i\in\C.\end{equation}
Let $M$ be any $\mfg$-module. For any $r,t\!\in\!\Z^{\ge0}$, set $M^{r,t}=M\OTIMES V^{\otimes r}\OTIMES  (V^*)^{\otimes t}$. For convenience
we denote the ordered set \begin{equation}\label{ordered-set}J=\{0\}\cup J_1\cup J_2,\
\mbox{ where $J_1=\{1,...,r\}$, $J_2=\{\bar 1,...,\bar t\}$,} \end{equation}
 and
$0\prec1\prec\ldots\prec r\prec\bar1\prec\ldots\prec\bar t$.
We write $M^{r,t}$ as
\begin{equation}\label{M-st==}M^{r,t}=\OT{i\in J}V_i,\mbox{ \ where $V_0=M$, $V_i=V$ if $0\prec i\prec\bar1$, and $V_i=V^*$ if $i\succ r$,} \end{equation}
 (hereafter all tensor products will be taken according to the order in $J$), which is a left $U(\mfg)^{\otimes(r+t+1)}$-module (where $U(\mfg)$ is the universal enveloping algebra of $\mfg$), with the action given by
 $$\Big(\OT{i\in J} g_i\Big)\Big(\OT{i\in J} x_i\Big)=(-1)^{\sum\limits_{i\in J}{}[g_i]\sum\limits_{j\prec i}{}[x_j]}\OT{i\in J}(g_ix_i)\mbox{ for }g_i\in U(\mfg),\ x_i\in V_i.$$

For the purpose of proving a basis of level $2$ walled-Brauer Clifford superalgebra, we take
$n=2m$ to be even integer. We denote $I_1^+=\{1,...,m\},$ $I_2^+=m+I_1^+$. Thus $I^+=I_1^+\cup I_2^+$.
For $i\in I_1^+$, we denote $i_{\bullet}=i+m\in I_2^+$. For $i\in I_2^+$, we denote $i_{\circ}=i-m\in I_1^+$.
Let $M=L_\l$  be the finite dimensional simple $\mfg$-module of type $M$ with dominant highest weight \equa{highw}{\mbox{$\l=(p,p\!-\!1,...,p\!-\!n\!+\!1)$
 for some $ p\!\in\!\C$ such that $p\!\in\!\C\backslash\Z$ or $p\!\in\!\Z$ with $p\!>\!n$ or $p\!<\!0$.}}
 Then  $End_g(L_\lambda)$ is one dimensional.
Denote by $v_\l$ a fixed highest weight vector of $L_\l$ with even parity, and $(L_\l)_\l$ the highest weight space of $L_\l$, which is $2^m$-dimensional with a basis
\equa{-basis-u}{B_1=\{b^\th v_\l\,|\,b^\th\in B_0\}\mbox{ with }
B_0=\Big\{b^\th:=\mbox{$\prod\limits_{i\in I_1^+}$} f_{i,i}^{\th_i}\,\Big|\,\th=(\th_1,...,\th_m)\in\Z_2^m\Big\},}
where the products are taken in any fixed order $($changing the order only changes the vectors by a factor $\pm1)$.
%
%
%
%
%
For $i\in I_2^+$, we have
\equa{Action-ffi0}{f_{i,i}v_\l=\sqrt{-\frac{p+i-1}{p+i_\circ-1}}f_{i_\circ,i_\circ}v_\l.}
Let $C$ be the PBW monomial basis of $U(\mfg^-\oplus\fh)$.
We say a basis element $a\in C$ has {\it length} $\ell(a):=k$ if $a$ contains $k$ factors; for instance, $\ell(b^{\th})=|\th|$.
For $i\in\Z^{\ge0}$, let $C_i=\{a\in C\,|\,\ell(a)=i\}.$ Set \equa{basis-u}{D=
\Big\{u^{\si}:=\mbox{$\prod\limits_{i\in I_1^+}$}f_{i_{\bullet},i}^{\si_i}\,\Big|
\,\si\!=\!(\si_1,...,\si_m)\!\in\!\Z_2^m\Big\}\subset C, \mbox{ and }D_i=D\cap C_i.}
Let $(L_\l)_i$ be the subspace of $L_\l$ spanned by $cv_\l$ for $c\in C$ with $\ell(c)\le i$. Set $(L_\l)_{-1}=0$. Note that elements of $\mfg^+$ acting on $L_\l$ send $(L_\l)_i$ to $(L_\l)_{i-1}$.

\begin{Lemma}\label{basi1lemm}
For $i\in\Z^{\ge0}$, the set $D_iv_\l$ is $\C$-linear independent under modulo $(L_\l)_{i-1}$.
\end{Lemma}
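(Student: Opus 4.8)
Since each factor $f_{j_\bullet,j}$ of $u^\si$ carries weight $\es_{j_\bullet}-\es_j$, the vector $u^\si v_\l$ is a weight vector of weight $\l+\sum_{j\,:\,\si_j=1}(\es_{j_\bullet}-\es_j)$, and the coordinates of this weight recover $\si$; hence the elements of $D_iv_\l$ have pairwise distinct weights. Because $(L_\l)_{i-1}$ is spanned by the weight vectors $cv_\l$, it is a sum of weight spaces, so any relation $\sum_{|\si|=i}c_\si u^\si v_\l\in(L_\l)_{i-1}$ forces $c_\si u^\si v_\l\in(L_\l)_{i-1}$ for each $\si$ individually. Thus the plan is to reduce the lemma to the single assertion $u^\si v_\l\notin(L_\l)_{i-1}$ for every $\si$ with $|\si|=i$.

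To detect this I would pair $u^\si v_\l$ against a raising operator. For $j\in I_1^+$ both $f_{j,j_\bullet}$ and $e_{j,j_\bullet}$ lie in $\mfg^+$ and have weight $\es_j-\es_{j_\bullet}$, opposite to that of $f_{j_\bullet,j}$. I would set $Y_\si=\prod_{j\,:\,\si_j=1}Y_j$ with each $Y_j\in\{f_{j,j_\bullet},e_{j,j_\bullet}\}$ chosen below; this is a length-$i$ element of $U(\mfg^+)$, so by the stated property that $\mfg^+$ carries $(L_\l)_k$ into $(L_\l)_{k-1}$, $Y_\si$ annihilates $(L_\l)_{i-1}$. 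It therefore suffices to arrange $Y_\si u^\si v_\l\neq0$, for then $u^\si v_\l\in(L_\l)_{i-1}$ would give $0=Y_\si u^\si v_\l\neq 0$.

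The evaluation of $Y_\si u^\si v_\l$ reduces to rank-one pieces. Root vectors attached to distinct indices $j$ occupy disjoint matrix positions, hence supercommute; moving each $Y_j$ next to its partner $f_{j_\bullet,j}$ yields $Y_\si u^\si v_\l=\pm\prod_{j\,:\,\si_j=1}(Y_jf_{j_\bullet,j})v_\l$, the resulting even factors now commuting. On the highest weight space one computes, using $f_{j,j_\bullet}v_\l=0$ together with the superalgebra brackets $\{f_{j,j_\bullet},f_{j_\bullet,j}\}=e_{j,j}+e_{j_\bullet,j_\bullet}$ and $[e_{j,j_\bullet},f_{j_\bullet,j}]=f_{j,j}-f_{j_\bullet,j_\bullet}$, that $f_{j,j_\bullet}f_{j_\bullet,j}$ acts as the scalar $\l_j+\l_{j_\bullet}$, while $e_{j,j_\bullet}f_{j_\bullet,j}$ acts as $f_{j,j}-f_{j_\bullet,j_\bullet}$, which by \eqref{Action-ffi0} sends $v_\l$ to $(1-s_j)f_{j,j}v_\l$ with $s_j=\sqrt{-(p+j_\bullet-1)/(p+j-1)}$.

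The main obstacle is to force this product to be nonzero for every $p$ allowed by \eqref{highw}. The scalar $\l_j+\l_{j_\bullet}=2p-2j-m+2$ vanishes for at most one index $j\in I_1^+$, and when $p>n$ or $p<0$ it never vanishes, so taking $Y_j=f_{j,j_\bullet}$ throughout gives a nonzero multiple of $v_\l$. The delicate case is $p\in\C\setminus\Z$ with $m$ odd, where a single $j_0$ may satisfy $\l_{j_0}+\l_{(j_0)_\bullet}=0$; for that index I would instead take $Y_{j_0}=e_{j_0,(j_0)_\bullet}$, contributing $(1-s_{j_0})f_{j_0,j_0}v_\l$. Here $f_{j_0,j_0}v_\l\neq0$ since $f_{j_0,j_0}^2v_\l=\l_{j_0}v_\l$ with $\l_{j_0}=m/2\neq0$, and $s_{j_0}\neq1$ because $s_{j_0}=1$ would force $p=1-j_0-m/2$, incompatible with $p=j_0-1+m/2$; the remaining indices keep $Y_j=f_{j,j_\bullet}$ and contribute nonzero scalars. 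Thus $Y_\si u^\si v_\l$ is a nonzero multiple of $v_\l$ or of $f_{j_0,j_0}v_\l$, which finishes the proof. The only genuinely technical points are checking that \eqref{highw} excludes all simultaneous vanishings and bookkeeping the signs produced by supercommuting the odd root vectors.
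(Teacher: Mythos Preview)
Your argument is correct, and in one respect it is more careful than the paper's. Both proofs rest on the same commutation relation $\{f_{\ell,\ell_\bullet},f_{\ell_\bullet,\ell}\}=e_{\ell,\ell}+e_{\ell_\bullet,\ell_\bullet}$ and the fact that $\mfg^+$ lowers the filtration on $L_\l$, but the organisation differs. The paper proceeds by induction on $i$: it applies a single $f_{\ell,\ell_\bullet}$ to the whole linear combination $\sum_{|\si|=i}a_\si u^\si v_\l$, producing $\sum_{\si_\ell\ne0}\pm a_\si(\l_\ell+\l_{\ell_\bullet})u^{\si-1_\ell}v_\l\in(L_\l)_{i-2}$, and then invokes the inductive hypothesis. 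You instead first separate the $u^\si v_\l$ by weight, reducing to a single $\si$, and then hit $u^\si v_\l$ with the full product $Y_\si$ at once. Your weight reduction is a pleasant simplification, and your batched application is just the paper's induction unwound.

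The genuine difference is your treatment of the scalar $\l_j+\l_{j_\bullet}=2p-2j-m+2$. The paper simply asserts this is nonzero by the hypothesis on $p$, but as you noticed, when $m$ is odd and $p\in\C\setminus\Z$ is the half-integer $j_0+m/2-1$ for some $j_0\in I_1^+$, that scalar does vanish. Your substitution $Y_{j_0}=e_{j_0,(j_0)_\bullet}$, yielding the nonzero vector $(1-s_{j_0})f_{j_0,j_0}v_\l$, genuinely closes this gap. In the paper's later applications one is free to take $m$ even, so the oversight is harmless there, but your version proves the lemma as stated.
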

\begin{proof}
Assume $c:=\sum_{\si\in\Z_2^m:|\si|=i}a_{\si} u^{\si} v_\l\in (L_\l)_{i-1}$ for some $a_{\si}\in\C$ with at least one $a_{\si}\ne0$.
Take a $\tilde\si\in\Z_2^m$ such that $a_{\tilde\si}\ne0$. Assume $\tilde\si_\ell\ne0$ for some $\ell\in I_1^+$.
Applying $f_{\ell,\ell_{\bullet}}\in\mfg^+$ to $c$, by moving $f_{\ell,\ell_{\bullet}}$ to the right until it meets $v_\l$, using the commutation relation $[f_{\ell,\ell_{\bullet}},f_{j_{\bullet},j}]=\d_{\ell{\ssc\,}j}(e_{\ell\ell}+e_{\ell_{\bullet},\ell_{\bullet}})$ (which is a Cartan element commuting with $f_{i_{\bullet},i}$ for $i\ne\ell$), we can easily obtain
$f_{\ell,\ell_{\bullet}}c=\sum_{\si\in\Z_2^m:\si_{\ell}\ne0}a'_{\si}(2p-(\ell+\ell_{\bullet})) u^{\si-1_\ell}v_\l\in (L_\l)_{i-2}$, where $a'_{\si}=\pm a_{\si}$, $1_\ell=(\d_{1\ell},...,\d_{m\ell})\in\Z_2^m$. Note that $2p-(\ell+\ell_{\bullet})\ne0$ by \eqref{highw}. Now induction on $|\th|$ gives that $a_{\tilde\si}=0$, a contradiction with the assumption.
\end{proof}

For each $i=1,2,...,$ by Lemma \ref{basi1lemm}, we can choose a maximal subset $\hat C_i$ of $C_i$ satisfying the following conditions
(i.e., we extend $D_i$ to a basis $\hat C_i$ of $(L_\l)_i$ modulo $(L_\l)_{i-1}$, thus  $\#\hat C_i={\rm dim}(L_\l)_i/(L_\l)_{i-1}$):
\begin{itemize}
\item[(C1)] $\hat C_i\supset D_i$;
\item[(C2)]
$\{u v_\l\,|\,u\in\hat C_i\}$ is a  $\C$-linear independent subset of $L_\l$.
\end{itemize}
Then we have the following
 basis of $L_\l$, \equa{BWE}{\mbox{$B^e:=\{w v_\l \,|\,w\in \hat C\}$, where  $\hat C=\bigcup\limits_{i=0}^\infty\hat C_i$}.} We say the basis element $wv_\l$ has {\it length} $\ell(wv_\l):=\ell(w)$.
Then from our choice of $\hat C_i$, we immediately have the following.
\begin{Lemma}\label{Basis20}
Let $\alpha\in C$ be a monomial basis element of length $j$. Then $\alpha v_\l$ is a combination of basis elements in $B^e$ with length $\le j$.
\end{Lemma}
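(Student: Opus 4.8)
The plan is to read off the claim from the way the sets $\hat C_i$ were selected, namely so that $\hat C_i v_\l$ descends to a basis of the quotient $(L_\l)_i/(L_\l)_{i-1}$. First I would record the one observation that places $\alpha v_\l$ in the correct filtration piece: since $\alpha\in C$ has $\ell(\alpha)=j$, the very definition of $(L_\l)_j$ (spanned by $cv_\l$ with $c\in C$ and $\ell(c)\le j$) gives $\alpha v_\l\in (L_\l)_j$. So the whole content is to express an arbitrary element of $(L_\l)_j$ in terms of the distinguished basis $B^e$.

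Next I would upgrade the layer-by-layer data into a single basis of $(L_\l)_j$. For each $i\le j$ we have $\hat C_i v_\l\subset (L_\l)_i$ (as $\ell(w)=i$ for $w\in\hat C_i\subset C_i$), and by condition (C2) together with the maximality/cardinality choice $\#\hat C_i={\rm dim}\,(L_\l)_i/(L_\l)_{i-1}$, the image of $\hat C_i v_\l$ in $(L_\l)_i/(L_\l)_{i-1}$ is a basis of that quotient. A routine induction on $i$ along the finite filtration $0=(L_\l)_{-1}\subset (L_\l)_0\subset\cdots\subset (L_\l)_j$ then shows that $\bigcup_{i=0}^{j}\hat C_i v_\l$ is a basis of $(L_\l)_j$: at each step one adjoins a lift of a basis of the top quotient to a basis of the previous piece, and linear independence across layers is exactly independence modulo the next-lower piece.

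Finally, since $\alpha v_\l\in(L_\l)_j$ and $\bigcup_{i=0}^{j}\hat C_i v_\l$ is a basis of $(L_\l)_j$, the vector $\alpha v_\l$ is a $\C$-linear combination of the $wv_\l$ with $w\in\bigcup_{i\le j}\hat C_i$, i.e.\ of elements of $B^e$ of length $\le j$, which is precisely the assertion. There is essentially no serious obstacle here; the only point requiring care is the standard filtration argument—verifying that the chosen $\hat C_i v_\l$ are simultaneously linearly independent across all layers and span $(L_\l)_j$—and this rests entirely on Lemma~\ref{basi1lemm} (which guarantees that $D_i v_\l$, hence the enlarged $\hat C_i v_\l$, is independent modulo $(L_\l)_{i-1}$) and on the definition of $(L_\l)_j$ as the span of monomials of length at most $j$.
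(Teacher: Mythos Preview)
Your proposal is correct and is exactly the argument the paper has in mind: the paper offers no proof beyond the remark that the statement follows ``from our choice of $\hat C_i$'', and you have simply unpacked this by observing $\alpha v_\l\in(L_\l)_j$ and that $\bigcup_{i\le j}\hat C_i v_\l$ is a basis of $(L_\l)_j$ via the standard filtration argument. The only minor comment is that Lemma~\ref{basi1lemm} is not really needed here; it was used earlier to guarantee that $D_i$ can be included in $\hat C_i$, while the spanning and independence you need come purely from the maximality in the choice of $\hat C_i$ and the stated equality $\#\hat C_i=\dim(L_\l)_i/(L_\l)_{i-1}$.
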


Take the following 
 basis of $M^{r,t}$,
\begin{equation}\label{basis-M-0}
B_M=\Big\{b_M= b\otimes\OT{i\in J_1}v_{k_i}\otimes\OT{i\in J_2}\bar v_{k_i}\,\Big|\,b\in B^e,\,
k_i\in I\Big\}.
\end{equation}
Introduce the following elements, \begin{eqnarray}\label{def-Omega}&\!\!\!\!\!\!\!\!\!\!\!\!\!\!\!\!\!\!\!\!\!\!\!\!\!\!\!&
\bar e_{ij}=E_{ij}-E_{-i,-j},\ \ \  \bar f_{ij}=E_{-i,j}-E_{i,-j}\ \ \in\ \ {\mathfrak {gl}}_{n|n},\nonumber\\
&\!\!\!\!\!\!\!\!\!\!\!\!\!\!\!\!\!\!\!\!\!\!\!\!\!\!\!\!\!\!\!&\Omega_0\!=\!\mbox{$\sum\limits_{i,j\in I}$}(-1)^{[j]}E_{ij}\OTIMES E_{ji}\in {\mathfrak {gl}}_{n|n}^{\otimes 2},\ \ \
\Omega_1\!=\!\mbox{$\sum\limits_{i,j\in I^+}$}e_{ij}\OTIMES \bar e_{ji}\!-\!\mbox{$\sum\limits_{i,j\in I^+}$}f_{ij}\OTIMES \bar f_{ji}\in\mfg\OTIMES {\mathfrak {gl}}_{n|n}.\end{eqnarray}
For $a,b\in J$ with $a\prec b$, we define
$\pi_{ab}:U(\mfg)\otimes U({\mathfrak {gl}}_{n|n})\to U(\mfg)\otimes U({\mathfrak {gl}}_{n|n})^{\otimes(r+t)}$ by \begin{equation}\label{pi-ab}
\pi_{ab}(x\OTIMES y)=1\OTIMES\cdots\OTIMES1\OTIMES x\OTIMES 1\OTIMES\cdots\OTIMES1\OTIMES y\OTIMES1\OTIMES\cdots\OTIMES1,\end{equation}
where $x$ and $y$ are in the $a$-th and $b$-th tensors respectively. Similarly we have $\pi_a:U(\mfg)\to U(\mfg)^{\otimes(r+t+1)}$
which sends  $x$ to the $a$-th tensor.
\begin{Defn}\label{casm}
We can use \eqref{pi-ab} to define the following elements of the endomorphism algebra ${\rm End}_{U(\mfg)}(M^{r,t})^{\rm op}$,
\begin{eqnarray}\label{operator--1}&\!\!\!\!\!\!\!\!\!\!\!\!\!\!\!\!\!\!\!\!\!\!\!\!\!\!&
s_i=\pi_{i,i+1}(\Omega_0)|_{M^{r,t}}\ (1\!\le\! i\!<\!r),\ \ \ \
\bar s_j=\pi_{\bar j,\overline{j+1}}(\Omega_0)|_{M^{r,t}}\ (1\!\le\! j\!<\!t),\nonumber\\&\!\!\!\!\!\!\!\!\!\!\!\!\!\!\!\!\!\!\!\!\!\!\!\!\!\!\!\!\!\!\!\!\!&
x'_i=-\pi_{0i}(\Omega_1)|_{M^{r,t}}\ (1\!\le\! i\!\le\! r),\ \ \ \ \bar x'_j=-\pi_{0\bar j}(\Omega_1)|_{M^{r,t}}\ (1\!\le\! j\!\le\! t),
  \nonumber\\&\!\!\!\!\!\!\!\!\!\!\!\!\!\!\!\!\!\!\!\!\!\!\!\!\!\!\!\!\!\!\!\!\!& e_i\!=\!-\pi_{i\bar i}(\Omega_0)|_{M^{r,t}}\,(1\!\le\! i\!\le\!\min\{r,t\}),\ \
   c_i\!=\!\pi_i(c)\, (1\!\le\! i\!\le\! r),\ \ \bar c_i\!=\!\pi_{\bar i}(\bar c)\,(1\!\le\! i\!\le\! t),
\end{eqnarray}
where $c:V\to V$ (resp., $\bar c:V^*\to V^*$) is the automorphism such that $c(v_{\pm i})=\pm v_{\mp i}$ (resp., $\bar c(\bar v_{\pm i})=\bar v_{\mp i}$).  Set $x_1=x'_1,\,\bar x_1=\bar x'_1$.
\end{Defn}

Observe that $c^2=-1$ and $\bar c^2=1$, and  $c,\bar c$ correspond to  maps $c,\bar c:I\to I$ such that
\equa{c-map}{c(\pm i)=\bar c(\pm i)=\mp i\mbox{ for }i\in I^+,
\mbox{ \ and $c(v_i)=[i]v_{c(i)}$, \ \ $\bar c(\bar v_i)=\bar v_{\bar c(i)}$ for $i\in I$.}}

\begin{Lemma}\label{SMSM}
\begin{enumerate}\item[\rm(a)]
The minimal polynomial of $x_1$ with respect to  $M^{r,t}$ is $f(x)=x^2-p(p+1)$.
\item[\rm(b)]
The minimal polynomial of $\bar x_1$ with respect to  $M^{r,t}$ is $g(\bar x_1)=x^2-(p-n+1)(p-n)$.
\item[\rm(c)] We have $e_1x_1e_1=-n(2p-n+1)e_1$ with respect to $M^{r,t}$.
\end{enumerate}
\end{Lemma}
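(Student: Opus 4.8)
The plan is to compute each of the three minimal‑polynomial/scalar identities on the small tensor factors that the relevant operator actually touches, using crucially that $x_1,\bar x_1,e_1$ are by construction elements of $\mathrm{End}_{U(\mfg)}(M^{r,t})^{\mathrm{op}}$ (Definition~\ref{casm}). Since $x_1=x_1'=-\pi_{01}(\Omega_1)$ acts only on the $0$‑th and $1$‑st tensor slots, it has the form $\theta\otimes\mathrm{id}$ with $\theta=-\Omega_1$ acting on $M\otimes V$; hence (for $r\ge1$) the minimal polynomial of $x_1$ on $M^{r,t}$ equals that of $\theta$ on $M\otimes V$. Moreover $\theta\in\mathrm{End}_{U(\mfg)}(M\otimes V)$, so $\theta$ and $\theta^2$ commute with the diagonal $\mfg$‑action, and any polynomial identity satisfied by $\theta$ on a vector is automatically satisfied on the whole $U(\mfg)$‑submodule it generates.

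For part (a) I would first pin down the candidate polynomial and the non‑scalarness by a finite computation. Using the explicit formulas for $\bar e_{ji}v_k$ and $\bar f_{ji}v_k$ coming from \eqref{def-Omega}, the $\mathfrak q(n)$‑relations $[e_{1j},f_{11}]=\delta_{j1}f_{11}-f_{1j}$ and $\{f_{1j},f_{11}\}=\delta_{j1}e_{11}+e_{1j}$, and $f_{11}^2v_\l=\l_1v_\l=p\,v_\l$, one checks that the two‑dimensional space spanned by $A=v_\l\otimes v_1$ and $B=f_{11}v_\l\otimes v_{-1}$ is $\theta$‑stable, with $\theta A=-pA+B$ and $\theta B=pA+pB$ (only the Cartan/Clifford terms survive because $v_\l$ is a highest weight vector). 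The resulting $2\times2$ matrix has trace $0$ and determinant $-p(p+1)$, i.e. characteristic polynomial $x^2-p(p+1)$; since $p(p+1)\ne0$ for the admissible $p$ of \eqref{highw}, its two eigenvalues $\pm\sqrt{p(p+1)}$ are distinct, so $\theta$ is not scalar and no linear polynomial annihilates it. It remains only to upgrade $\theta^2=p(p+1)$ from this subspace to all of $M\otimes V$: because $\theta^2$ is a $\mfg$‑endomorphism equal to $p(p+1)$ on $U(\mfg)A+U(\mfg)B$, it suffices to know that $A,B$ generate $M\otimes V$ as a $\mfg$‑module. This follows from the genericity of $p$ (typicality of $L_\l$, hence complete reducibility of $L_\l\otimes V$) together with the observation that $\l+\es_1$ is the only dominant weight among the weights $\{\l+\es_i\}$ of $M\otimes V$, so there is a single addable box and the Clifford structure only doubles it into the $\pm$ pair already seen.

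Part (b) is carried out identically with $M\otimes V^*$ in place of $M\otimes V$ and $\bar x_1=-\pi_{0\bar1}(\Omega_1)$ in place of $x_1$. Using the dual action \eqref{action-dual}, the unique dominant weight now is $\l-\es_n$, realized by $v_\l\otimes\bar v_n$, and the relevant two‑dimensional $\bar\theta$‑stable space is spanned by $v_\l\otimes\bar v_n$ and $f_{nn}v_\l\otimes\bar v_{-n}$. The analogous computation (with $\l_n=p-n+1$ and $f_{nn}^2v_\l=\l_nv_\l$) yields a matrix whose characteristic polynomial is $x^2-\l_n(\l_n-1)=x^2-(p-n+1)(p-n)$, and the same globalization argument gives the minimal polynomial $g$. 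For part (c), note that $e_1=-\pi_{1\bar1}(\Omega_0)$ is the $V\otimes V^*$ contraction operator which factors through $M$; since $\mathrm{End}_\mfg(L_\l)$ is one‑dimensional, $e_1 x_1 e_1\in R\,e_1$, and it only remains to identify the scalar. Evaluating $e_1x_1e_1$ on the image of the cup vector $\sum_{i\in I}\pm\,v_i\otimes\bar v_i$ collapses the middle application of $x_1=-\Omega_1$ into a trace over $V$ of its partial action, which reduces to the eigenvalue $\sum_{i}\l_i$ of $\sum_i e_{ii}$ on $v_\l$; using $\sum_{i=1}^n\l_i=np-\tfrac{n(n-1)}2$, i.e. $n(2p-n+1)=2\sum_i\l_i$, one obtains $e_1x_1e_1=-2\big(\sum_i\l_i\big)e_1=-n(2p-n+1)e_1$.

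The step I expect to be the genuine obstacle is the globalization in (a) and (b): the explicit highest‑weight computation only controls $\theta$ on the cyclic piece generated by $A,B$, and to conclude that $x^2-p(p+1)$ is the full minimal polynomial one must rule out any additional eigenvalue coming from another composition factor of $L_\l\otimes V$. Making this rigorous is exactly where the hypotheses on $p$ in \eqref{highw} enter, through typicality of $L_\l$ (semisimplicity of the mixed tensor product) and the resulting control of which dominant weights occur; everything else is bookkeeping with the $\mathfrak q(n)$ commutation relations and the matrix‑unit actions on $V$ and $V^*$.
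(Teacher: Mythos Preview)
Your approach is essentially the paper's: for (a)--(b) you compute $x_1$ (resp.\ $\bar x_1$) on a two-dimensional invariant subspace containing the highest weight vector of $L_\l\otimes V$ (resp.\ $L_\l\otimes V^*$), read off the characteristic polynomial, and globalize via the $\mfg$-module structure. The paper organizes the globalization slightly differently---it first decomposes $L_\l\otimes V=L_\mu^{\oplus2}$ by counting highest weight vectors and then observes that the generators $v_\mu^\pm=v_\l\otimes v_{\pm1}$ already make the propagation immediate---but the content is the same, and your basis $\{A,B\}$ spans exactly the paper's subspace $\{v_\mu^+,f_{11}v_\mu^-\}$ (since $f_{11}v_\mu^-=B+A$). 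One small slip: your claim $p(p+1)\ne0$ fails for $p=-1$, which is permitted by \eqref{highw}; however this does not matter, because your $2\times2$ matrix has the nonzero off-diagonal entry $1$ for every $p$, hence is never scalar, and its minimal polynomial is always the characteristic polynomial $x^2-p(p+1)$.

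For (c) the paper simply evaluates $e_1x_1e_1$ directly on $v_\l\otimes v_a\otimes\bar v_b$, whereas you first invoke one-dimensionality of $\End_\mfg(L_\l)$ to force $e_1x_1e_1\in R\,e_1$ and then extract the scalar by the cup--trace computation. Both routes are valid; yours is a bit more conceptual, the paper's more explicit, and they of course land on the same scalar $-n(2p-n+1)=-2\sum_i\l_i$.
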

\begin{proof}
(a) We may assume $r=1,\,t=0$. Note that the only possible highest weight in the finite dimensional $\mfg$-module $L_\l\otimes V$ is $\mu=\l+\epsilon_1$, which is a typical dominant weight. Thus $L_\l\otimes V$ must be complete reducible, and thus  a direct sum of finite copies of $L_{\mu}$. Observe that
the set $\{u^\th\otimes v_{\pm1}\,|\,u^\th\in B_0\}$, with $2^{m+1}$ elements, is a maximal set of $\C$-linear independent highest weight vectors of weight $\mu$. Since $L_\mu$ occupies $2^m$ $\C$-linear independent highest weight vectors, we  see that $L_\l\otimes V=L_\mu^{\oplus2}$, which  as a $\mfg$-module is generated by
$v^\pm_\mu:=v_\l\otimes v_{\pm1}$. One can easily verify that $v_\mu^\pm x_1=\mp(p+1)v_\mu^\pm\pm f_{11}v_\mu^\mp $. Thus $v_\mu^+,f_{11}v_\mu^{-}$ (resp.,
$v_\mu^-,f_{11}v_\mu^{+}$) span a 2-dimensional $x_1$-invariant subspace of $L_\l\otimes V$, and the minimal polynomial of $x_1$ in this subspace is $f(x)=x_1^2-p(p+1)$. Since $x_1$ commutes with the $\mfg$-action and $L_\l\otimes V$ is generated by $v_\mu^\pm$, we see $f(x_1)$ is also the minimal polynomial of $x_1$ in $M^{r,t}$.

(b) We can assume $r=0,\,t=1.$ Similar to the arguments in (a), we have $L_\l\otimes V^*=L_\nu^{\oplus2}$ with highest weight $\nu=\l-\epsilon_n$ (which is again a typical dominant weight) and two highest weight vectors $v_\nu^\pm:=v_\l\otimes\bar v_{\pm n}$. In addition, $v_\nu^\pm\bar x_1=\pm(p-n)v_\nu^\pm+f_{nn} v_\nu^\mp$. Thus the minimal polynomial of $\bar x_1$ is $g(\bar x_1)=\bar x_1^2-(p-n+1)(p-n)$.

(c) We can assume $r=t=1.$ Then for $a,b\in I$, we have
$$\begin{array}{lll}(v_\l\otimes v_a\otimes \bar v_b)e_1x_1e_1=(-1)^{[a]}\d_{ab}\sum\limits_{i\in I}(v_\l\otimes v_i\otimes\bar v_i)x_1e_1\\[4pt]
\phantom{===}=(-1)^{[a]}\d_{ab}\sum\limits_{i\in I}(-1)^{[i]}\big((p\!+\!1\!-\!i)v_\l\otimes v_i\otimes\bar v_i\big)e_1\!=\!-n(2p\!-\!n\!+\!1)(v_\l\otimes v_a\otimes \bar v_b)e_1.\end{array}$$
Since $L_\l\otimes V\otimes V^*$ is generated by $v_\l\otimes v_a\otimes \bar v_b$ for $a,b\in I$, and $e_1,x_1$ commute with the $\mfg$-action, we obtain (c).
 \end{proof}
\begin{Lemma}\label{p4} For $k\in I^+$, we have
\begin{eqnarray*}
&\!\!\!\!\!\!\!\!\!&
(v_\lambda\otimes v_{\pm k})x_1=\mp\lambda_k v_\lambda \otimes v_{\pm k}\pm (-1)^{[ v_\lambda]}f_{k,k}v_{\lambda}\otimes v_{\mp k}\mp \mbox{$\sum\limits_{j<k}$}e_{k,j}v_\lambda \otimes v_{\pm j}\mp\(-1)^{[v_\lambda]} \mbox{$\sum\limits_{j<k}$}f_{k,j}v_\lambda \otimes v_{\mp j},\\
&\!\!\!\!\!\!\!\!\!\!\!\!\!\!\!\!\!\!\!\!\!\!\!\!&
(v_\lambda\otimes{\bar v_{\pm k}}) \bar x_1=\pm\lambda_k v_\lambda \otimes {\bar v_{\pm k}}\mp (-1)^{[ v_\lambda]}f_{k,k}v_{\lambda}\otimes \bar  v_{\mp k}  \pm \mbox{$\sum\limits_{i>k}$}e_{i,k}v_\lambda \otimes \bar v_{\pm i} \mp\(-1)^{[ v_\lambda]}\mbox{$\sum\limits_{i>k}$}f_{i,k}v_\lambda \otimes {\bar v_{\mp i}}.
\end{eqnarray*}
\end{Lemma}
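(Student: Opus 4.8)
The plan is to compute the operators $x_1=-\pi_{01}(\Omega_1)$ and $\bar x_1=-\pi_{0\bar1}(\Omega_1)$ directly from Definition~\ref{casm}, reducing exactly as in Lemma~\ref{SMSM} to the cases $r=1,\,t=0$ and $r=0,\,t=1$ respectively. This reduction is legitimate because $\pi_{01}$ (resp.\ $\pi_{0\bar1}$) only acts on the $0$-th and the first (resp.\ $\bar1$-th) tensor slots, so it suffices to evaluate $x_1$ on $v_\lambda\otimes v_{\pm k}\in L_\lambda\otimes V$ and $\bar x_1$ on $v_\lambda\otimes\bar v_{\pm k}\in L_\lambda\otimes V^*$.

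First, for $x_1$, I would substitute the explicit expression $\Omega_1=\sum_{i,j\in I^+}e_{ij}\OTIMES\bar e_{ji}-\sum_{i,j\in I^+}f_{ij}\OTIMES\bar f_{ji}$ from \eqref{def-Omega} and apply the super (Koszul) sign rule of the tensor action to $v_\lambda\otimes v_{\pm k}$. Since $\bar e_{ji}=E_{ji}-E_{-j,-i}$ and $\bar f_{ji}=E_{-j,i}-E_{j,-i}$ act on the natural module $V$ by matrix units, evaluating $\bar e_{ji}v_{\pm k}$ and $\bar f_{ji}v_{\pm k}$ forces $i=k$ and collapses the double sum to a single sum over $j$, leaving $e_{kj}$ and $f_{kj}$ acting on $v_\lambda$. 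The odd factors contribute a sign $(-1)^{[v_\lambda]}$, since each $\bar f_{ji}$ is odd and must be moved past $v_\lambda$; this is the source of the $(-1)^{[v_\lambda]}$ prefactors in the statement. I would then invoke the highest weight structure forced by \eqref{highw}: the positive root vectors $e_{kj},f_{kj}$ with $j>k$ annihilate $v_\lambda$, the Cartan element $e_{kk}$ acts by the scalar $\lambda_k$, and $f_{kk}\in\fh_{\bar 1}$ produces the surviving odd vector $f_{kk}v_\lambda$. This truncates the sums to $j\le k$, isolating the diagonal term $\mp\lambda_k\,v_\lambda\otimes v_{\pm k}$, the Clifford-type term $\pm(-1)^{[v_\lambda]}f_{kk}v_\lambda\otimes v_{\mp k}$, and the two remaining sums over $j<k$.

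The computation for $\bar x_1$ runs in parallel but on $V^*$, where one must use the dual action \eqref{action-dual}, namely $E_{ab}\bar v_i=-(-1)^{[a]([a]+[b])}\delta_{ia}\bar v_b$. Here the evaluation of $\bar e_{ji}\bar v_{\pm k}$ and $\bar f_{ji}\bar v_{\pm k}$ instead forces $j=k$, so the surviving $\mfg$-operators are $e_{ik},f_{ik}$, which annihilate $v_\lambda$ unless $i\ge k$; this is why the second formula carries sums over $i>k$ together with the diagonal $\pm\lambda_k$ and the $f_{kk}$ contribution. Collecting terms in each of the four cases $\pm k$ then yields the two displayed identities, with the $\pm/\mp$ pattern dictated by the signs accumulated above.

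The algebraic manipulations are routine; the delicate part, and the main obstacle, is the consistent bookkeeping of the two independent sources of signs. One is the Koszul sign of the super tensor action, governed by $[v_\lambda]$ and the parities of $f_{ij}$ and $\bar f_{ji}$; the other, present only in the $\bar x_1$ computation, is the extra factor $-(-1)^{[a]([a]+[b])}$ built into the $V^*$-action. Tracking these simultaneously over the signs $\pm k$ is precisely what generates the alternating $\pm,\mp$ coefficients, and is where all the care is needed; everything else follows mechanically from \eqref{def-Omega}, the matrix-unit actions on $V$ and $V^*$, and the highest-weight relations imposed by \eqref{highw}.
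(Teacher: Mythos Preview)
Your proposal is correct and follows essentially the same approach as the paper, which simply states that the result follows from the definitions of $x_1$ and $\bar x_1$. You spell out in detail what that terse sentence means: plug in $\Omega_1$ from \eqref{def-Omega}, evaluate the matrix units on $v_{\pm k}$ (resp.\ $\bar v_{\pm k}$) to collapse one index, track the Koszul sign from the odd factor $\bar f_{ji}$ passing $v_\lambda$, and then use the highest-weight property of $v_\lambda$ to kill the terms with $j>k$ (resp.\ $i<k$) and turn the $e_{kk}$ term into the scalar $\lambda_k$.
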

\begin{proof}
The result follows from the definitions of $x_1$ and $\bar x_1$.
\end{proof}

For any $a\in I$, we set $a^+=|a|\in I^+$. Then \eqref{c-map} gives
\equa{CCSCS}{(c(a))^+=(\bar c(a))^+=a^+\mbox{ for }a\in I.}
Now we assume $BC_{2,r, t}$ is the level two walled-Brauer Clifford superalgebra such that $x_1,\bar x_1$ satisfy the degree 2 polynomials in Lemma \ref{SMSM}, and
 parameters satisfy
\equa{omega-i}{\omega_0\!=\!0,\
\omega_1\!=\!-2m(2p-2m+1),\
\omega_i\!=\!p(p+1)\omega_{i-2}
\mbox{ for $i\!\ge\!2$,}
}
where $ p\!\in\!\C\backslash\Z$, or $p\in\Z$ with $p>2m$ or $p<0$,
and $m\in\Z^{>0}$ satisfies $m\ge  2(r+t)$. By Lemma~4.4 and Definition~3.1(1)(3), we have
$e_1 f(x_1)=e_1g(\bar x_1)$.  We take $n=2m$. Take the weight $\l$ as in \eqref{highw}, then we can define the space $ M^{r,t}$ as in \eqref{M-st==}.

\begin{Prop}\label{hom} There is an algebra  homomorphism $\varphi: BC_{2,r, t} \rightarrow End_{\mfg}(M^{r,t})^{op}$ such that $\varphi$ sends the generators $e_1$, $x_1$, $\bar x_1$, $s_i$'s, $\bar s_j$'s, $c_m$'s, $\bar c_n$'s  to the same symbols defined in the Definition~\ref{casm}.\end{Prop}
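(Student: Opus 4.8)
The plan is to exploit the presentation of $BC_{2,r,t}$ as an iterated quotient of $BC_{r,t}^{\rm aff}$ furnished by Definitions~\ref{affinewbcsa} and~\ref{cwbcsa1}: to define $\varphi$ it suffices to check that the operators of Definition~\ref{casm} lie in $\End_\mfg(M^{r,t})^{\rm op}$ and obey every defining relation, and that the central parameters and the degree two polynomials are sent to the prescribed scalars and to zero. I would sort the relations into three groups: (a) the walled Brauer-Clifford relations \eqref{symm}--\eqref{aseradual} and Definition~\ref{wsera}(1)--(10), none of which involve $x_1,\bar x_1$; (b) the affine relations \eqref{asera}, \eqref{aseradual2} and Definition~\ref{awbsa}(1)--(11); and (c) the parameter values \eqref{omega-i} together with the level two relations $f(x_1)=g(\bar x_1)=0$ from \eqref{fg}.

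First I would record that all of the listed operators are $\mfg$-endomorphisms: $\Omega_0$ and $\Omega_1$ are $\mfg$-invariant tensors, so $s_i,\bar s_j,e_i,x_i',\bar x_j'$ commute with the diagonal $\mfg$-action, while $c,\bar c$ are the odd automorphisms exhibiting $\mathfrak q(n)$ as a centraliser inside $\gl_{n|n}$, whence $c_i,\bar c_j$ too are $\mfg$-endomorphisms. For group (a) the generators $s_i,\bar s_j,e_i,c_i,\bar c_j$ act as the identity on the factor $M=L_\l$ placed in position $0$, so their relations are exactly those verified in the mixed Schur-Weyl-Sergeev duality of Jung-Kang~\cite{JK} on $V^{\otimes r}\otimes(V^*)^{\otimes t}$, and hold after extending by the identity on $M$.

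Group (b) is the heart of the matter, and the step I expect to be the main obstacle. A large part of it is pure locality: $x_1'$ (resp.~$\bar x_1'$) touches only tensor positions $0,1$ (resp.~$0,\bar1$), so Definition~\ref{awbsa}(8)--(11), the relation $e_1s_1x_1s_1=s_1x_1s_1e_1$ of (2) (the operator $s_1x_1s_1$ having support $\{0,2\}$, disjoint from $\{1,\bar1\}$), its analogue (4), and the commutativities $s_jx_1=x_1s_j$ and $x_1c_i=c_ix_1$ for $i,j\neq1$ all reduce to disjointness of supports. The remaining relations I would check from the explicit formulas of Lemma~\ref{p4}: the anticommutation $x_1c_1=-c_1x_1$ (overlapping support at position $1$), the Jucys-Murphy commutativity $x_1x_2=x_2x_1$ with $x_2=s_1x_1s_1-(1-c_1c_2)s_1$, and the truly delicate relations binding $x_1,\bar x_1$ to the cap $e_1$, namely (1) $e_1(x_1+\bar x_1)=(x_1+\bar x_1)e_1=0$, expressing that $e_1$ annihilates the combined Casimir on the contracted pair, and (3), the operator analogue of Lemma~\ref{usef}(7).

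For group (c) the decisive inputs are Lemma~\ref{SMSM}: parts (a) and (b) give $x_1^2=p(p+1)\,\id$ and $\bar x_1^2=(p-n+1)(p-n)\,\id$ on $M^{r,t}$, which are precisely the level two relations $f(x_1)=0$, $g(\bar x_1)=0$ coming from \eqref{fg}, while part (c), $e_1x_1e_1=-n(2p-n+1)e_1$, yields $\omega_1$ with $n=2m$. Combining $x_1^2=p(p+1)\,\id$ with $e_1^2=0$ forces $e_1x_1^{2k}e_1=(p(p+1))^k e_1^2=0$ (Definition~\ref{awbsa}(6)) and $e_1x_1^{2k+1}e_1=(p(p+1))^k e_1x_1e_1=\omega_{2k+1}e_1$ via the recursion $\omega_{2k+1}=(p(p+1))^k\omega_1$ of \eqref{omega-i}; using relation (1) to write $e_1\bar x_1e_1=-e_1x_1e_1$ and then $\bar x_1^2=(p-n+1)(p-n)\,\id$ produces the values $\bar\omega_k$ dictated by Corollary~\ref{assump1}, so Definition~\ref{awbsa}(5)--(7) hold with the correct scalars. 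The compatibility \eqref{efg3211}, already noted to follow from Lemma~\ref{SMSM} together with Definition~\ref{awbsa}(1),(3), confirms the two cyclotomic relations are consistent, and hence $\varphi$ is a well-defined homomorphism.
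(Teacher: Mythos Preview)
Your outline is correct and matches the paper's proof in overall structure: sort the relations into the walled Brauer--Clifford block, the affine block, and the parameter/level-two block, and dispatch each in turn. Two points of comparison are worth recording.

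For the affine Hecke--Clifford relations \eqref{asera}--\eqref{aseradual2} and, more importantly, for Definition~\ref{awbsa}(3), the paper does not compute from the explicit matrix formulae of Lemma~\ref{p4} as you suggest. Instead it imports from \cite[Theorem~7.4.1]{HKS} a short list of operator identities for $\Omega_1$, $\Omega_0$ and the Clifford element $C$ (their (a)--(d)), and specialises the commutator identity (d) at positions $(1,\bar 1)$ to obtain Definition~\ref{awbsa}(3) in two lines. Your direct route via Lemma~\ref{p4} is valid in principle, but relation~(3) couples positions $0,1,\bar 1$ simultaneously and the bare-hands verification is considerably longer than the HKS citation; you should be aware that this is where the real work hides.

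Conversely, for Definition~\ref{awbsa}(5)--(7) your argument is cleaner than the paper's. The paper shows abstractly that $e_1x_1^ae_1=\omega_a e_1$ for \emph{some} scalar $\omega_a$ by writing $e_1=-\beta\circ\alpha$ with $\alpha:V\otimes V^*\to\C$, $\beta:\C\to V\otimes V^*$ and invoking that $L_\l$ is of type $M$ so that $\End_\mfg(L_\l)=\C$; it then checks $\omega_{2a}=0$ separately. You instead use $x_1^2=p(p+1)\,\id$ from Lemma~\ref{SMSM}(a) together with $e_1^2=0$ to get $e_1x_1^{2k}e_1=0$ and $e_1x_1^{2k+1}e_1=(p(p+1))^k\omega_1 e_1$ at once, matching \eqref{omega-i} directly. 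This is a genuine simplification.
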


\begin{proof}By Lemma~4.4(a)-(b),
we need to show the images of the generators satisfy the relations in Definition~\ref{awbsa}.
First it is easy to see that (2.1)--(2.3) and Definition~2.1 (1)--(10) are satisfied (cf.\cite{JK} or \cite [Theorem~1.4]{BGJKW}).
 By \cite[Thoerem~7.4.1]{HKS}, (2.7)--(2.8)  are satisfied.
 Moreover, (2), (4), (8)--(11) in Definition~\ref{awbsa} follows from the Definition~\ref{casm}.

Let $\Omega_{0,i}=\pi_{0,i}(\Omega_1) $ for $i\in J_1\cup J_2$,
$S_{i,j}=\pi_{i,j}(\Omega_0) $ for $i,j\in J_1\cup J_2$, and
$C_i=\pi_i(C)$ for $i\in J_1\cup J_2$, where $C=\sum_{i\in I^{+}} \bar f_{i,i}$.
Then $c_i= C_i|_{M^{r,t}}$ for $i\in J_1$ and   $\bar c_i= C_{\bar i}|_{M^{r,t}}$ for $\bar i\in J_2$. It follows from  the proof of \cite[Thoerem~7.4.1]{HKS}  that
\begin{itemize}
\item [(a)] $\Omega_1(1\otimes C )=-(1\otimes C )\Omega_1$,
\item [(b)] $S_{i,i+1}\Omega_{0,i}S_{i,i+1}=\Omega_{0,i+1}$,
\item [(c)] $(1^{\otimes {i-1}}\otimes C \otimes 1) S_{i,i+1}=S_{i,i+1}(1^{\otimes i }\otimes C)$,
\item [(d)] $\Omega_{0,i}\Omega_{0,j}-\Omega_{0,j}\Omega_{0,i}=(\Omega_{0,j}-\Omega_{0,i})S_{i,j}+(\Omega_{0,j}+\Omega_{0,i})
    C_iC_jS_{i,j}$.
 \end{itemize}
 Assuming that $i=1$ and $j=\bar 1$ in (d), we have \begin{equation}\label{sss1} \Omega_{0,1}(\Omega_{0,\bar 1}+S_{1,\bar 1}-C_1C_{\bar 1}S_{1,\bar 1})=\Omega_{0,\bar 1}\Omega_{0,1}+\Omega_{0,\bar 1}S_{1,\bar 1}+\Omega_{0,\bar 1}C_1C_{\bar 1}S_{1,\bar 1}.\end{equation}
 By (a)-(c), we get $\Omega_{0, \bar 1} C_1C_{\bar 1} S_{1, \bar 1}=-C_1S_{1, \bar 1} C_1\Omega_{0,  1}$, $\Omega_{0, \bar 1}S_{1,\bar 1}=S_{1,\bar 1} \Omega_{0, 1}$, $C_1 C_{\bar 1} S_{1, \bar 1}=C_1S_{1, \bar 1} C_1$, respectively.
  So, the right hand side of \eqref{sss1} is equal to
  \begin{equation} \label{sss2}\Omega_{0,\bar 1}\Omega_{0,1}+S_{1,\bar 1}\Omega_{0,1}-C_1S_{1,\bar 1}C_1\Omega_{0,1}=(\Omega_{0,\bar 1}+S_{1,\bar 1}-C_1S_{1,\bar 1}C_1)\Omega_{0,1}.\end{equation}
Now,  Definition~\ref{awbsa}(3) is satisfied by \eqref{sss1}-\eqref{sss2}. Consider the following  $\mfg$-homomorphism:
\begin{itemize}
\item[]
$\alpha:V \otimes V^{*} \rightarrow \mathbb C$
 such that$u\otimes \phi\mapsto (-1)^{[{\phi}] [{u}]} \phi(u)$;
\item[]
$\beta:\mathbb C\rightarrow V\otimes V^{*} $
such that $1\mapsto\sum_{i\in I}v_i\otimes \bar v_i$.
\end{itemize}
Then $e_1=-\beta\circ\alpha$.
Since $ \alpha\circ {x_1}^a\circ\beta \in End_{\mfg}(L_\lambda)$, and $L(\lambda)$ is of type $M$, we have  $\alpha\circ {x_1}^a\circ\beta=-{\omega_a}$, for some $ {\omega_a}\in\mathbb C$.
Then $e_1x_1^ae_1=(-\beta\circ\alpha)\circ x_1^a\circ(-\beta\circ\alpha)=-{\omega_a}\beta\circ\alpha=\omega_a e_1$.
Similarly, $e_1\bar x_1^a e_1=\bar \omega_a e_1$ for some ${\bar \omega_a}\in\mathbb C$. So, Definition~\ref{awbsa}
(5), (7) are satisfied.
Since $c_1^2=-1,\,\bar c_1^2=1,\,c_1x_1=-x_1c_1$,  we can get that $e_1c_1^2x_1^{2a}e_1=-e_1x_1^{2a}e_1$.
Hence $\omega_{2a}=-\omega_{2a}$ and $\,\omega_{2a}=0$, and Definition~\ref{awbsa}  (6) is satisfied.
Finally,  Definition~\ref{awbsa}  (1) follows from  Lemma \ref{p4}.
\end{proof}

By Proposition~\ref{hom},  $M^{r,t}$ is a right $BC_{2,r, t}$-module.
For any $\alpha,\b\in\Z_2^r$,
$\bar\alpha,\bar\beta\in\Z_2^t$, we 
 define the following elements of  $BC_{2,r, t}$:
\begin{equation}\label{x-al-bx-b}
c^\alpha=\mbox{$\prod\limits_{i=1}^rc_i^{\alpha_i},\ \  x'^\beta=\prod\limits_{j=1}^r x'^{\beta_j}_j,$}\ \
\bar c^{\bar\alpha}=\mbox{$\prod\limits_{i=1}^t\bar c_i^{\bar\alpha_i},\  \ \bar x'^{\bar \beta}=\prod\limits_{j=1}^t\bar x'^{\bar\beta_j}_j,$}
\end{equation}where the product in $x'^\beta$ is written in the order $x_r'^{\beta_r}\cdots x_1'^{\beta_1}$ (thus $x_1'$ acts first on $M^{r,t}$)
and the like for $\bar x'^{\bar\beta}$.

\begin{Theorem}\label{theo-2222}The monomials
\begin{equation}\label{monom}
{\textit{\textbf{m}}}:=d_1^{-1}c^\alpha x'^\beta e^f \bar x'^{\bar\beta}\bar c^{\bar\alpha}wd_2,\end{equation}
with $\alpha,\beta\in\Z_2^r,\,\bar\alpha,\bar\beta\in\Z_2^t$ and $d_1,e^f,w,d_2$  as in
\eqref{bcabasis},
 are $\mathbb C$-linearly independent
elements of  $ BC_{2,r, t}$.
\end{Theorem}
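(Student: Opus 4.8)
The plan is to deduce the statement from the representation $\varphi\colon BC_{2,r,t}\to\End_{\mfg}(M^{r,t})^{\rm op}$ of Proposition~\ref{hom}: since $\varphi$ is an algebra homomorphism, it suffices to prove that the endomorphisms $\varphi(\textbf{m})$ of $M^{r,t}$ are $\C$-linearly independent. I first record the bookkeeping that makes this the decisive statement. The number of monomials \eqref{monom} is $(r+t)!\cdot 2^{2(r+t)}$, because the walled-diagram data $(d_1,e^f,w,d_2)$ contributes the $(r+t)!$ diagrams indexing \eqref{bcabasis}, while $(\alpha,\bar\alpha,\beta,\bar\beta)\in\Z_2^r\times\Z_2^t\times\Z_2^r\times\Z_2^t$ contributes $2^{2(r+t)}$. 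This matches the count of the regular monomials spanning $BC_{2,r,t}$ in Corollary~\ref{level-l-span} (the passage from the primed $x_i'$ of \eqref{relsmurp} to the unprimed $x_i$ being unitriangular for the degree filtration \eqref{filtr}, hence count-preserving), so once independence of the $\varphi(\textbf{m})$ is established, the $\textbf{m}$ form a basis and $\dim_\C BC_{2,r,t}=(r+t)!\cdot 2^{2(r+t)}$.

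To prove independence of the operators, I would argue by leading terms with respect to the length filtration on the $L_\lambda$-factor coming from \eqref{BWE} (counting lowering operators applied to $v_\lambda$). The key input is Lemma~\ref{p4}: on $v_\lambda\otimes v_{\pm k}$ the operator $x_1$ equals $\mp\lambda_k\,v_\lambda\otimes v_{\pm k}$ modulo terms of strictly larger length, and symmetrically for $\bar x_1$. Hence on the associated graded each $x_i'$ (resp.\ $\bar x_j'$) acts diagonally on the corresponding tensor slot, with eigenvalue $-\operatorname{sign}(a)\lambda_{|a|}$ determined by the index $a$ occupying that slot, whereas the Clifford and diagram generators $s,\bar s,e,c,\bar c$ act on $V^{\otimes r}\OTIMES(V^*)^{\otimes t}$ exactly as the walled Brauer-Clifford algebra $BC_{r,t}$ does, fixing the $L_\lambda$-factor. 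Because $n=2m\ge 4(r+t)$ is large, this $BC_{r,t}$-action on mixed tensor space is faithful (the Schur-Weyl-Sergeev separation of \cite{JK}), so the diagram-with-Clifford parts $d_1^{-1}c^\alpha e^f\bar c^{\bar\alpha}wd_2$ of distinct monomials are already linearly independent operators. It then remains, for a fixed diagram-with-Clifford part, to separate the contributions of $x'^\beta\bar x'^{\bar\beta}$; on the propagating strands these act through the diagonal eigenvalues above, and the genericity of $p$ in \eqref{highw} makes the $\lambda_k=p-k+1$ distinct and nonzero, so the attached diagonal operators $\prod_i\phi_{\beta_i}(a_i)\prod_j\bar\phi_{\bar\beta_j}(b_j)$, with $\phi_0\equiv 1$ and $\phi_1(a)=-\operatorname{sign}(a)\lambda_{|a|}$ nonconstant, are independent by a tensor-product-of-nonproportional-functions argument. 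Evaluating a relation $\sum a_{\textbf{m}}\varphi(\textbf{m})=0$ on test vectors $v_\lambda\otimes v_{a_1}\otimes\cdots\otimes\bar v_{b_t}$ then forces every $a_{\textbf{m}}=0$.

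The step I expect to be the main obstacle is reconciling the $x$-powers with a nontrivial contraction $e^f$. On the $f$ contracted strands the operators $x_i',\bar x_j'$ are not diagonal but are absorbed into the scalar parameters $\omega_a,\bar\omega_a$ fixed by \eqref{omega-i}, via Lemmas~\ref{hecrel2} and~\ref{relxpri}, so the clean ``diagonal on slots'' description survives only on the propagating strands and the two kinds of strands must be bookkept separately. Mimicking the strategy of \cite{RSu}, I would choose the test indices $a_i,b_j$ with pairwise distinct absolute values drawn from $\{1,\dots,n\}$, so that a cup-cap pairing in $d_1^{-1}e^fwd_2$ can act nonzero only by matching slots of equal absolute value; with all absolute values distinct this pins down $f$, the pairing $(d_1,d_2)$, and the permutation $w$, and reduces the $x$-bookkeeping on the surviving strands to the $f=0$ analysis above. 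The distinctness and nonvanishing of the eigenvalues $\lambda_k$ from \eqref{highw}, together with the nonvanishing of the relevant $\omega_a$ guaranteed by \eqref{omega-i}, are precisely what make the leading coefficients of the resulting triangular system invertible, completing the elimination.
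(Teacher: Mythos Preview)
Your overall plan---prove that the operators $\varphi(\textbf{m})$ on $M^{r,t}$ are linearly independent---is the paper's plan too, and your counting is correct. The gap is in how you extract a ``leading term'' from $x_i'$.

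You want an associated graded in which $x_i'$ acts diagonally, by the term $\mp\lambda_k v_\lambda\otimes v_{\pm k}$ of Lemma~\ref{p4}. But no such grading exists. The operator $x_i'=-\pi_{0i}(\Omega_1)$ contains both lowering and raising elements $e_{kl},f_{kl}$ in its zeroth tensor factor, so with respect to the length filtration on $L_\lambda$ it shifts degree by $+1,0,$ or $-1$. If one uses the increasing filtration $(L_\lambda)_0\subset(L_\lambda)_1\subset\cdots$, then $x_i'$ is filtered of degree $+1$ and its image in the associated graded is the \emph{length-increasing} part (the lowering operators), while the diagonal term you single out is of strictly lower filtration and is therefore \emph{killed}, not promoted to a leading term. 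Concretely, your picture would predict that the length-$0$ part of a product $x_1'x_2'\cdots$ acting on $v_\lambda\otimes\cdots$ is the product of the diagonal eigenvalues; but the raising pieces of later $x_j'$'s can bring length-$1$ contributions from earlier $x_i'$'s back down to length $0$, so the length-$0$ output is contaminated by ``up--down'' paths and is not the product of diagonals. (That $x_1^2=p(p+1)$ acts as a scalar while the diagonal eigenvalue squared is $\lambda_k^2\neq p(p+1)$ is a hint that diagonal parts do not compose the way you need.)

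The paper exploits exactly the other end of this observation: the maximal-length part of $\varphi(\textbf{m})(v_\lambda\otimes\cdots)$ \emph{is} unambiguous, since each $x_i'$ (or $\bar x_j'$) can add at most one to the length, and the length-$(|\beta|+|\bar\beta|)$ output comes solely from the all-lowering path. One then chooses the input indices so that the only lowering factor $x_i'$ can contribute is $f_{i_\bullet,i}$ with a prescribed $i$ (this is the role of conditions (1)--(3) on the $k_i$ and the hypothesis $m\ge 2(r+t)$), which forces $\beta,\bar\beta$, then $f$, then $d_1,d_2,w$, then $\alpha,\bar\alpha$, to agree with those of a chosen extremal monomial $\tilde{\textbf{m}}$. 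Your treatment of the contracted strands is also off: in $d_1^{-1}c^\alpha x'^\beta e^f\bar x'^{\bar\beta}\bar c^{\bar\alpha}wd_2$ there is no sandwich $e\,x^k\,e$, so nothing is ``absorbed into $\omega_a$''; the paper handles contractions by first minimizing $f$ among monomials with maximal $|\beta|+|\bar\beta|$ and then using the choice $k_{\bar i}=k_i^+$ on the first $f$ dual slots to control them.
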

\def\tif#1{{\textit{\textbf{#1{$\ssc\,$}}}}}\begin{proof}
Suppose there is a nonzero $\mathbb C$-combination $\tif c:=\sum_{\tif m}r_{\tif m}\tif m$ of
monomials \eqref{monom} being zero. We fix a monomial
$\tilde{\tif m}:=\tilde d^{-1}_1c^{\tilde \alpha} x'^{\tilde \beta} e^{\tilde f} \bar x'^{\bar{\tilde \beta}}\bar c^{\bar{\tilde \alpha}}\tilde w\tilde d_2$ in $\tif c$ with nonzero coefficient $r_{\tilde{\tif m}}\ne0$ which satisfies the following conditions:\begin{itemize}\item[(i)]
$|\tilde\beta|+|\bar{\tilde\beta}|$ is maximal;
\item[(ii)] $\tilde f$ is minimal    among all monomials satisfying (i).
\end{itemize}
We take the basis element $v= v_\l\OTIMES\OTIMES_{i\in J_1}v_{k_i}\OTIMES\OTIMES_{i\in J_2}\bar v_{k_i}\in B_{ M}$ (cf.~\eqref{basis-M-0}) such that
(note that here is the place where we require condition that $2(r+t)\le m$)
\begin{itemize}\item[(1)] for $1\le i\le r$, $k_i=i$ if $\tilde\b_i=0$ and $k_i=-(m+i)$ if $\tilde\b_i=1$;
\item[(2)] for $1\le i\le \tilde f$, $k_{\bar i}=k_i^+$; 
\item[(3)]
for $\tilde f<i\le t$, $k_{\bar i}=r+i_{\bullet}$ if $\bar{\tilde \b}_i=0$ and $k_{\bar i}=-(r+i)$ if $\bar{\tilde \b}_i=1$.
\end{itemize}
Take
\begin{eqnarray}
\label{b'===}&\!\!\!\!\!\!\!\!\!\!\!\!\!\!\!\!\!\!\!\!&
z:=(v) c^{\tilde\alpha}\tilde d_1{\tif c}\tilde d_2^{-1}\tilde w^{-1}\bar c^{\bar{\tilde\alpha}}\in M^{r,t},\\
\label{1b'===}&\!\!\!\!\!\!\!\!\!\!\!\!\!\!\!\!\!\!\!\!&
 \tilde u=\mbox{$\prod\limits_{i=1}^r$}f^{\tilde\b_i}_{i_{\bullet},i}
\mbox{$\prod\limits_{i=1}^t$}f^{\bar{\tilde\b}_i}_{r+i_{\bullet},r+i}v_\l\in B^e,
\\
\label{2b'===}&\!\!\!\!\!\!\!\!\!\!\!\!\!\!\!\!\!\!\!\!&
 \tilde v=\tilde u\OTIMES\OT{i\in J_1}v_{i}\OTIMES\OT{i\in J_2}\bar v_{r+i_{\bullet}}\in B_{ M},
 \end{eqnarray}
where the product in \eqref{1b'===} is in the same order as in \eqref{basis-u}.
We want to prove that when written $z$ as a combination of basis $B_{ M}$ in \eqref{basis-M-0}, the coefficient $\chi^{z}_{\tilde v}$ of  $\tilde v$ is nonzero.
Thus assume a monomial $\tif m$ in \eqref{monom} appears in the expression of $\tif c$ with $r_{\tif m}\ne0$ and consider the following element,
\begin{eqnarray}\label{u-11111}
z_1&\!\!\!:=\!\!\!&(v)c^{\tilde\alpha}\tilde d_1{\tif m}\tilde d_2^{-1}\tilde w^{-1}\bar c^{\bar{\tilde\alpha}}=
(v)c^{\tilde\alpha}\tilde d_1d_1^{-1}c^\alpha x'^\b e^f\bar x'^{\bar\b}\bar c^{\bar\alpha}wd_2\tilde d_2^{-1}\tilde w^{-1}\bar c^{\bar{\tilde\alpha}}\nonumber\\
&\!\!\!=\!\!\!&
\Big(\,v_\l\otimes\OT{i\in J_1}v_{ c^{\g_i+\tilde\alpha_i}(k_{(i)\tilde d_1d_1^{-1}})}\otimes\OT{i\in J_2}\bar v_{k_{(i)\tilde d_1d_1^{-1}}}\Big)x'^\b e^f\bar x'^{\bar\b}\bar c^{\bar\alpha}wd_2\tilde d_2^{-1}\tilde w^{-1}\bar c^{\bar{\tilde\alpha}},
\end{eqnarray}where $\g_i=\alpha_{(i)\tilde d_1d_1^{-1}}$,
and where the last equation is understood as ``equal up to a sign'' (cf.~\eqref{c-map}), which follows by noting that elements in $\Sigma_r\times\bar{\Sigma}_t$ have natural right actions on $J_1\cup J_2$ by permutations and $c$ acts on $I$ by \eqref{c-map}.
Write $z_1$ as a $\mathbb C$-combination of  basis $B_{ M}$. If $\tilde v$ appears as a term with a nonzero coefficient in the combination, then we say that $z_1$ {produces} $\tilde v$.

Note that $\tilde u$ has length $|\b|+|\bar\b|$. By Definition \ref{casm} and from our choice of $B^e$ in \eqref{BWE}, 
we see that  factors of $\tilde u$ can be only
contributed by the actions of $x'_i$'s and $\bar x'_i$'s, and each $x'_i$ or $\bar x'_i$ can at most contribute one length of $\tilde u$ by observing the following: if the first factor of a term in $\pi_{0i}(\Omega_{1})$ for $i\in J_1\cup J_2$ acting on the first factor of an element in $B_M$ changes the first factor to a basis element in $B^e$ then this $\pi_{0i}(\Omega_{1})$ may contribute one length, otherwise the first factor is changed to a combination of basis elements with length not increasing by Lemma \ref{Basis20}.
We see that $z_1$
cannot produce a basis element with degree higher than
$|\b|+|\bar\b|$.
Thus $\tilde v$ cannot be produced  if $|\b|+|\bar\b|<|\tilde\b|+|\bar{\tilde\b}|$.
So  by condition (i), we can assume \equa{b+beta=}{|\b|+|\bar\b|=|\tilde\b|+|\bar{\tilde\b}|.} Then
$f{\!}\ge{\!}\tilde f$ by condition~(ii).

For any basis element $b_M$ written as in \eqref{basis-M-0}, 
we say $k_i$ the {\it $i$-th label} of  $b_M$ for $i\in J_1\cup J_2$.
Note from  \eqref{1b'===} that
all factors of $\tilde u$ have the following form
 \equa{f-ij=}{\mbox{$f_{i_{\bullet},i}$ with $i\in I_1^+$}.}
Thus  when $x'_i=-\pi_{0i}(\Omega_1)$ for $1\le i\le r$ is applied to
the element inside the bracket, it can only change its $i$-th label, say $\ell$, to $\pm\ell$, $\pm(\ell-m)$.
Since $\tilde d_1d_1^{-1}$ only permutes labels and $c^{\g_i+\tilde\alpha_i}$ only changes labels up to a sign,
in order for a term in \eqref{u-11111} to contribute to $\chi^{z_1}_{\tilde v}$,
we need at least $f$ pairs $(i,\bar j)\in J_1\times J_2$ such that the $i$-th label $k_i$ and $j$-th  label $k_{\bar j}$
satisfy the condition $k_i^+=k_{\bar j}^+$ or $k_i^+=k_{\bar j}^++m$. From our choice of the vector $v$, we must have $f\le\tilde f$. Thus we can suppose $\tilde f=f$ by the fact that $f\ge \tilde f$.

Set $J_{f}=\{i,\bar i\,|\,1\le i\le \tilde f=f\}\subset J_1\cup J_2$ (cf.~\eqref{ordered-set}).
If $d_1\ne \tilde d_1$,
then by definition \eqref{drt}, 
 we have \begin{equation}\label{j---in}\tilde j :=(j)\tilde d_1d_1^{-1}\notin J_{f}\mbox{ \ for some $j\in J_{f}$}.\end{equation} Say $\tilde j \!\in\! J_1$ (the proof is similar if $\tilde j \!\in\! J_2$), then $f\!<\!\tilde j \!\le\! r$.
Condition (1) shows that $k^+_{\tilde j }= \tilde j$.
Then conditions (2) and (3) show that there is no $\bar\ell\in J_2$ with $k^+_{\tilde j }= k^+_{\bar\ell }$ or  $k^+_{\tilde j }= k^+_{\bar\ell }+m$. Since all factors of $\tilde u$ have the form
 \eqref{f-ij=}, we see that $z_1$ cannot produce the basis element $\tilde v$.
Thus we can suppose $\tilde d_1=d_1$. Then $c^{\g_i+\tilde\alpha_i}(k_{(i)\tilde d_1d_1^{-1}})=c^{\alpha_i-\tilde\alpha_i}(k_i)$ (note that $c^2=1$ acting on $I$). If $\alpha_i\ne\tilde\alpha_i$ for some $1\le i\le f$, then $c^{\alpha_i-\tilde\alpha_i}(k_i)=-k_i$ and after applying $x'^{\beta}$ to the element inside the bracket in \eqref{u-11111}, we obtain an element which satisfies the condition that either its $i$-th label is not $i$ (in this case after we apply $e^f$ we obtain the zero element) or else its zero-th factor cannot contain the factor
$\prod_{i=1}^rf^{\tilde\b_i}_{i_{\bullet},i}$. In any case we cannot obtain the element $\tilde v$. Thus we can assume $\alpha_i=\tilde\alpha_i$ for  $1\le i\le f$. Similarly, we can assume $\alpha_i=\tilde\alpha_i$ for  $f<i\le r$, i.e.,
$\alpha=\tilde\alpha$.

By conditions (1) and (2), we see that if $\beta_i\ne\tilde\b_i$ for some $i$ with $1\le i\le f$, or $\beta_i=1>\tilde\b_i$ for some $i\in J_1$, then again
$z_1$ cannot produce the basis element $\tilde v$.~Thus we suppose: $\beta_i\!=\!\tilde\b_i$ if $1\!\le\! i\!\le\! f$, and
$\beta_i\le\tilde\b_i$ for~$i\!\in\! J_1$. 
If $\tilde\b_i=1$ but $\b_i=0$ for some $i\in J_1$, then by \eqref{f-ij=}, $z_1$ can only produce some basis elements which have
 at least a tensor factor, say $v_\ell$, with $\ell\in I^-$, and thus $\tilde v$ cannot be produced.
Hence we can suppose  $\b=\tilde\b$. Dually, we can suppose~$\bar\beta\!=\!\bar{\tilde\beta}$.

Rewrite $wd_2\tilde d_2^{-1}\tilde w^{-1}$ as $wd_2\tilde d_2^{-1}\tilde w^{-1}=d_{20}\tilde d_{20}^{-1}w'$, where $d_{20}=wd_2w^{-1}$,
$\tilde d_{20}=w\tilde d_2w^{-1}$ and $w'=w\tilde w^{-1}$. Note that 
$w'\in \mathfrak S_{r-f} \times \bar{\mathfrak S}_{t-f}$, which only permutes elements of $(J_1\cup J_2)\bs J_{f}$.
We see that if $ d_{20}\ne \tilde d_{20}$, then as in \eqref{j---in}, there exists some $j\in J_{f}$ with
$\tilde j :=(j)d_{20}\tilde d_{20}^{-1}w'\notin J_{f }$, thus $\tilde u_{ M}$ cannot be produced. So assume $ d_{20}=\tilde d_{20}$.
Similarly we can suppose $w'=1$. Then the same arguments after \eqref{j---in} show that we can assume $\bar\alpha=\bar{\tilde\alpha}$ (cf.~\eqref{j---in}).

The above has in fact proved that if the coefficient $\chi^{z_1}_{\tilde v}$ is nonzero then  $z_1$ in \eqref{u-11111} must satisfy
$(d_1,\alpha,\beta,f,{\bar\beta},{\bar\alpha},w,d_2)=
(\tilde d,{\tilde \alpha},{\tilde \beta},{\tilde f},{\bar{\tilde \beta}},{\bar{\tilde \alpha}},\tilde w,\tilde d_2)$, i.e.,
$z_1=(v)\tilde{\tif m}$. In this case, one can easily verify that $\chi^{z_1}_{\tilde v}\!=\!\pm1$. This proves that $z$ defined in \eqref{b'===} is nonzero, a contradiction. The theorem is proven.
\end{proof}

\begin{Cor}\label{level-2} $BC_{2, r, t}$ has a $\mathbb C$-basis which consists of all   regular monomials of it.
\end{Cor}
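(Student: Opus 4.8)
The plan is to deduce the statement from a dimension count, playing the spanning assertion of Corollary~\ref{level-l-span} against the linear independence assertion of Theorem~\ref{theo-2222}. Both facts have already been established; what remains is to observe that they pin down the dimension of $BC_{2,r,t}$ from both sides and that the two relevant families of monomials have the same cardinality.

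First I would produce an upper bound on the dimension. By Corollary~\ref{level-l-span}, $BC_{2,r,t}$ is spanned by its regular monomials $x^{\alpha}d\,\bar x^{\beta}$ with $d\in S$ and $(\alpha,\beta)\in\mathbb{Z}_2^r\times\mathbb{Z}_2^t$, where $S$ is the basis of $BC_{r,t}$ from \eqref{bcabasis}; in particular $BC_{2,r,t}$ is finite dimensional over $\mathbb{C}$. Since $|S|=2^{r+t}(r+t)!$ by Theorem~\ref{wbhsa321}, the number of regular monomials is at most $|S|\cdot 2^r\cdot 2^t=2^{2(r+t)}(r+t)!$; write $N$ for this number. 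Hence $\dim_{\mathbb{C}}BC_{2,r,t}\le N$.

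Next I would count the monomials \eqref{monom} of Theorem~\ref{theo-2222}, which that theorem asserts are $\mathbb{C}$-linearly independent. Their diagram data $(d_1,e^f,w,d_2)$ range, as in \eqref{bcabasis}, over the $(r+t)!$ walled Brauer diagrams $d_1^{-1}e^f w d_2$ (equivalently, over $S$ with its Clifford decorations stripped off), while the decorations $(\alpha,\beta,\bar\alpha,\bar\beta)$ range over $\mathbb{Z}_2^r\times\mathbb{Z}_2^r\times\mathbb{Z}_2^t\times\mathbb{Z}_2^t$. Thus there are exactly $(r+t)!\cdot 2^{2(r+t)}=N$ of them, giving $\dim_{\mathbb{C}}BC_{2,r,t}\ge N$.

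Combining the two inequalities yields $\dim_{\mathbb{C}}BC_{2,r,t}=N$. A spanning set of cardinality at most $N$ inside a space of dimension $N$ must consist of exactly $N$ linearly independent vectors, so the regular monomials form a $\mathbb{C}$-basis of $BC_{2,r,t}$, as claimed. The only delicate point is the bookkeeping of the two counts: I must verify that the parametrization of the regular monomials and that of the monomials \eqref{monom} each produce precisely $N$ index tuples, and confirm that Corollary~\ref{level-l-span} and Theorem~\ref{theo-2222} are being applied to the same superalgebra $BC_{2,r,t}$ with the parameters fixed in \eqref{omega-i} and in Lemma~\ref{SMSM}. I expect no genuine obstacle beyond this careful matching of cardinalities.
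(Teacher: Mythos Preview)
Your proposal is correct and follows essentially the same route as the paper's own proof, which simply cites Corollary~\ref{level-l-span} and Theorem~\ref{theo-2222} without spelling out the dimension count. You have made explicit the cardinality matching that the paper leaves implicit: both the spanning set of regular monomials and the linearly independent set of monomials~\eqref{monom} have size $2^{2(r+t)}(r+t)!$, forcing both to be bases.
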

\begin{proof} We have the result immediately from Corollary~\ref{level-l-span} and Theorem~\ref{theo-2222}.\end{proof}

\section{Homomorphisms between $\widetilde{BC}_{r, t}$  and $BC_{2, r+k, t+k}$ }
In this section, we generalize Theorem~\ref{level-1} so as to  establish infinite many homomorphisms from  $ BC^{\rm aff}_{r, t}$ to  $BC_{2, r+k, t+k}$ for all positive integers $k$, where $BC_{2, r+k, t+k}$ are level two walled Brauer-Clifford superalgebras which appear in the higher level mixed Schur-Weyl-Sergeev duality  in section~4.  As an application, we prove that $ BC^{\rm aff}_{r, t}$ has $R$-basis  which consists of  all regular monomials in the Definition~\ref{regm}.
Recall $x_i', \bar x_j'$  in \eqref{relsmurp}.
\begin{Lemma}\label{usefaff} For all admissible $i, j$, we have the following results in $BC_{r, t}^{\rm aff}$:
$s_j\bar x'_i=\bar x'_i s_j$, $\bar s_j  x'_i=x'_i\bar s_j$,  $x'_i\bar c_j=\bar c_j x'_i$ and $\bar x'_i c_j=c_j\bar x'_i$.
\end{Lemma}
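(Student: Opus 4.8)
The plan is to reduce all four commutation identities to the definition of $x_i'$ and $\bar x_j'$ in \eqref{relsmurp} together with the defining relations listed in Definition~\ref{awbsa} and Definition~\ref{wsera}. The key structural observation is that $x_i'=s_{i-1}\cdots s_1 x_1 s_1\cdots s_{i-1}$ is a word in the unbarred letters $s_1,\dots,s_{i-1}$ and $x_1$ only, while $\bar x_j'=\bar s_{j-1}\cdots\bar s_1\bar x_1\bar s_1\cdots\bar s_{j-1}$ is a word in the barred letters $\bar s_1,\dots,\bar s_{j-1}$ and $\bar x_1$ only. Hence each identity asks for one fixed generator to be carried across such a word, and it suffices to move that generator across each factor of the word in turn.

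For $s_j\bar x_i'=\bar x_i' s_j$, I would commute $s_j$ through the word $\bar x_i'$ factor by factor, using $s_j\bar s_k=\bar s_k s_j$ from Definition~\ref{wsera}(3) for the reflections and $s_j\bar x_1=\bar x_1 s_j$, which is Definition~\ref{awbsa}(11), for the single occurrence of $\bar x_1$. The identity $\bar s_j x_i'=x_i'\bar s_j$ is the mirror image: $x_i'$ is a word in $s_1,\dots,s_{i-1}$ and $x_1$, and I use $\bar s_j s_k=s_k\bar s_j$ (Definition~\ref{wsera}(3)) together with $\bar s_j x_1=x_1\bar s_j$, which is Definition~\ref{awbsa}(10). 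The two Clifford identities are handled identically: for $x_i'\bar c_j=\bar c_j x_i'$ I move $\bar c_j$ across the word $x_i'$ using $s_k\bar c_j=\bar c_j s_k$ from Definition~\ref{wsera}(2) for each reflection and $x_1\bar c_j=\bar c_j x_1$, which is Definition~\ref{awbsa}(8), for $x_1$; dually, for $\bar x_i' c_j=c_j\bar x_i'$ I use $\bar s_k c_j=c_j\bar s_k$ (Definition~\ref{wsera}(2)) and $\bar x_1 c_j=c_j\bar x_1$ (Definition~\ref{awbsa}(9)).

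In every case the moving letter commutes with each factor of the word with neither a sign nor a correction term — note that although the Clifford generators of opposite type anticommute with each other, a barred Clifford generator nonetheless commutes with an unbarred reflection and with $x_1$, and symmetrically — so there is no genuine obstacle and no induction is needed beyond the trivial letter-by-letter pass. The only point requiring care is the index bookkeeping, namely checking that the factors appearing in $x_i'$ (resp.\ $\bar x_i'$) are precisely those covered by the cited relations; this is immediate from the explicit form of these elements in \eqref{relsmurp}. Consequently the whole statement is a direct consequence of the defining relations, which matches the terse ``follows from the definitions'' style expected of this lemma.
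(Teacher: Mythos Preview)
Your proof is correct and is exactly the routine verification the paper has in mind: it simply writes ``Easy exercises'' as its proof. You have correctly identified the needed defining relations (Definition~\ref{wsera}(2),(3) and Definition~\ref{awbsa}(8)--(11)) and the factor-by-factor argument is precisely the intended one.
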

\begin{proof} Easy exercises.
\end{proof}

\begin{Lemma}\label{usefaffco} Recall  $y_i$ and $\bar y_j$ in Definition~$\ref{RSu-GRSS}$. The following results hold  in   $ BC_{r, t}^{\rm aff} $ for all admissible  $i,j$:
\begin{multicols}{2}
\begin{enumerate}
\item [(1)] $x'_i \bar y_i=\bar y_i x'_i$,
\item [(2)] $\bar x'_i y_i=y_i \bar x'_i$,
\item [(3)] $x'_{i+1}  y_i= y_i x'_{i+1}$,
\item [(4)] $\bar x'_{i+1} \bar  y_i= \bar y_i \bar x'_{i+1}$,
\item [(5)] $(\bar x'_{i+1}+\bar y_{i+1})  x_j'= x_j'(\bar x'_{i+1}+\bar y_{i+1})$ if  $j\le i$,
\item [(6)] $(x'_{i+1}+y_{i+1}) x_j'=x_j'(x'_{i+1}+y_{i+1})$, if  $j\le i$,
\item [(7)] $(\bar x'_{i+1}+\bar y_{i+1}) \bar  x_j'= \bar x_j'(\bar x'_{i+1}+\bar y_{i+1})$ if  $j\le i$,
\item [(8)]  $(x'_{i+1}+y_{i+1})\bar  x_j'=\bar x_j'(x'_{i+1}+y_{i+1})$, if  $j\le i$.
\end{enumerate}\end{multicols}
\end{Lemma}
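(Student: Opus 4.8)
The plan is to prove statements (1), (3), (5), (6) directly and to deduce (2), (4), (7), (8) from them by the barred/unbarred symmetry of $BC_{r,t}^{\rm aff}$ (the interchange $s_i\leftrightarrow\bar s_i$, $c_i\leftrightarrow\bar c_i$, $x_i'\leftrightarrow\bar x_i'$, hence $y_i\leftrightarrow\bar y_i$), exactly as in the proof of Lemma~\ref{usef}; alternatively each of (2),(4),(7),(8) is obtained by the verbatim dual of the argument for its partner. The uniform strategy is to expand $y_i$ and $\bar y_i$ via Definition~\ref{RSu-GRSS} into their local summands $e_{\bullet,\bullet}$, $\bar e_{\bullet,\bullet}=c_\bullet e_{\bullet,\bullet}c_\bullet$, $L_\bullet$, $\bar L_\bullet$, and to commute the relevant $x'$ past each summand using Lemmas~\ref{relxpri}, \ref{hecrel}, \ref{usefaff} together with the Hecke--Clifford relations \eqref{jm-wba1}, \eqref{comm-hc}.

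For (1) and (3) all indices are disjoint, so the commutation is term by term. For (1), $\bar y_i=\sum_{j<i}(e_{j,i}-\bar e_{j,i})-\bar L_i$, and $x_i'$ commutes with each $e_{j,i}$ by Lemma~\ref{relxpri}(1) (its first index $j\ne i$), with $\bar e_{j,i}=c_je_{j,i}c_j$ since in addition $x_i'c_j=c_jx_i'$ for $j\ne i$, and with $\bar L_i$ by Lemma~\ref{usefaff} (as $\bar L_i$ is built from the $\bar s$'s and $\bar c_i$). For (3), $y_i=\sum_{j<i}(e_{i,j}+\bar e_{i,j})-L_i$, and $x_{i+1}'$ commutes with each $e_{i,j}$ (first index $i\ne i+1$) and each $\bar e_{i,j}$; it commutes with $L_i=\mathfrak L_i+c_i\mathfrak L_ic_i$ because it commutes with $s_l$ ($l\le i-1$) and with $c_l$ ($l\le i$) by the affine Hecke--Clifford relations.

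The crux is (5)--(6), where neither $x_{i+1}'$ nor $y_{i+1}$ commutes with $x_j'$ separately, only their sum. The key regrouping is
\[
x_{i+1}'+y_{i+1}=x_{i+1}+\sum_{k=1}^{i}(e_{i+1,k}+\bar e_{i+1,k}),\qquad x_{i+1}=x_{i+1}'-L_{i+1},
\]
together with its barred analogue $\bar x_{i+1}'+\bar y_{i+1}=\bar x_{i+1}+\sum_{k=1}^i(e_{k,i+1}-\bar e_{k,i+1})$. For (6) with $j\le i$: the genuine Jucys--Murphy element $x_{i+1}$ commutes with $x_j'=x_j+L_j$ because Jucys--Murphy elements commute by \eqref{comm-hc} and $x_{i+1}$ commutes with $L_j$ (since $j\le i$), while the cap-sum commutes with $x_j'$ term by term by Lemma~\ref{relxpri}(1) (first index $i+1\ne j$) and the Clifford commutations. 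For the cross-wall statement (5), the decisive tool is Lemma~\ref{hecrel2}(1), which gives that $x_j'$ commutes with $\bar x_{i+1}'+e_{j,i+1}-\bar e_{j,i+1}$; one peels off exactly the $k=j$ summand of $\bar y_{i+1}$ to pair it with $\bar x_{i+1}'$, and checks that the remaining summands $\sum_{k\ne j}(e_{k,i+1}-\bar e_{k,i+1})$ and $-\bar L_{i+1}$ commute with $x_j'$ by the disjoint-index arguments (here $\bar L_{i+1}$ commutes with $x_j'$ by Lemma~\ref{usefaff}).

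I expect the main obstacle to be conceptual rather than computational: recognizing that $x_{i+1}'$ by itself does \emph{not} commute with $x_j'$ (their strands overlap through $s_i$), so that the $-L_{i+1}$ concealed inside $y_{i+1}$ is precisely what converts $x_{i+1}'$ into a commuting Jucys--Murphy element; and, for (5), spotting that Lemma~\ref{hecrel2}(1) is exactly the relation absorbing the single cross-wall cap $e_{j,i+1}-\bar e_{j,i+1}$ into $\bar x_{i+1}'$. Once the sum is split along these lines, every remaining commutation is routine.
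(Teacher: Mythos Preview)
Your proof is correct and follows essentially the same approach as the paper: the same symmetry reduction to (1), (3), (5), (6); the same term-by-term commutation for (1) and (3) via Lemmas~\ref{relxpri} and~\ref{usefaff}; the same key regrouping $x'_{i+1}+y_{i+1}=x_{i+1}+\sum_{k=1}^i(e_{i+1,k}+\bar e_{i+1,k})$ for (6); and the same use of Lemma~\ref{hecrel2}(1) to absorb the $k=j$ cap in (5). The only cosmetic difference is that for (6) the paper first treats $j=1$ (where $x_1'=x_1$) and then conjugates by $(1,j)$, whereas you argue directly for general $j$ via $x_j'=x_j+L_j$; the content is identical.
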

\begin{proof}
By symmetry, it is enough to  prove (1), (3), (5), (6).

(1) If $j\le i-1$, then $x_i' e_{j, i}=e_{j, i}x_i'$, $x'_i\bar c_j=\bar c_j x'_i$, and $\bar s_j  x'_i=x'_i\bar s_j$   by Lemmas~\ref{relxpri}(1) and \ref{usefaff}. So,
   $$x'_i \bar y_i=x_i'\Big(\mbox{$\sum\limits_{j=1}^{i-1}$} (e_{j, i}-\bar e_{j, i})-\bar L_i\Big)=\bar y_i x'_i.$$
One can check (3) via Definition~\ref{RSu-GRSS} similarly.

(5)
By Lemmas~\ref{hecrel2}(1),~\ref{relxpri}(1)--(2), $x'_j(\bar x'_{i+1}+e_{j, i+1}-\bar e_{j, i+1})=(\bar x'_{i+1}+e_{j, i+1}-\bar e_{j, i+1})x'_j$ and
$x'_j e_{s, i+1 }=e_{s, i+1 }x_j'$ and $x'_j \bar e_{s, i+1 }=\bar e_{s, i+1 }x_j'$ whenever $j\neq s$.  Since
$x'_j \bar L_{i+1}=\bar L_{i+1} x_j'$, we have
$$\begin{aligned}  (\bar x'_{i+1}+\bar y_{i+1})x_j' &=\Big(\bar x'_{i+1}+e_{j, i+1}-\bar e_{j, i+1}+\mbox{$\sum\limits_{1\le s\le i, s\neq j}$}(e_{s, i+1 }-\bar e_{s, i+1})-\bar L_{i+1}\Big)x_j'\\
&= x_j'\Big(\bar x'_{i+1}+e_{j, i+1}-\bar e_{j, i+1}+\mbox{$\sum\limits_{1\le s\le i, s\neq j}$}(e_{s, i+1 }-\bar e_{s, i+1})-\bar L_{i+1}\Big)\\ & =x_j' (\bar x'_{i+1}+\bar y_{i+1}).\\
\end{aligned}$$

(6)  By \eqref{asera}, \eqref{comm-hc} and  Lemma~\ref{relxpri}(1),
 we have
$$\begin{aligned} x_1(x_{i+1}'+y_{i+1})& =x_1\Big(x_{i+1}+\mbox{$\sum\limits_{j=1}^{i}$} (e_{i+1, j}+\bar e_{i+1, j})\Big)= \Big(x_{i+1}+\mbox{$\sum\limits_{j=1}^{i}$} (e_{i+1, j}+\bar e_{i+1, j})\Big)x_1\\ &= (x_{i+1}'+y_{i+1})x_1.\\ \end{aligned}$$
Applying $(1, j)$ on both sides of the above equation yields (6). \end{proof}

For the simplification of notation, we define \begin{equation} \label{zi} z_i=x_i'+y_i \text{ and $\bar z_j=\bar x_j'+\bar y_j$}\end{equation}
for all admissible $i$ and $j$.
\begin{Lemma}\label{usefcycl}The following results hold in   $ BC_{r, t}^{\rm aff} $ for all admissible  $i, j$:
\begin{multicols}{2}
\begin{itemize}
\item [(1)] $s_j z_i =z_i s_j$, $\bar s_j \bar z_i=\bar z_i \bar s_j$,  if $j\neq i-1, i$,
\item  [(2)] $s_j \bar z_i=\bar z_i s_j$, $\bar s_j z_i =z_i \bar s_j$,  if $j\neq i-1$,
\item [(3)] $z_i  c_i=-c_iz_i $, $\bar z_i \bar c_i=-\bar c_i \bar z_i$,
\item [(4)] $z_i c_j=c_j z_i $, $\bar z_i \bar c_j=\bar c_j \bar z_i$, if $i\neq j$,
\item [(5)] $z_i\bar c_j=\bar c_j z_i$, $\bar z_i c_j=c_j\bar z_i$, if $i\leq j$,
\item [(6)] $z_i(e_i+\bar z_i-\bar e_i)=(e_i+\bar  z_i-\bar e_i)z_i $,
\item [(7)] $e_{i} \bar z_i=-e_i( x_i+\bar L_i)$, $e_{i} z_i=e_i(x_i+\bar L_i)$,
\item [(8)] $e_i s_i z_is_i=s_i z_i  s_i e_i$,  $e_j \bar s_j \bar z_j\bar s_j=\bar s_j \bar z_j\bar s_j e_j$,
\item [(9)] $z_i\tilde z_i  =\tilde z_iz_i$,
\item [(10)] $\bar z_i\tilde{\bar z_i} =\tilde{\bar z_i}\bar z_i$,
\item [(11)] $e_i z_i e_i=e_i x'_i e_i=\omega_1 e_i $,
\item [(12)] $ e_i z_i^k c_i e_i=0$,  $\forall k\in \mathbb N$,
\item [(13)] $e_i z_i^{2n} e_i=0 $, $e_i (\bar z_i)^{2n} e_i=0$  $\forall n\in \mathbb N$,
\end{itemize}\end{multicols} where
$\tilde z_i=(s_i z_i s_i -(1-c_ic_{i+1})s_i)$ and $\tilde{\bar z_i}=(\bar s_i \bar z_i \bar s_i -(1+\bar c_i\bar c_{i+1})\bar s_i)$.

\end{Lemma}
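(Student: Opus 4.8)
The plan is to exploit the decompositions $z_i = x_i' + y_i$ and $\bar z_i = \bar x_i' + \bar y_i$ from \eqref{zi} and to assemble each identity from the corresponding property of the affine part $x_i'$ (Lemmas~\ref{hecrel}, \ref{hecrel2}, \ref{relxpri}, \ref{usefaff}, \ref{usefaffco}) and of the Brauer--Clifford part $y_i$ (Lemma~\ref{usef}). Since each barred/unbarred pair in (1)--(13) is handled by the same argument, I would prove only one representative of each pair, the other being entirely analogous. For items (1)--(5), the commutation of $z_i$ with $s_j,\bar s_j,c_j,\bar c_j$ is immediate term by term: the $y_i$-summand is governed by Lemma~\ref{usef}(1)--(5) and the $x_i'$-summand by Lemmas~\ref{hecrel} and \ref{usefaff}, and one checks that the index restrictions ($j\neq i-1,i$, $i\le j$, etc.) match on both summands.

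For (6) I would expand $z_i(e_i+\bar z_i-\bar e_i)$ into the four products $x_i'(e_i+\bar x_i'-\bar e_i)$, $x_i'\bar y_i$, $y_i(e_i+\bar y_i-\bar e_i)$, $y_i\bar x_i'$: the first is symmetric by Lemma~\ref{hecrel2}(1) with $i=j$, the third by Lemma~\ref{usef}(7), and the two cross terms by Lemma~\ref{usefaffco}(1)--(2); after commuting everything across, the residual terms cancel identically to give $(e_i+\bar z_i-\bar e_i)z_i$. For (7), writing $x_i'=x_i+L_i$ and $\bar x_i'=\bar x_i+\bar L_i$ from \eqref{relsmurp} and using $e_iy_i=e_i(\bar L_i-L_i)$ from Lemma~\ref{usef}(8) gives $e_iz_i=e_i(x_i+\bar L_i)$ at once, while the wall relation $e_i(x_i'+\bar x_i')=0$ (Lemma~\ref{hecrel2}(3) with $i=j$) turns $e_i\bar z_i=e_i\bar x_i+e_iL_i$ into $-e_i(x_i+\bar L_i)$. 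Item (8) follows from $s_ix_i's_i=x_{i+1}'$ together with $e_ix_{i+1}'=x_{i+1}'e_i$ (Lemma~\ref{relxpri}(1), as $i\neq i+1$) and $e_is_iy_is_i=s_iy_is_ie_i$ (Lemma~\ref{usef}(9)); item (11) follows from $e_iy_ie_i=0$ (Lemma~\ref{usef}(13)) and $e_ix_i'e_i=\omega_1e_i$ (Lemma~\ref{relxpri}(3) with $a=1$).

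Items (12) and (13) I would prove by transcribing the parity arguments of Lemma~\ref{usef}(11)--(12) with $z_i$ in place of $y_i$: the newly established (3) $z_ic_i=-c_iz_i$ and (5) $z_i\bar c_j=\bar c_jz_i$ ($i\le j$), together with the relations $e_ic_i=e_i\bar c_i$, $e_ic_ie_i=0$ and the Clifford relations $c_i^2=-1,\bar c_i^2=1$, force $e_iz_i^kc_ie_i=0$ and $2\,e_iz_i^{2n}e_i=0$, hence $e_iz_i^{2n}e_i=0$ since $2^{-1}\in R$; the even case of (12) uses in addition (6) and (7) exactly as Lemma~\ref{usef}(11) uses Lemma~\ref{usef}(7)--(8).

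The main obstacle is (9)--(10), the commutativity $z_i\tilde z_i=\tilde z_iz_i$. Here the decomposition is of little help because $x_i'$ and $x_{i+1}'=s_ix_i's_i$ do not commute, and indeed $\tilde z_i\neq z_{i+1}$ (a short computation with \eqref{comm-hc} gives $\tilde z_i=z_{i+1}-e_{i+1,i}-\bar e_{i+1,i}$), so one cannot reduce to commutativity of consecutive Jucys--Murphy elements. Instead I would follow the anti-involution strategy of Lemma~\ref{usef}(10): first verify that the anti-involution $\sigma$ of Lemma~\ref{antiaff} fixes both $z_i$ and $\tilde z_i$ — it fixes $z_i$ since $\sigma$ reverses each palindromic generator-word, and it fixes $\tilde z_i$ by a one-line calculation in which $\sigma$ sends $(1-c_ic_{i+1})s_i$ to $s_i(1+c_ic_{i+1})$ and the identity $s_ic_ic_{i+1}=-c_ic_{i+1}s_i$ reconciles the two. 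Then I would expand $z_i\tilde z_i$, in the manner of \eqref{yy123}, into a sum of terms that is manifestly invariant under $\sigma$; applying $\sigma$ gives $\sigma(z_i\tilde z_i)=\tilde z_iz_i$, while the manifest symmetry gives $\sigma(z_i\tilde z_i)=z_i\tilde z_i$, whence the two sides coincide. The delicate point, and the real obstacle, is organizing this expansion so that every term appears alongside its $\sigma$-image; tracking the Clifford signs ($c_i^2=-1$, $s_ic_i=c_{i+1}s_i$) and the cross relations of Lemma~\ref{usefaffco} is precisely what made the $\tau$-symmetric grouping in \eqref{yy123} the crux there.
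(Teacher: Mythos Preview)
Your treatment of (1)--(8) and (11)--(13) is correct and matches the paper's argument essentially verbatim: decompose $z_i=x_i'+y_i$, $\bar z_i=\bar x_i'+\bar y_i$, and invoke Lemma~\ref{usef} for the $y$-part and Lemmas~\ref{hecrel}, \ref{hecrel2}, \ref{relxpri}, \ref{usefaff}, \ref{usefaffco} for the $x'$-part and cross terms.

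Where you diverge from the paper is (9)--(10), and here you have made life harder than necessary. Your premise that ``the decomposition is of little help because $x_i'$ and $x_{i+1}'$ do not commute'' is the point you are missing. The paper does \emph{not} need $x_i'x_{i+1}'=x_{i+1}'x_i'$. Instead it conjugates the defining relation $x_1x_2=x_2x_1$ of \eqref{asera} by $(2,i{+}1)(1,i)$ to obtain
\[
x_i'\bigl(x_{i+1}'-(1-c_ic_{i+1})s_i\bigr)=\bigl(x_{i+1}'-(1-c_ic_{i+1})s_i\bigr)x_i'.
\]
With this in hand, write $\tilde z_i=x_{i+1}'+\tilde y_i$ (since $s_ix_i's_i=x_{i+1}'$). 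Then $[y_i,\tilde y_i]=0$ by Lemma~\ref{usef}(10), $[y_i,x_{i+1}']=0$ by Lemma~\ref{usefaffco}(3), and it remains to see $[x_i',x_{i+1}'+\tilde y_i]=0$. Split $x_{i+1}'+\tilde y_i=\bigl(x_{i+1}'-(1-c_ic_{i+1})s_i\bigr)+s_iy_is_i$: the first bracket commutes with $x_i'$ by the displayed identity, and $x_i'(s_iy_is_i)=s_ix_{i+1}'y_is_i=s_iy_ix_{i+1}'s_i=(s_iy_is_i)x_i'$, again using Lemma~\ref{usefaffco}(3). This finishes (9); (10) is analogous. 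Your $\sigma$-symmetry strategy is not wrong in principle, but carrying out a $z$-analogue of the expansion \eqref{yy123} would require an identity playing the role of $\mathfrak y_i\mathbf m_i=\mathbf m_i\mathfrak y_i$, which is not available here; the paper's route avoids this entirely by reducing to lemmas already proved.
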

\begin{proof} (1)--(5) follows from Lemma~\ref{usef}(1)--(5), Lemma~\ref{usefaff} and
\eqref{asera} and \eqref{aseradual2}.
 (6) follows from Lemmas~\ref{usef}(7),~\ref{hecrel2}(1),~\ref{usefaffco}(1)--(2).
 (7) follows from Lemmas~\ref{usef}(8),~\ref{hecrel2}(3)--(4). (8) follows from Lemmas~\ref{usef}(9),~\ref{relxpri}(1).
 Multiplying $(2, i+1)(1, i)$ on both sides of  $x_1x_2=x_2x_1$ (see \eqref{asera})
 yields \begin{equation} \label{bii1} x_i'(x_{i+1}'-(1-c_ic_{i+1})s_i)=(x_{i+1}'-(1-c_ic_{i+1})s_i)x_i'.\end{equation}
 Now, (9) follows from \eqref{bii1},  Lemmas~\ref{usef}(10),~\ref{usefaffco}(3)--(4). We leave (10) to the reader since it  can be verified, similarly.
 (11) follows from Lemmas~\ref{usef}(13),~\ref{relxpri}(3).
  Via (6), one can prove  (12) by arguments similar to those for  Lemma~\ref{usef}(11). Finally, one can verify   (13) by arguments similar to those
   for Lemma~\ref{usef}(12).
\end{proof}

From here to the end of Theorem~\ref{alghomcyco}, we assume the ground ring $R$ is $\mathbb C$. Also,  $BC_{2, r, t}$  is one of those which appear in  the higher  level mixed Schur-Weyl-Sergeev duality in section~4.

\begin{Lemma}\label{step2}  The $e_k BC_{2, k, k}$ is  the left $BC_{2, k-1, k-1}$-module  spanned by
$e_k c_k^{\sigma_1}x_k'^{\sigma_2} s_{k, j} \bar s_{k, l}$ for all $ \sigma_1, \sigma_2\in \Z_2$ and $ 1\le j, l\le k$,
where $BC_{2, k-1, k-1}$ is the subalgebra of $BC_{2, k, k}$ generated by $c_1, \bar c_1$,$ x_1, \bar x_1$, $ s_1, \ldots, s_{k-2}$ and  $\bar s_1, \ldots, \bar s_{k-2}$.
\end{Lemma}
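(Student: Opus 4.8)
The plan is to imitate the proof of Lemma~\ref{spannwbc}. Let $V_k$ denote the left $BC_{2,k-1,k-1}$-module spanned by all $e_k c_k^{\sigma_1}x_k'^{\sigma_2}s_{k,j}\bar s_{k,l}$ with $\sigma_1,\sigma_2\in\Z_2$ and $1\le j,l\le k$. Taking $\sigma_1=\sigma_2=0$ and $j=l=k$ shows $e_k\in V_k$. Every generator of $BC_{2,k-1,k-1}$, and in particular $x_1,\bar x_1$, commutes with $e_k=e_{k,k}$ (by Corollary~\ref{wsba-1} together with Lemma~\ref{relxpri}(1)--(2)), so each spanning element equals $e_k\cdot(\,\cdot\,)$ and hence $V_k\subseteq e_k BC_{2,k,k}$. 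Thus it suffices to prove that $V_k$ is a right $BC_{2,k,k}$-module: then $e_k BC_{2,k,k}\subseteq V_k BC_{2,k,k}\subseteq V_k$, giving equality. Since $BC_{2,k,k}$ is generated by $s_i,\bar s_i\ (1\le i\le k-1)$, $c_1,\bar c_1,e_1,x_1,\bar x_1$ (recall that $x_i',\bar x_j'$ are built from these via \eqref{relsmurp}), I only have to check that right multiplication by each of these generators preserves $V_k$.

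For the generators $s_i,\bar s_i,c_1,\bar c_1,e_1$ the computation is the one carried out in Lemma~\ref{spannwbc} and \eqref{step1}; the only novelty is that the factor $x_k'^{\sigma_2}$ has to be transported through the rewriting. Here I will use the identities $x_k's_i=s_ix_k'$ for $i\le k-2$ and $x_k's_{k-1}=s_{k-1}x_{k-1}'$ (both consequences of $x_{i+1}'=s_ix_i's_i$, which follows from \eqref{relsmurp}), so more generally $x_k's_{k,j}=s_{k,j}x_j'$, together with $c_kx_k'=-x_k'c_k$ (Lemma~\ref{hecrel}(1)), the contraction $e_kx_k'e_k=\omega_1e_k$ (Lemma~\ref{relxpri}(3)), and the level-two relation, which forces $x_1^2\in R$ and hence $x_k'^2=x_1^2\in R$. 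These let me slide $x_k'^{\sigma_2}$ past the symmetric-group factors (at worst changing its index or producing a scalar), so that each reduction in Lemma~\ref{spannwbc} goes through with an extra, harmless power $\le 1$ of $x_k'$ in the tail.

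The essential new input is closure under right multiplication by $x_1$ and $\bar x_1$. For $x_1$, start from $e_kc_k^{\sigma_1}x_k'^{\sigma_2}s_{k,j}\bar s_{k,l}x_1$ and move $x_1$ left past $\bar s_{k,l}$ using $x_1\bar s_i=\bar s_ix_1$ (Definition~\ref{awbsa}(10)). When $j=1$ the clean identity $s_{k,1}x_1=x_k's_{k,1}$ (again from \eqref{relsmurp}) converts $x_1$ into $x_k'$, raising $x_k'^{\sigma_2}$ to $x_k'^{\sigma_2+1}$, which is reduced modulo $x_k'^2\in R$ back into $V_k$. When $j\ge 2$, $x_1$ commutes with $s_{k,j}$ and is brought adjacent to $x_k'^{\sigma_2}$; the obstruction is that $x_1$ and $x_k'$ do not commute. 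I resolve this through the decomposition $x_k'=x_k+L_k$ of \eqref{relsmurp}, where $x_k$ is the genuinely commuting Jucys--Murphy element ($x_kx_1=x_1x_k$ by \eqref{comm-hc}); then $[x_k',x_1]=[L_k,x_1]$ is of $x$-degree one and is supported on the transposition terms coming from $L_k=\mathfrak L_k+c_k\mathfrak L_kc_k$. The leading term $x_1x_k'^{\sigma_2}$ carries $x_1$ off to the left as an element of $BC_{2,k-1,k-1}$, landing in $V_k$, while the correction is a sum of shorter walled-Brauer words with a single $x_1$, which is absorbed by an induction on the complexity of the walled-Brauer part. Right multiplication by $\bar x_1$ is handled symmetrically, using $\bar x_1s_i=s_i\bar x_1$ (Definition~\ref{awbsa}(11)), the contraction $e_k\bar x_k'=-e_kx_k'$ (Lemma~\ref{hecrel2}(3)) together with $\bar x_k'^2\in R$, and the analogous decomposition $\bar x_k'=\bar x_k+\bar L_k$.

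Granting these verifications, $V_k$ is a right $BC_{2,k,k}$-module containing $e_k$, whence $V_k=e_kBC_{2,k,k}$, which is the assertion. The main difficulty is the third paragraph: the failure of $x_1$ (and $\bar x_1$) to commute with $x_k'$ (and $\bar x_k'$) means the naive ``slide to the left'' fails, so one must trade $x_k'$ for the commuting element $x_k$ and control the resulting lower-order corrections, the quadratic cyclotomic relations being exactly what keeps the powers of $x_k'$ bounded by one.
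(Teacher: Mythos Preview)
Your approach is correct and is exactly what the paper intends: its proof is the single line ``can be proved similarly [to Lemma~\ref{spannwbc}]'', and you have spelled out the adaptation, correctly isolating the only new work as closure under right multiplication by $x_1$ and $\bar x_1$.

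One small imprecision in your step for $x_1$: the commutator $[x_k',x_1]=[L_k,x_1]$ is not ``a sum of shorter walled-Brauer words with a single $x_1$''.  A direct computation (only the $i=1$ summand of $L_k$ contributes) gives
\[
[L_k,x_1]=(x_k'-x_1)(1,k)-(x_k'+x_1)(1,k)c_1c_k,
\]
so the correction contains $x_k'$-terms as well as $x_1$-terms.  This does not spoil the argument, and in fact it removes the need for any induction: once you have already established closure of $V_k$ under the walled-Brauer generators, each $x_k'\cdot(\text{walled-Brauer word})$ is a spanning element of $V_k$ right-multiplied by a walled-Brauer element, hence in $V_k$; and each $x_1\cdot(\text{walled-Brauer word})$ has $x_1$ commuting past $e_k,c_k$ to become a left $BC_{2,k-1,k-1}$-coefficient.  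The same remark applies to the $\bar x_1$ case, where your use of $e_k\bar x_k'=-e_kx_k'$ to trade $\bar x_k'$ for $x_k'$ (so that only one polynomial generator appears in the spanning set) is exactly the right move.
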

\begin{proof} This  result, which  is a counterpart of Lemma~\ref{spannwbc}, can be proved similarly. \end{proof}

\begin{Prop}\label{esecycl}\begin{enumerate} \item [(1)]
$e_k BC_{2, k, k} e_k=e_{k} BC_{2, k-1, k-1}$.\item[(2)] Recall $z_k$ and $\bar z_k$ in \eqref{zi}.  There is a unique $\xi_{a, k}$ $($resp., $\bar \xi_{a, k}${}$)$ in $BC_{2,k-1, k-1}$ such that $e_k z_k^a e_k=\xi_{a, k} e_k$  (resp., $e_k \bar z_k^a e_k=\bar \xi_{a, k} e_k)$. Moreover, $\xi_{2n,k}=\bar \xi_{2n,k}=0$, and
$\xi_{1, k}=\omega_1$. \end{enumerate}\end{Prop}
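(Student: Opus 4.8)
The plan is to prove part (1) by mimicking the proof of Proposition~\ref{ese1}, now using Lemma~\ref{step2} (the level-two counterpart of Lemma~\ref{spannwbc}) in place of Lemma~\ref{spannwbc}, and then to deduce part (2) formally from part (1) together with the basis in Corollary~\ref{level-2} and the identities in Lemma~\ref{usefcycl}. Throughout I use that $e_k$ commutes with every generator of $BC_{2,k-1,k-1}$: it commutes with the $s_i,\bar s_i$ ($i\le k-2$) and with $c_1,\bar c_1$ exactly as in Proposition~\ref{ese1}, and with $x_1,\bar x_1$ by Lemma~\ref{relxpri}(1),(2), since $e_k=e_{k,k}$ and $k\neq 1$. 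Hence $e_k h=h e_k$ for all $h\in BC_{2,k-1,k-1}$, and in particular $e_k BC_{2,k-1,k-1}=BC_{2,k-1,k-1}e_k$.

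For the forward inclusion $e_kBC_{2,k,k}e_k\subseteq e_kBC_{2,k-1,k-1}$ of part~(1), Lemma~\ref{step2} presents $e_kBC_{2,k,k}$ as the left $BC_{2,k-1,k-1}$-module spanned by the elements $e_kc_k^{\sigma_1}x_k'^{\sigma_2}s_{k,j}\bar s_{k,l}$ with $\sigma_1,\sigma_2\in\Z_2$. Since $e_k$ commutes with $BC_{2,k-1,k-1}$, it therefore suffices to show that each $e_kc_k^{\sigma_1}x_k'^{\sigma_2}s_{k,j}\bar s_{k,l}e_k$ lies in $e_kBC_{2,k-1,k-1}$. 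This is the analog of \eqref{step1} carrying an extra factor $x_k'^{\sigma_2}$, and I would establish it by the same case analysis on $j,l$ relative to $k$: push $x_k'$ past $s_{k,j}$ and $\bar s_{k,l}$, then apply Lemma~\ref{relxpri}(3),(4) when a cap-cup closes at position $k$ (producing $\omega_{\sigma_2}$, where $\omega_0=0$, or $\bar\omega_{\sigma_2}$), and use Lemma~\ref{hecrel2}(5),(6) together with the anticommutation $c_kx_k'=-x_k'c_k$ (noted in the proof of Lemma~\ref{hecrel2}) to annihilate the terms carrying $c_k$. The reverse inclusion is verbatim Proposition~\ref{ese1}: for $k\ge 2$ and $h\in BC_{2,k-1,k-1}$ one writes $e_kh=(e_ks_{k-1}e_k)h=e_ks_{k-1}(he_k)=e_k(s_{k-1}h)e_k\in e_kBC_{2,k,k}e_k$, using $e_k=e_ks_{k-1}e_k$ and $e_kh=he_k$.

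Part~(2) is then formal. Because $z_k^a,\bar z_k^a\in BC_{2,k,k}$, part~(1) and the commutativity of $e_k$ with $BC_{2,k-1,k-1}$ give $e_kz_k^ae_k=\xi_{a,k}e_k$ and $e_k\bar z_k^ae_k=\bar\xi_{a,k}e_k$ for some $\xi_{a,k},\bar\xi_{a,k}\in BC_{2,k-1,k-1}$, where $z_k,\bar z_k$ are as in \eqref{zi}. Uniqueness is exactly the injectivity of right multiplication by $e_k$ on $BC_{2,k-1,k-1}$, which follows from the basis of regular monomials in Corollary~\ref{level-2} (just as Corollary~\ref{equal123} invoked Theorem~\ref{wbhsa321}). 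The remaining claims are immediate from Lemma~\ref{usefcycl}: part~(13) gives $e_kz_k^{2n}e_k=e_k\bar z_k^{2n}e_k=0$, whence $\xi_{2n,k}=\bar\xi_{2n,k}=0$ by uniqueness, and part~(11) gives $e_kz_ke_k=\omega_1e_k$, whence $\xi_{1,k}=\omega_1$ (a scalar in $\C\subseteq BC_{2,k-1,k-1}$ by the specialization \eqref{omega-i}).

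The hard part will be the forward inclusion in part~(1), i.e. the $x_k'$-decorated analog of \eqref{step1}. Unlike the situation in Lemma~\ref{spannwbc}, the factor $x_k'$ does not simply slide through $s_{k,j}$ and $\bar s_{k,l}$; one must account for the Jucys--Murphy correction in $x_k'=x_k-L_k$ and for the interplay of Lemma~\ref{relxpri}(3),(4) with Lemma~\ref{hecrel2}(5),(6). The parity bookkeeping encoded in $c_kx_k'=-x_k'c_k$ is precisely what collapses the odd $c_k$-terms, while the even closures at position $k$ reproduce scalars times elements of $BC_{2,k-1,k-1}$. Once this identity is in place, everything else reduces to the bookkeeping already carried out for the non-affine algebra in Proposition~\ref{ese1} and Corollary~\ref{equal123}.
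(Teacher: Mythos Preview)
Your proposal is correct and follows essentially the same approach as the paper's (very terse) proof: part~(1) via Lemma~\ref{step2} by mimicking Proposition~\ref{ese1}, and part~(2) from part~(1) together with Corollary~\ref{level-2} and Lemma~\ref{usefcycl}(11),(13). Your last paragraph overstates the difficulty of the forward inclusion; the key observation is simply that $x_k's_{k-1}=s_{k-1}x_{k-1}'$ and $c_ks_{k-1}=s_{k-1}c_{k-1}$, so after peeling off $s_{k-1}$ the remaining factors $x_{k-1}',c_{k-1},s_{k-2},\ldots$ all commute with $e_k$ and the computation reduces to the four cases of \eqref{step1} decorated by an extra $x_{k-1}'$ (or, in the case $j=l=k$, to Lemma~\ref{relxpri}(3) and Lemma~\ref{hecrel2}(5)).
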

\begin{proof} (1) follows from  Lemma~\ref{step2} (see the proof of Proposition~\ref{ese1}) and (2) follows from  (1) and  Corollary~\ref{level-2} for $BC_{2, k, k}$ and Lemma~\ref{usefcycl}(11), (13).\end{proof}

 Lemmas~\ref{assumcycl} and \ref{assum1cycl} can be proven by arguments similar to those for Lemma~\ref{y1}--\ref{y2}.

\begin{Lemma}\label{assumcycl} For any $n\in \mathbb N$,  $e_i\bar z_i^{2n+1}=\sum_{j=0}^n a^{(i)}_{2n+1, j} e_iz_i^{2j+1}$ for some    $a^{(i)}_{2n+1,j}\in R[ \xi_{3,i}, \ldots, \xi_{2n-1,i}]$  such that
\begin{enumerate}\item [(1)] $a^{(i)}_{2n+1, n}=-1$,
\item [(2)] $a^{(i)}_{2n+1, j}=a^{(i)}_{2n-1, j-1}$ for all $1\le j\le n-1$,
\item [(3)] $a^{(i)}_{2n+1, 0}=\sum_{j=0}^{n-1} a^{(i)}_{2n-1, j}\xi_{2j+1,i}$.\end{enumerate}
\end{Lemma}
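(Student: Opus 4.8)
The plan is to prove the statement by induction on $n$, mirroring verbatim the proof of Lemma~\ref{y1} but with $z_i,\bar z_i$ in place of $y_i,\bar y_i$ and with the elements $\xi_{a,i}$ of Proposition~\ref{esecycl}(2) in place of $\omega_{a,i}$. For the base case $n=0$, I would invoke Lemma~\ref{usefcycl}(7), which gives $e_iz_i=e_i(x_i+\bar L_i)=-e_i\bar z_i$, so that $e_i\bar z_i=-e_iz_i$ and hence $a^{(i)}_{1,0}=-1$, matching (1). Throughout I will use $e_i^2=0$ (relation (4) of Definition~\ref{wsera}, conjugated) and $e_i\bar e_i=e_ic_ie_ic_i=0$ (relation (10) of Definition~\ref{wsera}).

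The heart of the argument is a single reduction identity. First I would establish the auxiliary identity $e_iz_i^{2j}\bar z_i=-e_iz_i^{2j+1}$: writing $\bar z_i=(\bar z_i+e_i-\bar e_i)-(e_i-\bar e_i)$, I use Lemma~\ref{usefcycl}(6) to commute $\bar z_i+e_i-\bar e_i$ past $z_i^{2j}$ to the left, then collapse $e_i(\bar z_i+e_i-\bar e_i)=e_i\bar z_i=-e_iz_i$ to get $-e_iz_i^{2j+1}$, while the remaining piece vanishes since $e_iz_i^{2j}e_i=\xi_{2j,i}e_i=0$ (Proposition~\ref{esecycl}(2), even case) and $e_iz_i^{2j}\bar e_i=e_iz_i^{2j}c_ie_ic_i=0$ (Lemma~\ref{usefcycl}(12)). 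The same bookkeeping then yields the main identity $e_iz_i^{2j-1}\bar z_i^2=e_iz_i^{2j+1}+\xi_{2j-1,i}e_iz_i$: the $\xi_{2j-1,i}e_iz_i$ term is produced by $-e_iz_i^{2j-1}e_i\bar z_i=-\xi_{2j-1,i}e_i\bar z_i=\xi_{2j-1,i}e_iz_i$, again via Proposition~\ref{esecycl}(2) together with $e_i\bar z_i=-e_iz_i$.

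With this identity in hand, the induction is mechanical. By the inductive hypothesis $e_i\bar z_i^{2n-1}=\sum_{j=0}^{n-1}a^{(i)}_{2n-1,j}e_iz_i^{2j+1}$; multiplying on the right by $\bar z_i^2$ and applying the main identity with $2j-1$ replaced by $2j+1$ to each term gives $e_i\bar z_i^{2n+1}=\sum_{j=0}^{n-1}a^{(i)}_{2n-1,j}\big(e_iz_i^{2j+3}+\xi_{2j+1,i}e_iz_i\big)$, where each coefficient $a^{(i)}_{2n-1,j}$ stays to the left of $e_i$ throughout, so no commutation of the $\xi$'s is needed. Reindexing the first sum produces relations (2) and (1) (the latter from $a^{(i)}_{2n-1,n-1}=-1$), and collecting the coefficient of $e_iz_i=e_iz_i^{1}$ yields (3).

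The one genuine point of departure from Lemma~\ref{y1} — and the step I expect to need the most care — is that here $\xi_{1,i}=\omega_1$ is in general nonzero, whereas in the $y$-setting one had $\omega_{1,i}=0$ and could discard the $j=0$ term. This is precisely why the sum in (3) runs from $j=0$ rather than from $j=1$. I would stress that this causes no inconsistency with the membership claim $a^{(i)}_{2n+1,j}\in R[\xi_{3,i},\ldots,\xi_{2n-1,i}]$: in this section $R=\mathbb{C}$ and $\xi_{1,i}=\omega_1$ is a \emph{scalar} of $R$, so the surviving contribution $a^{(i)}_{2n-1,0}\xi_{1,i}$ to $a^{(i)}_{2n+1,0}$ remains inside $R[\xi_{3,i},\ldots,\xi_{2n-1,i}]$. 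All the other inputs, namely Lemma~\ref{usefcycl}(6),(7),(12),(13) and Proposition~\ref{esecycl}(2), are the exact $z$-analogues of the facts driving the proof of Lemma~\ref{y1}, so no further structural difficulty arises.
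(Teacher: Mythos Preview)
Your proposal is correct and follows exactly the approach the paper indicates: the paper states that Lemma~\ref{assumcycl} ``can be proven by arguments similar to those for Lemma~\ref{y1}--\ref{y2}'', and your argument is precisely the transcription of the proof of Lemma~\ref{y1} with $z_i,\bar z_i,\xi_{a,i}$ in place of $y_i,\bar y_i,\omega_{a,i}$, using Lemma~\ref{usefcycl}(6),(7),(12) and Proposition~\ref{esecycl}(2) as the corresponding inputs. Your observation that $\xi_{1,i}=\omega_1$ need not vanish (unlike $\omega_{1,i}=0$) and that this is exactly why the sum in (3) starts at $j=0$ rather than $j=1$ is the right point to flag, and your remark that $\omega_1\in R$ keeps the coefficients in $R[\xi_{3,i},\ldots,\xi_{2n-1,i}]$ is the correct resolution.
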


\begin{Lemma}\label{assum1cycl} For any positive integer $n$, $e_i\bar z_i^{2n}=\sum_{j=0}^n a^{(i)}_{2n, j} e_i z_i^{2j}$ for some $a^{(i)}_{2n,j}\in R[ \xi_{3,i}, \ldots, \xi_{2n-1,i}]$  such that
\begin{enumerate}\item  [(1)] $a^{(i)}_{2n, n}=1$,
\item [(2)] $a^{(i)}_{2n, j}=a^{(i)}_{2n-1, j-1}$ for all $1\le j\le n-1$,
\item [(3)] $a^{(i)}_{2n, 0}=\sum_{j=0}^{n-1} a^{(i)}_{2n-2, j}\xi_{2j+1,i}$.  \end{enumerate}
\end{Lemma}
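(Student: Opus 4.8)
The plan is to mirror the proof of Lemma~\ref{y2} verbatim, replacing $y_i,\bar y_i$ by $z_i,\bar z_i$ and $\omega_{a,i}$ by $\xi_{a,i}$, and invoking the affine/level-two counterparts of the structural identities used there: Lemma~\ref{usefcycl}(6),(7),(12),(13) in place of Lemma~\ref{usef}(7),(8),(11),(12), and Proposition~\ref{esecycl}(2) in place of Corollary~\ref{equal123}. I would induct on $n$, the base case $n=1$ being the identity $e_i\bar z_i^{2}=e_i z_i^{2}+\xi_{1,i}e_i$ (recall $\xi_{1,i}=\omega_1$), which already exhibits $a^{(i)}_{2,1}=1$ and $a^{(i)}_{2,0}=\xi_{1,i}$ and so settles (1) and (3) for $n=1$.

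The heart of the matter is the single recursion
\[
e_i z_i^{2j}\bar z_i^{2}=e_i z_i^{2j+2}+\xi_{2j+1,i}\,e_i,\qquad j\ge 0.
\]
To obtain it I would first establish the intermediate relation $e_i z_i^{k}\bar z_i=-e_i z_i^{k+1}-\xi_{k,i}\,e_i$ for every $k\ge0$. Writing $\bar z_i=(e_i+\bar z_i-\bar e_i)-e_i+\bar e_i$, the summand $e_i z_i^{k}\bar e_i=e_i z_i^{k}c_i e_i c_i$ vanishes by Lemma~\ref{usefcycl}(12), the summand $e_i z_i^{k}e_i$ equals $\xi_{k,i}e_i$ by Proposition~\ref{esecycl}(2), and $z_i^{k}$ may be pulled across $(e_i+\bar z_i-\bar e_i)$ by the commutation Lemma~\ref{usefcycl}(6); finally $e_i(e_i+\bar z_i-\bar e_i)=e_i\bar z_i=-e_i z_i$, using $e_i^{2}=0$, $e_i\bar e_i=0$, and $e_i(z_i+\bar z_i)=0$ from Lemma~\ref{usefcycl}(7). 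Specialising to even $k=2j$, where $\xi_{2j,i}=0$, gives $e_i z_i^{2j}\bar z_i=-e_i z_i^{2j+1}$; applying the intermediate relation once more with $k=2j+1$ then yields the displayed recursion.

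With the recursion in hand I would close the induction by multiplying the inductive hypothesis $e_i\bar z_i^{2n-2}=\sum_{j=0}^{n-1}a^{(i)}_{2n-2,j}e_i z_i^{2j}$ on the \emph{right} by $\bar z_i^{2}$; since each $a^{(i)}_{2n-2,j}$ sits on the far left it is never commuted past $\bar z_i$, and term-by-term substitution of the recursion produces $e_i\bar z_i^{2n}=\sum_{j=1}^{n}a^{(i)}_{2n-2,j-1}e_i z_i^{2j}+\big(\sum_{j=0}^{n-1}a^{(i)}_{2n-2,j}\xi_{2j+1,i}\big)e_i$. Reading off coefficients gives $a^{(i)}_{2n,j}=a^{(i)}_{2n-2,j-1}$ for $1\le j\le n$ (the even-index recursion, exactly as in Lemma~\ref{y2}(2)), whence $a^{(i)}_{2n,n}=a^{(i)}_{2n-2,n-1}=1$ by induction, giving (1) and (2), together with (3). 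The membership $a^{(i)}_{2n,j}\in R[\xi_{3,i},\dots,\xi_{2n-1,i}]$ then follows inductively, the $j=0$ term of (3) lying in $R[\xi_{3,i},\dots,\xi_{2n-1,i}]$ because $\xi_{1,i}=\omega_1\in R$.

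I expect no conceptual obstacle: every nontrivial superalgebra identity is already packaged into Lemma~\ref{usefcycl} and Proposition~\ref{esecycl}, so once the recursion is proved the argument is pure bookkeeping. The two points to watch are, first, that unlike the non-affine situation of Lemma~\ref{y2} (where $\omega_{1,i}=0$) the constant $\xi_{1,i}=\omega_1$ need not vanish, so the $j=0$ summand genuinely survives in part (3); and second, that carrying $\bar z_i^{2}$ through the inductive hypothesis is legitimate precisely because the coefficients are kept on the far left, which is why the recursion must be phrased as a right-multiplication identity. In particular the centrality of the $\xi_{2\ell+1,i}$ (the affine analog of Lemma~\ref{free1}) is not needed for this step.
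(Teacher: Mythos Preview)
Your proposal is correct and follows essentially the same approach the paper intends: the authors merely state that Lemmas~\ref{assumcycl} and~\ref{assum1cycl} ``can be proven by arguments similar to those for Lemmas~\ref{y1}--\ref{y2}'', and your write-up is exactly that transposition, with Lemma~\ref{usefcycl}(6),(7),(12) and Proposition~\ref{esecycl}(2) playing the roles of Lemma~\ref{usef}(7),(8),(11) and Corollary~\ref{equal123}. Your observation that $\xi_{1,i}=\omega_1$ need not vanish (unlike $\omega_{1,i}=0$ in the $BC_{r,t}$ setting) is precisely the one extra wrinkle, and you handle it correctly: the $j=0$ term in (3) now genuinely contributes an $R$-scalar rather than zero.
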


 We can assume $k \ge 2$ (resp., $n\ge 1$)  in the Lemma~\ref{free1cycl}  since  $z_k=x_k'+y_k$ and $\bar z_k=\bar x_k'+\bar y_k$ (resp., $\xi_{1, k}=\omega_1$ and $\bar \xi_{1, k}=-\omega_1$ by Lemma~\ref{usefcycl}(11)).

\begin{Lemma}\label{free1cycl} We have $\bar \xi_{2n+1, k}\in  R[\xi_{3, k}, \ldots, \xi_{2n+1, k}]$ if  $k \in\Z^{\ge2}$ and $n\in \mathbb Z^{\ge 1}$. Furthermore,  both $\xi_{2n+1, k}$
and $\bar\xi_{2n+1, k}$ are central in $BC_{2,k-1, k-1}$.\end{Lemma}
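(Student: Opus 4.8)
The plan is to follow closely the proof of Lemma~\ref{free1}, replacing $y_k,\bar y_k$ by $z_k,\bar z_k$ (see \eqref{zi}) and $\omega_{a,k},\bar\omega_{a,k}$ by $\xi_{a,k},\bar\xi_{a,k}$, and using Proposition~\ref{esecycl} and Lemma~\ref{usefcycl} as the level-two analogues of Proposition~\ref{ese1} and Lemma~\ref{usef}, with Corollary~\ref{level-2} playing the role of Theorem~\ref{wbhsa321}. Throughout I rely on the faithfulness statement $e_kg=0\Rightarrow g=0$ for $g\in BC_{2,k-1,k-1}$, which follows from the regular-monomial basis of Corollary~\ref{level-2} exactly as the analogous fact was deduced from Theorem~\ref{wbhsa321} just before Lemma~\ref{omed}.

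For the first statement I would start from Proposition~\ref{esecycl}(2), which gives $\bar\xi_{2n+1,k}e_k=e_k\bar z_k^{2n+1}e_k$. By Lemma~\ref{assumcycl}, $e_k\bar z_k^{2n+1}=\sum_{j=0}^n a^{(k)}_{2n+1,j}e_kz_k^{2j+1}$ with $a^{(k)}_{2n+1,j}\in R[\xi_{3,k},\dots,\xi_{2n-1,k}]$. Right-multiplying by $e_k$ and applying Proposition~\ref{esecycl}(2) once more to replace $e_kz_k^{2j+1}e_k$ by $\xi_{2j+1,k}e_k$ yields
\[
\bar\xi_{2n+1,k}e_k=\Big(\sum_{j=0}^n a^{(k)}_{2n+1,j}\xi_{2j+1,k}\Big)e_k,
\]
so by faithfulness $\bar\xi_{2n+1,k}=\sum_{j=0}^n a^{(k)}_{2n+1,j}\xi_{2j+1,k}$. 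Since $\xi_{1,k}=\omega_1\in R$ by Lemma~\ref{usefcycl}(11), the $j=0$ summand stays in $R[\xi_{3,k},\dots,\xi_{2n-1,k}]$ while the remaining terms introduce at most $\xi_{2n+1,k}$, giving $\bar\xi_{2n+1,k}\in R[\xi_{3,k},\dots,\xi_{2n+1,k}]$.

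For centrality I would first record the commutations. By Lemma~\ref{usefcycl}(1),(4) and the fact that each of $e_1,s_1,\dots,s_{k-2},c_1,x_1$ commutes with $e_k$ (the pairwise commutation $e_1e_k=e_ke_1$ and the relations of Definition~\ref{wsera}, \eqref{asera}, and Lemma~\ref{relxpri}(1)), every unbarred generator $h$ of $BC_{2,k-1,k-1}$ commutes with both $e_k$ and $z_k$; dually every barred generator commutes with $e_k$ and $\bar z_k$. For such an $h$, using $\xi_{2n+1,k}e_k=e_kz_k^{2n+1}e_k$ I compute
\[
e_k\xi_{2n+1,k}h=e_kz_k^{2n+1}e_kh=e_kz_k^{2n+1}he_k=e_khz_k^{2n+1}e_k=he_kz_k^{2n+1}e_k=e_kh\xi_{2n+1,k},
\]
so $e_k(\xi_{2n+1,k}h-h\xi_{2n+1,k})=0$ and hence $\xi_{2n+1,k}h=h\xi_{2n+1,k}$. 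This shows $\xi_{2n+1,k}$ commutes with all unbarred generators, and the identical computation with $\bar z_k$ shows $\bar\xi_{2n+1,k}$ commutes with all barred generators.

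Finally I would cross over. The first statement already gives $\bar\xi_{2n+1,k}\in R[\xi_{3,k},\dots,\xi_{2n+1,k}]$, so $\bar\xi_{2n+1,k}$ also commutes with the unbarred generators. For $\xi_{2n+1,k}$ against the barred generators I invert the unitriangular relation of Lemma~\ref{assumcycl}: as $a^{(k)}_{2n+1,n}=-1$, one has $\xi_{2n+1,k}=-\bar\xi_{2n+1,k}+\sum_{j=0}^{n-1}a^{(k)}_{2n+1,j}\xi_{2j+1,k}$, and induction on $n$ (with $a^{(k)}_{2n+1,j}\in R[\xi_{3,k},\dots,\xi_{2n-1,k}]$ and $\xi_{1,k}=\omega_1\in R$) gives $\xi_{2n+1,k}\in R[\bar\xi_{3,k},\dots,\bar\xi_{2n+1,k}]$, whence $\xi_{2n+1,k}$ commutes with the barred generators too. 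Combining the four commutation statements shows both $\xi_{2n+1,k}$ and $\bar\xi_{2n+1,k}$ are central. The genuinely non-formal ingredients are precisely the relations $[h,z_k]=[h,\bar z_k]=0$ packaged in Lemma~\ref{usefcycl} and the faithfulness from Corollary~\ref{level-2}; I expect the cross-over inversion to be the main bookkeeping obstacle, but it is controlled because the transition in Lemma~\ref{assumcycl} is unitriangular.
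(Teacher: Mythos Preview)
Your argument is essentially the paper's argument, and the first statement and the cross-over by inverting the unitriangular relation in Lemma~\ref{assumcycl} are carried out correctly. There is one point that is underjustified: you assert that every unbarred generator of $BC_{2,k-1,k-1}$ commutes with $z_k$, citing Lemma~\ref{usefcycl}(1),(4). Those items only give $[s_j,z_k]=[c_j,z_k]=0$; they say nothing about $e_1$ or $x_1$. The paper closes this gap by rewriting $z_k=x_k'+y_k=x_k+\sum_{i=1}^{k-1}(e_{k,i}+\bar e_{k,i})$ and then checking directly: $x_1x_k=x_kx_1$ by \eqref{comm-hc}, $x_1e_{k,i}=e_{k,i}x_1$ and $e_1x_k'=x_k'e_1$ by Lemma~\ref{relxpri}(1)--(2), and $e_1$ commutes with $e_{k,i},\bar e_{k,i},(k,i)$ for $i\neq 1$ while $e_1e_{k,1}=e_1(k,1)$ handles the remaining term. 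The dual check for $\bar x_1,\,e_1$ against $\bar z_k$ is analogous. Once you insert these two short verifications, your proof is complete and matches the paper's.
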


\begin{proof} The first statement follows from Lemma~\ref{assumcycl}.
We have $s_jz_k=z_ks_j$ and $c_jz_k=z_kc_j$ for all $j\le k-1$ by Lemma~\ref{usefcycl}(1)--(4). Since $y_k=\sum_{i=1}^{k-1} (e_{k, i}+\bar e_{k, i})-L_k$£¬   $z_k=x_k'+y_k=x_k+\sum_{i=1}^{k-1} (e_{k, i}+\bar e_{k, i})$. By \eqref{asera} and Lemma~\ref{relxpri}(1) and (2), $x_1z_k=z_kx_1$ for $k\ge 2$.
Obviously, $e_1$ commutes with $x_k'$ and $e_{k, i}, \bar e_{k, i}, c_k, (k, i)$ whenever $i\neq 1$ and $k\ge 2$. Since $e_1 e_{k, 1}=e_1 (k, 1)$, we have
$e_1z_k=z_ke_1$.

 We have proved that  $h$ commutes with $e_k, z_k$ for
  any $h\in \{e_1, s_i,c_j,x_1 \mid  1\le i\le k-2\ ,1\le j\le k-1\} $. So   $e_k (h \xi_{a, k})=e_k(\xi_{a, k} h)$. By  Corollary~\ref{level-2} and Proposition~\ref{esecycl}, $h \xi_{2n+1, k}=\xi_{2n+1, k} h$.
  Finally, we need to check $e_k (h \xi_{a, k})=e_k(\xi_{a, k} h)$ for any $h\in \{\bar s_i,\bar c_j,\bar x_1 \mid  1\le i\le k-2\ ,1\le j\le k-1\}$. In this case, we use Lemma~\ref{assumcycl} so as to  use $\bar z_k $ instead of $z_k$
in $e_k z_k^{2n+1}  e_k$. Therefore,  $h \xi_{2n+1, k}=\xi_{2n+1, k} h$, as required.
\end{proof}

\begin{Lemma} For $k,a\in\Z^{\ge 1}$, we have
 \begin{equation}\label{sykcycl}
 s_k z_{k+1}^a =h_k^a s_k-\mbox{$\sum\limits_{b=0}^{a-1}$} h_k^{a-1-b}z_{k+1}^b  +\mbox{$\sum\limits_{b=0}^{a-1}$} (-1)^{a-b} c_kc_{k+1} h_k^{a-b-1}
 z_{k+1}^b,
 \end{equation} where $h_k=z_k+e_k+\bar e_k$.
\end{Lemma}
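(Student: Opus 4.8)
The plan is to prove \eqref{sykcycl} by induction on $a$, following verbatim the scheme behind Lemma~\ref{syk}, of which this is the exact $z$-theoretic counterpart. Two ingredients drive the induction: the base case $s_k z_{k+1} = h_k s_k - 1 - c_k c_{k+1}$, and the anticommutation relation $h_k c_k c_{k+1} = -c_k c_{k+1} h_k$, which immediately upgrades to $h_k^a c_k c_{k+1} = (-1)^a c_k c_{k+1} h_k^a$.

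For the base case I would start from $z_{k+1} = x_{k+1}' + y_{k+1}$ (see \eqref{zi}) and split $s_k z_{k+1} = s_k x_{k+1}' + s_k y_{k+1}$. Since $x_{k+1}' = s_k x_k' s_k$ by the definition in \eqref{relsmurp} and $s_k^2 = 1$ by \eqref{symm}, the first summand is $s_k x_{k+1}' = x_k' s_k$. For the second summand I would invoke the $a=1$ instance of Lemma~\ref{syk}, namely $s_k y_{k+1} = (y_k + e_k + \bar e_k)s_k - 1 - c_k c_{k+1}$ (there the symbol $h_k$ denotes $y_k + e_k + \bar e_k$). Adding the two pieces gives $s_k z_{k+1} = (x_k' + y_k + e_k + \bar e_k)s_k - 1 - c_k c_{k+1} = h_k s_k - 1 - c_k c_{k+1}$, now with $h_k = z_k + e_k + \bar e_k$, as required.

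Next I would establish $h_k c_k c_{k+1} = -c_k c_{k+1} h_k$ by treating the three summands of $h_k$ separately. For $z_k$, Lemma~\ref{usefcycl}(3) gives $z_k c_k = -c_k z_k$ and (4) gives $z_k c_{k+1} = c_{k+1} z_k$, whence $z_k c_k c_{k+1} = -c_k c_{k+1} z_k$. For $e_k + \bar e_k$ the subtlety is that the two terms individually are not anticommuted but are swapped: using $\bar e_k = c_k e_k c_k$ (Definition~\ref{RSu-GRSS}), the Clifford relations $c_k^2 = -1$ and $c_{k+1} c_k = -c_k c_{k+1}$ (both from \eqref{sera}), and $c_{k+1} e_k = e_k c_{k+1}$ (Definition~\ref{wsera}(9) shifted via Corollary~\ref{wsba-1}), one computes $e_k c_k c_{k+1} = -c_k c_{k+1} \bar e_k$ and $\bar e_k c_k c_{k+1} = -c_k c_{k+1} e_k$. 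Summing, the swap cancels and $(e_k + \bar e_k) c_k c_{k+1} = -c_k c_{k+1}(e_k + \bar e_k)$; combining with the $z_k$ computation yields the claimed anticommutation for $h_k$.

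Finally, for the inductive step I would multiply the induction hypothesis on the right by $z_{k+1}$. The leading term becomes $h_k^a s_k z_{k+1}$, to which the base case applies to produce $h_k^{a+1} s_k - h_k^a - h_k^a c_k c_{k+1}$; the anticommutation rewrites $-h_k^a c_k c_{k+1}$ as $(-1)^{a+1} c_k c_{k+1} h_k^a$, which is exactly the $b=0$ term of the second sum in the $(a+1)$-formula, while $-h_k^a$ is the $b=0$ term of the first sum. The two surviving sums, after shifting the summation index by one (sending each $z_{k+1}^b$ to $z_{k+1}^{b+1}$ and adjusting the sign $(-1)^{a-b}$ to $(-1)^{a+1-b'}$), align precisely with the $1 \le b \le a$ terms of the $(a+1)$-formula. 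I expect the one genuinely non-formal point — and hence the main obstacle — to be the $e_k + \bar e_k$ anticommutation: it is the cancellation of the $e_k \leftrightarrow \bar e_k$ swap, rather than a termwise anticommutation, that makes it work. The base case reduction and the index bookkeeping in the inductive step are then routine.
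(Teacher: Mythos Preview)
Your proof is correct and follows exactly the approach the paper intends: the paper's own proof says only ``The result can be easily checked by induction on $a$'', and your argument supplies precisely that induction, with the base case reduced to Lemma~\ref{syk} plus $s_k x_{k+1}'=x_k's_k$, and the inductive step driven by the anticommutation $h_k c_kc_{k+1}=-c_kc_{k+1}h_k$. Your identification of the $e_k\leftrightarrow\bar e_k$ swap as the mechanism behind the $(e_k+\bar e_k)$ anticommutation is accurate and is the only point requiring care.
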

\begin{proof}The result can be easily checked  by  induction on $a$.
  \end{proof}

Recall  $z_i, \bar z_j$ in \eqref{zi} for all admissible $i$ and $j$.
For all $1\le j\le k-1$, define
\begin{equation}\label{z-bar-zcycl} p_{j, k}=s_{j, k-1} (z_{k-1} +e_{k-1}+\bar e_{k-1})s_{k-1, j},  \ \ \text{and  $\bar p_{j, k}=\bar s_{j, k-1} (\bar z_{k-1}+e_{k-1}-\bar e_{k-1})\bar s_{k-1, j}$.}\end{equation}
Note that $\xi_{0,k}=0$, and $e_k h=0$ for $h\in BC_{2,k-1, k-1}$ if and only if $h=0$. We will use this fact  freely in the proof of
the following lemma, where we use the terminology that
a monomial in  $p_{j, k+1}$'s  $\bar p_{j, k+1}$'s, and $ x_j', \bar x_j'$
is a {\it leading term} in an expression if it has
the highest degree by defining ${\rm deg\,}p_{i,j}={\rm deg\,}\bar p_{i,j}= {\rm deg\,}x'_j={\rm deg\,}\bar x'_j=1$.

\begin{Lemma}\label{omedcycl} For any positive integer $n$,   $\xi_{2n+1, k+1}$ can be written as an $R$-linear combination of
monomials in   $ p_{j, k+1}$, $\bar p_{j, k+1}$, $x'_j$ and  $\bar x'_j$ for $1\le j\le k$ such that the leading terms
of  $\xi_{2n+1, k+1}$ are
 $-2\sum_{j=1}^{k}(p_{j, k+1}^{2n}+p_{j, k+1}^{2n-1} x_j'-\bar p_{ j, k+1}^{2n}-\bar p_{j, k+1}^{2n-1}\bar x_j')$.
\end{Lemma}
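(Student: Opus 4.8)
The plan is to follow the proof of Lemma~\ref{omed} line by line, with $z_{k+1}$, $p_{j,k+1}$, $\bar p_{j,k+1}$ and $\xi_{b,k+1}$ playing the roles previously played by $y_{k+1}$, $z_{j,k+1}$, $\bar z_{j,k+1}$ and $\omega_{b,k+1}$, the one genuinely new ingredient being the term coming from $x_{k+1}'$. First I would combine Proposition~\ref{esecycl}(2) with Lemma~\ref{usefcycl}(7) and \eqref{relsmurp} to obtain
\[
\xi_{2n+1,k+1}e_{k+1}=e_{k+1}z_{k+1}^{2n+1}e_{k+1}=e_{k+1}(x_{k+1}'-L_{k+1}+\bar L_{k+1})z_{k+1}^{2n}e_{k+1},
\]
which splits the problem into an $-L_{k+1}$-part, a $\bar L_{k+1}$-part and an $x_{k+1}'$-part.

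For the $-L_{k+1}$-part I would expand $L_{k+1}$ by \eqref{mmpy}, write $(j,k+1)=s_{j,k}s_ks_{k,j}$, use that $s_{j,k},s_{k,j}$ commute with $z_{k+1},e_{k+1},c_{k+1}$ (Lemma~\ref{usefcycl}(1) and \eqref{sera}), apply \eqref{sykcycl} to $s_kz_{k+1}^{2n}$, kill the $c_kc_{k+1}$-terms by Lemma~\ref{usefcycl}(12), and collapse using $e_{k+1}s_ke_{k+1}=e_{k+1}$ and $e_{k+1}z_{k+1}^be_{k+1}=\xi_{b,k+1}e_{k+1}$ (Proposition~\ref{esecycl}(2)). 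Since $p_{j,k+1}=s_{j,k}h_ks_{k,j}$ by \eqref{z-bar-zcycl} gives $s_{j,k}h_k^{2n}s_{k,j}=p_{j,k+1}^{2n}$ while each correction $s_{j,k}h_k^{2n-1-b}\xi_{b,k+1}s_{k,j}$ has strictly lower degree, this contributes the leading part $-2\sum_{j=1}^{k}p_{j,k+1}^{2n}$. For the $\bar L_{k+1}$-part I would, exactly as in Lemma~\ref{omed}, apply the generator-fixing anti-involution $\sigma$ of Lemma~\ref{antiaff} and use Lemma~\ref{assum1cycl} to replace $z_{k+1}^{2n}e_{k+1}$ by $\bar z_{k+1}^{2n}e_{k+1}$ modulo lower order, producing the leading part $+2\sum_{j=1}^{k}\bar p_{j,k+1}^{2n}$.

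The heart of the matter is the $x_{k+1}'$-part $e_{k+1}x_{k+1}'z_{k+1}^{2n}e_{k+1}$, which has no counterpart in Lemma~\ref{omed}. Here the degree is provisionally $2n+1$, so one full degree must be absorbed by the sandwiching $e_{k+1}(\cdots)e_{k+1}$; the surviving summation over $j$ together with the factor $p_{j,k+1}^{2n-1}$ are created by the $y_{k+1}$-content hidden inside $z_{k+1}^{2n}$, which I would expose by repeatedly applying \eqref{sykcycl} and reducing via $e_{k+1}z_{k+1}^be_{k+1}=\xi_{b,k+1}e_{k+1}$, while the outer $x_{k+1}'=s_kx_k's_k$ is transported to the correct slot $x_j'$ using the commutation relations of Lemmas~\ref{relxpri}, \ref{usefaffco} and \ref{usefaff}. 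This should leave the leading part $-2\sum_{j=1}^{k}p_{j,k+1}^{2n-1}x_j'$, and applying $\sigma$ together with the conversion of Lemma~\ref{assum1cycl} yields the dual summand $+2\sum_{j=1}^{k}\bar p_{j,k+1}^{2n-1}\bar x_j'$. Assembling the three parts gives the claimed leading terms, and all lower-degree remainders are rewritten as $R$-linear combinations of monomials in $p_{j,k+1},\bar p_{j,k+1},x_j',\bar x_j'$ by induction on $n$, the centrality statements of Lemma~\ref{free1cycl} being used to handle the $\xi_{b,k+1}$ and $\bar\xi_{b,k+1}$ occurring in the corrections.

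I expect the $x_{k+1}'$-part to be the main obstacle. Unlike $L_{k+1}$ it carries no built-in summation, so the crux is to verify that the correct range $\sum_{j=1}^{k}$, the coefficient $-2$, and the pairing with $x_j'$ are all produced during the reduction, and that the non-commutativity of $x_k'$ with $h_k$ and with $e_{k+1}$ (which on its own does \emph{not} lower degree) contributes only to the lower-degree remainder once the element is sandwiched between the idempotent-like factors $e_{k+1}$. Establishing that the barred $x'$-terms emerge with the right sign after applying $\sigma$ is the other delicate point.
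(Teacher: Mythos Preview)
Your treatment of the $-L_{k+1}$ and $\bar L_{k+1}$ parts is correct and matches the paper.  The genuine gap is in the $x'_{k+1}$-part, where your proposal is only a heuristic.  You say that ``one full degree must be absorbed by the sandwiching $e_{k+1}(\cdots)e_{k+1}$'' and that the summation over $j$ is ``created by the $y_{k+1}$-content hidden inside $z_{k+1}^{2n}$, which I would expose by repeatedly applying \eqref{sykcycl}''.  But \eqref{sykcycl} rewrites $s_kz_{k+1}^a$; in $e_{k+1}x'_{k+1}z_{k+1}^{2n}e_{k+1}$ there is no $s_k$ to apply it to, and writing $x'_{k+1}=s_kx'_ks_k$ does not help because $x'_k$ does not commute with $h_k$.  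More seriously, the sandwich $e_{k+1}(\cdots)e_{k+1}$ by itself does not lower degree by one here: it replaces $z_{k+1}^b$ by $\xi_{b,k+1}$, which is indeed of degree $b-1$, but the outer $x'_{k+1}$ is still present and there is no $\sum_j$ anywhere.

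The mechanism the paper uses, and which you are missing, has two ingredients.  First, split off one more factor $z_{k+1}=x'_{k+1}+y_{k+1}$ from $z_{k+1}^{2n}$; the resulting term $e_{k+1}(x'_{k+1})^{2}z_{k+1}^{2n-1}e_{k+1}$ is killed to lower order by the \emph{level-two relation} $x_1^{2}=p(p+1)$ of Lemma~\ref{SMSM}(a), giving $p(p+1)\,\xi_{2n-1,k+1}e_{k+1}$.  You never invoke this relation, and without it the degree cannot drop.  Second, for the cross term $e_{k+1}x'_{k+1}y_{k+1}z_{k+1}^{2n-1}e_{k+1}$ one uses Lemma~\ref{hecrel2}(3) to trade $x'_{k+1}$ for $-\bar x'_{k+1}$ on the right of $e_{k+1}$, commutes $\bar x'_{k+1}$ past $y_{k+1}$ via Lemma~\ref{usefaffco}(2), and then applies Lemma~\ref{usef}(8) to obtain $e_{k+1}(L_{k+1}-\bar L_{k+1})\bar x'_{k+1}z_{k+1}^{2n-1}e_{k+1}$.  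It is this $L_{k+1}$ (and $\bar L_{k+1}$) that produces the summation $\sum_{j=1}^k$, and the attached $\bar x'_{k+1}$ is then transported to $x'_j$ (resp.\ $\bar x'_j$) using $e_{k+1}s_k\bar x'_{k+1}=-e_{k+1}s_kx'_k$ and the commutation of $x'_k$ with $z_{k+1},e_{k+1}$.  This is what yields the leading parts $-2\sum_j p_{j,k+1}^{2n-1}x'_j$ and $+2\sum_j\bar p_{j,k+1}^{2n-1}\bar x'_j$.  Your proposed route via iterated \eqref{sykcycl} does not reproduce this, and in particular gives no source for the $\sum_{j}$ or for the coefficient $-2$ in the $x'$-terms.
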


\begin{proof} We have  $x_1^2=p(p+1)$ and $\bar x_1^2=(p-n+1)(p-n)$  in $BC_{2, r, t}$ by  Lemma~\ref{SMSM}.
So,
\begin{equation}\label{ommmmcycl}{\sc\!}
\begin{aligned}
& \xi_{2n+1,k+1} e_{k+1}\overset{\rm Prop.~5.5} =  e_{k+1}z_{k+1}^{2n+1} e_{k+1}=e_{k+1}y_{k+1} z_{k+1}^{2n}e_{k+1}+e_{k+1}x'_{k+1}z_{k+1}^{2n}e_{k+1}\\ = & e_{k+1}y_{k+1}
z_{k+1}^{2n}e_{k+1}+e_{k+1}(x'_{k+1})^2 z_{k+1}^{2n-1}e_{k+1}+e_{k+1}x'_{k+1}y_{k+1}z_{k+1}^{2n-1}e_{k+1}\\ = & e_{k+1}y_{k+1}z_{k+1}^{2n}e_{k+1}+
p(p+1)e_{k+1} z_{k+1}^{2n-1}e_{k+1}-e_{k+1}\bar x'_{k+1}y_{k+1}z_{k+1}^{2n-1}e_{k+1} \text{ (by Lemma~\ref{hecrel2}(3))}\\= &e_{k+1}y_{k+1}z_{k+1}^{2n}e_{k+1}+p(p+1)e_{k+1} z_{k+1}^{2n-1}e_{k+1}+e_{k+1}\bar y_{k+1}\bar x'_{k+1}z_{k+1}^{2n-1}e_{k+1}  \text{ (Lemmas~\ref{usef}(8),~\ref{usefaffco}(2))}  \\=& e_{k+1}(-L_{k+1}+\bar L_{k+1})z_{k+1}^{2n}e_{k+1}+p(p+1)\xi_{2n-1, k+1} e_{k+1} +e_{k+1}(L_{k+1}-\bar L_{k+1} )\bar x'_{k+1}z_{k+1}^{2n-1}e_{k+1}.
\end{aligned}\!\!
\end{equation}
Recall  $h_k=z_k+e_k+\bar e_k$ in \eqref{sykcycl}.
Considering the right-hand side of \eqref{ommmmcycl} and expressing $L_{k+1}$ by \eqref{RSu-GRSS}, using $(j,k+1)=$ $s_{j,k}s_ks_{k,j}$  and the fact that $s_{j,k},s_{k,j}$ commute with $x'_{k+1},y_{k+1},e_{k+1}$, a term in the linear combination of  $e_{k+1} L_{k+1}z_{k+1}^{2n}e_{k+1}$ becomes
\begin{equation} \label{step44} s_{j, k} e_{k+1} s_{k} z_{k+1}^{2n} e_{k+1}
s_{ k, j}\! +
s_{j, k} e_{k+1}c_{k+1} s_{k} c_{k+1} z_{k+1}^{2n} e_{k+1} s_{ k, j}.
\end{equation}
Since $c_{k+1} x_{k+1}'=-x_{k+1}'c_{k+1}$ and $c_{k+1} y_{k+1}=-y_{k+1} c_{k+1}$, we have  $c_{k+1} z_{k+1}^{2n}=z_{k+1}^{2n} c_{k+1}$. Note that $e_{k+1}c_{k+1}=e_{k+1}\bar c_{k+1}$, and
$\bar c_{k+1}$ commutes with $s_k, x_{k+1}'$ and $y_{k+1}$. So
\begin{equation}\label{step45} \begin{aligned} & s_{j, k} e_{k+1}c_{k+1} s_{k} c_{k+1} z_{k+1}^{2n} e_{k+1} s_{ k, j}=s_{j, k} e_{k+1} \left(s_{k} z_{k+1}^{2n}\right) e_{k+1}
s_{ k, j}\! \\
\overset {  \eqref{sykcycl}}=& s_{j, k} e_{k+1}\left( h_k^{2n} s_k -\mbox{$\sum\limits_{b=0}^{2n-1}$} h_k^{2n-b-1} z_{k+1}^b
+(-1)^{b} \mbox{$\sum\limits_{b=0}^{2n-1}$} c_k c_{k+1}  h_k^{2n-b-1}  z_{k+1}^b \right)e_{k+1}s_{k, j} \\=&s_{j, k}  h_k^{2n}s_{k,j}e_{k+1}-s_{j, k}\mbox{$\sum\limits_{b=0}^{2n-1}$}  h_k^{2n-b-1} s_{k, j} \xi_{b,k+1}e_{k+1}\text{ (by Lemma~\ref{usefcycl}(12)).}\end{aligned}\end{equation}
By induction assumption, the leading terms
of  $\xi_{b, k+1}$ are of degree $b-1$. So, the leading term of $s_{j, k} e_{k+1} \left(s_{k} z_{k+1}^{2n}\right) e_{k+1}
s_{ k, j}$ is $p_{j, k+1}^{2n}$ and hence  $-e_{k+1} L_{k+1}z_{k+1}^{2n}e_{k+1}$ contributes to the leading terms $-2\sum_{j=1}^k p_{j,  k+1}^{2n}$.

We compute  $e_{k+1} L_{k+1}\bar x_{k+1}'z_{k+1}^{2n-1}e_{k+1}$. A term of it becomes $2s_{j, k} e_{k+1}s_k\bar x'_{k+1}z_{k+1}^{2n-1}e_{k+1}s_{k,j}$ since $$e_{k+1}c_{k+1}s_k  c_{k+1}\bar x'_{k+1}z_{k+1}^{2n-1}e_{k+1}=  e_{k+1}s_k\bar x'_{k+1}z_{k+1}^{2n-1}e_{k+1}.$$ Thus, it is enough to compute the leading terms of
$s_{j, k} e_{k+1}s_k\bar x'_{k+1}z_{k+1}^{2n-1}e_{k+1}s_{k,j}$. Since $x_k'$ commutes with $z_{k+1}$ and $e_{k+1}$,
and $e_{k+1} s_k\bar x_{k+1}'=e_{k+1} \bar x_{k+1}'s_k=-e_{k+1}  x_{k+1}'s_k=-e_{k+1} s_k x_{k}'$,  we have
$$s_{j, k} e_{k+1}s_k\bar x'_{k+1}z_{k+1}^{2n-1}e_{k+1}s_{k,j}=-s_{j, k} e_{k+1}s_k z_{k+1}^{2n-1}e_{k+1}s_{k,j} x'_{j}$$
By \eqref{step45}, $s_{j, k} e_{k+1}s_k\bar x'_{k+1}z_{k+1}^{2n-1}e_{k+1}s_{k,j}$ contributes to the leading term $\-p_{j, k+1}^{2n-1}x_j'$ whose degree is  $2n$.
Finally, we need to compute the leading terms of $e_{k+1} \bar L_{k+1}z_{k+1}^{2n}e_{k+1}$ and $-e_{k+1}\bar L_{k+1} \bar x'_{k+1}z_{k+1}^{2n-1}e_{k+1}$,
By Lemma~\ref{assum1cycl}, one can use $\bar z_{k+1}^{2n}$ to replace $z_{k+1}^{2n}$ in $e_{k+1} \bar L_{k+1}z_{k+1}^{2n}e_{k+1}$. Thus, $e_{k+1} \bar L_{k+1}z_{k+1}^{2n}e_{k+1}$ contributes to the leading terms $\bar p_{j, k+1}^{2n}$.
Similarly, a term of $e_{k+1}\bar L_{k+1}\bar x'_{k+1}z_{k+1}^{2n-1}e_{k+1}$ is of form $\bar x_j' e_{k+1} (\bar j, \overline {k+1}) z_{k+1}^{2n-1} e_{k+1}$ whose
leading term $\bar p_{j, k+1}^{2n-1}\bar x_j'$ is of degree $2n$.
The proof is completed.
\end{proof}

\begin{Lemma}\label{commomegacycl}For $a\!\in\!\Z^{\ge0},{\ssc\,}k\!\in\!\Z^{\ge 1}$,~both~$\xi_{a, k+1}$ and
$\bar \xi_{a, k+1}$ commute with $x'_{k+1}+y_{k+1}$ and $\bar x'_{k+1}+\bar y_{k+1}$.\end{Lemma}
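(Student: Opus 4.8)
The statement asserts that $z_{k+1}=x'_{k+1}+y_{k+1}$ and $\bar z_{k+1}=\bar x'_{k+1}+\bar y_{k+1}$ (cf.~\eqref{zi}) commute with $\xi_{a,k+1}$ and $\bar\xi_{a,k+1}$. First I would dispose of the trivial cases: by Proposition~\ref{esecycl}(2) we have $\xi_{2n,k+1}=\bar\xi_{2n,k+1}=0$, while $\xi_{1,k+1}=\omega_1$ and $\bar\xi_{1,k+1}=-\omega_1$ lie in $R$; and by Lemma~\ref{free1cycl} every $\bar\xi_{2n+1,k+1}$ is a polynomial in $\xi_{3,k+1},\ldots,\xi_{2n+1,k+1}$. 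Hence it suffices to prove that $z_{k+1}$ and $\bar z_{k+1}$ commute with $\xi_{2n+1,k+1}$ for $n\ge1$. By Lemma~\ref{omedcycl}, $\xi_{2n+1,k+1}$ is an $R$-linear combination of monomials in the elements $p_{j,k+1}$, $\bar p_{j,k+1}$, $x'_j$, $\bar x'_j$ with $1\le j\le k$, so the whole lemma reduces to checking that each of $z_{k+1},\bar z_{k+1}$ commutes with every member of these four families.

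The commutators with $x'_j$ and $\bar x'_j$ ($1\le j\le k$) are already available: they are precisely Lemma~\ref{usefaffco}(5)--(8). To treat $p_{j,k+1}$ and $\bar p_{j,k+1}$ I would first bring them into a usable shape. Since $x'_{i+1}=s_ix'_is_i$ by \eqref{relsmurp}, telescoping gives $s_{j,k}x'_ks_{k,j}=x'_j$; combining this with $z_k+e_k+\bar e_k=x'_k+(y_k+e_k+\bar e_k)$ and \eqref{z-bar-zcycl} yields
\[
p_{j,k+1}=x'_j+z_{j,k+1},\qquad \bar p_{j,k+1}=\bar x'_j+\bar z_{j,k+1},
\]
where $z_{j,k+1}$ and $\bar z_{j,k+1}$ denote the \emph{doubly indexed, non-affine} elements $s_{j,k}(y_k+e_k+\bar e_k)s_{k,j}$ and $\bar s_{j,k}(\bar y_k+e_k-\bar e_k)\bar s_{k,j}$ of Lemma~\ref{dzjk}, i.e.\ exactly those appearing in Lemma~\ref{commomega}. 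Consequently
\[
[z_{k+1},p_{j,k+1}]=[z_{k+1},x'_j]+[x'_{k+1},z_{j,k+1}]+[y_{k+1},z_{j,k+1}],
\]
where the first bracket vanishes by Lemma~\ref{usefaffco}(6), and the third vanishes by Lemma~\ref{commomega}, whose proof establishes that $y_{k+1}$ commutes with both $z_{j,k+1}$ and $\bar z_{j,k+1}$. The only genuinely new contribution is the middle bracket $[x'_{k+1},z_{j,k+1}]$.

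I would settle this middle bracket with the decomposition $z_{j,k+1}=\mathfrak z_{j,k+1}+c_j\mathfrak z_{j,k+1}c_j$ of Lemma~\ref{dzjk}(1), where $\mathfrak z_{j,k+1}=\sum_{\ell=1}^{k}e_{j,\ell}-\sum_{1\le s\le k,\,s\ne j}(s,j)$ uses only indices $\le k$. Each ingredient commutes with $x'_{k+1}$: the transpositions $(s,j)$ lie in $\langle s_1,\ldots,s_{k-1}\rangle$, which commutes with $x'_{k+1}$ (combine Lemmas~\ref{usef}(1) and~\ref{usefcycl}(1)); the $e_{j,\ell}$ commute with $x'_{k+1}$ by Lemma~\ref{relxpri}(1) since $j\le k\ne k+1$; and $c_j$ commutes with $x'_{k+1}$ for $j\le k$ by Lemma~\ref{hecrel} together with \eqref{relsmurp} and the relation $c_jL_{k+1}=L_{k+1}c_j$ from \eqref{jm-wba1}. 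Thus $[x'_{k+1},z_{j,k+1}]=0$, giving $[z_{k+1},p_{j,k+1}]=0$. The mixed commutator $[z_{k+1},\bar p_{j,k+1}]$ is handled identically, now using Lemma~\ref{usefaffco}(8) for $[z_{k+1},\bar x'_j]$ and the cross-wall commutations of $x'_{k+1}$ with $\bar s_i$, $\bar c_j$ (Lemma~\ref{usefaff}) and with $e_{\ell,j}$ (Lemma~\ref{relxpri}(1)) to kill $[x'_{k+1},\bar z_{j,k+1}]$. Finally, the two commutators involving $\bar z_{k+1}$ follow from the two just proved by the same symmetry as in the proof of Lemma~\ref{usef}: the superalgebra isomorphism $BC^{\rm aff}_{r,t}\cong BC^{\rm aff}_{t,r}$ (available since $\sqrt{-1}\in\mathbb C$) interchanges $z_{k+1}\leftrightarrow\bar z_{k+1}$ and $p_{j,k+1}\leftrightarrow\bar p_{j,k+1}$ up to nonzero scalars, which is immaterial for commutativity. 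I expect the main obstacle to be organizational rather than computational: the crucial move is recognizing the splitting $p_{j,k+1}=x'_j+z_{j,k+1}$, which lets the non-affine result Lemma~\ref{commomega} be reused verbatim, so that all that remains is the elementary commutation $[x'_{k+1},z_{j,k+1}]=0$.
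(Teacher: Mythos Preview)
Your approach is essentially the paper's: reduce to odd $a\ge3$, invoke Lemma~\ref{omedcycl}, split $p_{j,k+1}=x'_j+z_{j,k+1}$ and $\bar p_{j,k+1}=\bar x'_j+\bar z_{j,k+1}$, then recycle Lemma~\ref{commomega} for the $y_{k+1}$-part and check separately that $x'_{k+1}$ commutes with all ingredients of $z_{j,k+1},\bar z_{j,k+1}$. The paper does exactly this, citing Lemmas~\ref{dzjk},~\ref{relxpri}(1)--(2),~\ref{usefaff},~\ref{usefaffco}(5)--(8) and \eqref{asera}--\eqref{aseradual2} for the $x'_{k+1}$-commutations, just as you do.

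There is one soft spot. To pass from $z_{k+1}$ to $\bar z_{k+1}$ you invoke a superalgebra isomorphism $BC^{\rm aff}_{r,t}\cong BC^{\rm aff}_{t,r}$. Such a swap is available for the walled Brauer--Clifford algebra $BC_{r,t}$ (proof of Lemma~\ref{usef}), but here you are computing inside the specific level-two quotients $BC_{2,k+1,k+1}$ of Section~4, where the imposed relations $x_1^2=p(p+1)$ and $\bar x_1^2=(p-n+1)(p-n)$ are \emph{not} symmetric, so no such isomorphism is available (and none is established in the paper for $BC^{\rm aff}$ either). The fix is painless: replace the symmetry appeal by the parallel direct argument, using Lemma~\ref{usefaffco}(5),(7) in place of (6),(8) for $[\bar z_{k+1},x'_j]$ and $[\bar z_{k+1},\bar x'_j]$, and observing (via Lemmas~\ref{dzjk},~\ref{relxpri}(2),~\ref{usefaff} and \eqref{aseradual2}) that $\bar x'_{k+1}$ commutes with every building block of $z_{j,k+1}$ and $\bar z_{j,k+1}$. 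This is precisely what the paper does.
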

 \begin{proof}By Proposition~\ref{esecycl}, we can assume that $a=2n+1$ and $n\geq 1$.  By Lemma~\ref{omedcycl}, $\xi_{2n+1, k}$ can be written as a linear combinations of monomials  in
 $p_{j, k+1}$, $\bar p_{j, k+1}$,  $x'_j$, and $\bar x'_j$ for $1\le j\le k$. From \eqref{z-bar-zcycl},
 \begin{equation}\label{pze} p_{j, k+1}=x_j'+z_{j, k+1}, \text{ and $\bar p_{j, k+1}=\bar x_j'+\bar z_{j, k+1}$,}\end{equation}
 where $z_{j,k+1} ,\bar z_{j,k+1}$ are defined in Lemma~\ref{dzjk}.
 So, it is enough to prove that $z_{j, k+1}$, $\bar z_{j, k+1}$,  $x'_j$ and  $\bar x'_j$  commute with $\bar x'_{k+1}+\bar y_{k+1}$ if  $1\le j\le k$,. By Lemma~\ref{usefaffco}(5)--(6), both $x'_j$ and $\bar x'_j$ commute with $\bar x'_{k+1}+\bar y_{k+1}$. Finally,  $z_{j, k+1}$, $\bar z_{j, k+1}$ commute with $\bar x'_{k+1}+\bar y_{k+1}$ since (1): both $z_{j, k+1}$ and $\bar z_{j,k+1}$ commute with $y_{k+1}$ and $\bar y_{k+1}$  (see the proof of Lemma~\ref{commomega}) and (2):
 $z_{j,k+1}$ and $\bar z_{j,k+1}$   are linear combinations of elements which  commutes with  both  $\bar x'_{k+1}$ and   $ { x'_{k+1}}$, by  Lemmas~\ref{dzjk},
 ~\ref{relxpri}(1)--(2), and ~\ref{usefaff} and \eqref{asera} and \eqref{aseradual2}. This proves that $\xi_{a, k+1}$ commutes with $x'_{k+1}+y_{k+1}$ and $\bar x'_{k+1}+\bar y_{k+1}$ and so is
 $\bar \xi_{a, k+1}$ by Lemma~\ref{assumcycl}.\end{proof}

\begin{Lemma}\label{commom1egacycl}For $a\!\in\!\Z^{\ge0},{\ssc\,}k\!\in\!\Z^{\ge 1}$, both $\xi_{2a+1, k+1}$ and
$\bar \xi_{2a+1, k+1}$ commute with $c_j$ and $\bar c_j$ if $j \geq k+1$.\end{Lemma}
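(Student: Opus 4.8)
The plan is to mimic the proof of Lemma~\ref{commomegacycl}, reducing the statement to the behaviour of the individual building blocks that appear in the explicit expansion of $\xi_{2a+1,k+1}$. For $a=0$ there is nothing to prove: by Proposition~\ref{esecycl}(2) we have $\xi_{1,k+1}=\omega_1$ and $\bar\xi_{1,k+1}=-\omega_1$, both of which lie in $R$ and hence commute with everything. So I would assume $a\ge1$. By Lemma~\ref{omedcycl}, $\xi_{2a+1,k+1}$ is an $R$-linear combination of monomials in $p_{j,k+1}$, $\bar p_{j,k+1}$, $x'_j$ and $\bar x'_j$ for $1\le j\le k$. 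Since the property of commuting with a fixed element is preserved under taking products and $R$-linear combinations, it therefore suffices to show that each of these four families commutes with $c_l$ and $\bar c_l$ whenever $l\ge k+1$.

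The only genuinely new verification concerns the conjugated elements $x'_j$ and $\bar x'_j$. Recall from \eqref{relsmurp} that $x'_j=s_{j-1}\cdots s_1 x_1 s_1\cdots s_{j-1}$, and note $l\ge k+1>j$. Then $c_l$ commutes with each $s_i$ for $1\le i\le j-1$ by \eqref{sera} (because $(l)s_i=l$), and with $x_1$ by Lemma~\ref{hecrel}(2) (as $l\neq1$); likewise $\bar c_l$ commutes with each such $s_i$ by Definition~\ref{wsera}(2) and with $x_1$ by Lemma~\ref{hecrel}(3). Hence $x'_j$ commutes with both $c_l$ and $\bar c_l$, and the symmetric argument using \eqref{aseradual}, Definition~\ref{wsera}(2) and Lemma~\ref{hecrel}(1),(3) gives the same for $\bar x'_j$. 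For the remaining two families I would invoke \eqref{pze}, which writes $p_{j,k+1}=x'_j+z_{j,k+1}$ and $\bar p_{j,k+1}=\bar x'_j+\bar z_{j,k+1}$: the $x'_j,\bar x'_j$ parts are just handled, while $z_{j,k+1}$ and $\bar z_{j,k+1}$ were already shown to commute with $c_l$ and $\bar c_l$ for all $l\ge k+1$ at the end of the proof of Lemma~\ref{commomega}, via the explicit expressions of Lemma~\ref{dzjk}.

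Assembling these observations, every monomial in the expansion of $\xi_{2a+1,k+1}$ commutes with $c_l$ and $\bar c_l$ for $l\ge k+1$, and hence so does $\xi_{2a+1,k+1}$ itself. For the barred statement I would pass through Lemma~\ref{free1cycl}: it shows $\bar\xi_{2a+1,k+1}\in R[\xi_{3,k+1},\ldots,\xi_{2a+1,k+1}]$, a subalgebra generated by elements just proved to commute with $c_l$ and $\bar c_l$, so $\bar\xi_{2a+1,k+1}$ inherits the same commutation. The main obstacle is really only bookkeeping: one must be careful that the Clifford generators $c_l,\bar c_l$ with $l\ge k+1$ are untouched by the conjugating reflections $s_1,\ldots,s_{j-1}$ (respectively $\bar s_1,\ldots,\bar s_{j-1}$) defining $x'_j$ (respectively $\bar x'_j$); once this index separation is checked, everything else is a direct application of the earlier lemmas.
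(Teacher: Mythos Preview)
Your proof is correct and follows essentially the same approach as the paper: both reduce via Lemma~\ref{omedcycl} and \eqref{pze} to checking that $x'_j$, $\bar x'_j$, $z_{j,k+1}$, $\bar z_{j,k+1}$ (for $1\le j\le k$) commute with $c_l,\bar c_l$ when $l\ge k+1$, with the $z,\bar z$ cases inherited from the proof of Lemma~\ref{commomega}. Your treatment is in fact slightly more complete than the paper's terse version, since you explicitly derive the $\bar\xi_{2a+1,k+1}$ case from Lemma~\ref{free1cycl}; one small slip is that for $\bar x'_j$ you should cite Lemma~\ref{hecrel}(2),(3) rather than (1),(3), since it is part~(2) that gives $\bar x_1\bar c_l=\bar c_l\bar x_1$ for $l\neq1$.
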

\begin{proof} We have proved that both  $z_{j, k+1}$ and $\bar z_{j, k+1}$  commute with $c_j$ and $\bar c_j$ if $j \geq k+1$ in the proof of Lemma~\ref{commomega}.
 If $j\le k$, then  $ x_j'$ and $\bar x_j $ commute with $c_l$ and $\bar c_l$ for $l\ge k+1$, by Lemma~\ref{usefaff} and \eqref{asera} and \eqref{aseradual2}. Now, the result follows from
 \eqref{pze} and Lemma~\ref{omedcycl}, immediately. \end {proof}

In the following result, we assume the ground field is $\mathbb C$ since we use level two walled Brauer-Clifford superalgebras in section~4.
After we have proved the freeness of cyclotomic walled Brauer-Clifford sueralgebras in section~6, we know that the following result is available over an integral domain $R$.
\begin{Theorem}  \label{alghomcyco}
For any $k\in \mathbb Z^{>0}$, there is a superalgebra homomorphism
$\phi_k: BC_{r,t}^{\rm  aff}\rightarrow  BC_{2, r+k,t+k}$ sending
$\omega_{2n+1}, \bar \omega_{2n+1}, e_1,  s_i, \bar s_j, c_l, \bar c_m, x_1, \bar x_1$ to
$\xi_{2n+1, k+1}, \bar \xi_{2n+1, k+1}, e_{k+1}, s_{k+i}, \bar s_{k+j},  c_{k+l}, \bar c_{k+m}, z_{k+1}$, $ \bar z_{k+1}$ respectively,
for all admissible  $i, j, l, m, n$.
\end{Theorem}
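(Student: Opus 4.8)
The plan is to mimic the proof of Theorem~\ref{level-1} almost verbatim: it suffices to check that the images under $\phi_k$ of the generators of $BC_{r,t}^{\rm aff}$ satisfy every defining relation in Definition~\ref{awbsa}. The only structural difference from Theorem~\ref{level-1} is that $\phi_k$ sends $x_1,\bar x_1$ to $z_{k+1},\bar z_{k+1}$ instead of $y_{k+1},\bar y_{k+1}$; consequently the role played there by Lemma~\ref{usef} and Corollary~\ref{equal123} will be played here by Lemma~\ref{usefcycl} and Proposition~\ref{esecycl}. Throughout I note that parity is visibly preserved, since the odd generators $c_l,\bar c_m$ map to the odd elements $c_{k+l},\bar c_{k+m}$ and all other generators are even.

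First I would dispose of the relations \eqref{symm}--\eqref{aseradual} together with Definition~\ref{wsera}(1)--(10), which involve only $e,s,\bar s,c,\bar c$: these hold because the subalgebra of $BC_{2,r+k,t+k}$ generated by $e_{k+1},s_{k+i},\bar s_{k+j},c_{k+l},\bar c_{k+m}$ is isomorphic to $BC_{2,r,t}$, the level-two analogue of Corollary~\ref{wsba-1}, which follows from Corollary~\ref{level-2} exactly as Corollary~\ref{wsba-1} follows from Theorem~\ref{wbhsa321}. Next I would handle the relations mentioning $x_1,\bar x_1$, each of which becomes a statement about $z_{k+1},\bar z_{k+1}$ and is a direct quotation of an established identity. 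Writing $x_2\mapsto \tilde z_{k+1}$ and $\bar x_2\mapsto\tilde{\bar z}_{k+1}$, relations \eqref{asera} and \eqref{aseradual2} follow from Lemma~\ref{usefcycl}(1),(3),(4),(9),(10); Definition~\ref{awbsa}(2)--(4) follow from Lemma~\ref{usefcycl}(8),(6),(8); and Definition~\ref{awbsa}(8)--(11) follow from Lemma~\ref{usefcycl}(5),(2). For Definition~\ref{awbsa}(1) one gets $e_{k+1}(z_{k+1}+\bar z_{k+1})=0$ from Lemma~\ref{usefcycl}(7), and the right-handed equality $(z_{k+1}+\bar z_{k+1})e_{k+1}=0$ by applying the generator-fixing anti-involution (cf.\ Lemmas~\ref{tau},~\ref{antiaff}), which fixes $e_{k+1},z_{k+1},\bar z_{k+1}$. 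Finally, the $\omega$-relations Definition~\ref{awbsa}(5)--(7) are precisely the defining property of $\xi_{a,k+1},\bar\xi_{a,k+1}$ from Proposition~\ref{esecycl}(2), the even cases being covered by $\xi_{2n,k+1}=\bar\xi_{2n,k+1}=0$.

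I must also verify that the constraint imposed in Assumption~\ref{admis-para}, namely $\bar\omega_{2n}=0$ and $\bar\omega_{2n+1}=\sum_i a_{2n+1,i}\omega_{2i+1}$, is respected. Multiplying the expansion in Lemma~\ref{assumcycl} on the right by $e_{k+1}$ and using Proposition~\ref{esecycl}(2) together with the cancellation property ($e_{k+1}h=0\iff h=0$ for $h\in BC_{2,k,k}$) yields $\bar\xi_{2n+1,k+1}=\sum_j a^{(k+1)}_{2n+1,j}\,\xi_{2j+1,k+1}$ and $\bar\xi_{2n,k+1}=0$; since the recursions for $a_{2n+1,j}$ in Lemma~\ref{assum} and for $a^{(k+1)}_{2n+1,j}$ in Lemma~\ref{assumcycl} coincide, $a^{(k+1)}_{2n+1,j}$ is the image of the universal coefficient $a_{2n+1,j}$, so the defining relation for $\bar\omega_{2n+1}$ maps to a valid identity.

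The main obstacle is the remaining requirement, that $\omega_{2n+1},\bar\omega_{2n+1}$ are \emph{central} in $BC_{r,t}^{\rm aff}$, so their images must commute with the image of every generator. Here I would assemble the commutation results: $\xi_{2n+1,k+1},\bar\xi_{2n+1,k+1}$ lie in $BC_{2,k,k}$ and are central there by Lemma~\ref{free1cycl}, hence commute with $e_{k+1}$ and with $s_{k+i},\bar s_{k+j}$ for $i,j\ge 1$ (all of which commute with $BC_{2,k,k}$ since their indices exceed $k$); they commute with $z_{k+1}$ and $\bar z_{k+1}$ by Lemma~\ref{commomegacycl}; and they commute with $c_{k+l},\bar c_{k+m}$ for $l,m\ge 1$ by Lemma~\ref{commom1egacycl}. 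This step is genuinely the crux, because it is where the level-two polynomial for $x_1,\bar x_1$, the centrality of $\xi$, and the $p_{j,k+1},\bar p_{j,k+1}$-expansion of $\xi_{2n+1,k+1}$ from Lemma~\ref{omedcycl} are all used; once these commutations are in hand, every defining relation has been checked and $\phi_k$ is a well-defined superalgebra homomorphism.
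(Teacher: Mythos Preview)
Your proposal is correct and follows essentially the same route as the paper: verify each defining relation in Definition~\ref{awbsa} via the corresponding item of Lemma~\ref{usefcycl} and Proposition~\ref{esecycl}, exactly as Theorem~\ref{level-1} was proved from Lemma~\ref{usef}. You are in fact more explicit than the paper on two points---the centrality of the images $\xi_{2n+1,k+1},\bar\xi_{2n+1,k+1}$ (where you correctly invoke Lemmas~\ref{free1cycl}, \ref{commomegacycl}, \ref{commom1egacycl}) and the compatibility with Assumption~\ref{admis-para}---both of which the paper's brief proof leaves implicit.
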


\begin{proof} It is enough to verify the images of generators of $BC_{r, t}^{\rm aff}$ satisfy the defining relations for $BC_{r, t}^{\rm aff}$ in Definition~\ref{awbsa}.
If so, $\phi_k$ is an algebra homomorphism. Since $\phi_k$ sends even (resp., odd) generators to even (resp., odd) elements in  $BC_{2, r+k,t+k}$, $\phi_k$ is a superalgebra homomorphism.

 By Corollary~\ref{wsba-1}, $\phi_k$ satisfies
 \eqref{symm}--\eqref{aseradual}  and Definition~\ref{wsera}(1)--(10).  By Lemma~\ref{usefcycl}(1),(3),(9),(10), $\phi_k$ satisfies  \eqref{asera} and \eqref{aseradual2}. Applying anti-involution $\sigma$ on Lemma~\ref{usef}(7), we see that $\phi_k$ satisfies
the Definition~\ref{awbsa}(1). By  Lemma~\ref{usefcycl}(8), (6) (resp., (11)--(13)),  $\phi_k$  satisfies  Definition~\ref{awbsa}(2)--(4) (resp., (5)--(7)).
 Finally,  $\phi_k$ satisfies Definition~\ref{awbsa}(8)--(11) by
  Lemma~\ref{usefcycl}(2), (5).
\end{proof}

The following result is a counterpart of \cite[Theorem~4.14]{RSu}.

\begin{Theorem} \label{main1cyco}  Suppose  $R$ is a domain which contains $2^{-1}$ and $  \omega_1$. Then
 $ BC_{r,t}^{\rm aff}$ is free over $R$ spanned by
all regular monomials in Definition~$\ref{regm}$. In particular,  $BC_{r,t}^{\rm aff}$ is  of infinite rank.
\end{Theorem}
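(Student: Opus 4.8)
The plan is to combine the spanning statement of Proposition~\ref{spanned} with a proof that the regular monomials of Definition~\ref{regm} are $R$-linearly independent; once independence is known the infinite rank is immediate, since the exponents of $x^\alpha$, $\bar x^\beta$ and the central powers $a_{2n+1}$ are unbounded. Because the spanning set and all structure constants of $BC_{r,t}^{\rm aff}$ are already defined over the universal base $R_0=\mathbb{Z}[2^{-1}][\omega_1]$, and $BC_{r,t}^{\rm aff}$ over $R$ is obtained from the one over $R_0$ by the base change $(-)\otimes_{R_0}R$, it is enough to prove $R_0$-linear independence; for this it suffices to prove $\mathbb{C}$-linear independence after specializing $\omega_1$ to a transcendental value, which is realized by choosing $p$ transcendental in \eqref{omega-i}. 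This is exactly the setting $R=\mathbb{C}$ in which the homomorphisms $\phi_k$ of Theorem~\ref{alghomcyco} and the basis of Corollary~\ref{level-2} become available.

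The engine of the argument is the family $\phi_k\colon BC_{r,t}^{\rm aff}\to BC_{2,r+k,t+k}$. Suppose $\sum_{\mathbf m}c_{\mathbf m}\mathbf m=0$ is a nontrivial finite $\mathbb{C}$-linear relation among regular monomials $\mathbf m=x^\alpha d\,\bar x^\beta\prod_n\omega_{2n+1}^{a_{2n+1}}$. I would fix $k$ strictly larger than every exponent occurring, apply $\phi_k$ to get $\sum_{\mathbf m}c_{\mathbf m}\phi_k(\mathbf m)=0$ in $BC_{2,r+k,t+k}$, and extract information by passing to leading terms, the level two basis of Corollary~\ref{level-2} providing a controllable target. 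The point is that $\phi_k$ places the image of the diagram and affine data $x^\alpha d\,\bar x^\beta$ on the trailing strands $k+1,\dots,r+k$ (and their barred partners), where by Corollary~\ref{wsba-1} its leading part is a genuine regular monomial of $BC_{2,r+k,t+k}$, with distinct triples $(\alpha,d,\beta)$ giving distinct such monomials.

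The decisive step is to separate the central powers, which $\phi_k$ sends to $\prod_n\xi_{2n+1,k+1}^{a_{2n+1}}$. Here Lemma~\ref{omedcycl} is the key input: the leading term of $\xi_{2n+1,k+1}$ is the power sum $-2\sum_{j=1}^{k}\bigl(p_{j,k+1}^{2n}+p_{j,k+1}^{2n-1}x_j'-\bar p_{j,k+1}^{2n}-\bar p_{j,k+1}^{2n-1}\bar x_j'\bigr)$ in the $k$ commuting elements $p_{j,k+1},\bar p_{j,k+1}$, which by Lemmas~\ref{commomegacycl}--\ref{commom1egacycl} commute with the remaining generators and, in the graded picture, live on the leading strands $1,\dots,k$, disjoint from the strands carrying $\phi_k(x^\alpha d\,\bar x^\beta)$. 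I would then argue that for $k$ exceeding the largest $n$ in the relation the leading terms of the products $\prod_n\xi_{2n+1,k+1}^{a_{2n+1}}$ are $\mathbb{C}$-linearly independent modulo lower degree, reducing this to the algebraic independence of the power sums $\sum_j p_{j,k+1}^{2n}$, $n=1,\dots,k$, together with the barred analogue. Combined with the injectivity on the diagram data and the grading that separates the total degree, this forces every $c_{\mathbf m}=0$, giving independence over $\mathbb{C}$ and hence, through the base change of the first paragraph, over $R$.

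The step I expect to be the main obstacle is precisely this separation of the $\omega$-powers. One must choose the filtration in which $\xi_{2n+1,k+1}$ genuinely carries the degree $2n$ leading term of Lemma~\ref{omedcycl} -- reconciling it with the level two relation $(x_j')^2=p(p+1)$, which collapses naive $x$-degrees in $BC_{2,r+k,t+k}$ -- and then verify that the Jucys-Murphy elements $p_{j,k+1}$ (which mix the affine $x_j'$ with the diagram elements $z_{j,k+1}$ of Lemma~\ref{dzjk} and therefore possess a sufficiently rich joint spectrum) really do make the power sums $\sum_j p_{j,k+1}^{2n}$ algebraically independent as $k\to\infty$. Throughout, the Clifford signs and the $\mathbb{Z}_2$-grading, absent from the purely even walled Brauer setting, must be tracked carefully, which is the principal additional difficulty relative to \cite[Theorem~4.14]{RSu}.
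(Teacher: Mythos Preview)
Your global plan---spanning via Proposition~\ref{spanned}, reducing by base change to $\mathbb{C}$, then pushing a putative dependence through $\phi_k$ into $BC_{2,r+k,t+k}$ and reading it off against the basis of Corollary~\ref{level-2}---is exactly the paper's strategy. The divergence, and the genuine gap, is precisely the step you flag yourself: separating the central powers $\prod_n\omega_{2n+1}^{a_{2n+1}}$.

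The paper does \emph{not} appeal to algebraic independence of power sums. Instead it fixes $k$ equal to the \emph{exact} value $\max\{|\beta|+\sum_n(2n{+}1)a_{2n+1}\}$ (or its barred analogue), then expands $\phi_k(\mathbf m)$ term by term using \eqref{ximage} and singles out a class of ``admissible'' basis monomials of $BC_{2,r+k,t+k}$ defined purely by diagrammatic constraints on the underlying walled Brauer diagram (no fixed vertical edges on the first $k$ strands, no horizontal edges touching them on one row, $\tilde f=f_0$, and $\tilde\beta=\tilde\gamma=0$). The key combinatorial fact is that in this expansion each factor $\xi_{2n+1,k+1}$ contributes to an admissible monomial only through a $(2n{+}1)$-cycle on the first $k$ strands; hence $\prod_n\xi_{2n+1,k+1}^{a_{2n+1}}$ deposits a permutation of cycle type $(3^{a_3},5^{a_5},\ldots)$ there, and distinct exponent tuples $(a_{2n+1})$ give distinct cycle types, hence distinct basis elements. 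The data $(\alpha,\beta,\gamma,\delta,d_1,d_2,w,f)$ are then peeled off by a similar admissibility argument (multiplying by auxiliary permutations $g_i,\bar g_j$). This bypasses any filtration on $BC_{2,r+k,t+k}$ and is immune to the collapse $(x_j')^2=p(p+1)$.

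Your route asks instead for a filtration on $BC_{2,r+k,t+k}$ in which the leading parts of $\xi_{2n+1,k+1}$ survive with the degrees of Lemma~\ref{omedcycl}, and then for the $p_{j,k+1}$ to behave like algebraically independent commuting variables so that their power sums separate the exponent tuples. You correctly note that the level-two relation $(x_j')^2=p(p+1)$ destroys the obvious filtration, but you propose no replacement; and even granting one, the $p_{j,k+1}=x_j'+z_{j,k+1}$ are specific, non-commuting elements mixing affine and Jucys--Murphy pieces, so the required independence would itself need a proof of roughly the same difficulty as the theorem. The idea you are missing is to drop the filtration entirely and instead select terms by the combinatorial admissibility criteria, under which each $\omega_{2n+1}$ leaves an unmistakable $(2n{+}1)$-cycle signature on the first $k$ strands.
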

\begin{proof} By Proposition~\ref{spanned},  it is enough to prove that $M$, the set of all regular monomials of
$BC_{r, t}^{\rm aff},$ is linear independent over  $\mathbb Z[\omega_1, 2^{-1}]$, where
$\omega_1$ is an indeterminate. By fundamental theorem on algebras, it suffices to prove it for sufficient many $\omega_1$'s.
This can be done by choosing $\omega_1$ as in \eqref{omega-i}. So, it is enough to prove that $M$ is linear independent over  $\mathbb C$ for infinite many $\omega_1$'s
in \eqref{omega-i}.

 By Lemma~\ref{hecrel}(1)--(3) and Definition~\ref{regm}, we  assume that a regular monomial $\mathbf{m}$  of $BC_{r, t}^{\rm aff} $  is of form
 \begin{equation}\label{regmm} \mathbf{m}= c^{\alpha} x^{\beta} d_1^{-1} e^f wd_2   {\bar x}^{\gamma}  {\bar c}^{\delta}
\mbox{$\prod\limits_{n\in \mathbb Z^{>0}}$} \omega_{2n+1}^{a_{2n+1}},\end{equation} where $(\alpha, \delta)\in \mathbb Z_2^r\times \mathbb Z_2^t$  and $(\beta, \gamma)\in \mathbb N^r\times \mathbb N^t$ and  $d_1, d_2\in D_{r, t}^f$ and $w\in \Sigma_{r-f}\times \Sigma_{\overline{t-f}}$ and $0\le f\le \min\{r, t\}$.
So, it is equivalent to prove that the above regular monomials are linear independent. If it were false, then
 there is a finite subset $S\subset M$
such that  $\sum_{\mathbf{ m}\in  S} r_{\textbf{m}} {\textbf {m}}=0$  and  $r_{\mathbf {m}}\neq 0$ for all $\textbf {m}\in S$.
For each $\mathcal S$, we set
\begin{equation}\label{k-kmcyco}\begin{aligned} & \tilde k =\max\big\{|\beta|+\SUM{n}{} (2n+1) a_{2n+1}\mid
 c^{\alpha} x^{\beta} d_1^{-1} e^f wd_2   {\bar x}^{\gamma}  {\bar c}^{\delta}
\mbox{$\prod\limits_{n\in \mathbb Z^{>0}}$} \omega_{2n+1}^{a_{2n+1}}\in \mathcal S \big\},\\
&
\hat k=\max\big\{|\gamma|+\SUM{n}{} (2n+1) a_{2n+1}\mid  c^{\alpha} x^{\beta} d_1^{-1} e^f wd_2   {\bar x}^{\gamma}  {\bar c}^{\delta}
\mbox{$\prod\limits_{n\in \mathbb Z^{>0}}$} \omega_{2n+1}^{a_{2n+1}}\in \mathcal S \big\}.
\\ \end{aligned}
\end{equation}
If $\tilde k\geq \hat k$, we define $k=\tilde  k$ and \begin{equation}\label{k11} \begin{aligned}
& f_0=\min\big\{f\mid c^{\alpha} x^{\beta} d_1^{-1} e^f wd_2   {\bar x}^{\gamma}  {\bar c}^{\delta} \mbox{$\prod\limits_{n\in \mathbb Z^{>0}}$} \omega_{2n+1}^{a_{2n+1}}\in S, |\beta|+\SUM{n}{} (2n+1) a_{2n+1}=k\,\big\},\\
& k_1= \max\big\{|\gamma|\mid c^{\alpha} x^{\beta} d_1^{-1} e^{f_0} wd_2   {\bar x}^{\gamma}  {\bar c}^{\delta} \mbox{$\prod\limits_{n\in \mathbb Z^{>0}}$} \omega_{2n+1}^{a_{2n+1}}\in S, |\beta|+\SUM{n}{} (2n+1) a_{2n+1}= k\,\big\}.\\
\end{aligned}
 \end{equation}
If $\tilde k<\hat k$, we define $k=\hat k$ and
 \begin{equation}\label{k12}\begin{aligned}& f_0=\min\big\{f\mid c^{\alpha} x^{\beta} d_1^{-1} e^f wd_2   {\bar x}^{\gamma}  {\bar c}^{\delta} \mbox{$\prod\limits_{n\in \mathbb Z^{>0}}$} \omega_{2n+1}^{a_{2n+1}}\in S, |\gamma|+\SUM{n}{} (2n+1) a_{2n+1}=k\,\big\}, \\ & k_1= \max\big\{|\beta|\mid  c^{\alpha} x^{\beta} d_1^{-1} e^{f_0} wd_2   {\bar x}^{\gamma}  {\bar c}^{\delta} \mbox{$\prod\limits_{n\in \mathbb Z^{>0}}$} \omega_{2n+1}^{a_{2n+1}}\in S, |\gamma|+\SUM{n}{} (2n+1) a_{2n+1}= k\big\}.\\
\end{aligned}
  \end{equation}

Let  $\phi_k{\ssc\!}:\! BC_{r,t}^{\text{\rm aff}}{\ssc\!}\!\rightarrow {\ssc\!}\! BC_{2,r+k,t+k}(\omega_1)$ be the superalgebra homomorphism in Theorem~\ref{alghomcyco}.
Since $x_i =x_i'-L_i $ and $\bar x_i=\bar x_i'-\bar L_i$,
\begin{equation}\label{ximage} \phi_k(x_i)=\mbox{$\sum\limits_{j=1}^k$} (e_{k+i, j}+\bar e_{k+i, j})+x'_{k+i}-L_{k+i},\  \text{and}\ \phi_k(\bar x_i)=\mbox{$\sum\limits_{j=1}^k$}( e_{j, k+i}-\bar e_{j, k+i})+\bar x'_{k+i} -\bar L_{k+i}.\end{equation}
Using Lemma~\ref{omedcycl} to express $\xi_{2n+1,k+1}$ for $n\in \mathbb Z^{\ge 1}$, we see that
  some terms of $\phi_k(\mathbf m)$ are of forms
(we will see in the next paragraph
that other terms of $\phi_k(\mathbf m)$ will not contribute to our computations)
\begin{equation} \label{prod1cycl}  \mbox{$\prod_{i=1}^r { c}_{i+k}^{\alpha_i}\prod\limits_{i=1}^r$} (k\! +\!i, i_{1}) \cdots (k\!+\!i, i_{\beta_i}) \phi_k (d_1^{-1} e^f w d_2)
 \mbox{$\prod\limits_{j=1}^t$} (\overline{k\!+\!j}, \bar j_1) \cdots (\overline{k\!+\!j}, \bar j_{\gamma_j})\mbox{$\prod\limits_{j=1}^t$} { \bar c}_{j+k}^{\delta_j}  \mbox{$\prod\limits_{n\ge 1}$} \mathbf c_{2n+1},
\end{equation}
where $\mathbf c_{2n+1}$, which comes from $\xi_{2n+1, k+1}$,  ranges over products of  $a_{2n+1}$ disjoint cycles  in $\Sigma_k$ (or $\bar {\Sigma}_k)$ such that each cycle is of length $2n+1$.

By Theorem~\ref{theo-2222}, $BC_{r, t}$ is a subalgebra of $BC_{2, r, t}$ and so is the walled Brauer algebra, say $B_{r, t}(0)$ which is isomorphic to the subalgebra generated by
$e_1, s_1, \ldots, s_{r-1}$ and $\bar s_1, \ldots, \bar s_{t-1}$. Similarly, we have the subalgebra $B_{r+k, t+k}(0)$ of $BC_{2, r+k, t+k}$.
It is known that $B_{r, t}(0)$ can be defined by so called $(r, t)$-walled Brauer diagrams.
 Each of them  is a   diagram with $(r\!+\!t)$ vertices on the top and bottom rows, and vertices on both rows are labeled  from left to right
by $r, \ldots,2, 1, \bar 1, \bar 2, \ldots, \bar t$.
Every vertex $i\!\in\!\{1, 2, \ldots, r\}$ (resp., $\bar i\!\in $ $\{\bar 1, \bar 2, \ldots, \bar t\}$) on each
row must be connected to a unique vertex $\bar j$ (resp., $j$) on the same row or a unique vertex
$j$ (resp., $\bar j$) on the other row.
The pairs $[i, j]$ and $[\bar i, \bar j]$ are called
{\it vertical edges}, and the pairs    $[\bar i, j]$ and $[i, \bar j]$ are called
{\it horizontal edges}.
By definition, a   $\phi_k(d_1^{-1} e^f w d_2)$ in \eqref{prod1cycl} corresponds to a unique $(r+k, t+k)$-walled Brauer diagram such that
$[i, i]$ and $[\bar j, \bar j]$ are its  vertical edges for all $1\le i, j\le k$ (see e.g. \cite{RSu}).
We call the terms of the form (\ref{prod1cycl}) the {\it leading terms} if
 \begin{itemize}\item[(i)] $k=|\beta|+\sum_n (2n+1) a_{2n+1}$
 if $\tilde k\geq \hat k$ and  $k=|\gamma|+\sum_n(2n+1) a_{2n+1}$
  if $\tilde k<\hat k$. (cf.~\eqref{k-kmcyco}),
 \item[(ii)] the corresponding  $f$ in (\ref{prod1cycl}) is $f_0$ in \eqref{k12},
 \item [(iii)] $|\gamma|=k_1$ if $\tilde k\geq \hat k$ and $|\beta|=k_1$ if $\tilde k<\hat k$,
\item[(iv)]
in the first case of (i), the juxtapositions of the sequences $i_1, i_2, \ldots, i_{\beta_i}$ for $1\!\le \!i\!\le\! r$ and $\mathbf c_{2n+1}$, $n\ge 1$ run through all  permutations of the sequences in $1, 2, \ldots, k$ and  the sequences $\bar j_1, \bar j_2, \ldots,\bar j_{\gamma_j}$, $1\le j\le t$ run through all permutations of the sequence
$\bar 1, \bar 2, \ldots, \bar k_1$; while in the second case of (i), the  juxtapositions of the sequences $\bar j_1, \bar j_2, \ldots, \bar j_{\gamma_j}$ for $1\le j\le t$ and $\mathbf c_{2n+1}$, $n\ge 1$ run through all permutations of the sequences in $\bar 1, \bar 2, \ldots, \bar k$ and the sequence of $i_1, i_2, \ldots, i_{\beta_i}$, $1\le i\le r$ run through all permutations of sequence $1, 2, \ldots, k_1$.
 \end{itemize}

By Theorem~\ref{monom}, all $\tilde {\mathbf{m}}=c^{\tilde\alpha} x'^{\tilde \beta} \tilde d_1^{-1} e^{\tilde f} \tilde w\tilde d_2   {\bar x'}^{\tilde \gamma}  {\bar c}^{\tilde \delta} \in BC_{2, r+k, t+k}$ consist of a basis of $BC_{2, r+k, t+k}$ over $\mathbb C$, where
$\tilde \alpha, \tilde \beta\in \mathbb Z_2^{r+k}$,  $\tilde \gamma, \tilde \delta\in \mathbb Z_2^{t+k}$,  and  $\tilde d_1, \tilde d_2\in D_{r+k, t+k}^{\tilde f}$,  $\tilde w\in \Sigma_{r+k-\tilde f}\times \Sigma_{\overline{t+k-\tilde f}}$ and $0\le \tilde f\le \min\{r+k, t+k\}$.
Such monomials will be called normal monomials.
Moreover, $\tilde{\mathbf{m}}$ is called an admissible  monomial if \begin{enumerate} \item [(a)]
$\tilde \alpha_i=\tilde \delta_j=0$ for all $1\le i\le k$, $1\le j\le k_1$  if $\tilde k\ge \hat k$;  or $\tilde \alpha_i=\tilde \delta_j=0$ for all $1\le i\le k_1$ and $1\le j\le k$ if  $\tilde k<\hat k$,
 \item [(b)]
 the corresponding walled Brauer diagram of $\tilde d_1^{-1} e^{\tilde f } \tilde w \tilde d_2$ satisfies (1)--(5) as follows:
\begin{enumerate}
 \item[(1)] $\tilde f= f_0$,
 \item [(2)] no vertical edge of form $[i, i]$ and $[\bar j, \bar j]$, $1\le i\le k$, $1\le j\le k_1$ if $\tilde k \geq \hat k$,
 \item [(3)] no vertical edge of form $[i, i]$ and $[\bar j, \bar j]$, $1\le i\le k_1$, $1\le j\le k$ if $\tilde k<\hat k$,
 \item [(4)] no horizontal edge of form $[i, \bar j]$, $1\!\le\! i\! \le\! k$, at the bottom  row if $\tilde k \geq \hat k$,
 \item [(5)] no horizontal edge of form $[i, \bar j]$, $1\!\le\! j\! \le\! k$, at the top row  if $\tilde k<\hat k$,
 \end{enumerate}
 \item[(c)]  $\tilde \beta_i=\tilde \gamma_j=0$, for all  $1\le i\le r+k$ and $1\le j\le t+k$.
    \end{enumerate}
\noindent
In the following, we assume that $\tilde k<\hat k$ (the case $\tilde k\ge \hat k$ can be dealt with in a similar way).   A $\phi_k(\mathbf{m})$ contributes admissible monomials of $BC_{2, k+r, k+t}$ only when
    $\mathbf{m}\in \mathcal S$ is given in \eqref{regmm}
    such that   $k=|\gamma|+\sum_{n} (2n+1) a_{2n+1}$, $ f=f_0$ and   $k_1=|\beta|$. More explicitly, the  leading terms exactly appear in $\phi_k(\mathbf m)$ which are  admissible monomials of $BC_{2, k+r, k+t} $.
We claim that other terms in $\phi_k( \sum_{\mathbf  m\in \mathcal S} r_{\mathbf {m}} {\mathbf  m})$
  are obtained   from \eqref{prod1cycl}  by \begin{itemize}\item [(1)] using  the terms  $e_{k+i, i_j}, \bar e_{k+i,i_j}$ of $ \phi_k(x_i)$
to replace some $(k+i, i_{j})$,  \item [(2)]  using the terms  $e_{i, j}, \bar e_{i, j}$
 $1\le i, j\le k$  (resp.,  $\bar e_{j_i, k+j},  e_{j_i, k+j}$ of  $\phi_k(\bar x_j)$)  to replace  ($\bar i, \bar j)$ in $\mathbf c_{2n+1}$ (resp.,  $(\overline{k+j}, \bar j_i)$) if (1) does not occur,
\item[(3)] using the term $x_{k+i}'$ of  $\phi_k(x_i)$ to replace $(k+i, i_j)$ or
using the term $\bar x_{k+i}'$ of  $\phi_k(\bar x_i)$ to replace $(\overline{k+i}, \bar i_j)$, or using either $x_j'$ or $\bar x_j'$ to replace some $(\bar i, \bar j)$ in $\mathbf c_{2n+1}$, provided that neither (1) nor   (2)  occurs,
\item [(4)] using some $(\overline{k+j}, \bar s)$ or   $\bar c_s (\overline{k+j}, \bar s)\bar c_s$, $s>k$ to replace $(\overline{k+j}, \bar j_i)$; or using $(i, j), c_j(i, j)c_j$ to replace $(\bar i, \bar j)$ in $\mathbf c_{2n+1}$; or using $(k+i, s)$, $c_s(k+i, s) c_s$, $s>k_1$, to replace $(k+i, i_j)$, provided that (1)--(3) do not occur,
\item [(5)] using $\bar c_{j_i} (\overline{k+j}, \bar j_i) \bar c_{j_i}$ to replace $(\overline{k+j}, \bar j_i)$, or using $\bar c_{j} (\bar i, \bar j)\bar c_j$ to replace $(\bar i, \bar j)$ in $\mathbf c_{2n+1}$ or using $c_{i_j} (k+i, i_j) c_{i_j}$ to replace  $(k+i, i_j)$, provided that (1)-(4) do not occur.
      \end{itemize}
\noindent
In the case (1), we use defining relations for $BC_{2, r+k, t+k}$ to rewrite the corresponding monomial as a linear combinations of normal  monomials. Each of these normal  monomials  corresponds to a unique walled Brauer diagram, say $D$,  in which there is a horizontal edge  $[i, \bar j]$ at the top row of $D$ such that $1\le j\le k$.  Such a monomial does not satisfy (b)(5).  Similarly, in case (2) (resp.,
  (resp.,  (3),  (4),  (5)), the corresponding monomials of $BC_{2, k+r, k+t}$ can be written as a linear combinations of normal  monomials which do not satisfy  (b)(1) or (b)(3) (resp., (c) or (b)(3),  (b)(3),   (a)).
This verifies our claim.

We assume that $\mathbf{m}_1, \mathbf m_2, \ldots, \mathbf m_p $ are all monomials in $S$ which contribute leading terms. Write
\begin{equation}\label{mii} \mathbf m_i=c^{\alpha(\mathbf m_i)} x^{\beta(\mathbf m_i)} d_1(\mathbf m_i)^{-1} e^{f(\mathbf m_i)} w(\mathbf(m_i)d_2(\mathbf m_i)   {\bar x}^{\gamma(\mathbf m_i)}  {\bar c}^{\delta(\mathbf m_i)}
\mbox{$\prod\limits_{n\in \mathbb Z^{>0}}$} \omega_{2n+1}^{a_{2n+1}(\mathbf m_i)}.\end{equation}
Then $k=|\gamma(\mathbf m_i)|+\sum_n (2n+1) a_{2n+1}(\mathbf m_i)$, $f(\mathbf m_i)=f_0$ and $k_1=|\beta(\mathbf m_i)|$. Let $A_i$ be the set of all leading terms
contributed  by $\phi_k(\mathbf m_i)$. These leading terms are admissible monomials of  $BC_{2, k+r, k+t}$. We have proved that other terms of $\sum_{\mathbf m\in S} r_{\mathbf m} \phi_k(\mathbf m)$ will not contribute admissible monomials of $BC_{2, k+r, k+t}$. So,
\begin{equation} \label{setneq}\mbox{$\sum\limits_{i=1}^p$}  \tilde r_{\mathbf m_i} \mbox{$\sum\limits_{\mathbf n\in A_i}$} \mathbf n=0, \end{equation}
where $  \tilde r_{\mathbf m_i}$ is the $a r_{\mathbf m}$ and $a$ is a power of $\pm 2$, which comes from the coefficients of leading terms of $\xi_{2n+1, k+1}$.
In the following, we explain that \eqref{setneq} does not hold. If so, then $\mathcal S$ is linear independent and the result will follow.

Suppose $g_i=(k+i, 1)(k+i, 2)\cdots (k+i, k_1)$ and $\bar g_j=(\overline {k+j}, \bar 1)(\overline{k+j}, \bar 2)\cdots (\overline{k+j}, \bar k)$ for
$1\le i\le r$ and $1\le j\le t$. Note that each element in  $A_s$ contains  factors $(k+i, i_1)\cdots (k+i, i_{\beta_i})$ contributed by $\phi_k(x_i)^{\beta_i}$,
such that $i_1, \cdots, i_{\beta_i}$, $1\le i\le r$,  is a permutation of some elements in $1, 2, \ldots, k_1$. So, $(k+i, k_1)\cdots (k+i, k_1-\beta_i+1)$ is one of such factors and hence
$g_i (k+i, k_1) (k+i, k_1-1)\cdots (k+i, k_1-\beta_i+1)$ fixes $k_1, \ldots, k_1-\beta_i+1$.
 So,  there is an element in  $g_i (A_s)$ whose walled Brauer diagram contains  vertical edges $[j, j]$, where $j$ ranges   $\beta(\mathbf m_s)_i$ numbers in  $\{1, 2,\ldots, k_1\}$.     If
  $g_i(A_{s'})$ contains an  element such that the corresponding walled Brauer diagram contains  vertical edges $[j, j]$ for $\beta(\mathbf m_s)_i$ numbers in $\{1, 2, \ldots, k_1\}$,
  then $\beta(\mathbf m_{s'})_i=\beta(\mathbf m_{s})_i$. So, we can assume $\beta(\mathbf m_1)=\beta(\mathbf m_j)$, $2\le j\le p$. Similarly, we use $\bar g_j$ instead of $g_i$ to obtain
   $\gamma(\mathbf m_1)=\gamma(\mathbf m_j)$, $2\le j\le p$.
Note that $\mathbf c_{2n+1}$ is the product of $a_{2n+1}$ disjoint cycles with length $2n+1$. So, different $\prod_{n} \omega_{2n+1}^{a_{2n+1}}$ gives product of disjoint cycles with different lengthes and thus, we can assume $a_{2n+1}(\mathbf m_i)$ is independent of $\mathbf m_i$. Since any leading term is of form in \eqref{prod1cycl}, by Theorem~\ref{level-2} or Theorem~\ref{wbhsa321},   we can also assume that $\alpha(\mathbf m_i), \delta(\mathbf m_i)$ are independent of $\mathbf m_i$. Since $c_i^2=-1$ and $\bar c_j^2=1$, we can assume $\alpha(\mathbf m_i)=0^r\in \Z_2^r$ and $\delta(\mathbf m_i)=0^t\in \Z_2^t$. Using \eqref{prod1cycl}, we see that there exists   a leading term in $A_i\cap A_j$ if and only if $\mathbf m_i=\mathbf m_j$. So, $\tilde r_{\mathbf m_i}=0$ for all $1\le i\le p$, a contradiction.
So, $S$ is linear independent over $\mathbb C$ and hence   over $\mathbb Z[2^{-1}, \omega_1]$. In general, using arguments on base change yields the result over   an arbitrary integral   domain $R$ containing ${\mathbf {\omega}}_1$ and $2^{-1}$.
\end{proof}

The following result follows from Theorem~\ref{main1cyco}, immediately.

\begin{Theorem} \label{main1cyco321}  Suppose  $R$ is a domain containing $2^{-1}$ and $\omega_{2n+1}$, for all $n\in \mathbb N$.  Then
 $ \widetilde{BC}_{r,t}$ is free over $R$ spanned by
all of its  regular monomials. In particular,  $\widetilde{BC}_{r,t}$ is  of infinite rank.
\end{Theorem}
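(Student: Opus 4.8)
The plan is to deduce Theorem~\ref{main1cyco321} from Theorem~\ref{main1cyco} by a base-change (specialization) argument along the central subalgebra generated by the parameters $\omega_{2n+1}$. Recall that the central generators $\omega_3,\omega_5,\ldots$ of $BC_{r,t}^{\rm aff}$ span a polynomial subalgebra; write $\Omega:=R[\omega_3,\omega_5,\ldots]$, and let $\mathcal{B}_0$ denote the set of core monomials $x^\alpha d\,\bar x^\beta$ with $d\in S$ and $(\alpha,\beta)\in\mathbb{N}^r\times\mathbb{N}^t$. First I would upgrade the $R$-freeness supplied by Theorem~\ref{main1cyco} to $\Omega$-freeness. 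By that theorem $BC_{r,t}^{\rm aff}$ is free over $R$ with basis the regular monomials $x^\alpha d\,\bar x^\beta\prod_{n\ge1}\omega_{2n+1}^{a_{2n+1}}$ of Definition~\ref{regm}. Every such monomial factors uniquely as an element of $\mathcal{B}_0$ times the monomial $\prod_{n\ge1}\omega_{2n+1}^{a_{2n+1}}\in\Omega$, and the latter monomials form an $R$-basis of $\Omega$. Since the $\omega_{2n+1}$ are central (Definition~\ref{awbsa}), it follows at once that $BC_{r,t}^{\rm aff}$ is a free $\Omega$-module with basis $\mathcal{B}_0$.

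Next I would identify the quotient. The defining ideal of $\widetilde{BC}_{r,t}=BC_{r,t}^{\rm aff}/I$ is $I=\langle\,\omega_{2k+1}-\tilde\omega_{2k+1}\mid k\ge1\,\rangle$ (Definition~\ref{affinewbcsa}), whose generators lie in $\Omega$ and are central; hence $I=\mathfrak{a}\cdot BC_{r,t}^{\rm aff}$, where $\mathfrak{a}=\ker(\varepsilon)$ for the evaluation map $\varepsilon\colon\Omega\to R,\ \omega_{2k+1}\mapsto\tilde\omega_{2k+1}$. Viewing $R$ as an $\Omega$-module through $\varepsilon$ (so that $\Omega/\mathfrak{a}\cong R$), this yields a canonical isomorphism $\widetilde{BC}_{r,t}\cong BC_{r,t}^{\rm aff}\otimes_\Omega R$. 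Because base change carries a free module to a free module with the image basis, $\widetilde{BC}_{r,t}$ is free over $R$ with basis the images of $\mathcal{B}_0$, which are exactly the regular monomials of $\widetilde{BC}_{r,t}$ in Definition~\ref{cregm}; this simultaneously recovers the spanning statement of Corollary~\ref{level-l-span} and supplies linear independence. As $(\alpha,\beta)$ ranges over the infinite set $\mathbb{N}^r\times\mathbb{N}^t$ while $S$ is finite, this basis is infinite, giving the last assertion.

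The one point deserving care — and the only place where the argument is more than formal — is the passage from $R$-freeness to $\Omega$-freeness in the first step: one must know that $\omega_3,\omega_5,\ldots$ are genuinely algebraically independent over $R$ inside $BC_{r,t}^{\rm aff}$, so that $\Omega$ really is a polynomial ring and the factorization of each basis element as (element of $\mathcal{B}_0$) times ($\Omega$-monomial) is unambiguous. This is immediate from the explicit basis of Theorem~\ref{main1cyco}, since distinct monomials $\prod_{n}\omega_{2n+1}^{a_{2n+1}}$ occur as distinct basis vectors. With this in hand the remaining steps are routine commutative-algebra bookkeeping, which is why the result follows immediately from Theorem~\ref{main1cyco}.
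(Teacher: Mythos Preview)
Your argument is correct and is precisely the standard specialization/base-change argument that the paper's one-line proof (``follows from Theorem~\ref{main1cyco}, immediately'') is implicitly invoking. You have simply made explicit the passage from $R$-freeness of $BC_{r,t}^{\rm aff}$ to $\Omega$-freeness and then applied $-\otimes_\Omega R$, which is exactly what ``immediately'' means here.
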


\section{A basis of the  cyclotomic Brauer-Clifford   superalgebra}
In this section, we assume that $R$ is a domain  containing  $ 2^{-1}$ and parameters  $\{\omega_{2n+1}\in R\mid n\in \mathbb N\}$.  The  {\textit {affine walled Brauer-Clifford superalgebra}}
$ \widetilde {BC}_{r, t}$ with respect to the defining parameters $\omega_{2n+1}$'s  can be also defined in a simpler way  as follows.
As a free $R$-superspace,
\begin{equation}\label{Affbcycl}{ \widetilde{BC}_{r, t} =R[\mathbf {x}_r]\otimes BC_{r, t}\otimes R[\bar {\mathbf {x}}_t],}
 \end{equation}the  tensor product of the walled Brauer-Clifford superalgebra $ BC_{r, t}$ with
 two polynomial algebras $R[\mathbf{x}_r]:=R[x_1, x_2, \ldots, x_r]$   and  $R[\bar{ \mathbf{x}}_t]:=R[\bar x_1, \bar x_2, \ldots, \bar x_t]$.
The multiplication of $\widetilde{BC}_{r, t}$ is defined such that
   $R[\mathbf{x}_r]\otimes 1\otimes  1$, $1\otimes 1\otimes R[\bar{ \mathbf{x}}_t]$,
 $1\otimes BC_{r, t}\otimes 1$, $R[\mathbf{x}_r]\otimes  HC_r\otimes 1$ and
$1\otimes {\bar{HC}}_{t}\otimes R[\bar{ \mathbf{x}}_t]$
are subalgebras isomorphic to
$R[\mathbf{x}_r]$, $R[\bar{ \mathbf{x}}_t]$, $BC_{r, t}$, $HC_r^{\rm{aff}}$, and
$\overline{HC}{\ssc\,}_t^{\rm{aff}}$ respectively,
 and (for simplicity, without confusion we identify elements
 $x_i\otimes1\otimes1$, $1\otimes s_i\otimes 1$, $1\otimes e_i\otimes 1$, $1\otimes \bar s_i\otimes 1$, $1\otimes  c_i\otimes 1$,$1\otimes \bar c_i\otimes 1$,$1\otimes1\otimes\bar x_i$ in \eqref{Affbcycl} with
 $x_i,s_i,e_i,\bar s_i,c_i,\bar c_i,\bar x_i$)
\begin{eqnarray}
\label{it1}&\!\!\!\!\!\!\!\!\!\!\!\!\!\!\!\!&
e_1(x_1+\bar x_1)=(x_1+\bar x_1) e_1=0,\ \ e_1 s_1x_1s_1=s_1x_1s_1e_1,\ \  e_1 \bar s_1\bar x_1\bar s_1=\bar s_1\bar x_1\bar s_1e_1,\\
\label{it2}&\!\!\!\!\!\!\!\!\!\!\!\!\!\!\!\!&
s_i\bar x_1=\bar x_1 s_i,\ \ \bar s_i x_1= x_1 \bar s_i,\   i>1, \  x_1 (e_1-\bar e_1+\bar x_1)= (e_1-\bar e_1+\bar x_1)x_1,\\
\label{it3}&\!\!\!\!\!\!\!\!\!\!\!\!\!\!\!\!&
e_1x_1^ke_1=\omega_k e_1,\ \  e_1\bar x_1^ke_1=\bar \omega_k e_1,
\end{eqnarray}
where $\bar \omega_{2k+1}$ determined by $\omega_{1}, \ldots, \omega_{2k+1}$ as in  Corollary~\ref{assump1}. Further, $\omega_{2n}=\bar\omega_{2n}=0$.

We hope to classify finite dimensional simple $\widetilde {BC}_{r, t}$-modules over an algebraically closed field $F$ with $\text{Char} F\neq 2$. This leads us to introduce cyclotomic Brauer-Clifford superalgebras as follows.  Let $f(x)$ be the minimal polynomial of $x_1$ with respect to a finite dimensional $\widetilde {BC}_{r, t}$-module $M$. Then \begin{equation}\label{minimalf} f(x)=x^k\mbox{$ \prod\limits_{i=1}^n$} (x-u_i), \end{equation}
where $u_1, \ldots, u_n$  are nonzero in $F$. Let $\langle f(x_1)\rangle $ be the two-sided ideal of  $\widetilde {BC}_{r, t}$ generated by $f(x_1)$. Since   $M$ is simple,   $\langle f(x_1)\rangle\neq \widetilde {BC}_{r, t} $.

\begin{Lemma}\label{inducdeg} We have $c_1f(x_1)= \epsilon f(x_1)c_1$, where $\epsilon\in \{-1, 1\}$ and  $f(x)$ is given in \eqref{minimalf}.
\end{Lemma}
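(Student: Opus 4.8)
The plan is to reduce the statement to a parity property of the polynomial $f$ and then to read off that parity from the module $M$. The only relation I need in $\widetilde{BC}_{r,t}$ is $x_1 c_1 = -c_1 x_1$, which is the case $i=1$ of \eqref{asera}. An immediate induction gives $c_1 x_1^{j} = (-1)^{j} x_1^{j} c_1$ for all $j \ge 0$, and since $x_1$ generates a polynomial subalgebra $R[x_1] \subset \widetilde{BC}_{r,t}$ (see \eqref{Affbcycl}), extending $R$-linearly yields
\[
c_1\, p(x_1) = p(-x_1)\, c_1
\]
for every polynomial $p \in R[x]$. Taking $p = f$, the lemma becomes equivalent to the purely polynomial assertion that $f(-x) = \epsilon\, f(x)$ for some $\epsilon \in \{-1,1\}$, i.e. that $f$ is an even or an odd polynomial.

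To obtain this parity I would pass to the defining module $M$. Because $c_1^{2} = -1$ by \eqref{sera}, the element $c_1$ acts invertibly on $M$ with $c_1^{-1} = -c_1$, and the relation $x_1 c_1 = -c_1 x_1$ then gives $c_1\, x_1\, c_1^{-1} = -x_1$ as endomorphisms of $M$. Hence $x_1$ and $-x_1$ act on $M$ as conjugate (similar) operators, so they share the same minimal polynomial. Writing $d = k+n$ for the degree of $f$ and using that the minimal polynomial of $-A$ equals $(-1)^{d} m_A(-x)$ for any operator $A$ with minimal polynomial $m_A$ of degree $d$, conjugacy forces
\[
f(x) = (-1)^{d} f(-x), \qquad\text{equivalently}\qquad f(-x) = (-1)^{k+n} f(x).
\]

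Combining the two steps gives $c_1 f(x_1) = f(-x_1) c_1 = (-1)^{k+n} f(x_1) c_1$, so the lemma holds with $\epsilon = (-1)^{k+n}$.

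The main obstacle is conceptual rather than computational: the assertion concerns the abstract algebra element $f(x_1)\in\widetilde{BC}_{r,t}$, yet the needed parity of $f$ is invisible from the defining relations alone and becomes available only once one uses that $f$ is the minimal polynomial attached to $M$. The decisive observation is therefore that $c_1$ intertwines the action of $x_1$ with that of $-x_1$; this is what forces the nonzero roots of $f$ to be stable under $u \mapsto -u$, hence $f$ even or odd, and it is exactly what one will use afterward to justify the form $x_1^{k}\prod_i (x_1^2 - u_i^2)$ of the cyclotomic polynomial in Definition~\ref{cwbcsa1}.
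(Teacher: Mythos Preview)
Your proof is correct and takes a cleaner, more direct route than the paper's. The paper argues by induction on $\deg f$: it checks the degree-$1$ case by hand and, in the inductive step, assumes $f(-x)\neq\pm f(x)$, forms $h(x)\in\{f(-x)+f(x),\,f(-x)-f(x)\}$ of strictly smaller degree, and derives a contradiction with the minimality of $f$ via the observation that $d(x)=\gcd(h,f)$ also kills $M$. Your argument bypasses the induction entirely: from $c_1 x_1 c_1^{-1}=-x_1$ on $M$ you conclude that $x_1$ and $-x_1$ are similar operators, hence share the same minimal polynomial, which forces $f(-x)=(-1)^{\deg f}f(x)$ and therefore $c_1 f(x_1)=(-1)^{k+n}f(x_1)c_1$. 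This not only proves the lemma but pins down $\epsilon=(-1)^{\deg f}$ explicitly, a fact the paper only recovers afterwards when rewriting $f$ in the form \eqref{polyoff}. One small remark: you do not actually need to invoke that $R[x_1]$ sits as a polynomial subalgebra---the identity $c_1\,p(x_1)=p(-x_1)\,c_1$ follows purely from $c_1 x_1=-x_1 c_1$ and linearity, with no freeness hypothesis.
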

\begin{proof} We prove the result by induction on $\text{deg\,} f(x)$. If $\text{deg\,} f(x)=1$, then $f(x_1)=x_1-u$.
 When $u\neq 0$, we have  $\langle x_1-u\rangle=\widetilde{BC}_{r, t}$, a contradiction, since  $2u=c_1 (x_1-u) c_1-(x_1-u) \in \langle x_1-u\rangle$. So, $f(x_1)=x_1$ and
 $c_1f(x_1)=-f(x_1)c_1$.
In general, if $ f(x_1) c_1\in \{ c_1 f(x_1), -c_1 f(x_1)\}$, there is nothing to prove. Otherwise, by Definition~\ref{awbsa}(2), $f(-x_1)=-c_1f(x_1)c_1$ and   $ f(-x_1)\neq f(x_1)$. In this case,
we choose $h(x_1)\in \{f(-x_1)+f(x_1),  f(-x_1)-f(x_1)\}$ such that $\text{deg\,} h(x)< \text{deg\,} f(x)$.
 Define   $d(x)= {\rm g.c.d\,} (h(x), f(x))$, the greatest common divisor of $h(x)$ and $f(x)$. Then $ \langle d(x_1)\rangle=  \langle f(x_1)\rangle$ and hence the irreducible module $M$ is killed by $d(x_1)$. This is a contradiction since $f(x_1)$ is the minimal polynomial of $x_1$ with respect to $M$.
 \end{proof}

  Recall that $f(x_1)$ in \eqref{minimalf}. Since $c_1 f(x_1)c_1=-\epsilon f(x_1)$, and $c_1 f(x_1) c_1=\pm x_1^k \prod_{i=1}^n (x_1+u_i)$,
$f(x)$ is the  minimal polynomial of $x_1$ with respect to $M$ if and only if $x_1^k \prod_{i=1}^n (x_1+u_i) $ is the  minimal polynomial of $x_1$ with respect to $M$. In other words, $u_i$ and $-u_i$ appear simultaneously if $u_i\neq 0$. Thus, we can assume
\begin{equation}\label{polyoff}
f(x_1)= x_1^k\mbox{$\prod\limits  _{i=1}^m$} (x_1^2-u_i^2),
\end{equation}
 where $0\neq  u_i\in F$, $1\le i\le m$. Moreover, by
   Lemmas~\ref{assum}--\ref{assum1}, there is a monic polynomial $g(\bar x_1)$ with degree $l=k+2m$ such that
\begin{equation}\label{feg}
e_1f(x_1)=(-1)^k e_1g(\bar x_1).
\end{equation}

\begin{Lemma}\label{gcg} Let $g(\bar x_1)$ be given such that \eqref{feg} is satisfied.  Then
$\bar c_1 g(\bar x_1)=\epsilon g(\bar x_1)\bar c_1$.
\end{Lemma}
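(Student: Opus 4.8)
The plan is to reduce the claim to a polynomial identity in $\bar x_1$ and then verify that identity by pushing the Clifford generators through $e_1$. Since $\bar c_1^2=1$ and $\bar x_1\bar c_1=-\bar c_1\bar x_1$ by Lemma~\ref{hecrel}(1), conjugation by $\bar c_1$ negates $\bar x_1$, so $\bar c_1\bar x_1^j\bar c_1=(-1)^j\bar x_1^j$ and hence $\bar c_1 g(\bar x_1)\bar c_1=g(-\bar x_1)$. As $\bar c_1$ is invertible with $\bar c_1^{-1}=\bar c_1$, the assertion $\bar c_1 g(\bar x_1)=\epsilon g(\bar x_1)\bar c_1$ is equivalent to the polynomial identity $g(-\bar x_1)=\epsilon g(\bar x_1)$. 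By Theorem~\ref{main1cyco321} the elements $e_1\bar x_1^{0},\dots,e_1\bar x_1^{\ell}$ (with $\ell=\deg g$) are pairwise distinct regular monomials, hence $R$-linearly independent; so it is enough to prove the weaker statement $e_1 g(-\bar x_1)=\epsilon\,e_1 g(\bar x_1)$.

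First I would record the parity of $f$. From $x_1 c_1=-c_1 x_1$ (Lemma~\ref{hecrel}(1)) and $c_1^2=-1$ one gets $c_1 x_1^j c_1=(-1)^{j+1}x_1^j$, so $c_1 f(x_1)c_1=-f(-x_1)$; this is exactly the identity used in the proof of Lemma~\ref{inducdeg}. On the other hand the hypothesis $c_1 f(x_1)=\epsilon f(x_1)c_1$ together with $c_1^2=-1$ gives $c_1 f(x_1)c_1=-\epsilon f(x_1)$. Comparing the two yields
\[
f(-x_1)=\epsilon f(x_1).
\]

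The heart of the argument is then a single chain of the structural relations. Starting from $e_1 g(-\bar x_1)=e_1\bar c_1 g(\bar x_1)\bar c_1$, I would replace the leading $e_1\bar c_1$ by $e_1 c_1$ (Definition~\ref{wsera}(1)) and move $c_1$ past $g(\bar x_1)$ (it commutes with $\bar x_1$ by Definition~\ref{awbsa}(9)), obtaining $e_1 g(\bar x_1)c_1\bar c_1$; the relation \eqref{feg} rewrites $e_1 g(\bar x_1)$ as $(-1)^k e_1 f(x_1)$. Next, $f(x_1)c_1=c_1 f(-x_1)=\epsilon c_1 f(x_1)$ by the parity of $f$, and a second use of $e_1 c_1=e_1\bar c_1$, the commutation of $\bar c_1$ with $x_1$ (Definition~\ref{awbsa}(8)), and $\bar c_1^2=1$ collapse the trailing $c_1\bar c_1$ to the scalar $\epsilon$. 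The upshot is
\[
e_1 g(-\bar x_1)=(-1)^k e_1 f(x_1)\,c_1\bar c_1=(-1)^k\epsilon\,e_1 f(x_1)=\epsilon\,e_1 g(\bar x_1),
\]
the last step being \eqref{feg} once more. Linear independence of the $e_1\bar x_1^{j}$ now forces $g(-\bar x_1)=\epsilon g(\bar x_1)$, and right multiplication by $\bar c_1$ recovers the statement.

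The only genuinely delicate point is the sign bookkeeping while transporting $c_1$ and $\bar c_1$ across $e_1$, $f(x_1)$ and $g(\bar x_1)$. What makes the same $\epsilon$ serve both $f$ and $g$ is the coexistence of the mixed relations $e_1 c_1=e_1\bar c_1$ and the opposite squares $c_1^2=-1$, $\bar c_1^2=1$: the two factors of $(-1)$ coming from $c_1^2$ are cancelled against the parity of $f$ and against $\bar c_1^2=1$, so no spurious sign survives. I would therefore be careful to justify each transposition of a Clifford generator by the precise relation in Definitions~\ref{wsera} and~\ref{awbsa} and by Lemma~\ref{hecrel}(1).
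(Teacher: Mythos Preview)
Your proposal is correct and follows essentially the same route as the paper. Both arguments establish the identity after left multiplication by $e_1$ by shuttling between $c_1$ and $\bar c_1$ via $e_1c_1=e_1\bar c_1$, using that $c_1$ commutes with $\bar x_1$ and $\bar c_1$ with $x_1$, together with $c_1 f(x_1)=\epsilon f(x_1)c_1$ and \eqref{feg}, and then invoke Theorem~\ref{main1cyco321} to cancel $e_1$; your reformulation as the parity statement $g(-\bar x_1)=\epsilon g(\bar x_1)$ is a cosmetic repackaging of the paper's chain $e_1 g(\bar x_1)\bar c_1=\epsilon\,e_1\bar c_1 g(\bar x_1)$.
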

\begin{proof} Since $l=k+2m$,  we have
$$\begin{aligned} (-1)^k e_1 g(\bar x_1) \bar c_1 & =e_1f(x_1)\bar c_1= e_1\bar c_1 f(x_1)=e_1 c_1 f(x_1)=\epsilon e_1f(x_1) c_1\\ & =(-1)^k \epsilon e_1  g(\bar x_1)c_1=(-1)^k \epsilon e_1 c_1 g(\bar x_1)=(-1)^k \epsilon e_1\bar c_1  g(\bar x_1).\end{aligned}$$
By Theorem~\ref{main1cyco321}, $\bar c_1 g(\bar x_1)=\epsilon g(\bar x_1)\bar c_1$.
  \end{proof}

In the remaining part of this paper, we assume that \begin{equation}\label{polyofg}
g(\bar x_1)= \bar x_1^{k_1}\mbox{$\prod \limits_{j=1}^{m_1}$}(\bar x_1^2-\bar u_j^2),
\end{equation}  such that  $k_1+2m_1=k+2m$ and $0\neq \bar u_j\in F$, $1\le j\le m_1$. This is reasonable by Lemma~\ref{gcg}. Since the finite dimensional simple $\widetilde{BC}_{r, t}$-module $M$ is killed by $f(x_1)$, by \eqref{feg},  it is killed by  $e_1g(\bar x_1)$. We want to consider simple  $\widetilde{BC}_{r, t}$-modules $M$ such that $e_1$ acts on $M$ nontrivially, it is necessary  to assume that $M$ is killed by $g(\bar x_1)$. That is why we introduce cyclotomic walled Brauer-Clifford superalgebras as in the Definition~\ref{cwbcsa1}.

From here to the end of this section, we assume both $\widetilde{BC}_{r, t}$ and $BC_{l, r, t}$ are defined over a domain  $R$ containing $ 2^{-1}$ and parameters $\omega_{2n+1}$ for all $n\in \mathbb N$.

\begin{Lemma}\label{condi-k1cycl} Write $f(x_1)=x_1^{k+2m}+\sum_{i=1}^{2m} a_i x_1^{k+2m-i}$, where $f(x_1)$ is  given in \eqref{polyoff}.  Then  $e_1$ is an $R$-torsion
element of  $  BC_{k+2m, r, t}$  unless \begin{equation}\label{ccccc}
{ \omega_\ell=-(a_1\omega_{\ell-1}+\ldots +a_{2m}\omega_{\ell-2m})}\mbox{ \ for all \ }\ell\ge k+2m.
 \end{equation}
\end{Lemma}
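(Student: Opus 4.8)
The plan is to work entirely inside the cyclotomic quotient $BC_{k+2m, r, t}$, where by construction (Definition~\ref{cwbcsa1}, with $\ell = k+2m$) the relation $f(x_1)=0$ holds, and to couple this with the ``sandwich'' relations $e_1 x_1^j e_1 = \omega_j e_1$. The latter come directly from Definition~\ref{awbsa}(5)--(6) together with the convention $\omega_{2n}=0$: indeed $e_1 x_1^{2k+1} e_1 = \omega_{2k+1} e_1$ and $e_1 x_1^{2k} e_1 = 0 = \omega_{2k} e_1$, so $e_1 x_1^j e_1 = \omega_j e_1$ holds uniformly for every $j \ge 0$. The whole argument is to multiply a suitable consequence of $f(x_1)=0$ by $e_1$ on both sides and collapse the result using these relations.

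First I would fix $\ell \ge k+2m$ and observe that, since $f(x_1)=0$ in $BC_{k+2m,r,t}$, also $x_1^{\ell-(k+2m)} f(x_1)=0$ (note $\ell-(k+2m)\ge 0$, so this power of $x_1$ is legitimate). Writing $f(x_1)=x_1^{k+2m}+\sum_{i=1}^{2m} a_i x_1^{k+2m-i}$ as in the statement, this expands to
\begin{equation*}
x_1^{\ell} + \sum_{i=1}^{2m} a_i\, x_1^{\ell-i} = 0 ,
\end{equation*}
where each exponent $\ell-i \ge \ell-2m \ge k \ge 0$. Next I would sandwich this identity between $e_1$ on the left and $e_1$ on the right; the left-hand side vanishes, and applying $e_1 x_1^j e_1 = \omega_j e_1$ term by term gives
\begin{equation*}
\Big(\omega_{\ell} + \sum_{i=1}^{2m} a_i\, \omega_{\ell-i}\Big) e_1 = 0
\end{equation*}
in $BC_{k+2m,r,t}$, for every $\ell \ge k+2m$.

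This is exactly the desired dichotomy: the scalar $\omega_{\ell} + \sum_{i=1}^{2m} a_i \omega_{\ell-i}$ lies in $R$ (in the cyclotomic quotient each $\omega_j$ is the prescribed element of $R$, with $\omega_{2n}=0$), and it annihilates $e_1$. Hence if the recursion \eqref{ccccc}, i.e. $\omega_{\ell} = -\sum_{i=1}^{2m} a_i \omega_{\ell-i}$, fails for some $\ell \ge k+2m$, then this scalar is a nonzero element of $R$ killing $e_1$, so $e_1$ is $R$-torsion, proving the contrapositive of the claim. I do not expect a serious obstacle here; the only points requiring care are bookkeeping ones, namely verifying that $e_1 x_1^j e_1 = \omega_j e_1$ genuinely holds for all exponents arising (covered by the parity convention $\omega_{2n}=0$) and confirming that $f(x_1)$, hence $x_1^{\ell-(k+2m)}f(x_1)$, truly lies in the defining ideal of $BC_{k+2m,r,t}$ so that its $e_1$-sandwich is zero in the quotient.
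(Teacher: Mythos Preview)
Your proof is correct and follows essentially the same approach as the paper's own proof: both compute $e_1\, f(x_1)\, x_1^{\ell-(k+2m)}\, e_1$ in $\widetilde{BC}_{r,t}$ using $e_1 x_1^j e_1=\omega_j e_1$, obtain $(\omega_\ell+\sum_i a_i\,\omega_{\ell-i})\,e_1$, and observe this vanishes in the cyclotomic quotient. The paper's argument is just a more compressed version of yours.
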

\begin{proof} Let  $b_\ell=\omega_\ell+a_1\omega_{\ell-1}+\ldots+ a_{2m}\omega_{\ell-2m}\in R$. By \eqref{it3}, $b_\ell e_1=e_1 f(x_1)x_1^{\ell-2m-k}e_1$ in $ \widetilde{BC}_{r, t}$ and $b_\ell e_1=0$  in  $BC_{k+2m,r,t}$.
Thus, $e_1$ is  an $R$-torsion element if  $b_\ell\neq 0$ for some $\ell\ge k+2m$.
\end{proof}

\begin{Defn}\label{condi-kcycl}  The superalgebras $ \widetilde{BC}_{r, t}$ and   $BC_{k+2m,r,t}$ are called  \textit{admissible} (with respect to $f(x_1)$)  if \eqref{ccccc} holds.
\end{Defn}

\begin{Lemma}\label{zero-1cycl} For $1\leq i\leq r$, $1\leq j\leq t$, define   $f_i= f(x_i')$ and
$g_{i}= g (\bar x_i')$,  where   $f(x_1)$ and $g(\bar x_1)$ satisfy   \eqref{polyoff}--\eqref{polyofg}.
Then the following equations hold for all admissible  $i, j$:  \begin{multicols}{2}\begin{enumerate}
 \item[(1)]$c_jf_i=\epsilon f_ic_j$,
 \item[(2)] $\bar c_jg_i=\epsilon g_i\bar c_j$,
     \item[(3)]$\bar c_jf_i=f_i\bar c_j$,
     \item[(4)] $ c_jg_i= g_i c_j$,
     \item[(5)]$\bar s_jf_i=f_i\bar s_j$,
     \item[(6)] $ s_jg_i= g_i s_j$,
     \item[(7)]$f_if_j=f_jf_i $ in ${\rm gr} (\widetilde{BC} _{r,t})$,
     \item[(8)] $g_ig_j=g_jg_i$ in ${\rm gr} (\widetilde{BC}_{r,t})$.
     \end{enumerate} \end{multicols}
 \end{Lemma}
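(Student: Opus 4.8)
The plan is to reduce all eight identities to the base case $i=1$, where $f_1=f(x_1)$ and $g_1=g(\bar x_1)$ already obey the sign rules recorded in Lemmas~\ref{inducdeg} and~\ref{gcg}, and then transport these to arbitrary $i$ by conjugation. The structural fact I would exploit is that, by \eqref{relsmurp}, $x_i'=w_ix_1w_i^{-1}$ with $w_i=s_{i-1}\cdots s_1\in\Sigma_r$, so that $f_i=f(x_i')=w_if(x_1)w_i^{-1}=w_if_1w_i^{-1}$, and likewise $g_i=\bar w_ig_1\bar w_i^{-1}$ with $\bar w_i=\bar s_{i-1}\cdots\bar s_1$; the functoriality of conjugation under polynomials is what legitimizes this. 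Throughout I write $\epsilon=(-1)^k=(-1)^{k_1}$, the two being equal because $k\equiv k_1\pmod 2$ (both have the parity of $\ell=k+2m=k_1+2m_1$).

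Parts (3)--(6) need no conjugation and should fall out immediately from Lemma~\ref{usefaff}: there one has $\bar s_jx_i'=x_i'\bar s_j$, $x_i'\bar c_j=\bar c_jx_i'$, $\bar x_i'c_j=c_j\bar x_i'$ and $s_j\bar x_i'=\bar x_i's_j$, i.e.\ $x_i'$ commutes with every $\bar c_j,\bar s_j$ and $\bar x_i'$ with every $c_j,s_j$. Since anything commuting with $x_i'$ (resp.\ $\bar x_i'$) commutes with any polynomial in it, $f_i$ commutes with $\bar c_j,\bar s_j$ and $g_i$ with $c_j,s_j$, giving (3),(5) and (4),(6). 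For (1),(2) I would first establish the single sign rule $c_jx_i'=(-1)^{\delta_{ij}}x_i'c_j$ by conjugating $c_lx_1=(-1)^{\delta_{l1}}x_1c_l$ (Lemma~\ref{hecrel}(1),(2)) by $w_i$, using $c_jw_i=w_ic_{(j)w_i}$ from \eqref{sera} together with the combinatorial fact $(j)w_i=1\iff j=i$. The decisive point is then the parity structure of the cyclotomic polynomials: $f(x)=x^k\prod(x^2-u_l^2)$ has all its monomials of the single parity $k\bmod 2$, so $c_j(x_i')^n=(-1)^{n\delta_{ij}}(x_i')^nc_j$ with $(-1)^{n\delta_{ij}}=(-1)^{k\delta_{ij}}$ constant across the support of $f$; hence $c_jf_i=(-1)^{k\delta_{ij}}f_ic_j=\epsilon^{\delta_{ij}}f_ic_j$. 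This coincides with the asserted relation in (1): when $j=i$ it recovers exactly the sign $\epsilon$ of Lemma~\ref{inducdeg}, and for $j\ne i$ it reduces to commutation. Statement (2) is identical, with $k_1\equiv k$ ensuring the sign is again $\epsilon$ at $j=i$.

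Parts (7) and (8) are the only ones living in ${\rm gr}(\widetilde{BC}_{r,t})$, and they should hold because the correction term is of lower order. By \eqref{relsmurp}, $x_i'-x_i=L_i$ has degree $0$, so $x_i'\equiv x_i$ in the degree-one component of the graded algebra, and the affine Jucys--Murphy elements commute, $x_ix_j=x_jx_i$, by \eqref{comm-hc}. Concretely $[x_i',x_j']=[x_i,L_j]+[L_i,x_j]$ has degree $\le 1$, since $[x_i,x_j]=0$ and $[L_i,L_j]=0$ by \eqref{jm-wba1}; therefore the leading symbols $(x_i')^\ell,(x_j')^\ell$ of $f_i,f_j$ commute and $[f_i,f_j]$ has filtration degree $<2\ell$, which is precisely $f_if_j=f_jf_i$ in ${\rm gr}$. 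The same computation with $\bar x_i',\bar x_j'$ yields (8).

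The hard part, though only mildly so, will be the bookkeeping of the Clifford sign through conjugation in (1)--(2): I must verify simultaneously that conjugation by $w_i$ sends $c_j$ to $c_{(j)w_i}$ with $(j)w_i=1$ exactly when $j=i$, and that the parity shape $x^k\prod(x^2-u_l^2)$ of \eqref{polyoff}--\eqref{polyofg} (with the matching parities $k\equiv k_1$) forces the sign $(-1)^{n\delta_{ij}}$ to be constant across all monomials, so that $c_jf_i=\epsilon^{\delta_{ij}}f_ic_j$ is unambiguous. Everything else is a routine transfer of the base relations for $x_1,\bar x_1$ to their conjugates.
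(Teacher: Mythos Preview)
Your argument is correct and follows the same route the paper intends; the paper's own proof is a single sentence (``easily verified by using Lemmas~\ref{inducdeg}--\ref{gcg} and Definition~\ref{awbsa}''), and your write-up simply unpacks that sentence. In particular, your observation that the parity shape of $f$ in \eqref{polyoff} forces $c_jf_i=\epsilon^{\delta_{ij}}f_ic_j$ (so (1) reads as $\epsilon$ at $j=i$ and as plain commutation otherwise) is the right reading of the statement, and your use of $x_i'\equiv x_i$ in $\mathrm{gr}$ together with $x_ix_j=x_jx_i$ from \eqref{comm-hc} is exactly how (7)--(8) should be verified.
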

\begin{proof} These equations can be easily verified  by using Lemmas~\ref{inducdeg}--\ref{gcg} and Definition~\ref{awbsa}. \end{proof}

 Note that  the affine Hecke-Clifford superalgebra $HC_r^{\rm aff}$ (resp., $\overline { HC}{}{\ssc\,}_t^{\rm aff}$)  is  isomorphic to the sub-superalgebra of $\widetilde{BC}_{r, t}$ generated by $x_1, s_1,\ldots, s_{r-1}$ and $c_1$ (resp., $\bar x_1, \bar s_1,\ldots, \bar s_{t-1}$ and $\bar c_1$).
 \begin{Lemma}\label{zero-12}  For any  $a\in \mathbb Z^{>0}$, we have
  \begin{itemize} \item [(1)] $(x_i')^a f(x'_\ell)-f(x'_\ell)x'^a_i\in  \sum_{b<a}\sum_{h, h_1=1}^{\max\{i, \ell\}} f(x'_h)(x'_{h_1})^b HC_r$,
  \item [(2)]  $g(\bar x_\ell') (\bar x_i')^a-(\bar x_i')^a g(\bar x_\ell') \in  \sum_{b<a}\sum_{h, h_1=1}^{\max\{i, \ell\}}\overline{HC}_t (\bar x'_{h_1})^b g(\bar x'_h) $.
  \end{itemize}
 \end{Lemma}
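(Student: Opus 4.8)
The plan is to prove (1) directly and to deduce (2) from it by the mirror symmetry of the algebra. Both sides of (1) lie in the affine Hecke--Clifford subalgebra $HC_r^{\rm aff}\subseteq\widetilde{BC}_{r,t}$ generated by $x_1,s_1,\dots,s_{r-1},c_1$, so the whole computation may be carried out there. For (2) one would run the identical argument with every symbol barred (replacing $x_{i+1}'=s_ix_i's_i$ from \eqref{relsmurp} by $\bar x_{i+1}'=\bar s_i\bar x_i'\bar s_i$, Lemma~\ref{inducdeg} by Lemma~\ref{gcg}, and so on) to obtain the left-factor form $[g(\bar x_\ell'),(\bar x_i')^a]\in\sum g(\bar x_h')(\bar x_{h_1}')^b\,\overline{HC}_t$, and then apply the anti-involution $\sigma$ of Lemma~\ref{antiaff}, which fixes all generators and reverses products, to flip this into the right-factor form appearing in (2). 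Thus I concentrate on (1) and argue by induction on $a$.

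For the base case $a=1$ I would use the clean swap relation $s_i\,p(x_i')=p(x_{i+1}')\,s_i$ (valid for every polynomial $p$, since $x_{i+1}'=s_ix_i's_i$). When $\ell>i+1$, conjugating by $w=s_{i+1}\cdots s_{\ell-1}$ (which commutes with $x_i'$ by Lemma~\ref{usefcycl}(1) and the shape of $x_i'$, and satisfies $w\,x_{i+1}'\,w^{-1}=x_\ell'$) gives $[x_i',f(x_\ell')]=w\,[x_i',f(x_{i+1}')]\,w^{-1}$, where $w f(x_h')w^{-1}=f(x_{h'}')$ with $h'\le\ell$ and $w\,HC_r\,w^{-1}=HC_r$ keep us inside the required module; the case $\ell<i$ is symmetric. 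This reduces everything to the adjacent commutator. Here I would invoke \eqref{bii1}, which says $x_i'$ commutes with $x_{i+1}'-(1-c_ic_{i+1})s_i$, so $[x_{i+1}',x_i']=[(1-c_ic_{i+1})s_i,x_i']$; since the Leibniz expansion of $[Y,(x_i')^c]$ depends only on $[Y,x_i']$, this upgrades to $[x_{i+1}',f(x_i')]=[(1-c_ic_{i+1})s_i,f(x_i')]$. Conjugating by $s_i$ and straightening with the swap relation then yields $[x_i',f(x_{i+1}')]=(1+c_ic_{i+1})s_if(x_{i+1}')-f(x_{i+1}')(1+c_ic_{i+1})s_i$.

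The second term already displays $f(x_{i+1}')$ as a left factor times $(1+c_ic_{i+1})s_i\in HC_r$. For the first I move $f$ left: $s_if(x_{i+1}')=f(x_i')s_i$ and then $(1+c_ic_{i+1})f(x_i')=f(x_i')+(-1)^kf(x_i')c_ic_{i+1}$, using $c_if(x_i')=f(-x_i')c_i$ and the cyclotomic shape \eqref{polyoff}, whence $f(-x)=(-1)^kf(x)$ (equivalently Lemma~\ref{inducdeg}). Thus the first term is $f(x_i')\bigl(1+(-1)^kc_ic_{i+1}\bigr)s_i$, again $f(x_i')$ times an element of $HC_r$, so $[x_i',f(x_{i+1}')]\in f(x_i')HC_r+f(x_{i+1}')HC_r$, which is the $a=1$ instance. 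For the inductive step I would expand $[(x_i')^a,f(x_\ell')]=\sum_{p=0}^{a-1}(x_i')^p[x_i',f(x_\ell')](x_i')^{a-1-p}$ and substitute the base case; a typical summand is $(x_i')^p f(x_h')\,h'\,(x_i')^{a-1-p}$ with $h'\in HC_r$. When $i\ne h$ the factor $(x_i')^p$ must be commuted past $f(x_h')$, which is $[(x_i')^p,f(x_h')]$ with $p<a$ and so is covered by the induction hypothesis; when $i=h$ it commutes outright. Then $h'$ is pushed to the right through the remaining powers of $x_i'$ via $s_j(x_i')^p=(x_{i'}')^ps_j$ and $c_j(x_i')^p=\pm(x_i')^pc_j$, collecting all surviving $x'$-powers into a single $(x_{h_1}')^{b}$ of total degree $b=a-1<a$ times an $HC_r$-element, with $f(x_h')$ still at the far left.

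The main obstacle is precisely this last bookkeeping: one must verify that throughout the Leibniz expansion the polynomial $f$ is never broken apart but is always shepherded to the leftmost position (which is exactly what forces the cyclotomic evenness $f(-x)=\pm f(x)$ to absorb the stray Clifford factors $c_ic_{i+1}$), that the surviving $x'$-degree strictly drops from $a$ to at most $a-1$, and that every index generated by the swap relations stays bounded by $\max\{i,\ell\}$. Interlocking the base case, the Leibniz step, and the straightening of the $HC_r$-coefficients so that no ever-growing family of auxiliary indices appears is the delicate point; the remaining manipulations are routine applications of \eqref{comm-hc} together with Lemmas~\ref{hecrel}, \ref{hecrel2}, \ref{relxpri} and \ref{usefaff}.
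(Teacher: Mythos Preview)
Your approach and the paper's are essentially the same strategy: derive a clean commutation formula for the $a=1$ case and then induct on $a$; for part (2) the paper, like you, simply says it is verified similarly. The differences are minor and organizational. The paper works directly from $x_1x_2=x_2x_1$ (with $x_2=x_2'-s_1-c_2s_1c_2$) to obtain in one stroke
\[
x_2'f(x_1)=f(x_1)(x_2'-s_1-c_2s_1c_2)+f(x_2')\,s_1+\epsilon\,f(x_2')\,c_2s_1c_2,
\]
so that both $f$-factors already sit on the left; then it conjugates by $s_{i,2}\,(\cdot)\,s_{2,i}$ to get the case $\ell=1$, arbitrary $i$, and finally inducts on $\ell$ via $f(x_\ell')=s_{\ell-1}f(x_{\ell-1}')s_{\ell-1}$. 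You instead reduce to the \emph{adjacent} case $\{i,i+1\}$ by a single conjugation and recover the same adjacent formula from \eqref{bii1} plus a Leibniz identity and Lemma~\ref{inducdeg}. Both routes land on the same base case; the paper's path is a touch shorter because it never has to conjugate the commutator back by $s_i$.

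One place to tighten your inductive step: after substituting the base case into the Leibniz expansion you get terms of the form $f(x_h')\,(x_i')^p\,h'\,(x_i')^{a-1-p}$ with $h'\in HC_r$, and you then assert that the $x'$-powers can be ``collected into a single $(x_{h_1}')^b$''. But pushing $h'$ through $(x_i')^{a-1-p}$ changes the index, producing genuinely mixed products such as $(x_i')^p(x_{i+1}')^{a-1-p}$, which are not literally single powers. The paper does not spell out this step either (it says only ``the general case follows from induction on $a$''), and for the applications (Proposition~\ref{2ijcycl} and Lemma~\ref{I}) one only needs that $f$ ends up on the left with a coefficient of strictly lower $x'$-degree; still, if you want to match the lemma as stated, you should argue separately (e.g.\ via a further induction using the $a=1$ case again on the inner factor) rather than claim the collapse is immediate.
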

\begin{proof} We have
  $x_1x_2=x_2x_1$, where  $x_2=x_2'-s_1-c_2s_1c_2$ (see  \eqref{asera}).
  By Lemma~\ref{zero-1cycl}(1), \begin{equation}\label{key-1}
x_2' f(x_1)=f(x_1)(x_2'-s_1-c_2s_1c_2)+f(x_2') s_1+\epsilon f(x_2')c_2s_1c_2. \end{equation}
Considering ${s_{i, 2} x_2' f(x_1) s_{2, i}}$ yields the result  when  $a=1$ and $\ell=1$.
If $\ell>1$, then
   $$x_i' f(x_\ell')=x_i's_{\ell-1} f(x_{\ell-1}') s_{\ell-1}= s_{\ell-1}x_{(i)s_{\ell-1}}' f(x_{\ell-1}') s_{\ell-1}.$$ So, the result follows from inductive assumption on $\ell-1$. This is (1)  when $a=1$. The general case follows from arguments on  induction on $a$. Finally, (2) can be verified, similarly.
\end{proof}

\def\bii{{\vec{i}}}
\def\bij{{\vec{j}}}

 \begin{Prop}~\label{2ijcycl} Define  $J_L=\sum_{i=1}^t \widetilde{BC}_{r, t}\, g_i$ and   $J_R=\sum_{i=1}^r f_i\, \widetilde{BC}_{r, t}$. Let $I$ be the two-sided ideal of $\widetilde{BC}_{r, t}$  generated by
$f(x_1)$ and $ g(\bar x_1)$.  We have
  \begin{enumerate}\item [(1)] $J_R$ is a left $HC_r^{\rm aff}\otimes \overline{HC}_t$-module,
  \item [(2)] $J_L$ is a right $HC_r\otimes \overline{HC}{\ssc\,}_{ t}^{\rm aff}$-module,
  \item [(3)]    $I=J_L+J_R$  if  $\widetilde{BC}_{r, t}$ is admissible.
\end{enumerate}
\end{Prop}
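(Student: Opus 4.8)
The plan is to handle the three parts in order, drawing on the commutation relations of Lemmas~\ref{zero-1cycl}--\ref{zero-12}, \ref{relxpri}--\ref{hecrel2} together with the transfer relation \eqref{feg}. For part (1), since $J_R=\sum_{i=1}^r f_i\,\widetilde{BC}_{r,t}$ is by construction a right ideal, it suffices to check that left multiplication by each generator of $HC_r^{\rm aff}\otimes\overline{HC}_t$ carries each $f_i$ into $J_R$. For the odd and barred generators this is immediate from Lemma~\ref{zero-1cycl}: $c_pf_i=\epsilon f_ic_p$, $\bar c_qf_i=f_i\bar c_q$, and $\bar s_jf_i=f_i\bar s_j$. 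For $s_p$ I would use $x'_{i+1}=s_ix'_is_i$ (from \eqref{relsmurp}), giving $s_if_i=f_{i+1}s_i$, $s_{i-1}f_i=f_{i-1}s_{i-1}$, and $s_pf_i=f_is_p$ for $p\neq i-1,i$, all in $J_R$; and for the affine generator $x_1=x'_1$, Lemma~\ref{zero-12}(1) with $a=1$ shows $x'_1f(x'_i)-f(x'_i)x'_1\in\sum_h f(x'_h)\,HC_r\subseteq J_R$. Part (2) is the mirror statement, which I would obtain by applying the anti-involution $\sigma$ of Lemma~\ref{antiaff} (fixing generators, with $\sigma(g_j)=g_j$ and $\sigma(J_L)=\sum_j g_j\widetilde{BC}_{r,t}$); this turns the right-module claim into the left-module claim for $\sum_j g_j\widetilde{BC}_{r,t}$, which is the exact $g$-and-$\bar x$ analogue of part (1) and follows from Lemma~\ref{zero-1cycl}(2),(4),(6),(8) and Lemma~\ref{zero-12}(2).

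For part (3) the inclusion $J_L+J_R\subseteq I$ is clear, since $f_i=s_{i-1}\cdots s_1 f(x_1)s_1\cdots s_{i-1}\in I$ and likewise $g_j\in I$. The content is the reverse inclusion, which I would establish by proving that $K:=J_L+J_R$ is a two-sided ideal; as $f(x_1)=f_1\in J_R$ and $g(\bar x_1)=g_1\in J_L$ generate $I$, this gives $I\subseteq K$. Because $J_R$ is a right ideal, $J_L$ a left ideal, and parts (1)--(2) already show $J_R$ stable under left multiplication by $HC_r^{\rm aff}\otimes\overline{HC}_t$ and $J_L$ stable under right multiplication by $HC_r\otimes\overline{HC}_t^{\rm aff}$, the only multiplications left to control are the \emph{crossing} ones: left multiplication of $J_R$ by $\bar x_1$ and by $e_1$, and dually right multiplication of $J_L$ by $x_1$ and by $e_1$.

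These crossings are where \eqref{feg} is used. Writing $w_i=\bar x_1+e_{i,1}-\bar e_{i,1}$, Lemma~\ref{hecrel2}(1) gives $w_if(x'_i)=f(x'_i)w_i$, hence
\[
\bar x_1 f_i=f_i\bar x_1+f_i(e_{i,1}-\bar e_{i,1})-(e_{i,1}-\bar e_{i,1})f_i .
\]
The first two terms lie in $J_R$; for the last I would conjugate by the permutation $(1,i)$, which sends $x'_i$ to $x'_1$ and commutes with $\bar x_1$, so that $e_{i,1}f_i=(1,i)e_1f(x_1)(1,i)=(-1)^k e_{i,1}g(\bar x_1)\in\widetilde{BC}_{r,t}\,g_1\subseteq J_L$ by \eqref{feg}, the $\bar e_{i,1}$ term being treated identically after passing $c_i$ through $f_i$ by Lemma~\ref{zero-1cycl}(1) and through $J_L$ by part (2). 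Thus $\bar x_1 f_i\in K$; and $e_1f_i\in K$ because $e_1f_i=f_ie_1\in J_R$ for $i>1$ (Lemma~\ref{relxpri}) while $e_1f_1=(-1)^ke_1g_1\in J_L$. The right-hand crossings $J_L\,x_1$ and $J_L\,e_1$ are symmetric, via Lemma~\ref{hecrel2}(2) and the $\sigma$-image $g(\bar x_1)e_1=(-1)^kf(x_1)e_1$ of \eqref{feg}.

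The genuine obstacle, and the sole place admissibility enters, is that a single crossing leaves a trailing factor of $\widetilde{BC}_{r,t}$, so closing $K$ under multiplication by an \emph{arbitrary} algebra element forces an induction on the filtration degree of that element, during which the $J_R\!\leftrightarrow\!J_L$ transfers can accumulate and sandwich $e_1$ on both sides. The resulting error terms reduce to expressions of the form $e_1f(x_1)x_1^{a}e_1$, which by Lemma~\ref{condi-k1cycl} equal $b_{a+k+2m}\,e_1$ with $b_\ell=\omega_\ell+a_1\omega_{\ell-1}+\cdots+a_{2m}\omega_{\ell-2m}$; admissibility \eqref{ccccc} is precisely the statement that every such $b_\ell$ vanishes. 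Hence the simultaneous induction proving $\widetilde{BC}_{r,t}\,J_R\subseteq K$ and $J_L\,\widetilde{BC}_{r,t}\subseteq K$ closes exactly under \eqref{ccccc}, giving $I=K$; without admissibility a nonzero $b_\ell e_1$ would sit in $I$ while escaping $J_L+J_R$, and the identity would fail. Throughout, the freeness and regular-monomial basis of Theorem~\ref{main1cyco321} is what legitimizes treating these $e_1$-sandwiched relations as genuine identities rather than relations modulo torsion.
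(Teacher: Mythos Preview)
Your outline for parts (1) and (2) is fine and matches the paper. Your reduction for part (3)---showing $K=J_L+J_R$ is a two-sided ideal, and that this reduces to the two crossings $e_1J_R\subseteq K$ and $\bar x_1J_R\subseteq K$ (plus their mirrors)---is also exactly the paper's route, and your derivation of the $\bar x_1$-crossing from the $e_1$-crossing via Lemma~\ref{hecrel2}(1) is correct.

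The gap is in the $e_1$-crossing. You correctly observe that $e_1f_1=(-1)^ke_1g_1\in J_L$, but $J_L$ is only a \emph{left} ideal, so this does not give $e_1f_1\,\widetilde{BC}_{r,t}\subseteq K$: for general $b$ one has $e_1f(x_1)b=(-1)^ke_1g(\bar x_1)b$, and $g(\bar x_1)b$ need not lie in $J_L$. Your proposal to close this by ``an induction on the filtration degree of that element'' is the right impulse, but as stated it is only an intuition, not an argument; in particular you never say what quantity you induct on or how the case analysis runs. The paper does exactly this step, and it is the bulk of the proof: one checks $e_1f(x_1)\mathbf{m}\in K$ for each regular monomial $\mathbf{m}=c^{\alpha}x^{\beta}e_{\vec i,\vec j}w\bar x^{\gamma}\bar c^{\delta}$ by induction on $|\beta|$, with a case split according to whether some $e_{i_l,j_l}$ involves the index $1$ on either side. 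Admissibility enters precisely at the places you predicted---the terms $e_1f(x_1)x_1^{a}e_1$ and $e_1f(x_1)c_1x_1^{a}e_1$---but one must first manoeuvre $\mathbf{m}$ into a form where those sandwiches actually appear, using part~(2) to push $g(\bar x_1)$ rightwards past $e_{\vec i,\vec j}w\bar x^{\gamma}\bar c^{\delta}$ when no $j_l=1$, and the identity $e_1e_{i_1,1}=e_1(1,i_1)$ (together with $e_1f(x_1)e_{p,1}\in J_R$ via $s_1$-conjugation) when some $j_l=1$. Without spelling out this case analysis the induction does not close.
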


 \begin{proof} (1)--(2)  It is easy to see that  $J_R$ is stable under the left action of $HC_r\otimes \overline{HC}_t$. By Lemma~\ref{zero-12}(1), it is stable under the left action of ${HC}_r^{\rm aff}$. One can check (2) via Lemma~\ref{zero-12}(2), similarly.

 (3) Obviously,
 $J_L+J_R\subseteq I$. So, (3) follows if we can   prove $I\subseteq J_L+J_R$.
 Since $f(x_1), g(\bar x_1)\in J_L+J_R$, it suffices to verify that $ J_L+J_R$ is a two-sided ideal of  $\widetilde{BC}_{r, t}$.
We claim \begin{equation} \label{stepp1} h J_R\subset J_L+J_R,\end{equation}
for any generator $h$  of  $\widetilde{BC}_{r, t}$.  If so,   $h (J_L+J_R)\subset J_L+J_R$ and hence $J_L+J_R$ is a left ideal.

 In fact, by (1), it is enough to verify \eqref{stepp1} when $h\in \{\bar x_1, e_1\}$. If we have $e_1 J_R\subset J_L+J_R$, then $c_1e_1c_1 J_R\subset J_L+J_R$.
 Since $(\bar x_1+e_1-\bar e_1) f(x_1)=f(x_1) (\bar x_1+e_1-\bar e_1)\in J_R$, we have   $\bar x_1 f(x_1)\in J_L+J_R$.   Multiplying $(1, i)$ on both sides of $\bar x_1 f(x_1)$
 yields $\bar x_1 f_i\in J_L+J_R$. So, we need to verify $e_1 J_R\subset J_L+J_R$.
 By \eqref{it1},
 $e_1 f_i =f_i e_1$ for $i\ge 2$. So, $e_1 J_L\subset  J_L+J_R$ if $e_1f(x_1) \widetilde{BC}_{r, t}\subset J_L+J_R$.  This will be verified by checking
 \begin{equation}\label{stepp2}
 e_1 f(x_1)\textbf{m}\in J_L+J_R,\end{equation} for each regular monomial $\textbf{m}$ of $\widetilde{BC}_{r, t}$ in Definition~\ref{cregm}. Using arguments on graded structure of $\widetilde{BC}_{r, t}$,  we can write
 $\textbf{m}=c^{\alpha} x^{\beta} e_{i_1, j_1} \cdots  e_{i_f, j_f} w \bar x^{\gamma} {\bar c}^{\delta}$
 for some $(\alpha, \delta)\in \Z_2^r\times\Z_2^t$ and $(\beta, \gamma)\in \mathbb N^r\times \mathbb N^t$, and $w\in \Sigma_r\times \Sigma_{\bar t}$ and $1\le i_1, \ldots, i_f\le r$ and
 $1\le j_1, \ldots, j_f\le t$ such that $\{i_k, j_k\}\cap \{i_l, j_l\}=\emptyset$ if $k\neq l$.
 In the following, we write $e_{\bii, \bij}=e_{i_1, j_1} \cdots  e_{i_f, j_f}$.
  We prove \eqref{stepp2} by induction on $|\b|$.

\textit{Case~1: $|\b|=0$.}

If $f=0$, then $e_1f(x_1)c^\alpha w \bar x^\gamma \bar c^{\delta}=(-1)^k e_1g(\bar x_1)c^\alpha w \bar x^\gamma \bar c^{\delta}\subseteq J_L $. The last inclusion follows from (2). Suppose   $1\le f\le \min\{r, t\} $. Since $\widetilde {BC}_{r,t} $ is admissible, $e_1 f(x_1)e_1=0$. On the other hand,  we have  $e_1x_1^k c_1 e_1=0$ for all $k$. So,
 $e_1f(x_1)  \textbf{m}=0$ if $e_1$ is a factor
of  $e_{\bii,\bij}$.  If $e_1$ is not a factor of $e_{\bii,\bij}$, there are three cases we need to discuss.
\begin{itemize} \item   If $e_{ p, 1}$ is a factor of  $e_{\bii,\bij}$, and $p\neq 1$, then we assume that $i_1=p$ and $j_1=1$  since any two factors of $e_{\bii, \bij}$ commute each other. We have
$  e_1 f(x_1)c^{\alpha}  e_{ p, 1}=\prod_{i=2}^r c_i^{\alpha_i} e_1 f(x_1)e_{ p, 1} c_1^{\alpha_1} $. Since $$ e_1 f(x_1) e_{p, 1}=s_{p, 2} e_1 f(x_1) s_1 e_1 s_{1, p}=s_{p, 2} e_1s_1f(x_2') e_1 s_{1, p}=s_{p, 2} f(x_2')e_1 s_{1, p}\in J_R,$$ we have  $  e_1 f(x_1)c^{\alpha}  e_{1, p}\in J_R$ by
 (1). So,  $e_1f(x_1)\textbf{m}\in J_R$.

\item  If $e_{1, p}$ is a factor of $e_{\bii, \bij}$ and $p\neq 1$, we assume $i_1=1$ and $j_1=p$.  We have    $$\begin{array}{lll}
e_1 f(x_1) e_{1, p}\!\!\!\!&=(-1)^k \bar s_{p, 2} e_1 g(\bar x_1) \bar s_1 e_{1} \bar s_{1, p}=(-1)^k \bar s_{p, 2} e_1\bar s_1 g(\bar x_2' )  e_{1} \bar s_{1, p}\\[4pt]
\!\!\!\!&=(-1)^k \bar s_{p, 2} e_1 g(\bar {x}_2') \bar s_{1,p}=(-1)^k \bar s_{p, 2} e_1 \bar s_{1, p} g(\bar x_1)\in J_L.\end{array}$$
So, $  e_1 f(x_1)c^{\alpha}  e_{ 1, p}=\prod_{i=2}^r c_i^{\alpha_i} e_1 f(x_1) c_1^{\alpha_1} e_{1, p}=\prod_{i=2}^r c_i^{\alpha_i}\bar c_p^{\alpha_1} e_1 f(x_1)  e_{1, p}
\in J_L$.  By (2) and the equation $g(\bar x_1)\prod_{k=2}^f e_{i_k, j_k} =\prod_{k=2}^f e_{i_k, j_k} g(\bar x_1)$,  $e_1f(x_1)\textbf{m}\in J_L$.

\item  Finally, suppose   $\{ i_l, j_l\}\cap \{1\}=\emptyset$ for all possible $l$, then  $e_1f(x_1)\textbf{m}\in J_L$  follows from (2) and the following fact
$$e_1 {\textbf{\textit{f}}}(x_1)\mbox{$ \prod\limits_{ l=1}^f e_{i_f, j_f}=\prod\limits_{ l=1}^f e_{i_f, j_f}e_1 {\textbf{\textit{f}}}(x_1) =(-1)^k \prod\limits_{ l=1}^f $}e_{i_f, j_f} e_1 {\textbf{\textit{g}}}(\bar x_1)\in J_L.$$
\end{itemize}

 \textit{Case~2: $|\beta|>0$.}

 Suppose  $\beta_i\neq 0$ for some $2\le i\le r$.
 We have $x_i\textbf{m}'=\epsilon \textbf{m}$ by Lemma~\ref{hecrel}(1)--(2), where $\textbf{m}'$ is  obtained from $\textbf{m}$ by removing one $x_i$ and $\epsilon=\pm 1$. So   $e_1f(x_1)\textbf{m}\in J_L+J_R$ if   $e_1f(x_1)x_i \textbf{m}'\in J_L+J_R$.
  Since  $f(x_1)x_i=x_if(x_1)$, it suffices to  prove $e_1 L_i f(x_1)\textbf{m}'\in J_L+J_R$ and   $e_1 (x_i-L_i) f(x_1)\textbf{m}'\in J_L+J_R$.

  In the first case,
  since $e_1(j, i)=(j, i) e_1$ if ${j\neq 1}$ and $e_1c_i=c_i e_1$ and $c_if(x_1)=f(x_1)c_i$, by inductive assumption on  $|\beta|$ and (1)--(2), it is enough to prove $e_1 (1, i) f(x_1) c_i^l \textbf{m}'\in J_R$ for $l\in \Z_2$. In fact, it is the case since
    $$e_1 (1, i) f(x_1) c_i^l\textbf{m}'=e_1 f(x_i') (1, i) c_i^l   \textbf{m}'=f(x_i') e_1 (1, i)  c_i^l \textbf{m}' \in J_R.$$
    In the second case, since $e_1 (x_i-L_i) f(x_1)\textbf{m}'=(x_i-L_i) e_1f(x_1)\textbf{m}'$, by induction on $\b$, $e_1f(x_1)\textbf{m}'\in J_L+J_R$.
    By (1), we have  $(x_i-L_i) e_1f(x_1)\textbf{m}'\in J_L+J_R$. So,   $ e_1f(x_1)\textbf{m}\in J_L+J_R$.

If $\b_i=0$, $2\le i\le r$, then  $x^\b=x_1^{\b_1}$ with $\b_1>0$.  In this case, $\textbf{m}=c^\alpha x_1^{\b_1} e_{\bii, \bij} w \bar x^\gamma \bar c^\delta$.
We want to prove  $v=e_1f(x_1) \textbf{m}\in J_L+J_R$.
If $j_\ell\neq 1$, $1\le \ell\le f$, then by  inductive assumption,
$$\begin{aligned}
v&=e_1 f(x_1) c^\alpha x_1^{\b_1}e_{\bii,\bij} w\bar x^\gamma \bar c^\delta
= (-1)^k  e_1g(\bar x_1) c^\alpha x_1^{\b_1}  e_{\bii,\bij} w\bar x^\gamma \bar c^\delta
 \equiv (-1)^k  e_1 c^\alpha  x_1^{\b_1}g(\bar x_1)e_{\bii,\bij} w\bar x^\gamma \bar c^\delta \\ &
= (-1)^k e_1c^\alpha x_1^{\b_1}e_{\bii,\bij}{\sc\,}g(\bar x_{1}) w{\bar x}^\gamma \bar c^\delta \nonumber \in J_L w {\bar x}^\gamma \bar c^\delta\subset J_L+J_R,\\
\end{aligned}$$
where the ``\,$\equiv$\,'' is  modulo $J_L+J_R$.
Finally, if $j_\ell=1$ for some $\ell$,  without loss of any generality, we assume  $j_1=1$.  If $i_1=1$,
we have $e_1f(x_1) c^\alpha x_1^\beta e_1=0$ no matter whether $\alpha_1=1$ or $0$. In the first case, this result follows from the equation $e_1x_1^k c_1 e_1=0$ for all $k\in \mathbb N$. In the second case, this result follows from the fact that $\widetilde{BC}_{r, t}$ is admissible.  It remains to deal with the cases when $i_l\neq 1$ for all $l$.
Define $ i'=(i_2,\ldots, i_f)$ and $ j'=(j_2,\ldots ,j_f)$.
Then
$$\begin{aligned} v&= e_1 f(x_1)c^\alpha   x_1^{\b_1}e_{i_1, 1}e_{\bii',\bij'} w\bar x^\gamma \bar c^\delta
=\mbox{$\prod\limits_{i=2}^r$} c_i^{\alpha_i} e_1f (x_1) e_{i_1,1} c_1^{\alpha_1}  x_1^{\b_1}e_{\bii',\bij'} w\bar x^\gamma \bar c^\delta\\
&=\mbox{$\prod\limits_{i=2}^r$} c_i^{\alpha_i} e_1 e_{i_1, 1} f(x_1) c_1^{\alpha_1} x_1^{\b_1}e_{\bii',\bij'} w\bar x^\gamma \bar c^\delta
=\mbox{$\prod\limits_{i=2}^r$} c_i^{\alpha_i}  e_1(1,i_1)f(x_1)c_1^{\alpha_1} x_1^{\beta_1}e_{\bii',\bij'} w\bar x^\gamma \bar c^\delta\\
&= \mbox{$\prod\limits_{i=2}^r$}  {c_i^{\alpha_i}  e_{1}f (x_{i_1}') (1, i) c_1^{\alpha_1} x_1^{\beta_1}e_{\bii',\bij'} w\bar x^\gamma \bar c^\delta}
\in \mbox{$\prod\limits_{i=2}^r$} c_i^{\alpha_i} J_R\subset J_R, \text{ by (1).}
\\
\end{aligned}$$
This completes the proof of \eqref{stepp2} and hence $h J_R\!\subset \!J_L\!+\!J_R$. One can similarly check $J_L h\!\subset \!J_L\!+\!J_R$.  \end{proof}

For $(\alpha, \beta)\in \mathbb N^r\times \mathbb N^t$, let
${\textbf{\textit{f}}}(x')^\alpha=f_1^{\alpha_1}\cdots f_r^{\alpha_r}$ and
${\textbf{\textit{g}}}(\bar x')^\beta=g_1^{\beta_1}\cdots g_t^{\beta_t}$. Recall that $l=k+2m$.

   \begin{Lemma}\label{mathcalncycl} The affine walled Brauer-Clifford superalgebra $\widetilde{BC}_{r,t}$ is a free $R$-module with $\mathcal N$ as its $R$-basis, where
 \begin{eqnarray} \label{regs1}&\!\!\!\!\!\!\!\!&
  \mathcal N= \stackrel{\sc \min\{ r, t\}}{\underset{\sc f=0}{\dis\mbox{\Large$\cup$}}}
  \{ {\textbf{\textit{f}}}(x')^\alpha  {c}^{\tilde \gamma} {x}^{\gamma}  d_1^{-1} e^f w d_2 \bar x^{\delta}{\bar c}^{\tilde \delta} {\textbf{\textit{g}}}(\bar x')^\b
 \mid (\alpha, \beta)\in\mathbb N^r \times  \mathbb N^t, (\gamma, \delta ,\tilde \gamma,\tilde \delta)\in\mathbb Z_l^r \times  \mathbb Z_l^t\times \Z_2^r \times  \Z_2^t, \nonumber\\[-10pt]
 &\!\!\!\!\!\!\!\!&
\phantom{ \mathcal N= \stackrel{\sc \min\{m, n\}}{\underset{\sc f=0}{\dis\mbox{\Large$\cup$}}}
  \{}
 d_1, d_2\in D_{r, t}^f,
  w\in \Sigma_{r-f}\times \bar{\Sigma}_{{t-f}}\}.\end{eqnarray}\end{Lemma}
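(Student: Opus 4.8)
The plan is to deduce freeness with basis $\mathcal N$ from Theorem~\ref{main1cyco321} by exhibiting a degree-triangular change of basis between $\mathcal N$ and the regular-monomial basis. Write $\mathcal M$ for the set of regular monomials of $\widetilde{BC}_{r,t}$ (Definition~\ref{cregm}), so that by Theorem~\ref{main1cyco321} $\mathcal M$ is a free $R$-basis; each element of $\mathcal M$ has the shape $x^\mu\,c^{\tilde\gamma}d_1^{-1}e^fwd_2\bar c^{\tilde\delta}\,\bar x^\nu$ with $(\mu,\nu)\in\mathbb N^r\times\mathbb N^t$ and the remaining data as in \eqref{bcabasis}. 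I would work throughout with the degree filtration of \eqref{filtr}, inherited by $\widetilde{BC}_{r,t}$, in which $\mathrm{deg}\,x_i=\mathrm{deg}\,\bar x_j=1$ and all diagram and Clifford generators have degree $0$; recall that $x_i\equiv x_i'$ and $\bar x_j\equiv\bar x_j'$ modulo lower degree.

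First I would compute leading symbols. Since $f$ and $g$ are monic of degree $\ell=k+2m$ and $x_i'=x_i+L_i$ with $\mathrm{deg}\,L_i=0$, one has $f_i=(x_i)^\ell+(\text{lower degree})$ and $g_i=(\bar x_i)^\ell+(\text{lower degree})$; using the graded commutativity of Lemma~\ref{zero-1cycl}(7),(8) the leading symbols of $f(x')^\alpha$ and $g(\bar x')^\beta$ are $x^{\ell\alpha}$ and $\bar x^{\ell\beta}$. Next I would set up the bijection $\Theta\colon\mathcal N\to\mathcal M$ given by Euclidean division of exponents: an element of $\mathcal N$ carrying data $(\alpha,\gamma,\beta,\delta)$ together with fixed diagram/Clifford data is sent to the regular monomial with the same diagram/Clifford data and exponents $\mu=\ell\alpha+\gamma$, $\nu=\ell\beta+\delta$; since $0\le\gamma_i,\delta_j<\ell$ this is a bijection. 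The heart of the argument is to show, for each $\mathbf n\in\mathcal N$,
\begin{equation*}
\mathbf n=\pm\,\Theta(\mathbf n)+(\text{$R$-combination of elements of $\mathcal M$ of strictly smaller degree}),
\end{equation*}
with sign $\pm1\in R^\times$. This follows by passing to the top-degree part: in the graded algebra the leading symbol of $\mathbf n$ is $x^{\ell\alpha}c^{\tilde\gamma}x^\gamma d_1^{-1}e^fwd_2\bar x^\delta\bar c^{\tilde\delta}\bar x^{\ell\beta}$, and one commutes $x^\gamma$ to the left past $c^{\tilde\gamma}$ and $\bar x^\delta$ to the right past $\bar c^{\tilde\delta}$ using Lemma~\ref{hecrel}(1),(2) (which contributes only the sign $(-1)^{\sum_i\tilde\gamma_i\gamma_i+\sum_j\tilde\delta_j\delta_j}$) and merges the $x$- and $\bar x$-powers by \eqref{comm-hc}, producing exactly $\pm\Theta(\mathbf n)$.

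Finally I would conclude by a degree-by-degree invertibility argument. For each $D\ge0$ the truncations $\mathcal M_{\le D}$ and $\mathcal N_{\le D}$ of elements of total degree $\le D$ are finite sets of equal cardinality (the diagram and Clifford data are finite and the constraint $|\mu|+|\nu|\le D$ is finite), and $\Theta$ restricts to a total-degree-preserving bijection between them. By the displayed identity, the matrix expressing $\mathcal N_{\le D}$ in the free basis $\mathcal M_{\le D}$ is, after ordering by total degree, triangular with diagonal entries $\pm1\in R^\times$, hence invertible over $R$. Letting $D\to\infty$ shows that $\mathcal N$ is an $R$-basis of $\widetilde{BC}_{r,t}$, which is therefore free. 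The main obstacle is precisely the leading-symbol bookkeeping behind the displayed identity: one must verify that after the commutations the top-degree term is a \emph{single} regular monomial (up to the unit sign) and that the substitution $x_i'\equiv x_i$ in the associated graded is applied consistently, so that no spurious degree-$D$ contributions survive; everything else is the standard lifting of a graded basis through a filtration.
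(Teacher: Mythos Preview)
Your proposal is correct and follows essentially the same approach as the paper. The paper's proof consists of a single sentence asserting that the transition matrix between $\mathcal N$ and the regular-monomial basis of Theorem~\ref{main1cyco321} is upper-unitriangular; you have carefully unpacked exactly this, supplying the explicit bijection $\Theta$ via Euclidean division of exponents and verifying that each element of $\mathcal N$ has leading term $\pm\Theta(\mathbf n)$ in the degree filtration (the $\pm1$ rather than $+1$ on the diagonal is harmless since these are units).
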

\begin{proof} The result follows from  Theorem~\ref{main1cyco321} since the transition matrix between $\mathcal N$ and the basis in Theorem~\ref{main1cyco321}
is upper-unitriangular.
\end{proof}

\begin{Lemma}\label{I}Let $I$ be the two-sided ideal of  $\widetilde{BC}_{r,t}$ generated by
$f(x_1)$ and $g(\bar x_1)$ satisfying \eqref{polyoff}--\eqref{polyofg}. If  $ \widetilde{BC}_{r,t}$ is admissible, then $S$ is an $R$-basis of $I$, where
\begin{equation}\label{sbasiscycl} S=\{ {\textbf{\textit{f}}}(x')^\alpha {c}^{\tilde \gamma} {x}^{\gamma}  d_1^{-1} e^f w d_2 \bar x^{\delta}{\bar c}^{\tilde \delta}{\textbf{\textit{g}}} (\bar x')^\b\in\mathcal N\mid \alpha_i+\b_j\neq 0 \text{ for some $i, j$}\}.\end{equation}
\end{Lemma}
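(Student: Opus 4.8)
The plan is to check the three properties of an $R$-basis of $I$ in turn: that $S$ is $R$-linearly independent, that $\mathrm{span}_R(S)\subseteq I$, and that $I\subseteq\mathrm{span}_R(S)$. Linear independence is immediate: $S$ is a subset of the $R$-basis $\mathcal N$ of $\widetilde{BC}_{r,t}$ furnished by Lemma~\ref{mathcalncycl}, hence is $R$-free. So the real content is the two inclusions, and I would deal with them separately.

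For $\mathrm{span}_R(S)\subseteq I$, the key remark is that every $f_i$ and every $g_j$ already lies in $I$. Indeed $f_1=f(x_1)$ and $g_1=g(\bar x_1)$ generate $I$, and by \eqref{relsmurp} we have $x_i'=s_{i-1}\cdots s_1\,x_1\,s_1\cdots s_{i-1}$, so that $f_i=f(x_i')=s_{i-1}\cdots s_1\,f(x_1)\,s_1\cdots s_{i-1}\in I$, and likewise $g_j=\bar s_{j-1}\cdots\bar s_1\,g(\bar x_1)\,\bar s_1\cdots\bar s_{j-1}\in I$, since $I$ is a two-sided ideal. Now take a basis element of $S$ as written in \eqref{sbasiscycl}. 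If $\alpha\neq0$ and $i_0$ is the least index with $\alpha_{i_0}\neq0$, then $\alpha_i=0$ for $i<i_0$, so the prefix $\textbf{\textit{f}}(x')^\alpha$ begins with $f_{i_0}$ and the whole element lies in $f_{i_0}\widetilde{BC}_{r,t}\subseteq I$; if $\alpha=0$ but $\beta\neq0$, the same argument applied to the largest index $j_0$ with $\beta_{j_0}\neq0$ puts the element in $\widetilde{BC}_{r,t}\,g_{j_0}\subseteq I$. Hence $S\subseteq I$.

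For $I\subseteq\mathrm{span}_R(S)$ I would invoke Proposition~\ref{2ijcycl}, which under the admissibility hypothesis gives $I=J_L+J_R$ with $J_R=\sum_i f_i\widetilde{BC}_{r,t}$ and $J_L=\sum_j\widetilde{BC}_{r,t}\,g_j$, so it suffices to prove $J_R\subseteq\mathrm{span}_R(S)$ and, dually, $J_L\subseteq\mathrm{span}_R(S)$. Since $J_R$ is a right ideal it is spanned by the products $f_i\,n$ with $n\in\mathcal N$, and I would rewrite each such product into the basis $\mathcal N$ by induction on the $x$-degree. Combining $f_i$ with the prefix $\textbf{\textit{f}}(x')^\alpha$ and carrying it through the Clifford and reflection letters via Lemma~\ref{zero-1cycl}(1),(3),(5),(7) produces a top term $f_1^{\alpha_1}\cdots f_i^{\alpha_i+1}\cdots f_r^{\alpha_r}$ of $\textbf{\textit{f}}$-degree $|\alpha|+1\ge1$; by Lemma~\ref{zero-12}(1) the lower-degree corrections that arise are again $R$-combinations of monomials each retaining a factor $f_h$. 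Every resulting monomial thus has nonzero $\textbf{\textit{f}}$-exponent and lies in $S$, except where an $f$-factor at a capped position is normal-ordered against the $e^f$-wall: there the cyclotomic relation \eqref{feg}, $e_1f(x_1)=(-1)^ke_1g(\bar x_1)$, converts $f(x_i')$ into $g(\bar x_j')$, so the term acquires a nonzero $\textbf{\textit{g}}$-exponent and again lies in $S$. This is exactly the reduction already performed in the proof of Proposition~\ref{2ijcycl}, now carried out while recording which $f$- or $g$-factor survives. The symmetric argument, using Lemma~\ref{zero-12}(2) and right factors $g_j$, gives $J_L\subseteq\mathrm{span}_R(S)$, whence $I=J_L+J_R\subseteq\mathrm{span}_R(S)\subseteq I$.

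I expect the main obstacle to be the last paragraph's reduction: tracking the lower-order terms generated when $f_i$ is commuted past the non-central part of a monomial, and in particular guaranteeing that the conversion of $f$-factors into $g$-factors at the $e^f$-wall never yields a term with both $\alpha=0$ and $\beta=0$, which would fall outside $S$. This is precisely the point where admissibility is used, through its role in Proposition~\ref{2ijcycl} (namely $e_1f(x_1)e_1=0$), and it is what keeps the reduction confined to $\mathrm{span}_R(S)$ rather than merely to $I$.
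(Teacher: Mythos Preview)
Your strategy matches the paper's: $S\subseteq\mathcal N$ gives independence, $S\subseteq I$ since each element carries a factor $f_i$ or $g_j$, and $I=J_L+J_R\subseteq\mathrm{span}_R(S)$ is shown by proving $f_i\textbf{m}\in\mathrm{span}_R(S)$ for $\textbf{m}\in\mathcal N$ by induction on degree (and dually for $J_L$).

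The paper's execution is cleaner than you anticipate, however: one only commutes $f_i$ \emph{within the prefix} ${\textbf{\textit{f}}}(x')^\alpha$. By Lemma~\ref{zero-12} this yields the leading term $f_1^{\alpha_1}\cdots f_i^{\alpha_i+1}\cdots f_r^{\alpha_r}$ multiplied by the \emph{unchanged} tail $c^{\tilde\gamma} x^{\gamma} d_1^{-1} e^f w d_2\, \bar x^{\delta}\bar c^{\tilde\delta}{\textbf{\textit{g}}}(\bar x')^\beta$, and that product is already a basis element of $\mathcal N$ lying in $S$; the corrections are of the form $f_h\cdot c$ with $c$ of strictly smaller degree, and these land in $\mathrm{span}_R(S)$ by the inductive hypothesis (not by being directly in $S$, as your phrasing suggests). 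So there is no need to carry $f_i$ past Clifford letters, past $x^\gamma$, or through the $e^f$-wall. The conversion $e_1f(x_1)=(-1)^ke_1g(\bar x_1)$ was the engine behind Proposition~\ref{2ijcycl} and is not invoked again in this lemma; the obstacle you anticipate---a term with both $\alpha=0$ and $\beta=0$ escaping $S$---simply does not arise.
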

\begin{proof}
Let $M$ be the $R$-module spanned by $ S$. Obviously, $M\subseteq I$. If   $J_L\subseteq M$, and $J_R\subseteq M$, by Lemma~\ref{2ijcycl}(3),  $M=I$, proving the result.
By symmetry, we verify  $J_R\subseteq M$. By Lemma~\ref{mathcalncycl}, we need to verify  $f(x_i') \textbf{m}\in M$ for any basis element $\textbf{m}$ in \eqref{regs1}.
In fact,  we have
$$f(x_i')f(x_\ell')\in f(x_\ell') f(x_i')+\mbox{$\sum\limits_{j=1}^{\ell-1}$} f(x_j') \widetilde{BC}_{r, t}, \text{ by Lemma~\ref{zero-12}}. $$
Using induction on degrees, we have $f(x_i') \textbf{m}\in M$. Finally, one can check $J_L\subset M$, similarly.
\end{proof}

\begin{Theorem}\label{level-k-walledcycl} The cyclotomic walled Brauer-Clifford superalgebra  $ BC_{k+2m, r, t}$ is free over $R $ with rank  $2^{r+t}(k+2m)^{r+t}(r+t)!$ if and only if
 $BC_{k+2m, r, t}$ is admissible.
\end{Theorem}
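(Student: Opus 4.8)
The plan is to produce an explicit $R$-basis of the quotient $BC_{k+2m,r,t}=\widetilde{BC}_{r,t}/I$ in the admissible case and to rule out freeness of the correct rank by a torsion argument otherwise. Write $\ell=k+2m$. First I would count the regular monomials of $BC_{\ell,r,t}$ in the sense of Definition~\ref{cregm}: each has the shape $x^\alpha d\,\bar x^\beta$ with $d$ in the basis $S$ of $BC_{r,t}$ from \eqref{bcabasis} and $(\alpha,\beta)\in\mathbb Z_\ell^r\times\mathbb Z_\ell^t$. Since $|S|=2^{r+t}(r+t)!$ by Theorem~\ref{wbhsa321} and there are $\ell^{r+t}$ choices for $(\alpha,\beta)$, there are exactly $N:=2^{r+t}(k+2m)^{r+t}(r+t)!$ such monomials, and by Corollary~\ref{level-l-span} they span $BC_{\ell,r,t}$ over $R$.

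For the ``if'' direction, assume $BC_{\ell,r,t}$ is admissible, equivalently $\widetilde{BC}_{r,t}$ is admissible (Definition~\ref{condi-kcycl}). By Theorem~\ref{main1cyco321} and Lemma~\ref{mathcalncycl}, the set $\mathcal N$ of \eqref{regs1} is an $R$-basis of $\widetilde{BC}_{r,t}$, and by Lemma~\ref{I} the subset $S\subseteq\mathcal N$ of those elements with $\alpha_i+\beta_j\neq 0$ for some $i,j$ is an $R$-basis of the ideal $I$. Hence $\widetilde{BC}_{r,t}=I\oplus\bigoplus_{\mathbf m\in\mathcal N\setminus S}R\mathbf m$ as $R$-modules, so the images of $\mathcal N\setminus S$ form an $R$-basis of $BC_{\ell,r,t}$. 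Setting $\alpha=\beta=0$ (so that the leading $\mathbf f(x')^\alpha$ and trailing $\mathbf g(\bar x')^\beta$ factors become $1$) shows $\mathcal N\setminus S$ consists of the elements $c^{\tilde\gamma}x^\gamma d_1^{-1}e^f w d_2\bar x^\delta\bar c^{\tilde\delta}$ with $(\gamma,\delta)\in\mathbb Z_\ell^r\times\mathbb Z_\ell^t$; applying the commutation relations of Lemma~\ref{hecrel} to move the $c$'s past the $x$'s, each such image equals $\pm$ a regular monomial, and this sets up a bijection onto the set of all regular monomials. Therefore the $N$ regular monomials form an $R$-basis and $BC_{\ell,r,t}$ is free of rank $2^{r+t}(k+2m)^{r+t}(r+t)!$.

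For the ``only if'' direction I argue by contraposition. Suppose $BC_{\ell,r,t}$ is not admissible but, for contradiction, is free of rank $N$ over the integral domain $R$. Its $N$ regular monomials span it, so the map $R^N\twoheadrightarrow BC_{\ell,r,t}\cong R^N$ sending the standard basis to this spanning family is a surjective endomorphism of $R^N$; over a commutative ring such a map is an isomorphism, whence the regular monomials are $R$-linearly independent. In particular $e_1$ — which is the regular monomial with $\alpha=\beta=0$, $f=1$ and $d_1=d_2=w=1$ — is a basis element, so $b\,e_1\neq 0$ for every nonzero $b\in R$. But non-admissibility means \eqref{ccccc} fails, so by Lemma~\ref{condi-k1cycl} there is some $\ell'\ge k+2m$ with $b_{\ell'}\neq 0$ in $R$ and $b_{\ell'}e_1=0$, a contradiction. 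Hence freeness of rank $N$ forces admissibility.

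The real difficulty is concentrated entirely in the ``if'' direction and has already been dispatched by the preceding lemmas: the genuinely hard step is Lemma~\ref{I}, which rests on Proposition~\ref{2ijcycl}(3) identifying $I$ with $J_L+J_R$, and that identification uses admissibility precisely to guarantee $e_1f(x_1)e_1=0$. Once this structural description of $I$ is combined with the basis $\mathcal N$ of $\widetilde{BC}_{r,t}$, the extraction of the quotient basis and the cardinality count are routine, and the converse follows immediately from the torsion computation in Lemma~\ref{condi-k1cycl}.
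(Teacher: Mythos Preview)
Your proof is correct and follows essentially the same route as the paper's own argument: for the ``if'' direction you invoke Lemmas~\ref{mathcalncycl}--\ref{I} to obtain a basis of $I$ inside the basis $\mathcal N$ of $\widetilde{BC}_{r,t}$ and then read off the quotient basis, and for the ``only if'' direction you use the torsion of $e_1$ from Lemma~\ref{condi-k1cycl}. Your ``only if'' argument, via the surjective-endomorphism-of-$R^N$-is-an-isomorphism trick, is in fact a bit cleaner than the paper's terse ``either not free or rank strictly less than $N$'', since it handles the cases $e_1=0$ and $e_1\neq 0$ uniformly; and your explicit remark that $\mathcal N\setminus S$ is in bijection with the regular monomials up to signs (via Lemma~\ref{hecrel}) spells out a step the paper leaves implicit.
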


\begin{proof}By Corollary~\ref{level-l-span}, $ BC_{k+2m, r, t}$ is spanned by all of its regular monomials.
If $BC_{k+2m, r,t}$ is not admissible,  $e_1$ is an $R$-torsion element by  Corollary~\ref{assump1}. Since  $e_1\in M$, either $BC_{k+2m, r, t}$ is not free or the rank of $BC_{k+2m, r, t}$ is strictly less than  $2^{r+t}(k+2m)^{r+t}(r+t)!$, the number of all regular monomials of $ BC_{k+2m, r, t}$.
 If  $BC_{k+2m, r,t}$ is  admissible, by  Lemmas~\ref{mathcalncycl}--\ref{I},
 all regular monomials of  $BC_{k+2m, r,t}$ are $R$-linear independent. So $BC_{k+2m, r, t}$ is free over $R$ with rank  $2^{r+t}(k+2m)^{r+t}(r+t)!$. \end{proof}

\section{Isomorphisms between affine (resp.,  cyclotomic)  Brauer-Clifford algebras and Comes-Kujawa's affine (resp., cyclotomic) algebras}

Let $\mathscr {AOBC}$ be the degenerate affine oriented Brauer-Clifford supercategory defined in \cite[Definition~3.7]{CK}.
All notations on  $\mathscr {AOBC}$ we use here are the same as those in \cite{CK}.
For any positive integers  $r,t$, let $\mathscr {BC}_{r,t}^{\rm aff}:= \End_{\mathscr {AOBC}}(\uparrow^r\downarrow^t)$.
The aim of this section is to establish an isomorphism between certain quotient of $\mathscr {BC}_{r,t}^{\rm aff}$ and our $\widetilde{BC}_{r, t}$. First, we recall some of results in   \cite[Section~3.2]{CK}. We use the  notations in \cite{CK} to denote the morphisms in  $\mathscr {AOBC}$.

Given a nonnegative integer $r$ and any morphism $f$,  write
\begin{equation*}
    \begin{tikzpicture}[baseline = 12pt,scale=0.5,color=\clr,inner sep=0pt, minimum width=11pt]
        \draw[-,thick] (0,0) to (0,2);
        \draw (0,1) node[circle,draw,thick,fill=white]{$f$};
    \end{tikzpicture}^r
    =~
    \underbrace{
        \begin{tikzpicture}[baseline = 12pt,scale=0.5,color=\clr,inner sep=0pt, minimum width=11pt]
            \draw[-,thick] (0,0) to (0,2);
            \draw (0,1) node[circle,draw,thick,fill=white]{$f$};
        \end{tikzpicture}
        \cdots
        \begin{tikzpicture}[baseline = 12pt,scale=0.5,color=\clr,inner sep=0pt, minimum width=11pt]
            \draw[-,thick] (0,0) to (0,2);
            \draw (0,1) node[circle,draw,thick,fill=white]{$f$};
        \end{tikzpicture}
    }_{r\text{ times}}
    ~.
\end{equation*}
In the monoidal supercategory,  the super-interchange law is represented by
\begin{equation}\label{super-interchange}
    \begin{tikzpicture}[baseline = 19pt,scale=0.5,color=\clr,inner sep=0pt, minimum width=11pt]
        \draw[-,thick] (0,0) to (0,3);
        \draw[-,thick] (2,0) to (2,3);
        \draw (0,2) node[circle,draw,thick,fill=white]{$f$};
        \draw (2,1) node[circle,draw,thick,fill=white]{$g$};
    \end{tikzpicture}
    ~=~
    \begin{tikzpicture}[baseline = 19pt,scale=0.5,color=\clr,inner sep=0pt, minimum width=11pt]
        \draw[-,thick] (0,0) to (0,3);
        \draw[-,thick] (2,0) to (2,3);
        \draw (0,1.5) node[circle,draw,thick,fill=white]{$f$};
        \draw (2,1.5) node[circle,draw,thick,fill=white]{$g$};
    \end{tikzpicture}
    ~=(-1)^{\p{f}\p{g}}~
    \begin{tikzpicture}[baseline = 19pt,scale=0.5,color=\clr,inner sep=0pt, minimum width=11pt]
        \draw[-,thick] (0,0) to (0,3);
        \draw[-,thick] (2,0) to (2,3);
        \draw (0,1) node[circle,draw,thick,fill=white]{$f$};
        \draw (2,2) node[circle,draw,thick,fill=white]{$g$};
    \end{tikzpicture}
    ~.
\end{equation}

\begin{Lemma}\label{CKu} $\mathscr{BC}_{r,t}^{\rm aff}$ is an associative  superalgebras  generated by  even generators $e_1$, $s_i (1\leq i\leq r-1)$, $\bar s_j (1\leq j\leq t-1)$
 $y_i (1\leq i\leq r)$ and $ \bar y_j (1\leq j\leq t)$, and odd generators $c_i (1\leq i\leq r)$ and $\bar c_i (1\leq i\leq t)$, and even central elements
  $\bar\delta_k, (k\in\mathbb Z_{\geq0})$ defined as follows:
\begin{equation}\label{gens for AOBC}
\begin{aligned}&
   e_1=
         \begin{tikzpicture}[baseline = 5pt, scale=0.75, color=\clr]
             \draw[->,thick] (1,0) to[out=up, in=down] (1,1);     \draw (1.5,1.0) node{\footnotesize{$r-1$}};
              \draw[<-,thick] (0,1) to[out=down,in=left] (1,0.6) to[out=right,in=down] (2.2,1);
          \draw[<-,thick] (2.2,0) to[out=up,in=right] (1,0.4) to[out=left,in=up] (0,0);\end{tikzpicture} ~
             \begin{tikzpicture}[baseline = 5pt, scale=0.75, color=\clr]
            \draw[<-,thick] (2.25,0) to[out=up, in=down] (2.25,1);
        \end{tikzpicture}^{t-1}
    ~,\quad  s_i=\begin{tikzpicture}[baseline = 5pt, scale=0.75, color=\clr]
            \draw[->,thick] (0,0) to[out=up, in=down] (0,1);
        \end{tikzpicture}^{i-1}~
    \begin{tikzpicture}[baseline = 5pt, scale=0.75, color=\clr]
                \draw[->,thick] (1,0) to[out=up,in=down] (0.5,1);
        \draw[->,thick] (0.5,0) to[out=up,in=down] (1,1);
                  \end{tikzpicture}~
                   \begin{tikzpicture}[baseline = 5pt, scale=0.75, color=\clr]
            \draw[->,thick] (2.25,0) to[out=up, in=down] (2.25,1);
        \end{tikzpicture}^{r-i-1}~
                  \begin{tikzpicture}[baseline = 5pt, scale=0.75, color=\clr]
            \draw[<-,thick] (2.25,0) to[out=up, in=down] (2.25,1);
        \end{tikzpicture}^{t}
    ~,\quad
    \bar s_i=\begin{tikzpicture}[baseline = 5pt, scale=0.75, color=\clr]
            \draw[->,thick] (0,0) to[out=up, in=down] (0,1);
        \end{tikzpicture}^{r}~
        \begin{tikzpicture}[baseline = 5pt, scale=0.75, color=\clr]
            \draw[<-,thick] (2.25,0) to[out=up, in=down] (2.25,1);
        \end{tikzpicture}^{i-1}~
    \begin{tikzpicture}[baseline = 5pt, scale=0.75, color=\clr]
                \draw[->,thick](0.5,1) to[out=down,in=up](1,0);
        \draw[->,thick] (1,1)to[out=down,in=up](0.5,0);
                  \end{tikzpicture}~
                  \begin{tikzpicture}[baseline = 5pt, scale=0.75, color=\clr]
            \draw[<-,thick] (2.25,0) to[out=up, in=down] (2.25,1);
        \end{tikzpicture}^{t-i-1}~,\\
          &  y_i= \begin{tikzpicture}[baseline = 5pt, scale=0.75, color=\clr]
            \draw[->,thick] (0,0) to[out=up, in=down] (0,1);
        \end{tikzpicture}^{i-1}~
                \begin{tikzpicture}[baseline = 5pt, scale=0.75, color=\clr]
                       \draw[->,thick] (0.5,0) to (0.5,1);
               \draw (0.5,0.5) \bdot;
    \end{tikzpicture}~
    \begin{tikzpicture}[baseline = 5pt, scale=0.75, color=\clr]
            \draw[->,thick] (0,0) to[out=up, in=down] (0,1);
        \end{tikzpicture}^{r-i}~
    \begin{tikzpicture}[baseline = 5pt, scale=0.75, color=\clr]
            \draw[<-,thick] (2.25,0) to[out=up, in=down] (2.25,1);
        \end{tikzpicture}^{t}
    ~,\quad
    c_i=\begin{tikzpicture}[baseline = 5pt, scale=0.75, color=\clr]
            \draw[->,thick] (0,0) to[out=up, in=down] (0,1);
        \end{tikzpicture}^{i-1}~
                \begin{tikzpicture}[baseline = 5pt, scale=0.75, color=\clr]
                       \draw[->,thick] (0.5,0) to (0.5,1);
               \draw (0.5,0.5) \wdot;
    \end{tikzpicture}~
    \begin{tikzpicture}[baseline = 5pt, scale=0.75, color=\clr]
            \draw[->,thick] (0,0) to[out=up, in=down] (0,1);
        \end{tikzpicture}^{r-i}~
    \begin{tikzpicture}[baseline = 5pt, scale=0.75, color=\clr]
            \draw[<-,thick] (2.25,0) to[out=up, in=down] (2.25,1);
        \end{tikzpicture}^{t},\\
   &\bar y_i= \begin{tikzpicture}[baseline = 5pt, scale=0.75, color=\clr]
            \draw[->,thick] (0,0) to[out=up, in=down] (0,1);
        \end{tikzpicture}^{r}~
    \begin{tikzpicture}[baseline = 5pt, scale=0.75, color=\clr]
            \draw[<-,thick] (0,0) to[out=up, in=down] (0,1);
        \end{tikzpicture}^{i-1}~
           \begin{tikzpicture}[baseline = 5pt, scale=0.75, color=\clr]
                       \draw[->,thick] (0.5,1) to (0.5,0);
               \draw (0.5,0.5) \bdot;
    \end{tikzpicture}~
    \begin{tikzpicture}[baseline = 5pt, scale=0.75, color=\clr]
            \draw[<-,thick] (2.25,0) to[out=up, in=down] (2.25,1);
        \end{tikzpicture}^{t-i},
       ~\quad
      \bar c_i= \begin{tikzpicture}[baseline = 5pt, scale=0.75, color=\clr]
            \draw[->,thick] (0,0) to[out=up, in=down] (0,1);
        \end{tikzpicture}^{r}~
    \begin{tikzpicture}[baseline = 5pt, scale=0.75, color=\clr]
            \draw[<-,thick] (0,0) to[out=up, in=down] (0,1);
        \end{tikzpicture}^{i-1}~
           \begin{tikzpicture}[baseline = 5pt, scale=0.75, color=\clr]
                       \draw[->,thick] (0.5,1) to (0.5,0);
               \draw (0.5,0.5) \wdot;
    \end{tikzpicture}~
    \begin{tikzpicture}[baseline = 5pt, scale=0.75, color=\clr]
            \draw[<-,thick] (2.25,0) to[out=up, in=down] (2.25,1);
        \end{tikzpicture}^{t-i},\\
    & \bar\delta_k=\begin{tikzpicture}[baseline = 5pt, scale=0.5, color=\clr]
        \draw[<-,thick] (0.6,1) to (0.5,1) to[out=left,in=up] (0,0.5)
                        to[out=down,in=left] (0.5,0)
                        to[out=right,in=down] (1,0.5)
                        to[out=up,in=right] (0.5,1);
        \draw (1,0.5) \bdot;
        \draw (1.4,0.5) node{\footnotesize{$k$}};
    \end{tikzpicture}.~ \quad
 \end{aligned}
\end{equation}
These generators satisfy the following local relations:
\begin{equation}\label{ese}
e_1s_1\bar s_1e_1s_1=e_1s_1\bar s_1e_1\bar s_1,~  s_1e_1s_1\bar s_1e_1=\bar s_1 e_1s_1\bar s_1e_1,   e_1 s_1 e_1 =e_1\bar s_1 e_1=e_1,
\end{equation}
\begin{equation}\label{AOBC  rels 1}
    \begin{tikzpicture}[baseline = 10pt, scale=0.5, color=\clr]
        \draw[-,thick] (0,0) to[out=up, in=down] (1,1);
        \draw[->,thick] (1,1) to[out=up, in=down] (0,2);
        \draw[-,thick] (1,0) to[out=up, in=down] (0,1);
        \draw[->,thick] (0,1) to[out=up, in=down] (1,2);
            \end{tikzpicture}
    ~=~
    \begin{tikzpicture}[baseline = 10pt, scale=0.5, color=\clr]
        \draw[-,thick] (0,0) to (0,1);
        \draw[->,thick] (0,1) to (0,2);
        \draw[-,thick] (1,0) to (1,1);
        \draw[->,thick] (1,1) to (1,2);
    \end{tikzpicture}
    ,\qquad
    \begin{tikzpicture}[baseline = 10pt, scale=0.5, color=\clr]
        \draw[->,thick] (0,0) to[out=up, in=down] (2,2);
        \draw[->,thick] (2,0) to[out=up, in=down] (0,2);
        \draw[->,thick] (1,0) to[out=up, in=down] (0,1) to[out=up, in=down] (1,2);
            \end{tikzpicture}
    ~=~
    \begin{tikzpicture}[baseline = 10pt, scale=0.5, color=\clr]
        \draw[->,thick] (0,0) to[out=up, in=down] (2,2);
        \draw[->,thick] (2,0) to[out=up, in=down] (0,2);
        \draw[->,thick] (1,0) to[out=up, in=down] (2,1) to[out=up, in=down] (1,2);
           \end{tikzpicture},
\end{equation}
\begin{equation}\label{AOBC rels 2}
    \begin{tikzpicture}[baseline = 10pt, scale=0.5, color=\clr]
        \draw[->,thick] (1,1) to[out=down, in=up] (0,0);
        \draw[-,thick] (1,1) to[out=up, in=down] (0,2);
        \draw[->,thick] (0,1) to[out=down, in=up] (1,0);
        \draw[-,thick] (1,2) to[out=down, in=up] (0,1);
            \end{tikzpicture}
    ~=~
    \begin{tikzpicture}[baseline = 10pt, scale=0.5, color=\clr]
        \draw[<-,thick] (0,0) to (0,1);
        \draw[-,thick] (0,1) to (0,2);
        \draw[<-,thick] (1,0) to (1,1);
        \draw[-,thick] (1,1) to (1,2);
            \end{tikzpicture}
    ,\qquad
    \begin{tikzpicture}[baseline = 10pt, scale=0.5, color=\clr]
        \draw[<-,thick] (0,0) to[out=up, in=down] (2,2);
        \draw[<-,thick] (2,0) to[out=up, in=down] (0,2);
        \draw[<-,thick] (1,0) to[out=up, in=down] (0,1) to[out=up, in=down] (1,2);
            \end{tikzpicture}
    ~=~
    \begin{tikzpicture}[baseline = 10pt, scale=0.5, color=\clr]
        \draw[<-,thick] (0,0) to[out=up, in=down] (2,2);
        \draw[<-,thick] (2,0) to[out=up, in=down] (0,2);
        \draw[<-,thick] (1,0) to[out=up, in=down] (2,1) to[out=up, in=down] (1,2);
            \end{tikzpicture},
\end{equation}
\begin{equation}\label{AOBC scat rels 3}
    \begin{tikzpicture}[baseline = 7.5pt, scale=0.5, color=\clr]
        \draw[->,thick] (0,0) to[out=up, in=down] (0,1.5);
        \draw (0,1) \wdot;
        \draw (0,0.5) \wdot;
            \end{tikzpicture}
    ~=~
    \begin{tikzpicture}[baseline = 7.5pt, scale=0.5, color=\clr]
        \draw[->,thick] (0,0) to[out=up, in=down] (0,1.5);
            \end{tikzpicture}
    ,\qquad
    \begin{tikzpicture}[baseline = 7.5pt, scale=0.5, color=\clr]
        \draw[->,thick] (0,0) to[out=up, in=down] (1,1.5);
        \draw[->,thick] (1,0) to[out=up, in=down] (0,1.5);
        \draw (0.2,0.5) \wdot;
            \end{tikzpicture}
    ~=~
    \begin{tikzpicture}[baseline = 7.5pt, scale=0.5, color=\clr]
        \draw[->,thick] (0,0) to[out=up, in=down] (1,1.5);
        \draw[->,thick] (1,0) to[out=up, in=down] (0,1.5);
        \draw (0.8,1) \wdot;
            \end{tikzpicture}
    ~,
\end{equation}
\begin{equation}\label{AOBC scat rels 3-1}
    \begin{tikzpicture}[baseline = 7.5pt, scale=0.5, color=\clr]
        \draw[<-,thick] (0,0) to[out=up, in=down] (0,1.5);
        \draw (0,1) \wdot;
        \draw (0,0.5) \wdot;
            \end{tikzpicture}
    ~=-~
    \begin{tikzpicture}[baseline = 7.5pt, scale=0.5, color=\clr]
        \draw[<-,thick] (0,0) to[out=up, in=down] (0,1.5);
            \end{tikzpicture}
    ,\qquad
    \begin{tikzpicture}[baseline = 7.5pt, scale=0.5, color=\clr]
        \draw[<-,thick] (0,0) to[out=up, in=down] (1,1.5);
        \draw[<-,thick] (1,0) to[out=up, in=down] (0,1.5);
        \draw (0.2,0.5) \wdot;
            \end{tikzpicture}
    ~=~
    \begin{tikzpicture}[baseline = 7.5pt, scale=0.5, color=\clr]
        \draw[<-,thick] (0,0) to[out=up, in=down] (1,1.5);
        \draw[<-,thick] (1,0) to[out=up, in=down] (0,1.5);
        \draw (0.8,1) \wdot;
            \end{tikzpicture}
    ~,
\end{equation}
\begin{equation}\label{AOBC  rels 4}
   \begin{tikzpicture}[baseline = 7.5pt, scale=0.5, color=\clr]
        \draw[<-,thick] (0.6,1.25) to (0.5,1.25) to[out=left,in=up]
                        (0,0.75) to[out=down,in=left]
                        (0.5,0.25) to[out=right,in=down]
                        (1,0.75) to[out=up,in=right]
                        (0.5,1.25);
                 \draw (1,0.75) \bdot;\draw (1.5,0.75) node{\footnotesize{$2k$}};
           \end{tikzpicture}
    ~=0,\quad e_1 y_1^ke_1=\bar\delta_k e_1,\quad  e_1c_1 y_1^ke_1=0, \quad
    \begin{tikzpicture}[baseline = 7.5pt, scale=0.5, color=\clr]
        \draw[->,thick] (0,0) to[out=up, in=down] (0,1.5);
        \draw (0,1) \bdot;
        \draw (0,0.5) \wdot;
            \end{tikzpicture}
    ~=-~
    \begin{tikzpicture}[baseline = 7.5pt, scale=0.5, color=\clr]
        \draw[->,thick] (0,0) to[out=up, in=down] (0,1.5);
        \draw (0,1) \wdot;
        \draw (0,0.5) \bdot;
                \end{tikzpicture},~\quad
                \begin{tikzpicture}[baseline = 7.5pt, scale=0.5, color=\clr]
        \draw[<-,thick] (0,0) to[out=up, in=down] (0,1.5);
        \draw (0,1) \bdot;
        \draw (0,0.5) \wdot;
            \end{tikzpicture}
    ~=-~
    \begin{tikzpicture}[baseline = 7.5pt, scale=0.5, color=\clr]
        \draw[<-,thick] (0,0) to[out=up, in=down] (0,1.5);
        \draw (0,1) \wdot;
        \draw (0,0.5) \bdot;
                \end{tikzpicture}
\end{equation}

\begin{equation}\label{AOBC  rels 6}
    \begin{tikzpicture}[baseline = 7.5pt, scale=0.5, color=\clr]
        \draw[->,thick] (0,0) to[out=up, in=down] (1,1.5);
        \draw[->,thick] (1,0) to[out=up, in=down] (0,1.5);
        \draw (0.2,1) \bdot;
            \end{tikzpicture}
    ~-~
    \begin{tikzpicture}[baseline = 7.5pt, scale=0.5, color=\clr]
        \draw[->,thick] (0,0) to[out=up, in=down] (1,1.5);
        \draw[->,thick] (1,0) to[out=up, in=down] (0,1.5);
        \draw (0.8,0.5) \bdot;
                    \end{tikzpicture}
    ~=~
    \begin{tikzpicture}[baseline = 7.5pt, scale=0.5, color=\clr]
        \draw[->,thick] (0,0) to[out=up, in=down] (0,1.5);
        \draw[->,thick] (1,0) to[out=up, in=down] (1,1.5);
            \end{tikzpicture}
    ~-~
    \begin{tikzpicture}[baseline = 7.5pt, scale=0.5, color=\clr]
        \draw[->,thick] (0,0) to[out=up, in=down] (0,1.5);
        \draw[->,thick] (1,0) to[out=up, in=down] (1,1.5);
        \draw (0,0.7) \wdot;
        \draw (1,0.7) \wdot;
           \end{tikzpicture}
    ~,
\end{equation}

\begin{equation}\label{AOBC  rels 61}
    \begin{tikzpicture}[baseline = 7.5pt, scale=0.5, color=\clr]
        \draw[<-,thick] (0,0) to[out=up, in=down] (1,1.5);
        \draw[<-,thick] (1,0) to[out=up, in=down] (0,1.5);
        \draw (0.2,1) \bdot;
            \end{tikzpicture}
    ~-~
    \begin{tikzpicture}[baseline = 7.5pt, scale=0.5, color=\clr]
        \draw[<-,thick] (0,0) to[out=up, in=down] (1,1.5);
        \draw[<-,thick] (1,0) to[out=up, in=down] (0,1.5);
        \draw (0.8,0.5) \bdot;
                    \end{tikzpicture}
    ~=-~
    \begin{tikzpicture}[baseline = 7.5pt, scale=0.5, color=\clr]
        \draw[<-,thick] (0,0) to[out=up, in=down] (0,1.5);
        \draw[<-,thick] (1,0) to[out=up, in=down] (1,1.5);
            \end{tikzpicture}
    ~-~
    \begin{tikzpicture}[baseline = 7.5pt, scale=0.5, color=\clr]
        \draw[<-,thick] (0,0) to[out=up, in=down] (0,1.5);
        \draw[<-,thick] (1,0) to[out=up, in=down] (1,1.5);
        \draw (0,0.7) \wdot;
        \draw (1,0.7) \wdot;
           \end{tikzpicture}
    ~,
\end{equation}
\begin{equation}\label{AOBC  rels 7}
\begin{tikzpicture}[baseline = 5pt, scale=0.75, color=\clr]
                 \draw[<-,thick] (0.75,1) to[out=down,in=left] (1,0.6) to[out=right,in=down] (1.25,1);
          \draw[<-,thick] (1.25,0) to[out=up,in=right] (1,0.4) to[out=left,in=up] (0.75,0);
          \draw (0.8,0.8) \bdot; \end{tikzpicture}
          ~=~
          \begin{tikzpicture}[baseline = 5pt, scale=0.75, color=\clr]
                 \draw[<-,thick] (0.75,1) to[out=down,in=left] (1,0.6) to[out=right,in=down] (1.25,1);
          \draw[<-,thick] (1.25,0) to[out=up,in=right] (1,0.4) to[out=left,in=up] (0.75,0);
          \draw (1.2,0.8) \bdot; \end{tikzpicture}, ~
\begin{tikzpicture}[baseline = 5pt, scale=0.75, color=\clr]
                 \draw[<-,thick] (0.75,1) to[out=down,in=left] (1,0.6) to[out=right,in=down] (1.25,1);
          \draw[<-,thick] (1.25,0) to[out=up,in=right] (1,0.4) to[out=left,in=up] (0.75,0);
          \draw (0.8,0.2) \bdot; \end{tikzpicture}
          ~=~
          \begin{tikzpicture}[baseline = 5pt, scale=0.75, color=\clr]
                 \draw[<-,thick] (0.75,1) to[out=down,in=left] (1,0.6) to[out=right,in=down] (1.25,1);
          \draw[<-,thick] (1.25,0) to[out=up,in=right] (1,0.4) to[out=left,in=up] (0.75,0);
          \draw (1.2,0.2) \bdot; \end{tikzpicture},~
    \begin{tikzpicture}[baseline = 5pt, scale=0.75, color=\clr]
                 \draw[<-,thick] (0.75,1) to[out=down,in=left] (1,0.6) to[out=right,in=down] (1.25,1);
          \draw[<-,thick] (1.25,0) to[out=up,in=right] (1,0.4) to[out=left,in=up] (0.75,0);
          \draw (0.8,0.8) \wdot; \end{tikzpicture}
          ~=~
          \begin{tikzpicture}[baseline = 5pt, scale=0.75, color=\clr]
                 \draw[<-,thick] (0.75,1) to[out=down,in=left] (1,0.6) to[out=right,in=down] (1.25,1);
          \draw[<-,thick] (1.25,0) to[out=up,in=right] (1,0.4) to[out=left,in=up] (0.75,0);
          \draw (1.2,0.8) \wdot; \end{tikzpicture}, ~
\begin{tikzpicture}[baseline = 5pt, scale=0.75, color=\clr]
                 \draw[<-,thick] (0.75,1) to[out=down,in=left] (1,0.6) to[out=right,in=down] (1.25,1);
          \draw[<-,thick] (1.25,0) to[out=up,in=right] (1,0.4) to[out=left,in=up] (0.75,0);
          \draw (0.8,0.2) \wdot; \end{tikzpicture}
          ~=~
          \begin{tikzpicture}[baseline = 5pt, scale=0.75, color=\clr]
                 \draw[<-,thick] (0.75,1) to[out=down,in=left] (1,0.6) to[out=right,in=down] (1.25,1);
          \draw[<-,thick] (1.25,0) to[out=up,in=right] (1,0.4) to[out=left,in=up] (0.75,0);
          \draw (1.2,0.2) \wdot; \end{tikzpicture},~      \end{equation}

\begin{equation}\label{super commuting relations}
    \begin{tikzpicture}[baseline = 19pt,scale=0.5,color=\clr,inner sep=0pt, minimum width=11pt]
        \draw[-,thick] (0,0) to (0,3);
        \draw[-,thick] (1,0) to (1,3);
        \draw[-,thick] (2,0) to (2,3);
        \draw[-,thick] (3,0) to (3,3);
        \draw[-,thick] (4,0) to (4,3);
        \draw (1,2) node[circle,draw,thick,fill=white]{$g$};
        \draw (3,1) node[circle,draw,thick,fill=white]{$h$};
        \draw (0,-0.3) node{$\mathbf a$};
        \draw (2,-0.3) node{$\mathbf b$};
        \draw (4,-0.3) node{$\mathbf c$};
    \end{tikzpicture}
    ~=(-1)^{\p{g}\p{h}}
    \begin{tikzpicture}[baseline = 19pt,scale=0.5,color=\clr,inner sep=0pt, minimum width=11pt]
        \draw[-,thick] (0,0) to (0,3);
        \draw[-,thick] (1,0) to (1,3);
        \draw[-,thick] (2,0) to (2,3);
        \draw[-,thick] (3,0) to (3,3);
        \draw[-,thick] (4,0) to (4,3);
        \draw (1,1) node[circle,draw,thick,fill=white]{$g$};
        \draw (3,2) node[circle,draw,thick,fill=white]{$h$};
        \draw (0,-0.3) node{$\mathbf a$};
        \draw (2,-0.3) node{$\mathbf  b$};
        \draw (4,-0.3) node{$\mathbf c$};
    \end{tikzpicture}
    ~,
\end{equation}
for all $\mathbf a,\mathbf b,\mathbf c\in\wrd$ and $g,h\in\left\{\caup , \swap, \swapr, \xdot, \xdotr, \cldot, \cldotr \right\}$ such that the diagrams in \eqref{super commuting relations} are all in $\mathscr {BC}_{r,t}^{\rm aff}$.

\end{Lemma}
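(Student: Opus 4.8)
The plan is to work directly inside the degenerate affine oriented Brauer--Clifford supercategory $\mathscr{AOBC}$ of \cite{CK}, viewing $\mathscr{BC}_{r,t}^{\rm aff}=\End_{\mathscr{AOBC}}(\uparrow^r\downarrow^t)$ as the endomorphism \emph{super}algebra of the object $\uparrow^r\downarrow^t$, with product given by vertical stacking of diagrams. Associativity is immediate from associativity of composition in any category, and the $\Z_2$-grading is the one induced by the parity function $\p{-}$ on morphisms of $\mathscr{AOBC}$. First I would record the parities of the proposed generators: the crossings $s_i,\bar s_i$, the cup--cap element $e_1$, the black dots $y_i,\bar y_i$ and the bubbles $\bar\delta_k$ are even, while the Clifford (white) dots $c_i,\bar c_i$ are odd; this is read off from the generating morphisms of $\mathscr{AOBC}$ in \cite[Definition~3.7]{CK}.

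For the generation statement I would invoke the basis theorem for morphism spaces of $\mathscr{AOBC}$ established in \cite{CK}: every endomorphism of $\uparrow^r\downarrow^t$ is an $R$-linear combination of normally ordered diagrams, each consisting of an oriented Brauer (matching) diagram on $r+t$ points decorated with finitely many black dots on its strands, finitely many white Clifford dots, together with a collection of closed bubbles carrying dots. The task is then purely combinatorial: the underlying matching is built from the crossings $s_i,\bar s_i$ and the single cup--cap $e_1$ exactly as in the ungraded walled Brauer algebra $B_{r,t}(0)$ (using Theorem~\ref{wbhsa321} and Corollary~\ref{wsba-1} to organize the decomposition), the black dots on the $i$-th up- (resp. down-)strand produce powers of $y_i$ (resp. $\bar y_i$) after sliding them onto the designated strands via \eqref{AOBC rels 6}--\eqref{AOBC rels 61}, the white dots produce $c_i,\bar c_i$, and each closed bubble with $k$ dots evaluates to the central element $\bar\delta_k$ by the bubble relation of \eqref{AOBC rels 4}. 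Hence the listed elements generate $\mathscr{BC}_{r,t}^{\rm aff}$ as a superalgebra, and the $\bar\delta_k$ are central because closed bubbles can be isotoped past any other strand (up to an even sign, which is trivial since they are even).

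Next I would verify that the displayed local relations \eqref{ese}--\eqref{super commuting relations} all hold. The overwhelming majority are immediate instances of the defining relations of $\mathscr{AOBC}$: the straightening (zig--zag) identities together with the braid relations give \eqref{AOBC rels 1}--\eqref{AOBC rels 2}; the square and slide relations for the Clifford (white) dots give \eqref{AOBC scat rels 3}--\eqref{AOBC scat rels 3-1}; the even-bubble vanishing, the bubble evaluation $e_1 y_1^k e_1=\bar\delta_k e_1$, the odd-bubble vanishing $e_1c_1y_1^ke_1=0$, and the black/white dot sign-swap constitute \eqref{AOBC rels 4}; the dot--crossing relations give \eqref{AOBC rels 6}--\eqref{AOBC rels 61}; the dot--cap/cup relations give \eqref{AOBC rels 7}; and the super-interchange law \eqref{super-interchange} specializes to \eqref{super commuting relations} whenever the decorated strands are separated by at least one plain strand. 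The only relations requiring a short diagrammatic argument are those in \eqref{ese}, namely the walled-Brauer relations $e_1s_1e_1=e_1=e_1\bar s_1e_1$ and the tangle relations $e_1s_1\bar s_1e_1s_1=e_1s_1\bar s_1e_1\bar s_1$ and $s_1e_1s_1\bar s_1e_1=\bar s_1e_1s_1\bar s_1e_1$; here I would draw the corresponding cup--cap--crossing composites and reduce them with the straightening identities exactly as for the ungraded walled Brauer algebra, noting that no dots are involved so the superalgebra signs play no role.

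The main obstacle I anticipate is the generation step rather than the relation-checking: one must be sure that an \emph{arbitrary} normally ordered diagram factorizes into the specific finite list of generators, and in particular that black dots can always be transported onto the designated strands using \eqref{AOBC rels 6}--\eqref{AOBC rels 7}, with the resulting lower-order error terms reabsorbed inductively, and that every closed component is resolved into a product of bubbles $\bar\delta_k$. This bookkeeping is precisely where one genuinely uses the Comes--Kujawa basis theorem to guarantee completeness; the identification of the generators with their parities, and the verification of each individual relation, are then routine translations of their diagrammatic calculus.
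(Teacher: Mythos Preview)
Your proposal is correct and follows essentially the same approach as the paper: both arguments amount to reading off the claimed relations from the defining relations and derived identities of $\mathscr{AOBC}$ established in \cite{CK}, and appealing to the Comes--Kujawa results for generation/completeness. The paper's proof is terser, simply citing the precise locations in \cite{CK} (Corollary~3.6 for \eqref{ese}--\eqref{AOBC scat rels 3-1}, Propositions~3.3 and 3.10 for \eqref{AOBC  rels 6}--\eqref{AOBC  rels 7}, (3.29) and (3.27) for \eqref{AOBC  rels 4} and \eqref{AOBC scat rels 3}, and (7.8) for \eqref{super commuting relations}), whereas you spell out the diagrammatic mechanism and in particular give a more explicit account of the generation step via the basis theorem and dot-sliding.
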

\begin{proof}
Relations \eqref{ese}--\eqref{AOBC scat rels 3-1} follow from \cite[Corollary~3.6]{CK}. In other words,   the generators $ e_1$, $s_1,\ldots, s_{r-1}$, $\bar s_1, \ldots, \bar s_{t-1}$, $c_1, \ldots, c_r$ and $\bar c_1, \ldots, \bar c_t$ satisfy the defining relations for ${BC}_{r,t}$ in Definition~\ref{wsera}.  Relations \eqref{AOBC  rels 6}--\eqref{AOBC  rels 7} follows from \cite[Propositions~3.3,~3.10]{CK}. Relations in \eqref{AOBC  rels 4} (resp., \eqref{AOBC scat rels 3}) is  \cite[3.29]{CK} (resp., \cite[3.27]{CK}).
 Finally, \eqref{super commuting relations} follows from \cite[7.8]{CK}. Comes and Kujawa have pointed that  the relations (7.2)--(7.8) are defining relations
 for $\mathscr {AOBC}$. So, the above relations consist of a complete set of relations for $\mathscr {BC}_{r,t}^{\rm aff}$.
\end{proof}

Recall that $\widetilde{BC}_{r,t}$ is  the   affine walled Brauer-Clifford superalgebra in Definition~\ref{affinewbcsa}.
Their defining parameters are $\omega_i, \bar\omega_j\in R$ such that $\omega_{2n}=\bar \omega_{2n}=0$ for all $n\in \mathbb N$.
Let $J$ be the two-sided
ideal of $\mathscr{BC}_{r,t}^{\rm aff}$ generated by $\bar\delta_k-(-1)^k\omega_k$ and $\bar \delta_k+\sum_{l=1}^{k-2} \bar \delta_l \delta_{k-l-1}-\bar \omega_k$,
where  $ \delta_i$ (cf. \cite[Remark~.11]{CK})
are determined by
\begin{equation}\label{delta123}
\delta_k-\bar \delta_k= \sum_{0< i< k/2}\delta_{2i-1}\bar \delta_{k-2i}.
\end{equation}
 Define
\begin{equation}\label{affck} \widetilde{\mathscr {BC}}_{r, t}=\mathscr{BC}_{r,t}^{\rm aff}/J.\end{equation}

\begin{Theorem}\label{isomofoabc}  $\widetilde{\mathscr{BC}}_{r,t} \cong \widetilde{BC}_{r,t} $ as $R$-superalgebras.
\end{Theorem}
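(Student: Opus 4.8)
The plan is to produce two mutually inverse $R$-superalgebra homomorphisms between $\widetilde{BC}_{r,t}$ and $\widetilde{\mathscr{BC}}_{r,t}$, exploiting the fact that at this point both algebras are presented by generators and relations. On one side, $\widetilde{BC}_{r,t}$ is given by Definitions~\ref{awbsa} and~\ref{affinewbcsa}. On the other, Lemma~\ref{CKu} packages a complete set of local relations \eqref{ese}--\eqref{super commuting relations} for $\mathscr{BC}_{r,t}^{\rm aff}=\End_{\mathscr{AOBC}}(\uparrow^r\downarrow^t)$, and $\widetilde{\mathscr{BC}}_{r,t}$ is the quotient \eqref{affck} by the ideal $J$ enforcing $\bar\delta_k=(-1)^k\omega_k$ and $\bar\omega_k=\bar\delta_k+\sum_{l=1}^{k-2}\bar\delta_l\delta_{k-l-1}$. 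Thus a presentation of $\widetilde{\mathscr{BC}}_{r,t}$ is obtained from Lemma~\ref{CKu} by adjoining these two families, and the theorem reduces to matching the two presentations relation-by-relation, exactly in the spirit of the corresponding identification in \cite{BCNR}.

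First I would define $\Psi\colon \widetilde{BC}_{r,t}\to\widetilde{\mathscr{BC}}_{r,t}$ on generators by fixing the ``diagrammatic'' generators $e_1, s_i, \bar s_j$, sending the Clifford generators $c_i,\bar c_j$ to the corresponding white dots (reconciling the squaring conventions of \eqref{AOBC scat rels 3}--\eqref{AOBC scat rels 3-1} against \eqref{sera}--\eqref{aseradual}), sending $x_1\mapsto -y_1$ and $\bar x_1\mapsto -\bar y_1$, and sending each central parameter to its defining scalar. The sign on $x_1,\bar x_1$ is forced by the bubble relations: applying $\Psi$ to $e_1x_1^ke_1=\omega_k e_1$ gives $(-1)^k e_1 y_1^k e_1=(-1)^k\bar\delta_k e_1$ by \eqref{AOBC rels 4}, which equals $\omega_k e_1$ precisely because $\bar\delta_k=(-1)^k\omega_k$ in $\widetilde{\mathscr{BC}}_{r,t}$. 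One then verifies the remaining relations of Definition~\ref{awbsa}: the symmetric-group and walled-Brauer relations \eqref{symm}--\eqref{aseradual} together with Definition~\ref{wsera}(1)--(10) come from \eqref{ese}--\eqref{AOBC rels 2} and the super-interchange law \eqref{super-interchange}; the affine Hecke--Clifford relations \eqref{asera}, \eqref{aseradual2} and the dot-commutation relations Definition~\ref{awbsa}(8)--(11) come from the dot-slide relations \eqref{AOBC rels 6}, \eqref{AOBC rels 61} and \eqref{AOBC scat rels 3}--\eqref{AOBC scat rels 3-1}; and the $e_1$-interaction relations Definition~\ref{awbsa}(1)--(7) come from \eqref{AOBC rels 4} and the curl relations \eqref{AOBC rels 7}.

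Next I would construct the inverse $\Phi\colon \widetilde{\mathscr{BC}}_{r,t}\to\widetilde{BC}_{r,t}$. Here $e_1,s_i,\bar s_j,c_i,\bar c_j$ are again matched, $\bar\delta_k\mapsto(-1)^k\omega_k$, and the dots are sent to $y_1\mapsto -x_1$, $\bar y_1\mapsto -\bar x_1$ and, more generally, $y_i\mapsto -x_i$, $\bar y_j\mapsto -\bar x_j$, using the derived elements $x_i,\bar x_j$ of $\widetilde{BC}_{r,t}$ from \eqref{relsmurp}. Well-definedness of $\Phi$ amounts to checking that the images satisfy every local relation of Lemma~\ref{CKu}; these are precisely the identities already accumulated for $\widetilde{BC}_{r,t}$ (e.g.\ the dot-slide \eqref{AOBC rels 6} corresponds to \eqref{comm-hc}, and \eqref{AOBC rels 4} to Definition~\ref{awbsa}(5)--(7)). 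Since $\Psi$ and $\Phi$ are visibly mutually inverse on generators and each set of generators generates, this establishes the isomorphism; as a cross-check one may instead prove $\Psi$ surjective and then deduce injectivity by comparing the regular-monomial basis of $\widetilde{BC}_{r,t}$ from Theorem~\ref{main1cyco321} with the induced basis of $\widetilde{\mathscr{BC}}_{r,t}$.

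The hard part will be the matching of central data together with the Clifford sign conventions. Concretely, after fixing $\Psi(\bar x_1)=-\bar y_1$, the relation $e_1\bar x_1^ke_1=\bar\omega_k e_1$ forces the identity $(-1)^k e_1\bar y_1^k e_1=\bigl(\bar\delta_k+\sum_{l=1}^{k-2}\bar\delta_l\delta_{k-l-1}\bigr)e_1$ in $\widetilde{\mathscr{BC}}_{r,t}$; proving this requires computing the ``downward'' bubble $e_1\bar y_1^k e_1$ in the oriented calculus, sliding dots through curls via \eqref{AOBC rels 7} and invoking the clockwise/counterclockwise bubble recursion \eqref{delta123} that relates $\delta_k$ and $\bar\delta_k$. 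This is exactly the point at which the structure of the ideal $J$ in \eqref{affck} is engineered to succeed, and it is where the admissibility bookkeeping of Corollary~\ref{assump1} reappears in disguise. The parallel reconciliation of the opposite squaring conventions for the Clifford dots ($c_i^2=-1,\ \bar c_i^2=1$ for us versus the signs displayed in \eqref{AOBC scat rels 3}--\eqref{AOBC scat rels 3-1}) must be carried along consistently throughout both verifications; once these bubble and sign identities are in place, the remaining relation checks are routine diagram manipulations.
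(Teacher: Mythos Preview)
There is a genuine gap: your assignment $\Phi(\bar y_1)=-\bar x_1$ (and dually $\Psi(\bar x_1)=-\bar y_1$) cannot give a homomorphism. In $\widetilde{\mathscr{BC}}_{r,t}$ the super-interchange relation \eqref{super commuting relations}, applied with $g=\xdot$ and $h=\xdotr$, forces $y_i\bar y_j=\bar y_j y_i$ for all $i,j$. But in $\widetilde{BC}_{r,t}$ Definition~\ref{awbsa}(3) rewrites as $x_1\bar x_1-\bar x_1 x_1=[e_1-\bar e_1,\,x_1]$, and this commutator is nonzero (it is visibly nonzero already in the faithful level-two action of Section~4, cf.\ relation (d) in the proof of Proposition~\ref{hom}). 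Hence with your $\Phi$ one gets $\Phi(y_1\bar y_1-\bar y_1 y_1)=x_1\bar x_1-\bar x_1 x_1\neq 0$, contradicting well-definedness. The same obstruction kills your $\Psi$ on Definition~\ref{awbsa}(3). The point you identify as ``the hard part'' (bubble bookkeeping and Clifford signs) is not where the proof breaks; the missing ingredient is the placement of $\bar y_1$ relative to the up-strands.

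The paper fixes this by sending $\bar y_1$ not to $\pm\bar x_1$ but to $\bar x_1-\sum_{i=1}^r e_{i,1}+\sum_{i=1}^r c_i e_{i,1}c_i$. Diagrammatically, $\bar x_1$ corresponds to the down-dot dragged to the far left of all up-strands (the element $\bar y_1'$ of \eqref{bary1p}), not to the bare $\bar y_1$ sitting just right of the wall; these differ by exactly the $e_{i,1}$-corrections recorded via \cite[(3.24)]{CK}, and it is this shifted element that commutes with the $x_i$'s. With that correction, the relation checks go through. The paper then does not build an explicit inverse: it verifies $\varphi$ is a surjective superalgebra map and deduces injectivity by observing that normally ordered dotted diagrams map to distinct regular monomials in ${\rm gr}\,\widetilde{BC}_{r,t}$, invoking the basis of Theorem~\ref{main1cyco321}.
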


\begin{proof}
Let $\varphi: \widetilde{\mathscr{BC}}_{r,t} \rightarrow  \widetilde{BC}_{r,t} $ be the $R$-linear map such that
$\varphi$ sends $e_1$, $s_i$'s, $\bar s_j$'s (resp., $c_k$'s and $\bar c_l$'s)  to the same symbols (resp., $\sqrt{-1} c_k$'s and $\sqrt{-1} \bar c_l$'s) in $\widetilde{BC}_{r,t} $.
We remark that $c_i^2=1$ and $\bar c_j^2=-1$ in $\widetilde{\mathscr{BC}}_{r,t} $ whereas $c_i^2=-1$ and $\bar c_j^2=1$ in $\widetilde{{BC}}_{r,t} $.
Moreover,
\begin{equation}\label{defofvar}
\begin{aligned}
&\varphi(y_1 )=-x_1,~ \varphi(y_i)=s_{i-1}\varphi(y_{i-1}) s_{i-1}+(1+c_{i-1}c_{i})s_{i-1},\\
&\varphi(\bar y_1 )=\bar x_1 -\mbox{$\sum\limits_{i=1}^r$} e_{i,1} +\mbox{$\sum\limits_{i=1}^r$} c_i e_{i,1}c_i,~ \varphi(\bar y_i )= \bar s_{i-1}\varphi(y_{i-1}) \bar s_{i-1}-(1-\bar c_{i-1}\bar c_{i})\bar s_{i-1},\\
\end{aligned}\end{equation}
where $e_{i,1}=s_{i,1} e_1 s_{1,i}$.
We claim   that $\varphi$ is an algebra homomorphism. If so, since $ \varphi$ sends even (resp., odd) generators of  $\widetilde{\mathscr{BC}}_{r,t}$ to
 even (resp., odd) generators of  $\widetilde{{BC}}_{r,t}$, $\varphi$ has to be an even homomorphism. This shows that $\varphi$ is a superalgebra homomorphism.

We verify that the images of generators of  $  \widetilde{\mathscr{BC}}_{r,t}$ satisfy the corresponding defining relations in Lemma~\ref{CKu}.
First note that $\varphi$ preserves  relations \eqref{ese}--\eqref{AOBC scat rels 3-1} and the last two relations in \eqref{AOBC  rels 7} since the generators $ e_1$, $s_i$'s, $\bar s_j$'s, $c_i$'s and $c_j$'s satisfy the relations for $\widetilde{BC}_{r,t}$ (cf. \cite[Corollary~3.6]{CK}).
 It follows from the  commutative relations in Definition~\ref{awbsa}  that $\varphi$ preserves \eqref{super commuting relations} and the last two relations in \eqref{AOBC  rels 4}. Since $e_1c_1x_1^ke_1=0$ and $ e_1x_1^{2k}e_1=0$,
 $\varphi$ preserves  the first  and third relations in \eqref{AOBC  rels 4}.  Since  $ e_1 x_1^{k}e_1= w_k e_1$ and $\bar\delta_k=(-1)^k\omega_k$ in $R$, $\varphi$ preserves the second  relation in \eqref{AOBC  rels 4}.
By \eqref{defofvar}, $\varphi$ preserves  \eqref{AOBC  rels 6}--\eqref{AOBC  rels 61}.
By \cite[(3.24)]{CK},
\begin{equation}\label{bary1p}
\bar y_1'=~\begin{tikzpicture}[baseline = 10pt, scale=0.5, color=\clr]
        \draw[->,thick] (1,0) to[out=up, in=down] (1,2);     \draw (1.1,2.0) node{\footnotesize{$r$}};
        \draw[<-,thick] (2,0) to[out=up, in=down] (0,1) to[out=up, in=down] (2,2);
          \draw (0,1) \bdot;  \end{tikzpicture}~
            \begin{tikzpicture}[baseline = 10pt, scale=0.5, color=\clr]
            \draw[<-,thick] (2.25,0) to[out=up, in=down] (2.25,2);
        \end{tikzpicture}^{t-1}
\end{equation}
where $\bar y_1'=\bar y_1+\sum_{i=1}^r e_{i,1}+\sum_{i=1}^rc_ie_{i,1}c_i$. Hence, $e_1 y_1=e_1\bar y_1'$ by \cite[(3.24)]{CK} and \eqref{AOBC  rels 7}.
So, $\varphi(e_1)\varphi (y_1)=\varphi(e_1)\varphi(\bar y_1')$ by Definition~\ref{awbsa}(4). Similarly, we have $\varphi (y_1)\varphi(e_1)=\varphi(\bar y_1')\varphi(e_1)$,
proving that $\varphi$ preserves
the first two relations in \eqref{AOBC  rels 7}.

 By definition of $\varphi$, any generator of $\widetilde{BC}_{r, t}$ has an inverse image in  $\widetilde{\mathscr {BC}}_{r, t}$. This shows that $\varphi$  is an epimorphism.
Now, we consider both  $\widetilde{BC}_{r, t}$  and   $\widetilde{\mathscr {BC}}_{r, t}$
over $F$, the quotient field.  Since  $\widetilde{\mathscr {BC}}_{r, t}$ has a basis which consists of all  equivalence classes of dotted normal ordered oriented Brauer-Clifford diagrams, and each representative of an equivalent class is mapped to a unique regular monomial in   $\text{gr}\widetilde{BC}_{r, t}$. So, $\phi$ is an isomorphism.
 Finally, we remark that one can use the freeness of  $\widetilde{BC}_{r, t}$ to prove the freeness of  $\widetilde{\mathscr {BC}}_{r, t}$ and vice versa via the isomorphism $\varphi$ or $\varphi^{-1}$.
 \end{proof}

Fix $a,b \in\mathbb Z^{\geq0}$ and $u_i\in R$ for each $1\leq i\leq a$. For a given polynomial $f(u)=u^b\prod_{i=1}^a (u^2-u_i)$,
Comes and Kujuawa introduced the cyclotomic quotoent $ \mathscr {OBC}^f$ which is the supercategory defined as the quotient of $\mathscr {AOBC}$ by the left tensor ideal generated by $f(\xdot ) $. In the following, we consider the category ( which is also denoted by $ \mathscr{OBC}^f$ )  obtained from the original $ \mathscr{OBC}^f$   in \cite{CK} by specializing $\bar \delta_i$ at $(-1)^i w_i$.
By \cite[Remark~3.11,~Proposition~3.12]{CK},
\begin{equation}\label{delfor1}
(1-\mbox{$\sum\limits_{i\geq 1}$}\delta_{i-1}u^{-i})(1+\mbox{$\sum\limits_{j\geq 1}$}\bar \delta_{j-1}u^{-j})=1.
\end{equation}
Motivated by \cite[Remark~1.6]{BCNR}, for another given monic polynomial $g(x)$ of degree $\ell:=b+2a$, set
\begin{equation}\label{delfor2}
1+\mbox{$\sum\limits_{j\geq 1}$}\bar \delta_{j-1}u^{-j}:= \frac{g(u)}{f(u)}.
\end{equation}
Write $f(u)=u^\ell+a_{\ell-1} u^{\ell-1}+ \cdots + a_1 u+a_0$.  Then
\begin{equation}\label{fofy}
y:=(d \downarrow)\circ t \circ f(\xdot)\circ t^{-1} \circ ( \downarrow c)= y_\ell + a_{\ell-1}y_{\ell-1} +\cdots+ a_1 y_1 +a_0 \in \End_{\mathscr {AOBC}}(\downarrow),
\end{equation}
where $ d=\lcap$, $c=\lcup$ and $ t=\rswap$ and $y_k=(d \downarrow)\circ t \circ (\xdot)^k \circ t^{-1} \circ ( \downarrow c)$, $1\leq k\leq \ell$.
By \cite[(3.24)]{CK},
$$y_k=(\xdotr)^k- \sum_{i=0}^{k-1} \bar \delta_i (\xdotr)^{k-i+1}.$$
By \eqref{delfor2}, one can check that $y=g(\xdotr)$. So, the right tensor ideal generated by $f(\xdot)$ contains $g(\xdotr)$. Similarly, we also
have that the right tensor  ideal generated by  $g(\xdotr)$ also contains $f(\xdot)$.
Let $\widetilde{ \mathscr{BC}}_{r,t}^f:= \widetilde {\mathscr{BC}}_{r,t}/J$, where $ J$ is the two sided ideal of $\widetilde {\mathscr{BC}}_{r,t} $ generated by $f(y_1)$ and $g(\bar y_1')$, and
$\bar y_1'=\bar y_1+\sum_{i=1}^r e_{i,1}+\sum_{i=1}^rc_ie_{i,1}c_i$.
Then $\widetilde{\mathscr{BC}}_{r,t}^f \cong \End_{\mathscr {OBC}^f }(\uparrow^r \downarrow^t)$ by \cite[(7.9)]{CK}.
By the definition of  $\varphi$ in the proof of Theorem~\ref{isomofoabc}, we have $\varphi(f(y_1))=\tilde f(x_1)$, where $\tilde f(u)=(-1)^\ell f(-u)$.

The fact that $ \varphi(g(\bar y'_1))=g(\bar x_1)$ follows from \cite[(3.24)]{CK}.
Finally, $e_1\tilde f(x_1)=(-1)^be_1 g(\bar x_1)$ follows from \eqref{fofy}, \eqref{AOBC  rels 7} and the fact that $\varphi$ is an isomorphism in Theorem ~\ref{isomofoabc}.
This leads to the following result.

\begin{Theorem}\label{isomofoabcf} Let $BC_{\ell, r, t}:= \widetilde {BC}_{r,t}/I$, where $\widetilde {BC}_{r,t}$ is given in Theorem~$\ref{isomofoabc}$ and $I$ is the  two-sided ideal generated by  $\tilde f(x_1)$ and $g(\bar x_1)$. Then  $ BC_{\ell, r, t}$ is admissible and
 $ BC_{\ell, r,t}\cong \widetilde{\mathscr {BC}}_{r,t}^f$. In particular,   $\widetilde{\mathscr {BC}}_{r,t}^f$ is free over $R$ with rank $\ell^{r+t} 2^{r+t} (r+t)!$, where $\ell$ is the degree of $\tilde f(x_1)$.
\end{Theorem}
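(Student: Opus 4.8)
The plan is to deduce everything from the isomorphism $\varphi\colon\widetilde{\mathscr{BC}}_{r,t}\xrightarrow{\ \sim\ }\widetilde{BC}_{r,t}$ of Theorem~\ref{isomofoabc} together with the identities $\varphi(f(y_1))=\tilde f(x_1)$ and $\varphi(g(\bar y_1'))=g(\bar x_1)$ recorded above. First I would observe that, since $\varphi$ is an algebra isomorphism carrying the two generators $f(y_1),g(\bar y_1')$ of the defining ideal $J$ of $\widetilde{\mathscr{BC}}_{r,t}^f$ onto the two generators $\tilde f(x_1),g(\bar x_1)$ of $I$, it maps $J$ onto $I$; hence it descends to an isomorphism
\[
\widetilde{\mathscr{BC}}_{r,t}^f=\widetilde{\mathscr{BC}}_{r,t}/J\;\xrightarrow{\ \sim\ }\;\widetilde{BC}_{r,t}/I=BC_{\ell,r,t}.
\]
This settles the isomorphism claim outright and reduces the rest to two statements about $BC_{\ell,r,t}$: that it is admissible, and that it is free of the asserted rank. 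Before that I would record that $\tilde f$ and $g$ are of the cyclotomic shape \eqref{fg} and satisfy \eqref{efg3211}---the latter being exactly the identity $e_1\tilde f(x_1)=(-1)^b e_1g(\bar x_1)$ noted before the theorem---so that $BC_{\ell,r,t}$ genuinely is a cyclotomic walled Brauer-Clifford superalgebra in the sense of Definition~\ref{cwbcsa1} and Theorem~\ref{level-k-walledcycl} applies to it.

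For admissibility the key is to translate the Comes-Kujawa bubble relation into the recursion \eqref{ccccc}. I would use that in the specialised category $\mathscr{OBC}^f$ one has $\bar\delta_k=(-1)^k\omega_k$, while \eqref{delfor2} gives $g(u)=f(u)\bigl(1+\sum_{j\ge1}\bar\delta_{j-1}u^{-j}\bigr)$. Writing $f(u)=\sum_{i=0}^{\ell}a_i'u^{\ell-i}$ with $a_0'=1$ and comparing, for each $p>\ell$, the coefficient of $u^{\ell-p}$ on both sides---which must vanish because $\deg g=\ell$---should yield $\bar\delta_q=-\sum_{i=1}^{\ell}a_i'\bar\delta_{q-i}$ for all $q\ge\ell$. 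Substituting $\bar\delta_q=(-1)^q\omega_q$ and using that the coefficients of $\tilde f(u)=(-1)^{\ell}f(-u)$ are $\tilde a_i=(-1)^i a_i'$, this turns into $\omega_q=-\sum_{i=1}^{\ell}\tilde a_i\,\omega_{q-i}$ for $q\ge\ell$, which is precisely the condition \eqref{ccccc} of Definition~\ref{condi-kcycl} for the defining polynomial $\tilde f$. Hence $BC_{\ell,r,t}$ is admissible.

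Finally, with admissibility established, Theorem~\ref{level-k-walledcycl} shows that $BC_{\ell,r,t}$ is free over $R$ of rank $2^{r+t}\ell^{r+t}(r+t)!$ (here $\ell=b+2a=\deg\tilde f$ plays the role of $k+2m$), and transporting this along the isomorphism of the first paragraph gives the same freeness and rank for $\widetilde{\mathscr{BC}}_{r,t}^f$. I expect the main obstacle to lie in the second paragraph: the two frameworks encode the bubble parameters differently---ours through the scalars $\omega_k$ defined by $e_1x_1^ke_1=\omega_ke_1$, theirs through the $\bar\delta_k$ and the generating-function identity \eqref{delfor2}---so the real work is bookkeeping the two sign twists ($\bar\delta_k\leftrightarrow(-1)^k\omega_k$ and $f\leftrightarrow\tilde f$) carefully enough to land exactly on \eqref{ccccc} rather than on a sign-corrupted variant.
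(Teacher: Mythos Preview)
Your proof is correct and follows essentially the same strategy as the paper: use the isomorphism $\varphi$ of Theorem~\ref{isomofoabc}, verify admissibility from the generating-function identity \eqref{delfor2}, and then invoke Theorem~\ref{level-k-walledcycl} to obtain freeness and the rank. Your admissibility computation is in fact more explicit than the paper's, which simply asserts that \eqref{delfor2} yields \eqref{ccccc}.

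The one genuine difference is in how the isomorphism $BC_{\ell,r,t}\cong\widetilde{\mathscr{BC}}_{r,t}^f$ is obtained. You argue directly that since $\varphi$ is an isomorphism carrying the generators $f(y_1),\,g(\bar y_1')$ of $J$ to the generators $\tilde f(x_1),\,g(\bar x_1)$ of $I$, it satisfies $\varphi(J)=I$ and therefore descends to an isomorphism of quotients; this is the cleanest route. The paper instead first records only an epimorphism $\bar\varphi$, then invokes Comes--Kujawa's result that the normally ordered dotted diagrams with fewer than $\ell$ dots span $\widetilde{\mathscr{BC}}_{r,t}^f$, observes that $\bar\varphi$ sends these to the regular monomials of $BC_{\ell,r,t}$, and finally uses the freeness of $BC_{\ell,r,t}$ (from admissibility plus Theorem~\ref{level-k-walledcycl}) to conclude injectivity. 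Your argument avoids the appeal to the Comes--Kujawa spanning set altogether; the paper's longer route has the incidental benefit of making explicit that their diagrammatic spanning set is carried by $\bar\varphi$ onto the regular monomial basis.
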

\begin{proof} The isomorphism $\varphi$ in the proof of Theorem~7.2 leads to an $R$-epimorphism   $\bar \varphi: \widetilde{\mathscr {BC}}_{r,t}^f\rightarrow BC_{\ell, r, t}$.
Comes and Kujawa have proved that $\widetilde{\mathscr{BC}}_{r,t}^f$ is generated by all the equivalence classes of normally ordered dotted oriented Brauer-Clifford diagrams without bubbles of type $\mathbf a\rightarrow \mathbf a$ with fewer than $\ell$ dots on each strands, where $\mathbf a = \uparrow^r \downarrow^t$.
 Moreover, $\varphi$ sends all these generators to the corresponding regular  { monomials in} $ \text{gr} (BC_{\ell,r,t})$. One can check that  $BC_{\ell, r,t}$ is admissible by \eqref{delfor2}. So,  by Theorem~\ref{level-k-walledcycl},  $ \varphi$ is injective and hence $ BC_{\ell,r,t}\cong \widetilde{\mathscr {BC}}_{r,t}^f$. In particular,  $\widetilde{\mathscr {BC}}_{r,t}^f$ is free over $R$ with rank $\ell^{r+t} 2^{r+t} (r+t)!$.
\end{proof}
\small

\begin{thebibliography}{DWH99}
\bibitem{B}{\scshape R.~Brauer}, {\og On algebras which are connected with the semisimple
  continuous groups\fg}, \emph{Ann. of Math.} \textbf{38} (1937), 857--872.
\bibitem{BCNR} {\scshape J.Brundan, J.Comes, D.Nash and A. Reynolds}, {\og  A basis theorem for the degenerate affine oriented Brauer category and its cyclotomic quotients\fg}, \emph{ Quantum Topol.} \textbf{8} (1): 75--112 (2017).
\bibitem{BGJKW} {\scshape G.~Benkart, N. Guay, J. Jye Jung, S.-J. Kang and S. Wilcox }, {\og Quantum walled Brauer-Clifford superalgebras,
\fg}, \emph {J. Algebra} \textbf{454} (2016) 433--474.
\bibitem{BS4} {\scshape J. Brundan  {\normalfont \smfandname} C. Stroppel}, {\og Highest weight categories arising from Khovanov's diagram algebra
I, II, III, IV\fg} \emph{Moscow Math. J.} \textbf {11}, (2011), 685--722; \emph{Transform. Groups} \textbf{15}, (2010),  1--45; \emph {Represent.
Theory} \textbf{15}, (2011),  170--243; \emph{J. Eur. Math. Soc.} \textbf{14},  (2012), 373--419.
\bibitem {BS} {\scshape J. Brundan   {\normalfont \smfandname} C. Stroppel}, {\og  Gradings on walled Brauer algebras and Khovanov's arc algebra}, \emph{ Adv. Math.} \textbf {231} (2012), no. 2, 709--773.
\bibitem{CK} {\scshape J. Comes, J.~R. Kujawa}, {\og A basis theorem for the degenerate affine oriented Brauer-Clifford supercategory\fg},  \emph {arXiv: 1706.09999v1[math.RT]}.

\bibitem{Gr} {\scshape  J.A. Green}, {\og Polynomial Representations of $GL_n$\fg}, Second corrected and augmented edition. With an appendix on Schensted
correspondence and Littelmann paths by K. Erdmann, Green and M. Schocker, Lecture Notes in Math., Vol. \textbf{830}, Springer, Berlin, 2007.

\bibitem{HKS} {\scshape D.~Hill, J.R.~Kujawa {\normalfont \smfandname} J.~Sussan}, {\og Degenerate affine Hecke-Clifford algebras and type Q Lie superalgebras\fg},  \emph {Math.Z.}, \textbf{268} (3-4): 1091--1158 (2011).

\bibitem{JK}  {\scshape J.H. Jung {\normalfont \smfandname} S.-J. Kang}, {\og
Mixed Schur-Weyl-Sergeev duality for queer Lie superalgebras\fg}, \emph{J. Algebra}, \textbf{ 399},  (2014), 516-¨C545.

\bibitem{K} {\scshape A. Kleshchev}, {\og Linear and projective representations of symmetric groups\fg}, { Cambridge Tracts in Mathematics, 163. Cambridge University Press, Cambridge}, 2005.

\bibitem{Koi} {\scshape K. Koike}, {\og On the decomposition of tensor products of the representations of classical groups: By means of
universal characters\fg}, \emph{Adv. Math.} \textbf {74} (1989) 57--86.

\bibitem{GZ} {\scshape G. Lehrer and R.B. Zhang}, {\og The first fundamental theorem of invariant theory for the orthosymplectic supergroup\fg},  \emph{Comm. Math. Phys.}, \textbf{349} (2017), no. 2, 661--702.


\bibitem{RSong}{\scshape H. Rui {\normalfont \smfandname} L. Song}, {\og  The representations of
quantized walled Brauer algebras\fg}, \emph{J. Pure and Applied Algebra},  \textbf{219}, 1496--1518.


\bibitem{RSu}{\scshape H. Rui {\normalfont \smfandname} Y. Su}, {\og Affine walled Brauer algebras and super Schur-Weyl duality\fg}, \emph{ Adv. Math.}, \textbf{285} (2015), 28--71.

\bibitem{RSu1}{\scshape H. Rui {\normalfont \smfandname} Y. Su}, {\og Highest weight vectors of mixed tensor products of general linear Lie superalgebras\fg}, \emph{Transform. Groups}, \textbf{20} (2015), no. 4, 1107--1140.

\bibitem{Sa} {\scshape A. Sartori}, {\og The degenerate affine walled Brauer algebra
\fg}, \emph{J. Algebra}, \textbf{417}, (2014), 198--233.

\bibitem{Ser}{\scshape A.N. Sergeev}, {\og Tensor algebra of the identity representation as a module over the Lie superalgebras
$Gl(n,m)$ and  $Q(n)$, \fg},  \emph{(Russian) Mat. Sb. (N.S.)} \text{123} (165) (1984), no. 3, 422--430.

\bibitem{SM} {\scshape C.L. Shader    {\normalfont \smfandname}  D. Moon}, {\og Mixed tensor representations and representations for the general linear
Lie superalgebras\fg}, \emph{Comm. Algebra} \textbf{30} (2002), 839--857.
\bibitem {Tur} {\scshape V.~Turaev}, {\og  Operator invariants of tangles and R-matrices\fg}, \emph{Izv. Akad. Nauk SSSR Ser. Math.} \textbf{53} (1989) 1073--1107 (in Russian).


\end{thebibliography}
\end{document}